\documentclass[12pt,reqno]{amsart}

\usepackage[margin=1in]{geometry}
\usepackage[fleqn,tbtags]{mathtools}
\usepackage[shortlabels]{enumitem}
\usepackage{graphicx}
\usepackage{amssymb}
\usepackage{amsthm}
\usepackage{mathrsfs}
\usepackage{mlmodern}
\usepackage{eucal}
\usepackage{microtype}

\usepackage[nodisplayskipstretch]{setspace}
\usepackage{tikz}
\usetikzlibrary{arrows.meta,positioning,calc}
\usepackage{xcolor}

\usepackage[colorlinks, breaklinks=true, pagebackref=false]{hyperref}
\usepackage[nameinlink, capitalize, noabbrev]{cleveref}
\newtheorem{theorem}{Theorem}[section]
\newtheorem{definition}{Definition}[section]
\newtheorem{lemma}{Lemma}[section]
\newtheorem{corollary}{Corollary}[section]
\newtheorem{proposition}{Proposition}[section]
\newtheorem{remark}{Remark}[section]
\newtheorem{example}{Example}[section]
\newtheorem{conjecture}{Conjecture}[section]
\newtheorem{que}{Question}[section]

\title[Capacity stability and moving singularities]
{Capacity Stability of Complex Monge-Amp\`ere Equations with Moving Prescribed Singularities}

\author[K. Pang]{Kai Pang}
\address{Kai Pang: School of Mathematical Sciences\\ Beijing Normal University\\ Beijing 100875\\ P. R. China}
\email{202331130031@mail.bnu.edu.cn}
\author[H. Sun]{Haoyuan Sun}
\address{Haoyuan Sun: School of Mathematical Sciences\\ Beijing Normal University\\ Beijing 100875\\ P. R. China}
\email{202531130037@mail.bnu.edu.cn}
\author[Z. Wang]{Zhiwei Wang}
\address{Zhiwei Wang: Laboratory of Mathematics and Complex Systems (Ministry of Education)\\ School of Mathematical Sciences\\ Beijing Normal University\\ Beijing 100875\\ P. R. China}
\email{zhiwei@bnu.edu.cn}

\author[X. Zhou]{Xiangyu Zhou}
\address{Xiangyu Zhou: Institute of Mathematics\\ Academy of Mathematics and Systems Science\\
	and Hua Loo-Keng Key Laboratory of Mathematics\\ Chinese Academy of Sciences\\ Beijing 100190\\ P. R. China}
\email{xyzhou@math.ac.cn}

\subjclass[2020]{Primary 32W20, 32U05; Secondary 32U40, 32Q15, 53C55}
\keywords{capacity, stability, prescribed singularities, envelopes, geodesic, quantization}

\begin{document}
	
	\begin{abstract}

For complex Monge–Amp\`ere equations with moving big cohomology classes and prescribed model singularities of positive Monge-Amp\`ere mass, we prove that, under total variation convergence of the right-hand side non-pluripolar positive Radon measures, convergence of the prescribed model potentials in Monge–Amp\`ere capacity is equivalent to convergence in capacity of the associated normalized solutions.  We further prove that the ceiling operator coincides with the singularity envelope for potentials associated to a big $(1,1)$-class, regardless of their Monge-Amp\`ere mass, thereby resolving a conjecture of Darvas-Di Nezza-Lu.  Consequently, the singularity envelope is idempotent without  the  positivity assumption on the mass.

	\end{abstract}
	\maketitle
	\tableofcontents
	
	\section{Introduction}

	The complex Monge-Amp\`ere equation is a central object in complex geometry
	and pluripotential theory. In the smooth non-degenerate setting, it is closely
	related to the Calabi problem and K\"ahler-Einstein geometry
	\cite{Cal54,Cal57,Aub76,Aub78,Yau78,Aub82}. Its weak and degenerate forms were
	developed through the work of Bedford-Taylor \cite{BT76,BT82,BT87}, Cegrell
	\cite{Ceg98}, and Ko{\l}odziej \cite{Ko98,Ko03}, and subsequently in the
	global theory on compact K\"ahler manifolds and big cohomology classes
	\cite{GZ05,GZ07,BEGZ10,GZ17,BBGZ13,WN19}. These developments play an important
	role in singular K\"ahler-Einstein geometry and in the metric geometry of
	spaces of potentials \cite{EGZ09,Dar17,Dar19,DR17}. The corresponding pluripotential theory was also developed on compact Hermitian manifolds, see  \cite{Che87, Han96, GL10, TW10, Kol05,KN15, Ngu16, LWZ24,LWZ25,BGL25, SW25, PSWZ25} and references therein.  The relative
	pluripotential theory of Darvas-Di Nezza-Lu
	\cite{DDL18a,DDL18b,DDL18c,DDL21a,DDL21b,DDL25} provides the corresponding
	framework with prescribed singularities.
	
	Let \((X,\omega)\) be a compact K\"ahler manifold, and let \(\theta\) be a
	smooth closed real \((1,1)\)-form representing a big class. We write
	\(\operatorname{PSH}(X,\theta)\) for the set of \(\theta\)-plurisubharmonic
	functions, that is, upper semicontinuous functions
	\(\psi:X\to\mathbb R\cup\{-\infty\}\) which are locally the sum of a smooth
	function and a plurisubharmonic function and satisfy
	\(\theta+dd^c\psi\geq 0\) in the sense of currents. The potential with minimal
	singularities is
	\[
	V_\theta:=\sup\{u\in\operatorname{PSH}(X,\theta):u\leq 0\}.
	\]
	If \(\phi\in\operatorname{PSH}(X,\theta)\) is a positive mass model potential,
	the relative full mass class \(\mathcal E(X,\theta,\phi)\) is the natural
	domain for solutions of
	\[
	\theta_u^n=\mu,\qquad u\in\mathcal E(X,\theta,\phi),
	\]
	where \(\mu\) is a non-pluripolar positive Radon measure with
	\[
	\mu(X)=\int_X\theta_\phi^n.
	\]
	In this setting the relative theory gives existence, uniqueness, comparison
	principles, variational tools, relative energies, and a metric geometry of
	singularity types. In particular, Darvas-Di Nezza-Lu proved an important stability
	theorem for moving model potentials whose singularity types converge in the
	\(d_{\mathcal S}\)-metric \cite{DDL21a}. Related quantitative and
	\(L^1\)-type stability results were obtained in \cite{DV25,LZ25}.
	
	The \(d_{\mathcal S}\)-topology is well adapted to mixed non-pluripolar masses
	and records fine singularity data, such as Lelong numbers and multiplier
	ideals; see, for instance, \cite{RWN17,DX22,DX24}. It is, however, too rigid
	for certain geometric families with moving singularities. Logarithmic poles
	may move, collide, or deform in such a way that the corresponding potentials
	converge in Monge-Amp\`ere capacity, while the associated singularity types
	do not converge in \(d_{\mathcal S}\). Since capacity convergence is the
	natural stability notion for the complex Monge-Amp\`ere operator
	\cite{Xing96,CK06,Xing08,Xing09,GZ12}, this leads to the following problem.
	
	\begin{que}\label{que:capacity-topology}
		Identify the intrinsic topology on moving prescribed singularities which is
		detected by Monge-Amp\`ere stability.
	\end{que}
	
	Our main result answers this question. Under total variation convergence of
	the right-hand side measures, convergence in capacity of the prescribed model
	potentials is equivalent to convergence in capacity of the corresponding
	normalized solutions. Thus capacity is the intrinsic stability topology for
	moving prescribed singularities.
	
	\begin{theorem}[Capacity stability for moving prescribed singularities]
		\label{thm:intro-optimal-capacity-topology}
		Let \((X,\omega)\) be a compact K\"ahler manifold. Let
		\(\theta_j,\theta\) be smooth closed real \((1,1)\)-forms with big cohomology
		classes and assume
		\[
		-\varepsilon_j\omega\leq \theta_j-\theta\leq \varepsilon_j\omega,
		\qquad
		\varepsilon_j\to0.
		\]
		Let \(\phi_j\in\operatorname{PSH}(X,\theta_j)\) and
		\(\phi\in\operatorname{PSH}(X,\theta)\) be normalized positive mass model
		potentials. Let \(\mu_j,\mu\) be non-pluripolar positive Radon measures such
		that
		\[
		\|\mu_j-\mu\|_{\rm TV}\to0,\qquad
		\mu_j(X)=\int_X\theta_{j,\phi_j}^n,\qquad
		\mu(X)=\int_X\theta_\phi^n>0.
		\]
		Let \(u_j\in\mathcal E(X,\theta_j,\phi_j)\) and
		\(u\in\mathcal E(X,\theta,\phi)\) be the normalized solutions of
		\[
		\theta_{j,u_j}^n=\mu_j,\quad \sup_Xu_j=0,
		\qquad
		\theta_u^n=\mu,\quad \sup_Xu=0.
		\]
		Then
		\[
		\phi_j\to\phi \text{ in }\operatorname{Cap}_\omega
		\quad\Longleftrightarrow\quad
		u_j\to u \text{ in }\operatorname{Cap}_\omega .
		\]
	\end{theorem}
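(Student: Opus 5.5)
Both directions will go through the singularity envelope $P_\theta[\cdot]$ and the ceiling operator. For a positive mass model potential $\phi$ and $u\in\mathcal E(X,\theta,\phi)$ one has $P_\theta[u]=\phi$. The plan is therefore to show that capacity convergence is stable under the map $\phi\mapsto u$ (forward direction) and under the map $u\mapsto P[u]$ (backward direction). Two preliminary facts are needed.

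First, capacity convergence is insensitive to the moving forms: the condition $-\varepsilon_j\omega\le\theta_j-\theta\le\varepsilon_j\omega$ lets us regard $u_j$ as $(\theta+\varepsilon_j\omega)$-psh.

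Second, we need a \emph{mass continuity} statement: if $\phi_j\to\phi$ in capacity, then $\int_X\theta_{j,\phi_j}^n\to\int_X\theta_\phi^n$. The upper bound $\limsup\le$ should follow from the standard semicontinuity of non-pluripolar mass along capacity-convergent sequences (cf. \cite{DDL21a}, via $\max(\phi_j,V_\theta-C)$ and monotone approximation). The lower bound is where the model property $\phi_j=P[\phi_j]$ has to be used.

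\textbf{Forward direction.} Assume $\phi_j\to\phi$ in capacity. Since $\mu_j\to\mu$ in total variation, Hartogs compactness lets us pass to a subsequence with $u_j\to v$ in $L^1$, $\sup v=0$. The goal is to show that $v=u$ and that the convergence holds in capacity.

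To get this, I would build barriers. Set $\tilde u_j:=\big(\sup_{k\ge j}u_k\big)^*$, which decreases to $v$, and let $w_j:=P_{\theta+\varepsilon_j\omega}\big(\inf_{k\ge j}u_k\big)$, which increases. Two comparisons then follow from the relative comparison principle:
\begin{itemize}
\item Since $u_k\le\phi_k$ and $\phi_k\to\phi$ in capacity, we get $\tilde u_j\preceq\phi+o(1)$, so $v$ has singularity type at most $[\phi]$.
\item The total variation convergence yields $\theta_{w_j}^n\ge(\inf_{k\ge j}\mu_k)\,\mathbf 1_{\{w_j=\inf u_k\}}$. Using $\|\inf_{k\ge j}\mu_k-\mu\|\to0$ and the mass continuity above, the limit $w=\lim w_j$ satisfies $\int_X\theta_w^n\ge\mu(X)=\int_X\theta_\phi^n$.
\end{itemize}
Combining the two comparisons gives $w,v\in\mathcal E(X,\theta,\phi)$ and $\theta_v^n\ge\mu$, hence $\theta_v^n=\mu$ and $v=u$ by uniqueness. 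Since $w\le\liminf u_j\le\limsup u_j\le v$ with $w=v$ (after normalizing the constants, using domination by mass), the sequence is squeezed between a monotone increasing and a monotone decreasing sequence with the same limit, and this gives convergence in capacity. Subsequence uniqueness then upgrades the conclusion to the full sequence.

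\textbf{Backward direction.} Assume $u_j\to u$ in capacity. For a model potential, $\phi_j=P_{\theta_j}[u_j]$, and by the ceiling result announced in the abstract this equals the ceiling $\lceil u_j\rceil$, i.e. a largest $\theta_j$-psh function $\le0$ that is more singular than $u_j$ and has the same mass. Let $\psi$ be any $L^1$ cluster value of $\phi_j$. Upper semicontinuity gives $\psi\ge u$ up to constants, so $P[\psi]\succeq\phi$. In the other direction, passing to the limit in $\phi_j\ge u_j$ and using the mass identity $\int_X\theta_{j,\phi_j}^n=\mu_j(X)\to\mu(X)$ together with the characterization $\phi=\lceil u\rceil$, I expect to get $\psi\le\phi$; hence $\psi=\phi$. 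Capacity convergence would then be obtained by the same increasing/decreasing envelope squeeze, applied to $\inf_{k\ge j}\phi_k$ and $\sup_{k\ge j}\phi_k$ and exploiting that the $\phi_k$ are model potentials.

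\textbf{Main obstacle.} The delicate point is the lower mass bound for the increasing envelopes $w_j$, and, symmetrically, the identification $\psi\le\phi$ in the backward direction. Capacity convergence alone gives no $d_{\mathcal S}$-control, so masses could a priori drop in the limit. I would first try to rule this out by the monotonicity of non-pluripolar mass along capacity limits of envelopes, combined with the fact that $\int\theta_{\phi_j}^n=\mu_j(X)\to\mu(X)>0$ is pinned by the hypotheses. Positivity of $\mu(X)$ is exactly what allows the relative comparison principle to be applied.
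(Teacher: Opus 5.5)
\textbf{There is a genuine gap in both directions: the squeeze by increasing envelopes collapses when the singularities move, and the key inequality $\psi\le\phi$ in the backward direction is left unproved.}

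\textbf{Forward direction.} Your upper-tail half (Hartogs applied to $(\sup_{k\ge j}u_k)^*$) is fine, and it is what the paper does. The lower-tail mechanism, however, fails exactly in the regime the theorem targets.

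With moving singularities the increasing envelopes $w_j=P\bigl(\inf_{k\ge j}u_k\bigr)$ are typically $\equiv-\infty$. Take Example~\ref{ex:moving-poles-P1-cap-not-dS}: every solution in $\mathcal E(X,\omega,\phi_k)$ is $\preceq\phi_k\simeq c\log|s_k|^2_{h_{\rm FS}}$. An $\omega$-subharmonic minorant of $\inf_{k\ge j}u_k$ would then have Lelong number $\ge c$ at each of the infinitely many distinct points $D_k$. This is impossible, since $\int_X\omega=1$. The same collapse kills your squeeze with $\inf_{k\ge j}\phi_k$ in the backward direction.

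Moreover, the inequality $\theta_{w_j}^n\ge(\inf_{k\ge j}\mu_k)\mathbf 1_{\{w_j=\inf u_k\}}$ is asserted without argument. It runs opposite to the available maximum principle: $P\le u$ gives $\mathbf 1_{\{P=u\}}\theta_P^n\le\mathbf 1_{\{P=u\}}\theta_u^n$. Even granting it, nothing forces the contact set to carry almost all of $\mu$, so $\int_X\theta_w^n\ge\mu(X)$ does not follow.

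Your ``mass continuity'' preliminary is also oriented backwards: capacity convergence gives only lower semicontinuity of non-pluripolar mass. In any case $\mu_j(X)\to\mu(X)$ is automatic from TV convergence.

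The paper never forms envelopes of infima over $k$.
\begin{enumerate}[(i)]
\item It identifies the $L^1$ limit using $v\le\phi$ and the lower semicontinuity $\theta_v^n\ge\mu$ of \cite[Lemma~5.10]{DDL25}.
\item It then bounds $\operatorname{Cap}_\omega(\{\varphi_j<\varphi-\delta\})$ directly. For each test function $\rho$, it compares the truncation $\max(\max(\varphi_j,\phi_j-k),V_j-T)$ with a barrier $\psi_\rho$ built from a strict subbarrier $\chi\le\varphi-\sigma(V-\varphi)$, $\theta_\chi\ge a\omega$.
\item The slope gap $\sigma$ pushes the truncation error into $\{\phi_j<\phi-L\}$. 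This set is small precisely because $\phi_j\to\phi$ in capacity.
\item TV tails and the uniform domination of Lemma~\ref{lem:uniform-DDL-capacity-moving-models} control the remaining pieces.
\end{enumerate}

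\textbf{Backward direction.} Both halves of $\psi=\phi$ are missing.
\begin{enumerate}[(i)]
\item From $\psi\ge u$ you only get $P_\theta[\psi]\ge\phi$, not $\psi\ge\phi$, since an $L^1$ limit of model potentials need not be model.
\item Proving $\psi\le\phi$ via $\phi=\mathscr C_\theta(u)$ would require $\psi\in\mathscr F_u$, i.e.\ $\int_X\theta_\psi^k\wedge\theta_V^{n-k}\le\int_X\theta_u^k\wedge\theta_V^{n-k}$. This is an upper semicontinuity of mixed masses along $L^1$ limits, which fails in general. Your ``I expect'' sits exactly where the proof is needed.
\end{enumerate}

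The paper's missing ingredients are two.
\begin{enumerate}[(a)]
\item Fixed-level rooftops $p_{j,C}=P_{\theta_j}(u_j+C,V_j)$ satisfy $p_{j,C}\le\phi_j$. They converge in capacity to $p_C=P_\theta(u+C,V)$, which increases to $\phi$ (Lemma~\ref{lem:fixed-level-rooftop-moving-background}). That lemma is proved by the same barrier argument and uses TV convergence of $\theta_{j,u_j}^n$. This yields the lower tail.
\item A volume-recovery estimate: $\omega^n(\{\phi_j>p_{j,C}+\delta\})<\varepsilon$ uniformly in $j$ for $C\ge C_0$.
\begin{itemize}
\item It compares $p_{j,C}$ with $(1-\tau)\phi_j+\tau\chi_j-\delta/2$, where $\theta_{j,\chi_j}^n=c_j\omega^n$.
\item The uniform relative $L^\infty$ estimate gives $\phi_j-B\le\chi_j\le\phi_j$.
\item The rooftop contact inequality then bounds the result by the TV-controlled tail $\mu_j(\{u_j<\phi_j-C-\delta/2\})$.
\end{itemize}
\end{enumerate}
Ingredient (b) is what excludes mass loss and gives $\psi\le p_C+o(1)\le\phi$ in measure.
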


	The forward implication, proved in
	\Cref{thm:moving-background-capacity-stability}, refines the \(d_{\mathcal S}\)-stability theorem of Darvas-Di Nezza-Lu
	\cite{DDL21a}; the reverse implication is proved in
	\Cref{thm:no-mass-loss-singularity-envelope-capacity}. Together these two
	implications give the asserted optimality of the capacity topology.
	
	We briefly indicate the proof. For the forward implication, the main point is
	to replace the unavailable \(d_{\mathcal S}\)-control by a strict subbarrier
	with a quantitative slope gap. Combined with a minimal-truncation argument,
	this sends the comparison errors into deep singularity tails, which are small
	under capacity convergence of the prescribed model potentials.
	
	For the reverse implication, one recovers the moving model envelopes from the
	normalized solutions. The argument uses rooftop approximations and a
	no-mass-loss estimate to obtain \(L^1\)-recovery of the envelopes; Hartogs'
	lemma then gives convergence in capacity.

	The same recovery argument also applies to the ceiling operator introduced by
	Darvas-Di Nezza-Lu in the metric geometry of singularity types; see
	\cite[Section~2]{DDL21a}. This operator extends the model-envelope formalism
	to classes of arbitrary mass. Darvas-Di Nezza-Lu proved that, in positive
	mass, it coincides with the usual singularity envelope, and conjectured that
	the same identity holds in general.
	
	\begin{conjecture}[{\cite[Conjecture 2.5]{DDL21a}}]
		\label{conj:intro-DDL-ceiling}
		Let \((X,\omega)\) be a compact K\"ahler manifold, and let \(\theta\) be a
		smooth closed real \((1,1)\)-form representing a big class. Then, for every
		\(u\in\operatorname{PSH}(X,\theta)\),
		\[
		\mathscr C_\theta(u)=P_\theta[u].
		\]
	\end{conjecture}
	
	Lu also conjectured the idempotence of the singularity envelope.
	
	\begin{conjecture}[{\cite[Conjecture 1.8]{Lu22}}]
		\label{conj:intro-Lu-envelop}
		Let \((X,\omega)\) be a compact K\"ahler manifold, and let \(\theta\) be a
		smooth closed real \((1,1)\)-form representing a big class. Then, for every
		\(u\in\operatorname{PSH}(X,\theta)\),
		\[
		P_\theta[P_\theta[u]]=P_\theta[u].
		\]
	\end{conjecture}
	
	Our second main result proves these conjectures.
	
	\begin{theorem}
		\label{thm:intro-ceiling-equals-singularity-envelope}
		Let \((X,\omega)\) be a compact K\"ahler manifold, and let \(\theta\) be a
		smooth closed real \((1,1)\)-form representing a big class. Then, for every
		\(u\in\operatorname{PSH}(X,\theta)\),
		\[
		\mathscr C_\theta(u)=P_\theta[u],
		\qquad
		P_\theta[P_\theta[u]]=P_\theta[u].
		\]
		In other words, \cref{conj:intro-DDL-ceiling} and \cref{conj:intro-Lu-envelop} hold true.
	\end{theorem}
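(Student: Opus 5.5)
Throughout I take the ceiling operator in the form recalled from \cite[Section~2]{DDL21a}:
\[
\mathscr C_\theta(u)=\lim_{t\nearrow1}P_\theta\big[tu+(1-t)V_\theta\big].
\]
If the paper's preliminaries use a different but equivalent normalization, the argument below should adapt. The potentials $\psi_t:=P_\theta[tu+(1-t)V_\theta]$ are model potentials of positive mass, since $\int_X\theta^n_{tu+(1-t)V_\theta}\geq(1-t)^n\operatorname{vol}(\{\theta\})>0$. They decrease as $t\nearrow1$. Since $u\leq V_\theta$, we have $u\leq tu+(1-t)V_\theta$, and monotonicity of $P_\theta[\cdot]$ gives $P_\theta[u]\leq\psi_t$ for all $t$. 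Hence $P_\theta[u]\leq\mathscr C_\theta(u)$. The whole content of the first identity is therefore the reverse inequality. Equivalently, I must show that $\psi_t\searrow P_\theta[u]$, and I will aim for convergence in $\operatorname{Cap}_\omega$.

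For the reverse inequality I would run the recovery argument behind the reverse implication of \cref{thm:intro-optimal-capacity-topology}, as in \Cref{thm:no-mass-loss-singularity-envelope-capacity}, with the fixed class $\theta_j=\theta$ and the moving models $\phi_j=\psi_{t_j}$, $t_j\nearrow1$. The plan has four steps.
\begin{enumerate}
\item Approximate $P_\theta[u]$ from above by the rooftop envelopes $P_\theta(u+C,V_\theta)$, which increase to $P_\theta[u]$ a.e. as $C\to\infty$. Correspondingly, approximate $\psi_t$ by $P_\theta(t(u+C)+(1-t)V_\theta,\,V_\theta)$.
\item Use the no-mass-loss estimate to control the non-pluripolar Monge--Amp\`ere mass of $\psi_t$ on the contact set $\{\psi_t=0\}$ in terms of the mass of $P_\theta[u]$ on $\{P_\theta[u]=0\}$. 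The relevant identity is $\theta^n_{\psi_t}\leq\mathbf 1_{\{\psi_t=0\}}\theta^n$, and the correction is $O(1-t)$ coming from the $V_\theta$ part.
\item Deduce $L^1$-convergence $\psi_t\to P_\theta[u]$.
\item Upgrade to convergence in capacity by Hartogs' lemma, using that the sequence is monotone and the limit is $\theta$-psh.
\end{enumerate}
The $L^1$ identification is where the limit $\mathscr C_\theta(u)$ is compared with $P_\theta[u]$. Both are $\leq0$ and maximal off their contact sets. I would show that every $v\leq0$ with $v\leq\psi_t+O(1)$ for all $t$ is dominated, up to an error in deep singularity tails that vanishes as $C\to\infty$, by $P_\theta(u+C,V_\theta)$.

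Idempotence should then follow formally. For general $u$, apply the identity to $P_\theta[u]$ in place of $u$, which gives $P_\theta[P_\theta[u]]=\mathscr C_\theta(P_\theta[u])$. Next check directly from the definition that $\mathscr C_\theta(P_\theta[u])=\mathscr C_\theta(u)$. Indeed, $tP_\theta[u]+(1-t)V_\theta$ and $tu+(1-t)V_\theta$ have singularity envelopes that are sandwiched between $\psi_t$ and $P_\theta[\psi_t]$. By the positive-mass idempotence of Darvas--Di Nezza--Lu, $P_\theta[\psi_t]=\psi_t$. Combining, $P_\theta[P_\theta[u]]=\mathscr C_\theta(u)=P_\theta[u]$.

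The main obstacle is the zero-mass case, $\int_X\theta_u^n=0$. There the domination principle and the uniqueness of solutions in $\mathcal E(X,\theta,\phi)$ are unavailable, and the positive-mass argument of \cite{DDL21a} breaks down. My first attempt would be to avoid any uniqueness statement at the limit. I would work only at the level $t<1$, where all tools apply, and push all comparison errors into the sets $\{u<-C\}$. Their capacity tends to $0$ uniformly in $t$ because $tu+(1-t)V_\theta\geq u$. The expectation is that the quantitative slope-gap subbarrier of the forward implication, with $V_\theta$ as strict subbarrier, makes these tail errors uniform as $t\nearrow1$.
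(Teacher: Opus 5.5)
Your easy inequality $P_\theta[u]\le\mathscr C_\theta(u)$ is fine. So is your derivation of idempotence from the first identity, via $tP_\theta[u]+(1-t)V_\theta\le\psi_t$ and positive-mass idempotence (this is \cite[Proposition~2.6(iv)]{DDL21a}); it matches how the paper concludes. The gap is the reverse inequality $\mathscr C_\theta(u)\le P_\theta[u]$, which is the whole content of the theorem. You give an expectation rather than a mechanism, and the mechanisms you name fail exactly when $\int_X\theta_u^n=0$.

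Put $m_t:=\int_X\theta^n_{\psi_t}=\sum_k\binom nk t^k(1-t)^{n-k}\int_X\theta_u^k\wedge\theta_{V_\theta}^{n-k}$. In the zero-mass case $m_t\to0$. Every element of $\mathcal E(X,\theta,\psi_t)$ has total mass $m_t$. So any comparison-principle recovery in that class uses a subsolution with $\theta^n\ge c\,\omega^n$ and $c\le m_t/\int_X\omega^n$. It bounds $c$ times a volume by a Monge--Amp\`ere tail mass of $tu+(1-t)V_\theta$.
\begin{itemize}
\item A volume estimate uniform in $t$ therefore needs the tail mass to be $o(m_t)$ uniformly in $t$. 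Uniform smallness of $\operatorname{Cap}_\omega(\{u<-C\})$ is the wrong quantity: the tail masses are trivially $\le m_t\to0$, which says nothing.
\item Your Step~2, comparing masses on $\{\psi_t=0\}$ and $\{P_\theta[u]=0\}$, is vacuous for the same reason.
\item A slope-gap barrier cannot be uniform in $t$. If $\chi_t\le\psi_t-\sigma(V_\theta-\psi_t)$ with $\theta_{\chi_t}\ge a\omega$ and $\sigma$ fixed, then $\psi_t\ge\frac{1}{1+\sigma}\chi_t+\frac{\sigma}{1+\sigma}V_\theta$. Monotonicity then forces $m_t\ge(\frac{\sigma}{1+\sigma})^n\int_X\theta^n_{V_\theta}$, a contradiction.
\item For the zero-mass limit no such barrier exists at all, which is why \cref{lem:strict-subbarrier-slope-gap} assumes positive mass. $V_\theta$, being the largest candidate, cannot serve as a subbarrier either.
\end{itemize}

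The missing idea is to normalize every error by the mass. The paper does this in three steps.
\begin{enumerate}
\item \emph{Relative tail bound.} Set $w_\varepsilon:=(1-\varepsilon)u+\varepsilon V_\theta$ and $\mu_k:=\theta_u^k\wedge\theta^{n-k}_{V_\theta}$. The paper expands $\theta^n_{w_\varepsilon}=\sum_k\binom nk(1-\varepsilon)^k\varepsilon^{n-k}\mu_k$. Since $\{w_\varepsilon<V_\theta-A\}\subset\{u<V_\theta-A\}$ and each $\mu_k$ is non-pluripolar, it gets $\theta^n_{w_\varepsilon}(\{w_\varepsilon<V_\theta-A\})\le\eta\int_X\theta^n_{w_\varepsilon}$ for $A\ge A_0(\eta)$, uniformly in $\varepsilon$.
\item \emph{Mass-scaled comparison.} Let $\phi_\varepsilon:=P_\theta[w_\varepsilon]$ (your $\psi_{1-\varepsilon}$), and let $\chi_\varepsilon\in\mathcal E(X,\theta,\phi_\varepsilon)$ solve $\theta^n_{\chi_\varepsilon}=(m_\varepsilon/\int_X\omega^n)\,\omega^n$ with $\sup_X\chi_\varepsilon=0$. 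The paper compares the rooftop $P_\theta(w_\varepsilon+A,V_\theta)\in\mathcal E(X,\theta,\phi_\varepsilon)$ with $(1-\tau)\phi_\varepsilon+\tau\chi_\varepsilon-\delta/2$. Off $\{\chi_\varepsilon\le\phi_\varepsilon-B\}$, whose volume is $O(1/B)$ by the uniform $L^1$ bound, the comparison principle and rooftop contact inequality bound $\tau^n\frac{m_\varepsilon}{\int_X\omega^n}\,\omega^n(\{\phi_\varepsilon>P_\theta(w_\varepsilon+A,V_\theta)+\delta\})$ by $\theta^n_{w_\varepsilon}(\{w_\varepsilon<V_\theta-A-\delta/2\})$.
\item \emph{Cancellation and identification.} The factor $m_\varepsilon$ cancels against the relative tail bound, uniformly in $\varepsilon$. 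Only an $L^1$ bound on $\chi_\varepsilon$ is used, not the relative $L^\infty$ estimate, whose constant degenerates as $m_\varepsilon\to0$. Finally $P_\theta(w_\varepsilon+A,V_\theta)\searrow P_\theta(u+A,V_\theta)\le P_\theta[u]$ for fixed $A$. This shows every $L^1$-limit of $\phi_\varepsilon$ is $\le P_\theta[u]$.
\end{enumerate}
Without these normalized ingredients your four-step plan does not close.
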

	
	Thus the ceiling operator coincides with the singularity envelope even in
	arbitrary mass. In particular, the singularity envelope is idempotent without
	any positive mass assumption. This gives the arbitrary-mass completion of the
	Darvas-Di Nezza-Lu model-envelope formalism.
	
	The following example illustrates that the topology in 
	\Cref{thm:intro-optimal-capacity-topology} is strictly weaker than
	\(d_{\mathcal S}\). The details are given in
	\Cref{ex:moving-poles-P1-cap-not-dS}. Let \(X=\mathbb P^1\),
	\(\omega=\omega_{\rm FS}\), \(D_1=\{Z_0=0\}\), and
	\[
	D_j=\{Z_0-\varepsilon_j Z_1=0\},\qquad \varepsilon_j\to0.
	\]
	For \(0<c<1\), let \(s_j\) be a defining section of \(D_j\), and set
	\[
	u_j=c\log |s_j|_{h_{\rm FS}}^2,\qquad
	\phi_j=P_\omega[u_j].
	\]
	Then
	\[
	\phi_j\to\phi_1
	\quad\text{in }\operatorname{Cap}_\omega,
	\qquad
	\int_{\mathbb P^1}\omega_{\phi_j}
	=
	\int_{\mathbb P^1}\omega_{\phi_1}
	=
	1-c.
	\]
	On the other hand, for \(j\geq2\), the divisors \(D_j\) and \(D_1\) are
	distinct, and \(\max(\phi_j,\phi_1)\) has minimal singularity type. Using the
	Darvas-Di Nezza-Lu formula for \(d_{\mathcal S}\), one obtains a uniform
	positive lower bound for	$d_{\mathcal S}([\phi_j],[\phi_1])$.
	Thus \([\phi_j]\) does not converge to \([\phi_1]\) in \(d_{\mathcal S}\),
	although \(\phi_j\to\phi_1\) in capacity. This illustrates the additional
	scope of the capacity stability theorem beyond the \(d_{\mathcal S}\)-setting.

	We also obtain several applications beyond the Monge-Amp\`ere equation itself.
	The first concerns envelopes with prescribed singularities. Such envelopes  have
	appeared in the work of Ross-Witt Nystr\"om \cite{RWN17}, Berman-Boucksom
	\cite{BB10}, and Darvas-Xia \cite{DX22,DX24}. We prove the following capacity
	continuity theorem.
	
	\begin{theorem}[Capacity continuity of relative envelopes]
		\label{intro_thm:relative-bounded-obstacle-capacity-continuity}
		Let \((X,\omega)\) be a compact K\"ahler manifold and let \(\theta\) be a smooth closed real $(1,1)$-form whose cohomology class is big.  Let
		\(\phi_j,\phi\in\operatorname{PSH}(X,\theta)\) be normalized positive mass model
		potentials such that
		\[
		\phi_j\to\phi
		\quad\text{in }\operatorname{Cap}_\omega.
		\]
		Let \(h_j,h\) be quasi-continuous functions such that \(h_j\to h\) in capacity.
		Assume that, for some \(C>0\),
		\[
		\phi_j-C\le h_j\le\phi_j,\qquad
		\phi-C\le h\le\phi
		\]
		q.e. on \(X\). Then
		\[
		P_\theta(h_j)\to P_\theta(h)
		\quad\text{in }\operatorname{Cap}_\omega .
		\]
	\end{theorem}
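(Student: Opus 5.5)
The plan is to trap $P_\theta(h_j)$ between two sequences that both converge to $P_\theta(h)$ in capacity. The trapping comes from the obstacle bounds, and the convergence from a comparison/domination argument relative to the moving models $\phi_j$. The two-sided obstacle condition gives, by monotonicity of the envelope,
\[
\phi_j-C=P_\theta(\phi_j-C)\le P_\theta(h_j)\le P_\theta(\phi_j)=\phi_j .
\]
Here we use that $\phi_j$ is a model potential, hence $\theta$-psh and equal to its own envelope. So $v_j:=P_\theta(h_j)$ lies in $\mathcal E(X,\theta,\phi_j)$ with $|v_j-\phi_j|\le C$, and likewise $v:=P_\theta(h)$ satisfies $|v-\phi|\le C$. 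The whole family is therefore relatively bounded, with a uniform constant, with respect to models converging in capacity.

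\textbf{Step 1: reduction to a Hartogs-type statement.} Convergence in capacity of $v_j$ to $v$ follows once we show two things. First, every $L^1$-limit point $w$ of $(v_j)$ equals $v$. Second, the convergence is in capacity and not only in $L^1$. For the second point, I would use the uniform bound $|v_j-\phi_j|\le C$ together with $\phi_j\to\phi$ in capacity. On the set $\{\phi_j>-t\}$ the $v_j$ are uniformly bounded, and the complement $\{\phi_j\le -t\}$ has small capacity uniformly in $j$: indeed it lies in $\{\phi\le -t+1\}\cup\{|\phi_j-\phi|>1\}$. This reduces the problem to bounded $\theta$-psh functions, where $L^1$ convergence plus a one-sided inequality $\limsup v_j\le v$ (Hartogs) upgrades to capacity convergence, in the spirit of the arguments used for the reverse implication of Theorem~\ref{thm:intro-optimal-capacity-topology}.

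\textbf{Step 2: identify the limit.} Let $w$ be an $L^1$-limit of a subsequence. For the upper bound, $v_j\le h_j$ quasi-everywhere, and $h_j\to h$ in capacity. Passing to a further subsequence converging quasi-everywhere off sets of small capacity, and using that $(\limsup v_j)^*=w$ off a pluripolar set, gives $w\le h$ quasi-everywhere. Hence $w\le P_\theta(h)=v$.

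For the lower bound, I would build competitors. For $\varepsilon>0$, consider $\max(v-\delta, \phi_j - C)$, suitably corrected. The natural approach is to use the capacity convergence of $\phi_j$ and $h_j$ to show that $\tilde v_j:=P_\theta\big(\min(h_j, v+\text{err}_j)\big)$ has the property that, for every $\delta>0$, $v-\delta\le h_j$ holds outside a set $E_j$ with $\operatorname{Cap}_\omega(E_j)\to0$. One then needs an envelope estimate: if $g\le h_j$ off a set $E$ of small capacity and $g$ lies in the relative class of $\phi_j$, then $P_\theta(h_j)\ge g-\eta$ off a set whose size is controlled by $\operatorname{Cap}(E)$. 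I would prove this via the Monge--Ampère measure of $v_j$. That measure is supported on the contact set $\{v_j=h_j\}$, is non-pluripolar, and has mass $\int\theta_{\phi_j}^n$, which converges to $\int\theta_\phi^n>0$ because capacity convergence preserves mass (the no-mass-loss estimate). One then applies the domination principle in $\mathcal E(X,\theta,\phi_j)$, relative to the strict subbarrier with slope gap used in the forward implication of Theorem~\ref{thm:intro-optimal-capacity-topology}.

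\textbf{Main obstacle.} The hard step is this lower bound $w\ge v$. The difficulty is that the models move: $v$ is not relatively bounded with respect to $\phi_j$, so one cannot simply insert $v-\delta$ as a competitor in the definition of $P_\theta(h_j)$. I would first try the strict subbarrier and minimal-truncation device of Theorem~\ref{thm:moving-background-capacity-stability}. Concretely, replace $v$ by $\max(v,\phi_j-C')$ composed with a convex combination $(1-s)\max(\cdot)+s\,\rho$, where $\rho$ is a strict subbarrier, to create room. The errors are then confined to the deep tails $\{\phi_j<-t\}$, which are small in capacity uniformly in $j$. Letting $j\to\infty$, then $t\to\infty$, then $s\to0$, should give $w\ge v$. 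The positive mass hypothesis enters to guarantee the existence of the subbarrier and the no-mass-loss estimate.
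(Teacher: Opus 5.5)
There is a genuine gap. It sits exactly where the difficulty of the theorem lies: the lower tail $\operatorname{Cap}_\omega(\{v_j<v-\delta\})\to0$.

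\textbf{Step 1 does not give capacity convergence.} You claim that, after localizing to $\{\phi_j>-t\}$, $L^1$-convergence plus Hartogs' lemma upgrades to convergence in capacity. This is false. For uniformly bounded quasi-psh functions, Hartogs gives only the upper tail $\operatorname{Cap}_\omega(\{v_j>v+\delta\})\to0$.

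Here is a counterexample on $\mathbb P^1$. Take
$u_j=\max\bigl(\tfrac1j\log|z^j-1|^2-\log(1+|z|^2),-3\bigr)$, which converges in $L^1$ to
$u=\max\bigl(2\log\max\{|z|,1\}-\log(1+|z|^2),-3\bigr)$. Yet $\{u_j<u-\tfrac12\}$ contains the lemniscate $\{|z^j-1|\le e^{-j/2}\}$. Its logarithmic capacity is $e^{-1/2}$ for every $j$, so its $\operatorname{Cap}_\omega$ stays bounded below.

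Hence identifying every $L^1$-cluster point $w$ with $v$ does not prove the theorem. The upper half of your Step 2 is fine, once you use \cref{lem:pointwise-qe-envelope-positive-subbarrier} to pass from q.e. obstacles to $P_\theta(h)$. But the lower tail must be proved directly in capacity. The reverse-implication argument you invoke also establishes its lower tail separately, via rooftop approximations. In the paper's proof of this theorem the order is the opposite of yours: the capacity lower tail is proved first, and $w\ge v$ is read off from it.

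\textbf{The lower-bound sketch does not supply that estimate.} First, it leans on a false claim. Capacity convergence does not preserve total mass; in general only $\liminf_j\int_X\theta_{\phi_j}^n\ge\int_X\theta_\phi^n$ holds. For instance, two isolated logarithmic poles colliding in dimension $2$ lose mass in the limit. More importantly, total mass is not the relevant quantity. What you need is a set-wise domination of $\theta_{v_j}^n$ by $\operatorname{Cap}_\omega$, uniform in $j$.

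The paper gets this domination as follows.
\begin{enumerate}[(a)]
\item Set $w_j:=C_1^{-1}v_j+(1-C_1^{-1})\phi_j$ with $C_1=\max\{C,1\}$. Then $\phi_j-1\le w_j\le\phi_j$ and $\theta_{v_j}\le C_1\theta_{w_j}$.
\item Invoke the uniform bound $\operatorname{Cap}_{\theta,\phi_j}\le F(\operatorname{Cap}_\omega)$ of \cref{lem:uniform-DDL-capacity-moving-models}.
\item With the contact property and $v\le h$ q.e., this yields $\theta_{v_j}^n(\{v_j<v-\delta/2\})\le C_1^nF(\operatorname{Cap}_\omega(\{h_j<h-\delta/2\}))\to0$.
\end{enumerate}

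This mass bound is then converted into a capacity bound by the comparison principle in $\mathcal E(X,\theta,V_\theta)$. It is applied to $U_j^T=\max(v_j,V_\theta-T)$ and $\max(U_j^T,\psi_\rho)$, where $\psi_\rho=(1-\tau)v+\tau\chi+\tau b\rho-\delta/2-\tau b$. Here $\chi$ is the slope-gap subbarrier for $v$, and $\rho$ ranges over capacity test functions, with a supremum taken over $\rho$ at the end. The slope gap forces the truncation error on $\{v_j\le V_\theta-T\}$ into $\{\phi_j<\phi-L_T\}$ with $L_T\to+\infty$, a set of small capacity.

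Your proposal names these devices but does not assemble them into this argument. The competitor $\max(v-\delta,\phi_j-C)$ ``suitably corrected'' and the unproved envelope estimate are too vague to carry the proof. They also aim only at $w\ge v$ a.e., which by the above is not enough.
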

	
	As a consequence, rooftop envelopes are continuous in capacity under moving
	prescribed singularity constraints. This is a capacity analogue of the
	continuity phenomena for envelopes and Monge-Amp\`ere measures, see for example \cite{RWN17,DDL18a,Dar19,DDL21a}.
	
	The second consequence concerns weak geodesic segments. Weak geodesics in the
	space of K\"ahler potentials are central in the variational approach to
	canonical metrics \cite{RWN14,Dar17,Dar19,DR17}. Using the rooftop-Legendre
	representation, together with Kiselman's minimum principle \cite{Kis78}, we
	obtain capacity stability of geodesic segments with moving prescribed
	singularities.
	
	\begin{theorem}\label{intro_thm:weak geodesic segments convergence} Let \((X,\omega)\) be a compact K\"ahler manifold and let \(\theta\) be a smooth closed real $(1,1)$-form whose cohomology class is big. 
		Let \(\phi_j,\phi\in\operatorname{PSH}(X,\theta)\) be normalized positive mass
		model potentials such that
		\[
		\sup_X\phi_j=\sup_X\phi=0,
		\qquad
		\phi_j\to\phi
		\quad\text{in }\operatorname{Cap}_\omega .
		\]
		Let \(u_{j,0},u_{j,1},u_0,u_1\in\operatorname{PSH}(X,\theta)\). Assume that
		there exists \(C>0\) such that
		\[
		\phi_j-C\le u_{j,0},u_{j,1}\le\phi_j,
		\qquad
		\phi-C\le u_0,u_1\le\phi
		\]
		for all \(j\), and assume that
		\[
		u_{j,0}\to u_0,
		\qquad
		u_{j,1}\to u_1
		\quad\text{in }\operatorname{Cap}_\omega .
		\]
		Let \(t\mapsto u_{j,t}\) be the weak geodesic segment joining
		\(u_{j,0},u_{j,1}\), and let \(t\mapsto u_t\) be the weak geodesic segment
		joining \(u_0,u_1\). Then $u_{j,t}\to u_t$ in $\text{Cap}_\omega$ uniformly. Namely, for every \(\varepsilon>0\),
		\[
		\lim_{j\to\infty}
		\sup_{0\le t\le1}
		\operatorname{Cap}_\omega(\{|u_{j,t}-u_t|>\varepsilon\})=0 .
		\]
	\end{theorem}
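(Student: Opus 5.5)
The plan is to reduce the statement to \Cref{intro_thm:relative-bounded-obstacle-capacity-continuity} through the rooftop–Legendre representation of weak geodesics. Recall that for \(\tau\in\mathbb R\) the Legendre transform of the geodesic is the rooftop envelope
\[
\hat u_\tau:=\inf_{t\in[0,1]}\bigl(u_t-t\tau\bigr)=P_\theta\bigl(\min(u_0,u_1-\tau)\bigr),
\]
a consequence of Kiselman's minimum principle. Since \(t\mapsto u_t\) is convex, it is recovered by
\[
u_t=\sup_{\tau\in\mathbb R}\bigl(\hat u_\tau+t\tau\bigr).
\]
The same holds for \(u_{j,t}\), with \(\hat u_{j,\tau}=P_\theta(\min(u_{j,0},u_{j,1}-\tau))\). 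The hypothesis \(\phi-C\le u_0,u_1\le\phi\) forces \(|\dot u_t|\le C\). Hence \(\hat u_\tau=u_0\) for \(\tau\le -C\), and \(\hat u_\tau=u_1-\tau\) for \(\tau\ge C\). So only \(\tau\in[-C,C]\) matters, uniformly in \(j\), and \(\phi_j-C\le u_{j,t}\le\phi_j\) for all \(t\).

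\textbf{Step 1 (pointwise in \(\tau\)).} Fix \(\tau\in[-C,C]\) and set \(h_j=\min(u_{j,0},u_{j,1}-\tau)\) and \(h=\min(u_0,u_1-\tau)\). These are quasi-continuous, and \(h_j\to h\) in capacity because \(|\min(a,b)-\min(a',b')|\le\max(|a-a'|,|b-b'|)\). They also satisfy \(\phi_j-2C\le h_j\le\phi_j+C\). After subtracting the constant \(C\) we are exactly in the setting of \Cref{intro_thm:relative-bounded-obstacle-capacity-continuity}, since \(P_\theta(h-C)=P_\theta(h)-C\). That theorem then gives \(\hat u_{j,\tau}\to\hat u_\tau\) in \(\operatorname{Cap}_\omega\) for each fixed \(\tau\).

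\textbf{Step 2 (uniformity in \(\tau\)).} The map \(\tau\mapsto\hat u_{j,\tau}\) is decreasing and \(1\)-Lipschitz in sup norm, because \(h_j\) changes by at most \(|\tau-\tau'|\). Given \(\varepsilon>0\), choose a finite grid \(\tau_1<\dots<\tau_N\) in \([-C,C]\) with mesh below \(\varepsilon/4\). Then
\[
\sup_{\tau\in[-C,C]}|\hat u_{j,\tau}-\hat u_\tau|\le\max_k|\hat u_{j,\tau_k}-\hat u_{\tau_k}|+\varepsilon/2 .
\]
It follows that
\[
\operatorname{Cap}_\omega\Bigl(\sup_\tau|\hat u_{j,\tau}-\hat u_\tau|>\varepsilon\Bigr)\le\sum_k\operatorname{Cap}_\omega\bigl(|\hat u_{j,\tau_k}-\hat u_{\tau_k}|>\varepsilon/2\bigr)\to0,
\]
using subadditivity of capacity.

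\textbf{Step 3 (back to the geodesic).} From the Legendre representation restricted to \(\tau\in[-C,C]\) we get
\[
|u_{j,t}-u_t|\le\sup_{|\tau|\le C}|\hat u_{j,\tau}-\hat u_\tau|
\]
for all \(t\in[0,1]\), at points where both sides are finite, i.e.\ off a pluripolar set of zero capacity. Therefore \(\{|u_{j,t}-u_t|>\varepsilon\}\) is contained in the exceptional set of Step 2 for every \(t\), which yields the uniform-in-\(t\) statement.

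\textbf{Main obstacle.} The delicate point is justifying the Legendre duality in this relative setting with moving singularities. Concretely, one must show that the weak geodesic (the upper envelope of subgeodesics with \(\limsup\) boundary values \(\le u_{j,0},u_{j,1}\)) satisfies \(\inf_t(u_{j,t}-t\tau)=P_\theta(\min(u_{j,0},u_{j,1}-\tau))\) and is recovered by the supremum over \(\tau\), with no loss at the endpoints. I would first prove this using Kiselman's principle for the subgeodesic \(\hat u_\tau+t\tau\). The Lipschitz bound coming from \(\phi_j-C\le u_{j,0},u_{j,1}\le\phi_j\) ensures that \(\max(u_{j,0}-Ct,\,u_{j,1}-C(1-t))\) is a competing subgeodesic, which gives the endpoint attainment and the bound \(|\dot u_{j,t}|\le C\) uniformly in \(j\).
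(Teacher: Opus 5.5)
Your proposal is correct and follows essentially the same route as the paper. Both arguments combine the rooftop--Legendre representation with the parameter restricted to $[-C,C]$, fixed-parameter convergence from \Cref{thm:relative-bounded-obstacle-capacity-continuity}, and a finite partition of $[-C,C]$ handled by monotone (Lipschitz) dependence on the parameter and subadditivity of capacity. The one cosmetic difference is that you apply Kiselman's principle to the weak geodesic itself rather than, as the paper does, to an arbitrary subgeodesic before taking the supremum. In your version, the endpoint upper bound $u_t\le(1-t)u_0+tu_1$, which comes from convexity in $t$ and not from the lower barrier you mention, is what makes $\inf_t(u_t-t\tau)$ a competitor for the rooftop envelope.
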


	We next apply the stability theorem to twisted K\"ahler-Einstein equations in
	big classes. In the Fano case, the relation between K\"ahler-Einstein metrics
	and algebraic stability has been extensively studied; see, for instance,
	\cite{DT92,Yau93,CDS15a,CDS15b,CDS15c,Tian15a,Tian15b,DS16,CSW18,BBJ21,LTW22,Zha24}.
	Big-class analogues have recently been developed by Darvas-Zhang
	\cite{DZ24} and Trusiani \cite{Tru22,Tru23,Tru24}. In this setting prescribed
	and movable singularities enter the variational problem itself. We prove the
	following capacity stability statement for moving tame data.

	\begin{theorem}[Stability for twisted K\"ahler-Einstein equations]
		\label{intro:stability-of-Darvas-Zhang}
		Let \((X,\omega)\) be a compact K\"ahler manifold, and let \(\theta\) be a
		smooth closed real \((1,1)\)-form representing a big class. Set
		\[
		V:=V_\theta,\qquad
		m:=\int_X\theta_V^n>0,\qquad
		\mu_j:=e^{f_j-\psi_j}\omega^n,\qquad
		\mu:=e^{f-\psi}\omega^n .
		\]
		Assume that \(\mu_j,\mu\) are tame measures and that
		\(\delta_\psi(\{\theta\})>1\). Let \(\eta_j,\eta\) be smooth closed real
		\((1,1)\)-forms such that:
		\begin{enumerate}
			\item \(c_1(X)=\{\theta\}+\{\eta_j\}=\{\theta\}+\{\eta\}\);
			\item \(\operatorname{Ric}(\omega)
			=\theta+\eta_j+dd^cf_j
			=\theta+\eta+dd^cf\);
			\item \(\eta_j+dd^c\psi_j\geq0\) and \(\eta+dd^c\psi\geq0\).
		\end{enumerate}
		Assume moreover that \(f_j\to f\) uniformly, that
		\(\psi_j\to\psi\) in \(\operatorname{Cap}_\omega\), and that
		\(\psi_j\geq\psi-A\) for some \(A>0\). Let
		\(u_j\in\mathcal E(X,\theta,V)\) be the normalized solutions
		\[
		\sup_Xu_j=0,\qquad
		\theta_{u_j}^n
		=
		m\frac{e^{-u_j}\mu_j}{\int_Xe^{-u_j}\,d\mu_j}.
		\]
		Then every subsequence of \((u_j)\) admits a further subsequence converging in
		capacity to some \(u\in\mathcal E(X,\theta)\) satisfying
		\[
		\theta_u^n
		=
		m\frac{e^{-u}\mu}{\int_Xe^{-u}\,d\mu}.
		\]
	\end{theorem}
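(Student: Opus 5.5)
The plan is a compactness-plus-uniqueness argument, driven by the capacity stability result \Cref{thm:intro-optimal-capacity-topology} and its forward implication \Cref{thm:moving-background-capacity-stability}.

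\textbf{Step 1: an $L^1$-limit.} Since $\sup_X u_j=0$ and $u_j\in\operatorname{PSH}(X,\theta)$, the sequence is relatively compact in $L^1$. Pass to a subsequence with $u_j\to v$ in $L^1$, where $v\in\operatorname{PSH}(X,\theta)$ and $\sup_X v=0$.

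\textbf{Step 2: uniform control of the right-hand sides.} Set $\nu_j:=m\,e^{-u_j}\mu_j/\int_X e^{-u_j}\,d\mu_j$. The hypothesis $\delta_\psi(\{\theta\})>1$ should give a uniform exponent $p>1$ with
\[
\sup_j\int_X e^{-p u_j}\,d\mu_j<\infty .
\]
I would derive this from the following facts:
\begin{itemize}
\item the bound $\psi_j\ge\psi-A$ gives $\mu_j\le C e^{-\psi}\omega^n$, uniformly in $j$, since $f_j\to f$ uniformly;
\item the definition of the $\delta$-invariant gives uniform integrability of $e^{-\lambda w-\psi}$ over sup-normalized $\theta$-psh $w$, for some $\lambda>1$;
\item Hölder's inequality then yields the bound above.
\end{itemize}
A lower bound on $\int_X e^{-u_j}\,d\mu_j$ is easy, because $u_j\le0$ forces $e^{-u_j}\ge1$ and $\mu_j(X)$ stays bounded below.

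\textbf{Step 3: total variation convergence of $\nu_j$.} We know $\psi_j\to\psi$ in capacity and $\psi_j\ge\psi-A$. Together with the uniform $L^p$ bound, this gives $e^{-\psi_j}\omega^n\to e^{-\psi}\omega^n$ in total variation, via convergence in measure plus uniform integrability. For the factor $e^{-u_j}$ we need convergence in measure, which requires upgrading $L^1$ convergence of $u_j$ to capacity convergence. I plan to do this self-consistently. Since the $\nu_j$ have uniformly $L^{p'}(\mu)$-bounded type densities, Kołodziej-type estimates apply, and the $u_j$ have minimal singularities, $u_j\in\mathcal E(X,\theta,V)$, with uniform bounds $u_j\ge V-C$. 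Then Hartogs-type arguments together with the uniform $L^\infty$ control relative to $V$ give convergence in capacity: uniformly bounded relative potentials converging in $L^1$, whose Monge–Ampère measures have uniformly $L^p$ densities, converge in capacity. Once this holds, $e^{-u_j}\to e^{-v}$ in measure, and uniform integrability gives $\nu_j\to\nu:=m\,e^{-v}\mu/\int_X e^{-v}\,d\mu$ in total variation.

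\textbf{Step 4: identify the limit.} Let $w$ be the normalized solution of $\theta_w^n=\nu$ in $\mathcal E(X,\theta,V)$. Applying \Cref{thm:intro-optimal-capacity-topology} with the constant data $\theta_j=\theta$ and $\phi_j=\phi=V$ shows that the $u_j$, being the normalized solutions for $\nu_j$, converge to $w$ in capacity. Hence $w=v$. This gives $u=v\in\mathcal E(X,\theta)$ solving the limiting twisted equation.

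\textbf{Main obstacle.} The hardest step is the uniform integrability and $L^\infty$ control in Steps 2 and 3. That is where $\delta_\psi>1$, tameness and $\psi_j\ge\psi-A$ all enter, and where I expect most of the work to lie. Once $\nu_j\to\nu$ in total variation, the result is essentially a direct invocation of the capacity stability theorem.
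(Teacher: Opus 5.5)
\textbf{There is a genuine gap in Step 2, which carries all of the real content.} Your second bullet claims that $\delta_\psi(\{\theta\})>1$ gives some $\lambda>1$ for which $\int_X e^{-\lambda w}\,d\mu$ is bounded uniformly over all sup-normalized $w\in\operatorname{PSH}(X,\theta)$. That is an $\alpha$-invariant condition, not a $\delta$-invariant one. The quantity $S_\theta(E)$ is an \emph{average} vanishing order, so it does not control the maximal Lelong numbers of arbitrary potentials. The claim fails under the stated hypotheses. On $X=\mathbb P^n$, take $\theta$ a multiple of $\omega_{\rm FS}$ with $\{\theta\}=\lambda c_1(X)$ and $\frac1{n+1}<\lambda<1$, take $\eta\ge0$ in $(1-\lambda)c_1(X)$, and take $\psi=0$. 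Then $\delta_\psi(\{\theta\})=\delta(\mathbb P^n)/\lambda=1/\lambda>1$. But $\lambda(n+1)[H]\in\{\theta\}$ for a hyperplane $H$, so there is a sup-normalized $w\in\operatorname{PSH}(X,\theta)$ with $\theta_w=\lambda(n+1)[H]$. Its Lelong number $\lambda(n+1)>1$ along $H$ forces $\int_Xe^{-w}\omega^n=+\infty$. The bounded truncations $\max(w,-k)$ show that restricting to minimal singularities, or to $\mathcal E(X,\theta)$, does not help. Hence $\sup_j\int_Xe^{-pu_j}\,d\mu_j<\infty$ is not established. With it fall the uniform $L^p$ densities, the bound $u_j\ge V-C$ in Step 3, and the uniform integrability you need for $\nu_j\to\nu$ in total variation.

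\textbf{Missing idea: the $u_j$ are Ding minimizers.} They are not arbitrary potentials, and $\delta_\psi>1$ enters through properness, as in \Cref{prop:DZ-uniform-relative-bound-from-nonworsening-twists}. Darvas--Zhang give $\mathcal D(w)\ge a(\sup_Xw-I_\theta(w))-b$, and the bound $\mu_j\le B\mu$ makes this uniform for the $\mathcal D_j$. Since $\mathcal D_j(u_j)\le\mathcal D_j(V)$ and $\int_Xe^{-V}d\mu_j\to\int_Xe^{-V}d\mu$, this yields the energy bound $I_\theta(u_j)\ge-M$. A uniform Skoda bound with exponent $q>1$ holds only on the $L^1$-compact sublevel set $\mathcal K_M$. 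There every limit lies in $\mathcal E^1(X,\theta)$, and strong openness gives $c_\mu[w]=c_\mu[V]\ge\delta_\psi(\{\theta\})>1$. Guan--Li--Zhou's stability of multiplier ideals then makes the bound uniform along the sequence. H\"older's inequality, with $e^{-\psi}\in L^r$ by strong openness, gives uniformly $L^p$ densities, and the relative $L^\infty$ estimate gives $V-C\le u_j\le V$.

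\textbf{Steps 3 and 4.} Once this bound is in place, both steps go through. You do not need to upgrade to capacity convergence before proving total variation convergence. \cref{lem:DoVu-nonpluripolar-measure-convergence}, applied to the non-pluripolar measure $e^{-V}\mu$, gives $\int_X|e^{V-u_j}-e^{V-v}|\,e^{-V}d\mu\to0$ directly from $L^1$ convergence. Your identification of $v$ via uniqueness plus the stability theorem is a valid substitute for the paper's use of \cite[Lemma 5.10]{DDL25}.
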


	The proof combines the capacity stability theorem with uniform Ding
	properness, a Skoda-type estimate on Ding-bounded sets \cite{DZ24}, the strong
	openness theorem of Guan-Zhou \cite{GZ15-1,GZ15-2}, and the stability of
	multiplier ideal sheaves due to Guan-Li-Zhou \cite{GLZ22}; see also
	Xiao-Zhou \cite{XZ26}.

	The final application  concerns moving quantization of partial equilibrium
	measures.
	Following Berman-Boucksom-Witt Nystr\"om \cite{BBWN11}, Ross-Witt Nystr\"om \cite{RWN17} and Darvas-Xia
	\cite{DX24}, let \(P[u]_I\) be the \(I\)-model projection defined using
	multiplier ideal sheaves. For a weighted compact set \((K,v)\), let
	\(P_K[u]_I(v)\) be the corresponding partial \(I\)-equilibrium envelope.
	\begin{theorem}\label{intro: stability of Darvas-Xia}
		Let $(X,\omega)$ be a compact K\"ahler manifold, and let $\theta$ be a smooth
		closed real $(1,1)$-form whose cohomology class is big. Let
		$K\subset X$ be compact and non-pluripolar. Let $u_j,u\in\operatorname{PSH}(X,\theta)$ with $\sup_Xu_j=\sup_Xu=0$ and put
		\[
		\Phi_j:=P[u_j]_I,\qquad \Phi:=P[u]_I ,\qquad W_j:=P_K[u_j]_I(v_j),
		\qquad
		W:=P_K[u]_I(v).
		\]
		Assume that $\Phi_j\to\Phi$ in capacity. Let $v_j,v\in C^0(K)$ satisfy $v_j\to v$ uniformly on $K$. Then $W_j\to W$ in capacity.
		
		Consequently, if we moreover have $ \int_X\theta_{\Phi_j}^n\to \int_X\theta_\Phi^n>0$, then we have the weak convergence $\theta_{W_j}^n\rightharpoonup \theta_W^n$.
	\end{theorem}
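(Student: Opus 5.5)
We reduce \Cref{intro: stability of Darvas-Xia} to the capacity continuity of relative envelopes, \Cref{intro_thm:relative-bounded-obstacle-capacity-continuity}, and then deduce the weak convergence of Monge--Amp\`ere measures from standard capacity-convergence results.

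Recall the Darvas--Xia description: $P_K[u]_I(v)$ is the upper semicontinuous regularization of
\[
\sup\{w\in\operatorname{PSH}(X,\theta): w\preceq \Phi \text{ (i.e.\ } w\le \Phi+C_w),\ w\le v \text{ on } K\}.
\]
Since $\Phi=P[u]_I$ is a model potential, this envelope equals the relative envelope $P_\theta(h)$ for the obstacle
\[
h:=\min(\Phi,\ \Phi+\tilde v),
\]
where $\tilde v$ equals $v-\Phi$ suitably truncated on $K$ and $+\infty$ off $K$. A cleaner normalization is to set $h_K:=\Phi$ off $K$ and $h_K:=\min(\Phi,v)$ on $K$, and to show $W=P_\theta(h_K)$ up to the usual negligible-set convention; $K$ non-pluripolar guarantees $W\not\equiv-\infty$.

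\textbf{Step 1 (two-sided bounds).} We check the sandwich hypothesis of \Cref{intro_thm:relative-bounded-obstacle-capacity-continuity}.
\begin{itemize}
\item Upper bound: $W\le \Phi+C$ is automatic from the definition.
\item Lower bound: $\Phi-C\le W$ uses $\sup_X\Phi=0$ (from $\sup u=0$ and model projection) and $v\ge -\|v\|_\infty$. Then $\Phi-\|v\|_\infty-\max(0,\sup_K\Phi)$ is a competitor.
\end{itemize}
The constants are uniform in $j$ because $v_j\to v$ uniformly. We then shift by a constant so that the obstacle satisfies $\Phi_j-C\le h_j\le\Phi_j$ exactly. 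To do this, replace the obstacle by $h_j=\max(\Phi_j-C,\min(\Phi_j,v_j\mathbf 1_K+\Phi_j\mathbf 1_{X\setminus K}))$. Taking the max with $\Phi_j-C$ does not change the envelope, since $\Phi_j-C$ is already below $W_j$.

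\textbf{Step 2 (obstacles converge in capacity).} We show $h_j\to h$ in capacity using $\Phi_j\to\Phi$ in capacity together with $v_j\to v$ uniformly; $\min$ and $\max$ are $1$-Lipschitz, so this is immediate.

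\textbf{Main obstacle: quasi-continuity of the obstacles.} The genuine difficulty is that $h_j$ involves the indicator of $K$ and so is not quasi-continuous. I would handle this by approximating $K$ from outside by open neighborhoods $K_\delta$ and $v$ by continuous extensions $\tilde v$. Using the monotone (decreasing) continuity of envelopes, we have $P(h^{\delta})\searrow W$ as $\delta\to0$. Combining this with a uniform-in-$j$ estimate and a diagonal argument gives the result.

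If the reduction to quasi-continuous obstacles fails, I would instead argue directly. Subsequences converge in $L^1$, and the limit $w$ satisfies $w\le W$: this follows because by Hartogs' lemma $\limsup_j W_j\le v$ on $K$ quasi-everywhere, while $w\preceq\Phi$ holds via capacity convergence of $\Phi_j$. Conversely, $W-\epsilon\le$ competitors for $W_j$ up to small capacity errors, by taking $\max(W-\epsilon-\|v_j-v\|,\Phi_j-C)$-type constructions. Hartogs plus convergence in capacity then upgrades $L^1$ convergence to capacity convergence.

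\textbf{Consequence (weak convergence of measures).} Since $W_j\to W$ in capacity and $\Phi_j-C\le W_j\le\Phi_j$, we have $\int\theta_{W_j}^n=\int\theta_{\Phi_j}^n\to\int\theta_\Phi^n=\int\theta_W^n$. The lower semicontinuity of non-pluripolar products under capacity convergence (DDL) then gives $\liminf\theta_{W_j}^n\ge\theta_W^n$ on open sets. Combined with the convergence of total masses, this yields $\theta_{W_j}^n\rightharpoonup\theta_W^n$.
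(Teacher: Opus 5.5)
There is a genuine gap: neither of your two routes produces the lower-tail estimate $\operatorname{Cap}_\omega(\{W_j<W-\delta\})\to0$, and that estimate is the actual content of the theorem.

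\textbf{First route.} \Cref{thm:relative-bounded-obstacle-capacity-continuity} does not apply. Its proof rests on \Cref{thm:envelope-contact}, which requires quasi-continuous obstacles, and your $h_j$ contains $\mathbf 1_K$. The proposed repair does not close this, for three reasons. Obstacles built from neighbourhoods $K_\delta$ still jump across $\partial K_\delta$. The envelopes $P_\theta(h^\delta)$ \emph{increase}, not decrease, as $\delta\downarrow0$, since shrinking $K_\delta$ relaxes the constraint. Most importantly, the diagonal argument needs a uniform-in-$j$ estimate, namely $\lim_{\delta\to0}\sup_j\operatorname{Cap}_\omega(\{W_j>P_\theta(h^\delta_j)+\varepsilon\})=0$. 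You never supply it, and it is essentially as hard as the theorem itself.

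\textbf{Fallback route.} Your argument that an $L^1$-cluster value $w$ satisfies $w\le W$ is fine, given uniform bounds. The reverse competitor $\max(W-\varepsilon-\|v_j-v\|,\Phi_j-C)$, however, is not admissible for $W_j$. It dominates $W\simeq\Phi$, so it is $\preceq_I u_j$ only if $\Phi\preceq_I\Phi_j$, which fails in general. In \Cref{ex:moving-poles-P1-cap-not-dS} this maximum is even locally bounded, while $\mathcal I(tu_j)=\mathcal O_X(-\lfloor tc\rfloor D_j)\neq\mathcal O_X$ for $tc\ge1$. A singularity constraint cannot be satisfied ``up to small capacity errors''; this moving-singularity obstruction is exactly what must be circumvented. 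Finally, Hartogs' lemma only yields the upper tail $\operatorname{Cap}_\omega(\{W_j>W+\delta\})\to0$, and $L^1$-convergence alone does not give the lower tail.

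\textbf{Missing idea.} Avoid competitors altogether and rerun the comparison-principle argument of \Cref{thm:relative-bounded-obstacle-capacity-continuity} directly on $W_j$. Use:
the slope-gap barrier $\chi\le W-\sigma(V-W)$ from \Cref{lem:strict-subbarrier-slope-gap};
the test functions $\psi_\rho=(1-\tau)W+\tau\chi+\tau b\rho-\delta/2-\tau b$;
the truncation $U_j^T=\max(W_j,V-T)$;
and \Cref{thm:DDL-relative-comparison} in $\mathcal E(X,\theta,V)$.
The one place where quasi-continuity entered, the envelope contact property, is replaced by the Darvas--Xia contact property \cite[Corollary~5.7]{DX24}: $\theta_{W_j}^n$ puts no mass on $(X\setminus K)\cup\{W_j<v_j\}$. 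Since $W\le v$ q.e. on $K$, this gives
\[
\int_{\{W_j<W-\delta/2\}}\theta_{W_j}^n\le\int_{K\cap\{v_j<v-\delta/2\}}\theta_{W_j}^n=0
\]
for $j$ large. The truncation contribution on $\{W_j\le V-T\}$ is pushed into $\{\Phi_j<\Phi-L_T\}$, whose capacity tends to zero. Only after this lower tail is established do the $L^1$-identification and Hartogs steps (the part of your fallback that works) finish the proof.

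\textbf{Smaller point.} The bound $W\le\Phi+A$ is not automatic from the definition, because $\psi\preceq\Phi$ only gives $\psi\le\Phi+C_\psi$ with $C_\psi$ depending on $\psi$. A uniform $A$ needs the non-pluripolarity of $K$ and the model property of $\Phi$; the paper takes it from \cite[Lemma~5.2]{DX24}. Without this shift your identification $W=P_\theta(h_K)$ is false: for $v\equiv100$ one has $W\ge\Phi+100$, while $P_\theta(h_K)\le\Phi$. Your treatment of the weak convergence of $\theta_{W_j}^n$ matches the paper's.
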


	Combining this with the quantization theorem of Darvas-Xia
	\cite[Theorem~1.2]{DX24}, we obtain the weak convergence
	\[
	\lim_{j\to\infty}\lim_{k\to\infty}
	\beta^k_{v_j,u_j,\nu_j}
	=
	\theta^n_{P_K[u]_I(v)} .
	\]
	For moving logarithmic poles on \(\mathbb P^1\), this gives a concrete
	quantization by degree \(k\) sections satisfying the moving vanishing
	conditions
	\[
	\mathcal I(ku_j)
	=
	\mathcal O_{\mathbb P^1}(-\lfloor kc\rfloor D_j).
	\]
	This may be viewed as a moving-singularity version of partial Bergman measure
	convergence, in the spirit of equidistribution results for Fekete points
	\cite{BB10,BBWN11}.
	
	The paper is organized as follows. In \Cref{sec:preliminaries} we recall
	background from pluripotential theory, including non-pluripolar products,
	relative full mass classes, capacities, envelopes, the \(d_{\mathcal S}\)
	metric, multiplier ideals, and partial Bergman measures. In
	\Cref{sec:stability} we prove the direct capacity stability theorem and give
	the moving-pole example on \(\mathbb P^1\). In \Cref{sec-4} we prove capacity
	continuity of envelopes, the no-mass-loss recovery theorem, and the
	optimality of the capacity topology. In \Cref{sec-5} we prove the
	Darvas-Di Nezza-Lu ceiling conjecture. The remaining sections treat weak
	geodesic segments, twisted K\"ahler-Einstein equations in big classes, and
	moving quantization of partial equilibrium measures with multiplier ideal
	constraints.

	\subsection*{Acknowledgements}
	This research is supported by the National Key R\&D Program of China (Grant No. 2021YFA1002600 and  No. 2021YFA1003100).  Z. Wang and X. Zhou are partially supported by grants from the National Natural Science Foundation of China (NSFC) (No. 12571085) and (No. 12288201) respectively. Z. Wang is also supported by the Fundamental Research Funds for the Central Universities.
	
	\section{Preliminaries}
	\label{sec:preliminaries}
	
	Throughout the paper \((X,\omega)\) is a compact K\"ahler manifold of
	dimension \(n\). Unless otherwise specified, \(\theta\) is a smooth closed
	real \((1,1)\)-form whose cohomology class is big.  We use the convention
	\[
	d^c:=\frac{\sqrt{-1}}{2\pi}(\bar\partial-\partial),
	\qquad
	dd^c=\frac{\sqrt{-1}}{\pi}\partial\bar\partial .
	\]
	A function $u:X\rightarrow \mathbb R\cup \{-\infty\}$ is called quasi-plurisubharmonic (quasi-psh) if locally $u$ can be written as the sum of a smooth function and a plurisubharmonic (psh) function. We say that $u$ is $\theta$-plurisubharmonic ($\theta$-psh) if it is quasi-psh and $\theta_u:=\theta+dd^cu\geq 0$ in the sense of currents on $X$.
	We write PSH$(X,\theta)$ as the space of all $\theta$-psh functions on $X$ which are not identically $-\infty$. The class $\{\theta\}$ is big if there exists $\psi\in \mbox{PSH}(X,\theta)$ satisfying $\theta+dd^c\psi\geq \varepsilon\omega$ for some $\varepsilon>0$.

	\subsection{Singularity types, monotonicity, and capacity convergence}
	
	Let \(u,v\in\operatorname{PSH}(X,\theta)\). We write \(u\preceq v\) if
	\(u\leq v+O(1)\), and \(u\simeq v\) if both \(u\preceq v\) and
	\(v\preceq u\). The singularity type of \(u\) is denoted by \([u]\). The
	potential with minimal singularities in \(\operatorname{PSH}(X,\theta)\) is
	\[
	V_\theta:=
	\sup\{u\in\operatorname{PSH}(X,\theta):u\leq0\}.
	\]
	
	We recall the non-pluripolar Monge-Amp\`ere product in big classes, introduced
	by Boucksom-Eyssidieux-Guedj-Zeriahi \cite{BEGZ10} as a global counterpart
	of the Bedford-Taylor construction \cite{BT76,BT82,BT87}.
	
	\begin{definition}[{\cite{BEGZ10}}]
		\label{def:non-pluripolar-MA-measure}
		Let \(\theta\) be a smooth closed real \((1,1)\)-form representing a big class,
		and let \(u\in\operatorname{PSH}(X,\theta)\). The non-pluripolar
		Monge-Amp\`ere measure of \(u\) is
		\[
		\theta_u^n
		=
		\big\langle(\theta+dd^cu)^n\big\rangle
		:=
		\lim_{k\to+\infty}
		\mathbf 1_{\{u>V_\theta-k\}}
		\theta_{\max\{u,V_\theta-k\}}^n .
		\]
	\end{definition}
	
	This product does not charge pluripolar sets, is local in the plurifine
	topology, and is multilinear; see \cite[Proposition~1.4]{BEGZ10}.
	
	\begin{theorem}[
		{\cite[Theorem~1.1, Proposition~2.1]{DDL18b}}]
		\label{thm:mixed-monotonicity}
		Let \(X\) be a compact K\"ahler manifold of dimension \(n\), and let
		\(\theta_1,\ldots,\theta_n\) be smooth closed real \((1,1)\)-forms
		representing big classes. For each \(1\leq i\leq n\), let
		\(u_i,v_i\in\operatorname{PSH}(X,\theta_i)\). If
		\[
		u_i\preceq v_i,\qquad 1\leq i\leq n,
		\]
		then
		\[
		\int_X
		\theta_{1,u_1}\wedge\cdots\wedge\theta_{n,u_n}
		\leq
		\int_X
		\theta_{1,v_1}\wedge\cdots\wedge\theta_{n,v_n},
		\]
		where the products are understood in the non-pluripolar sense. In particular,
		if \(u_i\simeq v_i\) for all \(i\), then the two mixed masses are equal.
	\end{theorem}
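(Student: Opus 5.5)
The statement to prove is the last one displayed, \Cref{thm:mixed-monotonicity}: monotonicity of mixed non-pluripolar masses in the singularity type. The plan is to reduce to the case where a single pair differs and the potentials are ordered pointwise, then compare truncations via plurifine locality.

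\emph{Reduction to a single pair.} Change the potentials one index at a time, replacing $u_i$ by $v_i$. It then suffices to show
\[
\int_X \theta_{1,u}\wedge T \le \int_X \theta_{1,v}\wedge T ,
\]
where $u\preceq v$ in $\operatorname{PSH}(X,\theta_1)$ and $T$ stands for the fixed mixed factor built from the other $\theta_{i,w_i}$. This step is formal by multilinearity of the non-pluripolar product. Replacing $v$ by $\max(u,v)$ does not change the singularity type of $v$. Subtracting a constant from $u$ does not change any mass. So we may assume $u\le v$ pointwise.

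\emph{Comparison via truncations.} Set
\[
u^k=\max(u,V_{\theta_1}-k),\qquad v^k=\max(v,V_{\theta_1}-k),
\]
and similarly truncate the other factors, so that all of them have minimal singularities. For potentials with minimal singularities in big classes, the total mixed mass depends only on the cohomology classes. This follows from BEGZ: a Bedford--Taylor integration by parts on the ample locus, together with the fact that minimal-singularity potentials differ by a bounded function there. Hence
\[
\int_X \theta_{1,u^k}\wedge T^k=\int_X \theta_{1,v^k}\wedge T^k .
\]
By the definition of the non-pluripolar product, $\int_X \theta_{1,u}\wedge T$ is the increasing limit of the masses of $\theta_{1,u^k}\wedge T^k$ restricted to $\{u>V_{\theta_1}-k\}\cap\{w_i>V_{\theta_i}-k\ \forall i\}$. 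On this plurifine open set, plurifine locality identifies the measure with $\theta_{1,u}\wedge T$.

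\emph{Main obstacle and how to handle it.} The main obstacle is to compare the masses escaping into $\{u\le V_{\theta_1}-k\}$ with those escaping into $\{v\le V_{\theta_1}-k\}$. The first step is to show that the map $k\mapsto \int_{\{u\le V_{\theta_1}-k\}}\theta_{1,u^k}\wedge T^k$ is well controlled. Since $u\le v$ gives $\{v\le V_{\theta_1}-k\}\subset\{u\le V_{\theta_1}-k\}$, it is then enough to show that the lost mass for $u$ dominates the lost mass for $v$, asymptotically in $k$.

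I would try the DDL trick of comparing with the convex combination $w_t=tu+(1-t)V_{\theta_1}$ and the function $\max(u, v-k)$, then using the maximum principle. On $\{u>v-k\}$ the measures $\theta_{1,\max(u,v-k)}\wedge T$ and $\theta_{1,u}\wedge T$ coincide, by plurifine locality. Moreover $\max(u,v-k)\simeq v$, so the problem reduces to showing
\[
\int_X\theta_{1,\max(u,v-k)}\wedge T\to\int_X\theta_{1,v}\wedge T .
\]
The limit should hold because $\max(u,v-k)$ and $v$ agree up to an additive constant on the complement of a set whose mass tends to zero. Concretely, $\max(u,v-k)=v-k$ on $\{u<v-k\}$, and the non-pluripolar product is invariant under constants, so the masses agree with $\theta_{1,v}\wedge T$ there. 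Letting $k\to\infty$ yields the inequality. The equality case follows by applying the inequality in both directions.
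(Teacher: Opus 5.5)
\textbf{Gap: the step $\int_X\theta_{1,\max(u,v-k)}\wedge T\to\int_X\theta_{1,v}\wedge T$ is the theorem itself.} The paper does not prove this statement; it quotes it from Darvas--Di Nezza--Lu, with Witt Nystr\"om's theorem as the non-mixed case. Your proposal has a genuine gap, and it sits at the one nontrivial point.

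Your final step is the standard, correct reduction. By plurifine locality,
\[
\int_{\{u>v-k\}}\theta_{1,u}\wedge T=\int_{\{u>v-k\}}\theta_{1,\max(u,v-k)}\wedge T\le\int_X\theta_{1,\max(u,v-k)}\wedge T ,
\]
and letting $k\to\infty$ gives the inequality \emph{provided} $\int_X\theta_{1,\max(u,v-k)}\wedge T=\int_X\theta_{1,v}\wedge T$. But $\max(u,v-k)\simeq v$, so this proviso is exactly the ``in particular'' clause of the theorem. That clause, invariance of mixed non-pluripolar masses within a singularity type, is the real content of Witt Nystr\"om's theorem and its mixed extension.

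Your justification does not prove it. The set $\{u<v-k\}$, on which $\max(u,v-k)=v-k$, lies in $\{u<\sup_Xv-k\}$, so it shrinks to the pluripolar set $\{u=-\infty\}$. Plurifine locality there only matches a piece of $\theta_{1,v}\wedge T$ whose mass tends to $0$. Indeed,
\[
\int_X\theta_{1,\max(u,v-k)}\wedge T=\int_{\{u>v-k\}}\theta_{1,u}\wedge T+\int_{\{u<v-k\}}\theta_{1,v}\wedge T+\int_{\{u=v-k\}}\theta_{1,\max(u,v-k)}\wedge T .
\]
Here the first term tends to $\int_X\theta_{1,u}\wedge T$ and the second to $0$. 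So your claimed limit is equivalent to saying that the mass on the contact sets $\{u=v-k\}$ converges to the deficit $\int_X\theta_{1,v}\wedge T-\int_X\theta_{1,u}\wedge T$. Nothing in your argument controls those contact masses. Your heuristic ``agree up to a constant off a set of small mass'' has the two sets interchanged.

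The earlier parts do not supply this control either. The identity $\int_X\theta_{1,u^k}\wedge T^k=\int_X\theta_{1,v^k}\wedge T^k$ for minimal-singularity truncations is correct. However, ``the mass lost on $\{u\le V_{\theta_1}-k\}$ dominates the mass lost on $\{v\le V_{\theta_1}-k\}$'' only restates the theorem, and the convex-combination and maximum-principle idea is never carried out. Separately, replacing $v$ by $\max(u,v)$ in your reduction would presuppose the equality case. Subtracting a constant from $u$ already gives $u\le v$, so drop that step.

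\textbf{What is needed.} You need an independent proof that $u\simeq v$ forces equal masses. This is where Witt Nystr\"om's argument, or an integration-by-parts formula for potentials of the same singularity type, has to enter. Plurifine locality plus the minimal-singularity identity do not by themselves control mass concentrating on contact sets.

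Once the non-mixed equality is available, the mixed equality follows by polarization. For $t_i>0$ one has $\sum_it_iu_i\simeq\sum_it_iv_i$ in $\operatorname{PSH}(X,\sum_it_i\theta_i)$. By multilinearity the total masses are polynomials in $t$, and their $t_1\cdots t_n$-coefficients are $n!$ times the mixed masses. Your $\max(u,v-k)$ step then yields the inequality.
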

	
	The total mass monotonicity for the Monge-Amp\`ere product was proved by
	Witt Nystr\"om \cite{WN19}. The mixed version, together with invariance under
	replacing each potential by one with the same singularity type, is due to
	Darvas-Di Nezza-Lu \cite[Theorem~1.1, Proposition~2.1]{DDL18b}; see also
	\cite[Proposition~3.1]{DDL25}.
	
	Following Bedford-Taylor \cite{BT82} and its compact K\"ahler formulation in
	\cite{GZ05}, the Monge-Amp\`ere capacity is defined by
	\[
	\operatorname{Cap}_\omega(E)
	:=
	\sup\left\{
	\int_E(\omega+dd^c\rho)^n:
	\rho\in\operatorname{PSH}(X,\omega),\ 0\leq\rho\leq1
	\right\}.
	\]
	We say that \(u_j\to u\) in \(\operatorname{Cap}_\omega\), or in
	Monge-Amp\`ere capacity, if for every \(\varepsilon>0\),
	\[
	\operatorname{Cap}_\omega(\{|u_j-u|>\varepsilon\})\to0 .
	\]
	This convergence notion is standard in the stability theory of the complex
	Monge-Amp\`ere operator; see \cite{Xing96,Xing08,Xing09}.
	
	Monotone almost everywhere convergence of plurisubharmonic functions implies
	convergence in capacity \cite[Proposition~4.25]{GZ17}. A property holds
	quasi-everywhere, abbreviated q.e., if it holds outside a pluripolar set. We
	use the quasi-continuity of quasi-psh functions without further comment.
	
	The following lemma will be used to pass from \(L^1\)-convergence of quasi-psh
	functions to convergence with respect to non-pluripolar measures.
	
	\begin{lemma}[{\cite[Lemma~4.2]{DV25}}]
		\label{lem:DoVu-nonpluripolar-measure-convergence}
		Let \(A>0\), and let \(u_j,u\in\operatorname{PSH}(X,A\omega)\). Assume that
		\(u_j\to u\) in \(L^1(X)\). Let \(\nu\) be a finite positive Radon measure on
		\(X\) which does not charge pluripolar sets. Then
		\[
		\int_X \min\{|u_j-u|,1\}\,d\nu \longrightarrow 0 .
		\]
		Consequently, if \(\nu_j\to\nu\) in total variation, then for every
		\(\delta>0\),
		\[
		\nu_j\bigl(\{|u_j-u|>\delta\}\bigr)\longrightarrow 0 .
		\]
	\end{lemma}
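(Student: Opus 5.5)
The plan is to split $|u_j-u|$ into its positive and negative parts and to handle them by different mechanisms. By a subsequence argument it suffices to show that every subsequence has a further subsequence along which $\int_X\min\{|u_j-u|,1\}\,d\nu\to0$. Since the $u_j$ lie in $\operatorname{PSH}(X,A\omega)$ and converge in $L^1$, they are uniformly bounded above, and the standard Hartogs-type facts apply. We set
\[
w_k:=\Bigl(\sup_{j\ge k}u_j\Bigr)^*\in\operatorname{PSH}(X,A\omega).
\]
Then $w_k$ decreases to a quasi-psh function equal to $u$ almost everywhere, hence equal to $u$ everywhere. Moreover $(u_j-u)^+\le w_j-u$ off the pluripolar set $\{u=-\infty\}$, and this set is $\nu$-null. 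Dominated convergence applied to $\min\{w_j-u,1\}$ then gives
\[
\int_X\min\{(u_j-u)^+,1\}\,d\nu\to0 .
\]

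For the negative part we use the identity $(u-u_j)^+=(u_j-u)^+-(u_j-u)$. It therefore suffices to show $\int_X(u_j-u)\,d\nu\to0$ after truncation. We replace $u_j,u$ by $\max\{u_j,-M\}$ and $\max\{u,-M\}$; these are still $A\omega$-psh and still converge in $L^1$. The change in $\int_X\min\{|u_j-u|,1\}\,d\nu$ is controlled by $\nu(\{u<-M\})$, which tends to $0$ as $M\to\infty$ because $\nu$ is non-pluripolar. It is also controlled by $\nu(\{u_j<-M\}\cap\{u\ge -M+1\})$, which is bounded by the $\nu$-mass of $\{|u_j-u|>1\}$ and is handled once the bounded case is done, via a limsup bootstrap. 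So we reduce to uniformly bounded $v_j\to v$ in $L^1$, and we must prove $\int_X v_j\,d\nu\to\int_X v\,d\nu$; combined with the first paragraph this yields the claim.

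To prove this, we invoke the Cegrell/Guedj--Zeriahi decomposition of a non-pluripolar measure, $\nu=f\,(\omega+dd^c\varphi)^n$ with $\varphi\in\operatorname{PSH}(X,\omega)$ bounded and $f\in L^1((\omega+dd^c\varphi)^n)$. Since the $v_j$ are uniformly bounded, approximating $f$ in $L^1$ by bounded functions reduces us to $\nu\le C(\omega+dd^c\varphi)^n$. Splitting $f=\min\{f,C\}+(f-C)^+$ is not quite enough to finish, since $\min\{f,C\}$ is not a multiple of a Monge--Amp\`ere measure. Instead we write $f$ as a limit in $L^1$ of finite sums of $\mathbf 1_{\{\varphi_i>\psi_i\}}$-type plurifine-open pieces, or simply use the envelope $\nu'=(\omega+dd^c\varphi')^n$ with $\nu\le C\nu'$.

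For the Monge--Amp\`ere measure of a bounded potential we integrate by parts repeatedly, moving $dd^c$ off $\varphi$ onto $v_j$. At each step we approximate $\varphi$ by a continuous function off a set of small $\operatorname{Cap}_\omega$, using quasi-continuity. The mixed measures $(\omega+dd^c v_j)^k\wedge(\omega+dd^c\varphi)^{n-k}$ are uniformly dominated by $C\operatorname{Cap}_\omega$, by Chern--Levine--Nirenberg, because everything is uniformly bounded. Weak convergence of $\omega+dd^cv_j$ to $\omega+dd^cv$ then gives the limit. The main obstacle is this bounded case, and within it the passage from $\nu$ to a genuine Monge--Amp\`ere measure together with the quasi-continuity argument.

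Finally, the consequence about total variation follows from Chebyshev's inequality:
\[
\nu_j(\{|u_j-u|>\delta\})\le\|\nu_j-\nu\|_{\rm TV}+\min\{\delta,1\}^{-1}\int_X\min\{|u_j-u|,1\}\,d\nu\to0 .
\]
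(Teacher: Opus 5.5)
The paper gives no proof of this lemma; it quotes it from \cite[Lemma~4.2]{DV25}, so your argument has to stand on its own. Most of your structure is sound. The envelopes $w_k=(\sup_{j\ge k}u_j)^*\searrow u$ do control $\int_X\min\{(u_j-u)^+,1\}\,d\nu$. Truncation is harmless, and simpler than your ``limsup bootstrap'': use $\{u_j<u-\delta\}\subset\{u<-M+\delta\}\cup\{\max(u_j,-M)\le\max(u,-M)-\delta\}$. The decomposition $\nu=f\,\omega_\varphi^n$ with $\varphi$ bounded is the right tool, and the final Chebyshev step is correct.

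The genuine gap is the passage from $\mu:=\omega_\varphi^n$ to $\nu$. You aim at the \emph{linear} statement $\int_X v_j\,d\nu\to\int_X v\,d\nu$. You correctly note that it does not pass from $\mu$ to $\min\{f,C\}\mu$, but your two replacements fail for the same reason. A domination $\nu\le C\nu'$ with $\nu'$ a Monge--Amp\`ere measure does not transfer a linear limit from $\nu'$ to $\nu$. Measures $\mathbf 1_U\,\omega_\varphi^n$ with $U$ plurifine open are no more Monge--Amp\`ere measures than $\min\{f,C\}\mu$ is.

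The repair uses what you already have, in a different order. Apply your first paragraph to $\mu$ itself, which is finite and non-pluripolar, to get $\int_X(v_j-v)^+\,d\mu\to0$. Combined with $\int_X(v_j-v)\,d\mu\to0$, this gives $\int_X|v_j-v|\,d\mu\to0$, i.e.\ convergence in $\mu$-measure. That does pass to $\nu=f\mu$ by absolute continuity, $\nu(E_j)\le\nu(\{f>C\})+C\mu(E_j)$, without truncating $f$ at all.

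There is also a second, smaller gap in the bounded case: weak convergence of $\omega+dd^cv_j$ alone does not close the argument. After moving $dd^c\varphi$ onto $v_j$, you must pass to the limit in $\int_X\varphi\,(A\omega+dd^cv_j)\wedge\omega_\varphi^{n-1}$. This needs weak convergence of the \emph{mixed} measures $(A\omega+dd^cv_j)\wedge\omega_\varphi^{k-1}\wedge\beta_1\wedge\cdots\wedge\beta_{n-k}$, and for merely bounded $\varphi$ that is itself a lower-degree instance of the claim.

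So argue by induction on $k$ that $\int_X v_j\,\omega_\varphi^k\wedge\beta_1\wedge\cdots\wedge\beta_{n-k}\to\int_X v\,\omega_\varphi^k\wedge\beta_1\wedge\cdots\wedge\beta_{n-k}$ for all K\"ahler forms $\beta_i$.
\begin{itemize}
\item The case $k=0$ is just $L^1$-convergence.
\item In the inductive step, weak convergence of the mixed measures follows from the level-$(k-1)$ statement: test against smooth $\chi$ and write $dd^c\chi=(dd^c\chi+C\omega)-C\omega$.
\item The bounded integrand $\varphi$ is then replaced by a continuous function off an open set of small capacity. The error is uniformly controlled by the Chern--Levine--Nirenberg bound.
\end{itemize}
Keep every term linear in $v_j$. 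Measures such as $\omega_{v_j}^2\wedge\omega_\varphi^{n-2}$ need not converge under mere $L^1$-convergence, so ``integrating by parts repeatedly'' must never produce them.
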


	\subsection{Envelopes and model potentials}
	
	All envelopes are understood with upper semicontinuous regularization. For
	\(f:X\to[-\infty,+\infty]\), set
	\[
	P_\theta(f)
	:=
	\left(
	\sup\{u\in\operatorname{PSH}(X,\theta):u\leq f\}
	\right)^* .
	\]
	For two functions \(f,g\), we write
	\[
	P_\theta(f,g):=P_\theta(\min\{f,g\}).
	\]
	If \(\psi,\varphi\in\operatorname{PSH}(X,\theta)\), the relative singularity
	envelope is
	\[
	P_\theta[\psi](\varphi)
	:=
	\left(
	\lim_{C\to+\infty}P_\theta(\psi+C,\varphi)
	\right)^* .
	\]
	When \(\varphi=V_\theta\), we write
	\[
	P_\theta[\psi]:=P_\theta[\psi](V_\theta).
	\]
	
	A potential \(\phi\in\operatorname{PSH}(X,\theta)\) is called a model
	potential if
	\[
	\phi=P_\theta[\phi].
	\]
	When a model potential is used as a prescribed singularity, it will be assumed
	to have positive mass,
	\[
	\int_X\theta_\phi^n>0.
	\]
	Potentials with analytic singularity type are model; see \cite{DDL25}.

	We shall use the following contact results for envelopes and rooftop
	envelopes. Their compact K\"ahler and big-class forms are based on the
	envelope theory developed in \cite{DDL18b,GLZ19}; the relative formulations
	below are taken from \cite{DDL25}.
	
	\begin{theorem}[{\cite[Theorem~2.2]{DDL25}}]
		\label{thm:envelope-contact}
		Let \(X\) be a compact K\"ahler manifold, and let \(\theta\) be a smooth
		closed real \((1,1)\)-form representing a big class. Let
		\(h:X\to[-\infty,+\infty]\) be quasi-continuous, and assume that
		\(P_\theta(h)\in\operatorname{PSH}(X,\theta)\). Then
		\[
		\mathbf 1_{\{P_\theta(h)<h\}}\,
		\theta_{P_\theta(h)}^n
		=
		0 .
		\]
		Equivalently, \(\theta_{P_\theta(h)}^n\) is concentrated on the contact set
		\(\{P_\theta(h)=h\}\).
	\end{theorem}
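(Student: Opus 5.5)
The plan is to argue in three stages: continuous obstacles, then general quasi-continuous obstacles by approximation, with a truncation step to handle the non-pluripolar product of potentials that are not of minimal singularity type. Throughout, write $\varphi:=P_\theta(h)$. We may assume $h$ is bounded above, since $\varphi\le h$ and $\varphi$ is bounded above. Replacing $h$ by $\min\{h,\sup_X\varphi+1\}$ changes neither $\varphi$ nor the set $\{\varphi<h\}$ up to the relevant region, because on $\{h>\sup\varphi+1\}$ the inequality $\varphi<h$ holds for both functions.

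\emph{Step 1: continuous $h$.} Here $\{\varphi<h\}$ is open because $\varphi$ is upper semicontinuous and $h$ is continuous. Since $\theta_\varphi^n$ puts no mass outside the ample locus $\operatorname{Amp}(\theta)$ (it is non-pluripolar and $X\setminus\operatorname{Amp}(\theta)$ is analytic), it suffices to work near a point $x_0\in\{\varphi<h\}\cap\operatorname{Amp}(\theta)$. Take a small coordinate ball $B\ni x_0$ with $\overline B\subset\{\varphi<h-\delta\}$ for some $\delta>0$, on which $\theta=dd^c g$ with $g$ smooth.

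The argument is a Bedford--Taylor balayage. Let $\tilde\varphi$ be $\varphi$ outside $B$, and on $B$ the Perron--Bremermann solution of the homogeneous Dirichlet problem for $g+\cdot$ with boundary data $\varphi$. Concretely, we approximate $\varphi$ from above by continuous $\theta$-psh functions $\varphi_k\searrow\varphi$ on a neighbourhood of $\overline B$, perform the balayage on each $\varphi_k$, and pass to the decreasing limit. Then:
\begin{itemize}
\item $\tilde\varphi\ge\varphi$ and $\tilde\varphi$ is $\theta$-psh;
\item $\tilde\varphi$ is $\le h$ on $B$ after shrinking $B$, using the oscillation bound of $h$ on small balls and the gap $\delta$.
\end{itemize}
Maximality of $\varphi$ then forces $\tilde\varphi=\varphi$, so $(\theta+dd^c\varphi)^n=0$ on $B$ by Bedford--Taylor, since the balayage is maximal. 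Boundedness on $B$ is not automatic, but plurifine locality lets us replace $\varphi$ by $\max\{\varphi,V_\theta-k\}$ on $\{\varphi>V_\theta-k\}$.

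\emph{Step 2: quasi-continuous $h$.} By quasi-continuity there are open sets $G_j$ with $\operatorname{Cap}_\omega(G_j)\to0$, and continuous $h_j$ with $h_j=h$ on $X\setminus G_j$ (Tietze). We may arrange $h_j\ge h$ and make the sequence decreasing, using $\min$ over tails and a monotone choice of the $G_j$. Then $\varphi_j:=P_\theta(h_j)\searrow\psi\ge\varphi$, and $\psi\le h$ on $X\setminus\bigcup_{j\ge k}G_j$ for every $k$, hence quasi-everywhere. Since $\psi$ is quasi-psh and $\varphi$ is the envelope, this gives $\psi\le\varphi$ q.e., and therefore $\psi=\varphi$. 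Monotone convergence also gives $\varphi_j\to\varphi$ in $\operatorname{Cap}_\omega$.

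\emph{Step 3: passing to the limit (main obstacle).} Fix $\varepsilon>0$ and $k$. By Step 1, $\int_X\min\{(h_j-\varphi_j)_+,\varepsilon\}\,\theta_{\varphi_j}^n=0$. We want to deduce $\int_X\min\{(h-\varphi)_+,\varepsilon\}\,\theta_\varphi^n=0$, which gives the claim.

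The difficulty is that $\theta_{\varphi_j}^n\to\theta_\varphi^n$ weakly can fail, because mass may be lost when $\varphi$ is not of minimal singularity type. To handle this, work with the truncations $\varphi_j^k:=\max\{\varphi_j,V_\theta-k\}$, which decrease to $\varphi^k$. By Bedford--Taylor continuity along decreasing sequences of bounded-type potentials, their Monge--Amp\`ere measures converge weakly. By plurifine locality, $\theta_{\varphi_j^k}^n$ agrees with $\theta_{\varphi_j}^n$ on $\{\varphi_j>V_\theta-k\}$, and these sets increase in $j$ to $\{\varphi>V_\theta-k\}$.

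The integrand $\min\{(h_j-\varphi_j)_+,\varepsilon\}$ is only quasi-continuous and converges in capacity, not uniformly. We control it as follows:
\begin{itemize}
\item remove an open set of small capacity, on which all the measures $\theta_{\varphi_j^k}^n$ are uniformly small because they are dominated by a multiple of $\operatorname{Cap}_\omega$ on sets where the potentials are bounded relative to $V_\theta$;
\item on the complement, use uniform convergence together with the standard lemma that $\int f_j\,d\mu_j\to\int f\,d\mu$ for uniformly bounded quasi-continuous $f_j\to f$ in capacity when $\mu_j\to\mu$ weakly with uniformly capacity-dominated masses;
\item let $k\to\infty$.
\end{itemize}
This limit step is where I expect the main technical work, especially the uniform capacity domination and the restriction to $\{\varphi>V_\theta-k\}$.
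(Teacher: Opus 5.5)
The paper does not prove this statement; it quotes it from \cite{DDL25}, so your argument has to stand on its own. Steps~1 and~3 follow standard lines: balayage for continuous obstacles, then a passage to the limit through $\max\{\varphi_j,V_\theta-k\}$, using capacity domination of their Monge--Amp\`ere measures and convergence in capacity of quasi-continuous integrands. The gap is in Step~2, and it is load-bearing.

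\textbf{The gap.} In general you cannot choose continuous $h_j\ge h$ with $h_j=h$ on $X\setminus G_j$ and $\operatorname{Cap}_\omega(G_j)\to0$. For $y\notin G_j$ this would give $\limsup_{x\to y}h(x)\le\lim_{x\to y}h_j(x)=h_j(y)=h(y)$. So $h$ would have to be upper semicontinuous at every point of $X\setminus G_j$, and quasi-continuous obstacles need not be.

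For a counterexample, take $u\in\operatorname{PSH}(X,\omega)$ with $u\le0$ and $u=-\infty$ on a dense countable set, and put $h:=\min\{-u,C\}$. Then $h$ is bounded and quasi-continuous, so $P_\theta(h)\ge V_\theta$ and the theorem applies. However, $h=C$ on open neighbourhoods of the poles. Hence $h$ fails to be upper semicontinuous at every point of $\{u>-C\}$, whose volume, and hence whose $\operatorname{Cap}_\omega$, stays bounded below for $C$ large.

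This matters because $h_j\ge h$ and monotonicity are exactly what give you $\varphi_j\ge\varphi$, the decreasing limit $\psi$, and $\varphi_j\to\varphi$ in capacity, all of which Step~3 uses. For non-monotone continuous approximants, $P_\theta(h_j)\to P_\theta(h)$ in capacity is precisely the nontrivial point. It cannot be taken from \Cref{thm:relative-bounded-obstacle-capacity-continuity} or its corollaries, since their proofs invoke the contact property you are trying to prove.

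\textbf{A repair.} You can keep your structure by using lower semicontinuous majorants instead of continuous ones.

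\begin{enumerate}
\item Take $G_j$ decreasing, and enlarge them so that $h$ is finite on $X\setminus G_j$. This is possible because $\{h=-\infty\}\setminus G_j$ is compact and pluripolar.
\item Set $\tilde h_j:=h$ on $X\setminus G_j$ and $\tilde h_j:=\sup_Xh$ on $G_j$. These functions are bounded, lower semicontinuous, decreasing, $\ge h$, and equal to $h$ off $G_j$.
\item Your balayage from Step~1 still works for such obstacles. The set $\{\varphi_j<\tilde h_j\}$ is open, and $\tilde h_j>a$ near any point where $\varphi_j<a<\tilde h_j$. Since $\tilde h_j\ge V_\theta-C$, the glued competitor only needs to lie below $\tilde h_j$ quasi-everywhere.
\item In Step~3, put $h$ itself into the integrand:
\[
F_j:=\min\{(h-\varphi_j)_+,\varepsilon\}\,\min\{(\varphi_j-V_\theta+k)_+,1\}.
\]
This is quasi-continuous and converges in capacity. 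It integrates to zero against $\theta^n_{\max\{\varphi_j,V_\theta-k\}}$ because $\{\varphi_j<h\}\subset\{\varphi_j<\tilde h_j\}$. The cutoff replaces your indicator of $\{\varphi_j>V_\theta-k\}$; note that these sets decrease in $j$, not increase.
\end{enumerate}

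\textbf{A second, smaller issue.} The inference ``$\psi\le h$ q.e., hence $\psi\le\varphi$'' relies on the quasi-everywhere description of $P_\theta(h)$. That description holds when there is a positive-mass subsolution (\cref{lem:pointwise-qe-envelope-positive-subbarrier} with $q_0=\varphi$), but it can fail otherwise. On $\mathbb P^1$, let $h=\log|s|^2_{h_{\rm FS}}$ off a point $q\notin\{s=0\}$, and let $h(q)$ be one unit smaller. Then $P_\omega(h)=\log|s|^2_{h_{\rm FS}}-1$, although $\log|s|^2_{h_{\rm FS}}\le h$ q.e. So first dispose of the case $\int_X\theta_\varphi^n=0$, where the conclusion is trivial.
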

	
	\begin{theorem}[{\cite[Lemma 3.7]{DDL18b}, \cite[Theorem~3.2]{DDL25}}]
		\label{thm:rooftop-contact-inequality}
		Let \(X\) be a compact K\"ahler manifold, and let \(\theta\) be a smooth
		closed real \((1,1)\)-form representing a big class. Let
		\(u,v\in\operatorname{PSH}(X,\theta)\), and assume that
		\[
		P_\theta(u,v)\in \operatorname{PSH}(X,\theta).
		\]
		Then
		\[
		\theta_{P_\theta(u,v)}^n
		\leq
		\mathbf 1_{\{P_\theta(u,v)=u\}}\,\theta_u^n
		+
		\mathbf 1_{\{P_\theta(u,v)=v\}}\,\theta_v^n .
		\]
	\end{theorem}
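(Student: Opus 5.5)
The plan is to combine the contact result \Cref{thm:envelope-contact} with a ``maximum principle'' comparison of Monge--Amp\`ere measures on a contact set. Set $\varphi:=P_\theta(u,v)=P_\theta(\min\{u,v\})$. The obstacle $h:=\min\{u,v\}$ is quasi-continuous, since $u$ and $v$ are quasi-psh. By hypothesis $\varphi\in\operatorname{PSH}(X,\theta)$. Hence \Cref{thm:envelope-contact} shows that $\theta_\varphi^n$ is carried by $\{\varphi=\min\{u,v\}\}$. This set is contained in $\{\varphi=u\}\cup\{\varphi=v\}$, so
\[
\theta_\varphi^n\le \mathbf 1_{\{\varphi=u\}}\theta_\varphi^n+\mathbf 1_{\{\varphi=v\}}\theta_\varphi^n .
\]
The overlap is harmless: it is only counted twice on the right-hand side. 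It therefore suffices to prove the following claim.

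\emph{Claim.} If $\varphi,\psi\in\operatorname{PSH}(X,\theta)$ and $\varphi\le\psi$, then $\mathbf 1_{\{\varphi=\psi\}}\theta_\varphi^n\le \mathbf 1_{\{\varphi=\psi\}}\theta_\psi^n$. Applying the claim with $\psi=u$ and then with $\psi=v$ gives the stated inequality, since $\varphi\le\min\{u,v\}$.

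To prove the claim, I would first establish the Demailly-type inequality
\[
\theta_{\max\{\varphi,\psi\}}^n\ \ge\ \mathbf 1_{\{\varphi\ge\psi\}}\theta_\varphi^n+\mathbf 1_{\{\varphi<\psi\}}\theta_\psi^n
\]
in the non-pluripolar setting. On the plurifine open sets $\{\varphi>\psi\}$ and $\{\varphi<\psi\}$ the inequality is an equality, by plurifine locality of the non-pluripolar product \cite[Proposition~1.4]{BEGZ10}. The remaining task is the contact set $\{\varphi=\psi\}$. Here I would replace $\varphi$ by $\varphi+t$ with $t\downarrow0$. Because $\theta_\varphi^n$ and $\theta_\psi^n$ are finite measures, the sets $\{\varphi+t=\psi\}$ carry positive mass for at most countably many $t$, so $t$ can be chosen along a sequence avoiding them. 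For such $t$, locality gives $\theta_{\max\{\varphi+t,\psi\}}^n\ge \mathbf 1_{\{\varphi+t>\psi\}}\theta_\varphi^n+\mathbf 1_{\{\varphi+t<\psi\}}\theta_\psi^n$ up to a null set. Letting $t\to0$, the functions $\max\{\varphi+t,\psi\}$ decrease to $\max\{\varphi,\psi\}$, and $\mathbf 1_{\{\varphi+t>\psi\}}\to\mathbf 1_{\{\varphi\ge\psi\}}$ pointwise. One then passes to the limit, tested against continuous functions and then lower semicontinuous ones.

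With the Demailly-type inequality in hand, take $\varphi\le\psi$, so that $\max\{\varphi,\psi\}=\psi$. The inequality then reads $\theta_\psi^n\ge \mathbf 1_{\{\varphi=\psi\}}\theta_\varphi^n+\mathbf 1_{\{\varphi<\psi\}}\theta_\psi^n$. Restricting to $\{\varphi=\psi\}$ yields the claim.

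The main obstacle is the limit $t\to0$ in the non-pluripolar setting. Non-pluripolar products are not continuous along decreasing sequences when mass can escape into singularities. I would handle this by truncation. First I would prove the whole argument for the bounded-type potentials $\max\{\varphi,V_\theta-k\}$ and $\max\{\psi,V_\theta-k\}$, which have minimal singularities; for these, Bedford--Taylor-type continuity along monotone sequences is available in big classes \cite{BEGZ10}. Then I would use \Cref{def:non-pluripolar-MA-measure}: on the plurifine open set $\{\varphi>V_\theta-k\}$ the truncated measures agree with $\theta_\varphi^n$ and $\theta_\psi^n$ by locality. Letting $k\to\infty$ and using monotone convergence of $\mathbf 1_{\{\varphi>V_\theta-k\}}$ then gives the inequality for the original potentials, since the non-pluripolar measures put no mass on $\{\varphi=-\infty\}$.
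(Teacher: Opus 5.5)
Your proof is correct. The paper does not prove this statement itself; it quotes \cite[Lemma~3.7]{DDL18b} and \cite[Theorem~3.2]{DDL25}, and your route is, as far as I can tell, the standard argument for this result: \Cref{thm:envelope-contact} concentrates $\theta_{P_\theta(u,v)}^n$ on $\{P_\theta(u,v)=\min\{u,v\}\}$, and you combine this with the maximum principle $\mathbf 1_{\{\varphi=\psi\}}\theta_\varphi^n\le\mathbf 1_{\{\varphi=\psi\}}\theta_\psi^n$ for $\varphi\le\psi$, obtained from Demailly's inequality for $\max$.

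Two small remarks. First, you do not need to choose $t$ avoiding sets $\{\varphi+t=\psi\}$ of positive mass: locality holds on the disjoint plurifine open sets $\{\varphi+t>\psi\}$ and $\{\varphi+t<\psi\}$ for every $t$, and the indicators converge monotonically (off a pluripolar set) against fixed measures. Second, in the truncation step you should localize to $\{\varphi>V_\theta-k\}\cap\{\psi>V_\theta-k\}$, not just $\{\varphi>V_\theta-k\}$; the union of these sets over $k$ has pluripolar complement.
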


	\subsection{Review of relative pluripotential theory}
	
	Let \(\phi\in\operatorname{PSH}(X,\theta)\) be a positive mass model
	potential. The \(\phi\)-relative full mass class is
	\[
	\mathcal E(X,\theta,\phi)
	:=
	\left\{
	u\in\operatorname{PSH}(X,\theta):
	u\preceq\phi,\ 
	\int_X\theta_u^n=\int_X\theta_\phi^n
	\right\}.
	\]
	When \(\phi=V_\theta\), we write \(\mathcal E(X,\theta)\).
	
	We shall use the comparison principle in relative full mass classes, due to
	Darvas-Di Nezza-Lu \cite[Corollary~3.6]{DDL18b}. It extends the
	Bedford-Taylor comparison principle \cite{BT82}, its global full mass form
	\cite{GZ07}, and the big-class minimal singularity case of \cite{BEGZ10}.
	
	\begin{theorem}[{\cite[Corollary~3.6]{DDL18b}}]
		\label{thm:DDL-relative-comparison}
		Let \(X\) be a compact K\"ahler manifold, and let \(\theta\) be a smooth
		closed real \((1,1)\)-form representing a big class. Let
		\(\phi\in\operatorname{PSH}(X,\theta)\) be a positive mass model potential.
		If \(u,v\in\mathcal E(X,\theta,\phi)\), then
		\[
		\int_{\{u<v\}}\theta_v^n
		\leq
		\int_{\{u<v\}}\theta_u^n .
		\]
	\end{theorem}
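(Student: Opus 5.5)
The plan is to compare both measures with the Monge--Amp\`ere measure of a maximum, using plurifine locality of the non-pluripolar product together with the mass monotonicity of \cref{thm:mixed-monotonicity}. Fix \(\varepsilon>0\) and set \(v_\varepsilon:=v-\varepsilon\) and \(w_\varepsilon:=\max\{u,v_\varepsilon\}\in\operatorname{PSH}(X,\theta)\).

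\textbf{Step 1: \(w_\varepsilon\) has full relative mass.} Since \(u,v\preceq\phi\), we have \(w_\varepsilon\preceq\phi\). Also \(u\le w_\varepsilon\), so \(u\preceq w_\varepsilon\). Applying \cref{thm:mixed-monotonicity} to \(u\preceq w_\varepsilon\preceq\phi\) gives
\[
\int_X\theta_u^n\le\int_X\theta_{w_\varepsilon}^n\le\int_X\theta_\phi^n .
\]
The two outer quantities are equal because \(u\in\mathcal E(X,\theta,\phi)\). Hence \(w_\varepsilon\in\mathcal E(X,\theta,\phi)\), and all three masses equal \(M:=\int_X\theta_\phi^n\).

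\textbf{Step 2: plurifine locality.} The sets \(\{u<v_\varepsilon\}\) and \(\{u>v_\varepsilon\}\) are open in the plurifine topology. On them \(w_\varepsilon\) coincides with \(v_\varepsilon\) and with \(u\), respectively. By locality of the non-pluripolar product \cite[Proposition~1.4]{BEGZ10}, and since \(\theta_{v_\varepsilon}^n=\theta_v^n\), we get
\[
\mathbf 1_{\{u<v_\varepsilon\}}\theta_{w_\varepsilon}^n=\mathbf 1_{\{u<v_\varepsilon\}}\theta_v^n,
\qquad
\mathbf 1_{\{u>v_\varepsilon\}}\theta_{w_\varepsilon}^n=\mathbf 1_{\{u>v_\varepsilon\}}\theta_u^n .
\]

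\textbf{Step 3: choice of \(\varepsilon\) and the inequality.} The level sets \(\{u=v-\varepsilon\}\), for \(\varepsilon>0\), are pairwise disjoint. The measures \(\theta_u^n\) and \(\theta_{w_\varepsilon}^n\) are finite, but the latter depends on \(\varepsilon\), so countability alone does not control it. I would avoid needing it: only \(\theta_u^n(\{u=v-\varepsilon\})=0\) is required, and this fails for at most countably many \(\varepsilon\). For such \(\varepsilon\), Steps 1--2 give
\[
M=\int_X\theta_{w_\varepsilon}^n
\ge\int_{\{u<v_\varepsilon\}}\theta_v^n+\int_{\{u>v_\varepsilon\}}\theta_u^n
=\int_{\{u<v_\varepsilon\}}\theta_v^n+M-\int_{\{u<v_\varepsilon\}}\theta_u^n .
\]
The last equality uses that \(\theta_u^n\) has mass \(M\) and does not charge \(\{u=v_\varepsilon\}\). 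Therefore
\[
\int_{\{u<v-\varepsilon\}}\theta_v^n\le\int_{\{u<v-\varepsilon\}}\theta_u^n\le\int_{\{u<v\}}\theta_u^n .
\]

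\textbf{Step 4: passage to the limit.} Let \(\varepsilon\downarrow0\) along admissible values. The sets \(\{u<v-\varepsilon\}\) increase to \(\{u<v\}\), so monotone convergence yields the claim.

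The only delicate point is the plurifine-locality identification in Step 2 for unbounded potentials in a big class. This is exactly \cite[Proposition~1.4]{BEGZ10}, so I expect no real obstacle. The key conceptual input is Step 1, where the relative full-mass hypothesis enters through \cref{thm:mixed-monotonicity}.
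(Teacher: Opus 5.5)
The paper does not prove this statement; it quotes it from \cite[Corollary~3.6]{DDL18b}. Your proposal is correct, and it is the standard argument for the relative comparison principle. You replace $v$ by $v-\varepsilon$, apply plurifine locality to $\max\{u,v-\varepsilon\}$ on the two open pieces, and use mass monotonicity to show that this maximum has full relative mass. This is essentially the proof in the cited source, going back to the Guedj--Zeriahi full-mass case.

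There are two small points to tidy.

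First, the level sets $\{u=v-\varepsilon\}$ are not literally pairwise disjoint. For $\varepsilon_1\neq\varepsilon_2$ they meet exactly in $\{u=v=-\infty\}$. Disjointness should be stated on the complement of the pluripolar set $\{u=-\infty\}$, which $\theta_u^n$ does not charge.

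Second, you can drop the countable exclusion of $\varepsilon$'s altogether. Write $\int_{\{u>v-\varepsilon\}}\theta_u^n=M-\int_{\{u\le v-\varepsilon\}}\theta_u^n$ and note that $\{u\le v-\varepsilon\}\subset\{u<v\}\cup\{u=-\infty\}$. This gives
\[
\int_{\{u<v-\varepsilon\}}\theta_v^n\le\int_{\{u\le v-\varepsilon\}}\theta_u^n\le\int_{\{u<v\}}\theta_u^n
\]
for every $\varepsilon>0$.

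Your argument in fact only uses $u\in\mathcal E(X,\theta,\phi)$ and $v\preceq\phi$. It does not use the full mass of $v$, nor the model property or positivity of mass of $\phi$.
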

	
	For a Borel set \(E\subset X\), the relative Monge-Amp\`ere capacity with
	respect to \(\phi\) is defined by
	\[
	\operatorname{Cap}_{\theta,\phi}(E)
	:=
	\sup\left\{
	\int_E\theta_u^n:
	u\in\operatorname{PSH}(X,\theta),\ \phi-1\leq u\leq\phi
	\right\}.
	\]
	This notion was introduced in \cite[Definition~4.1]{DDL25}. For
	\(\phi=V_\theta\), it recovers the big-class Monge-Amp\`ere capacity of
	\cite[Section~4.1]{BEGZ10}.
	
	The relative capacity is dominated by the fixed K\"ahler capacity in the
	following sense: there exists an increasing continuous function \(F_\phi\),
	with \(F_\phi(0)=0\), such that
	\[
	\operatorname{Cap}_{\theta,\phi}(E)
	\leq
	F_\phi(\operatorname{Cap}_\omega(E))
	\]
	for all Borel sets \(E\subset X\); see \cite[Theorem~4.1]{DDL25}. In
	particular, if \(V_\theta-T\leq u\leq V_\theta\), then
	\[
	\int_E\theta_u^n
	\leq
	T^n\operatorname{Cap}_{\theta,V_\theta}(E)
	\leq
	T^nF_{V_\theta}(\operatorname{Cap}_\omega(E)).
	\]
	
	We shall also use the relative Ko{\l}odziej-type \(L^\infty\)  estimate of
	\cite[Theorem~5.6]{DDL25}, whose proof follows the quasi-psh envelope method
	of Guedj-Lu \cite{GL25}.
	
	\begin{theorem}[{\cite[Theorem~5.6]{DDL25}}]
		\label{thm:relative-Linfty-estimate}
		Let \(\theta\) be a smooth closed real \((1,1)\)-form representing a big
		class, and let \(\phi\in\operatorname{PSH}(X,\theta)\) be a positive mass
		model potential. Suppose that \(u\in\mathcal E(X,\theta,\phi)\),
		\(\sup_Xu=0\), and
		\[
		\theta_u^n=f\omega^n,\qquad f\in L^p(X,\omega^n),\quad p>1 .
		\]
		Then \(u\) has the same singularity type as \(\phi\). More precisely,
		\[
		\phi-C\leq u\leq\phi,
		\]
		where
		\[
		C=C\left(\|f\|_{L^p},p,\omega,\theta,
		\int_X\theta_\phi^n\right)>0 .
		\]
	\end{theorem}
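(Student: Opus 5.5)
The upper bound is formal, and the lower bound I would prove by a relative version of Ko{\l}odziej's capacity iteration. The only place where the positive mass $\int_X\theta_\phi^n>0$ genuinely enters is a volume--capacity comparison for the relative capacity, which I expect to be the main obstacle.

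\emph{Upper bound.} Since $u\preceq\phi$ and $\sup_X u=0$, there is $C'$ with $u\le\min\{\phi+C',V_\theta\}$. Hence
\[
u\le P_\theta(\phi+C',V_\theta)\le P_\theta[\phi]=\phi,
\]
because $\phi$ is model.

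\emph{Comparison step.} Fix $s>0$, $t\in(0,1]$ and $\psi\in\operatorname{PSH}(X,\theta)$ with $\phi-1\le\psi\le\phi$. Set $w:=t\psi+(1-t)\phi-s-t$, which lies in $\mathcal E(X,\theta,\phi)$. From $\phi-1\le\psi\le\phi$ one gets the inclusions
\[
\{u<\phi-s-t\}\subset\{u<w\}\subset\{u<\phi-s\}.
\]
Multilinearity of the non-pluripolar product gives $\theta_w^n\ge t^n\theta_\psi^n$. Applying \Cref{thm:DDL-relative-comparison} to $u,w$ and taking the supremum over $\psi$ yields
\[
t^n\operatorname{Cap}_{\theta,\phi}(\{u<\phi-s-t\})\le\int_{\{u<\phi-s\}}f\,\omega^n .
\]
By H\"older, the right-hand side is at most $\|f\|_{L^p}\operatorname{Vol}_\omega(\{u<\phi-s\})^{1/q}$, where $1/p+1/q=1$.

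\emph{Volume--capacity estimate (main obstacle).} I need a bound of the form
\[
\operatorname{Vol}_\omega(E)\le C\exp\bigl(-c\,\operatorname{Cap}_{\theta,\phi}(E)^{-1/n}\bigr),
\]
with constants depending only on $\theta,\omega$ and $\int_X\theta_\phi^n$. The domination $\operatorname{Cap}_{\theta,\phi}\le F_\phi(\operatorname{Cap}_\omega)$ recalled above goes the wrong direction, so it cannot be used. I would argue with the relative extremal function $V^*_{E,\phi}:=P_\theta(\phi\ \text{on}\ E)$ and $M_E:=\sup_X V^*_{E,\phi}$, in three steps.
\begin{enumerate}[(i)]
\item Since $\theta^n_{V^*_{E,\phi}}$ is supported on $\overline E$ by \Cref{thm:envelope-contact} and has total mass $\int_X\theta_\phi^n>0$, testing against a suitable rescaling of $V^*_{E,\phi}$ gives $M_E\gtrsim \bigl(\int_X\theta_\phi^n/\operatorname{Cap}_{\theta,\phi}(E)\bigr)^{1/n}$.
\item On $E$ we have $V^*_{E,\phi}-M_E\le-M_E$, and $V^*_{E,\phi}-M_E$ is a sup-normalized $\theta$-psh function.
\item The uniform Skoda estimate $\int_X e^{-\alpha v}\omega^n\le C$ for sup-normalized $v\in\operatorname{PSH}(X,\theta)$ then bounds $\operatorname{Vol}_\omega(E)$ by $Ce^{-\alpha M_E}$.
\end{enumerate}
If the constant in (i) is delicate, my first fallback is the Guedj--Lu route: compare $u$ with the solution of an auxiliary equation $\theta_v^n=e^{\lambda v}\,\cdot$ (a suitable density) relative to $\phi$, and invoke the envelope characterization of \Cref{thm:envelope-contact}.

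\emph{Iteration.} Let $g(s):=\operatorname{Cap}_{\theta,\phi}(\{u<\phi-s\})^{1/n}$. Combining the comparison step with the volume--capacity estimate, and using $\{u<\phi-s\}\subset\{u<w\}$ with $t=1$ in the comparison inclusions to bound $\operatorname{Vol}_\omega$ by the capacity, gives
\[
t\,g(s+t)\le A\,g(s)^{2},
\]
with $A$ depending only on the stated data. To start the iteration I need $g(s_0)$ small for some $s_0$ under control. I would get this from a uniform $L^1$ bound on sup-normalized $\theta$-psh functions, which gives $\operatorname{Cap}_{\theta,\phi}(\{u<\phi-s\})\le C/s$.

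The standard De Giorgi lemma then shows $g\equiv0$ beyond an explicit $s_\infty$. So $\{u<\phi-s_\infty\}$ has zero relative capacity and is therefore pluripolar. Since $u$ and $\phi$ are quasi-psh, the inequality $u\ge\phi-s_\infty$ extends from quasi-everywhere to everywhere. This gives the lower bound with $C=s_\infty$.
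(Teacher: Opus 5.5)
Your proposal is correct in outline, but it takes a different route from the one the paper relies on.

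\textbf{Comparison of routes.} The paper does not reprove this statement. It imports \cite[Theorem~5.6]{DDL25}, whose proof follows the quasi-psh envelope method of Guedj--Lu \cite{GL25}. That method uses no capacity and no extremal functions: $u$ is compared, through envelopes and the domination principle, with solutions of auxiliary Monge--Amp\`ere equations in $\mathcal E(X,\theta,\phi)$. As the proof of \Cref{cor:relative-Linfty-estimate-moving} records, the quantitative inputs are $\mu(X)$ and a uniform moment bound $A_r(\mu)=\sup_h\int_X(-h)^r\,d\mu$ for some $r>n$, taken over sup-normalized quasi-psh $h$. This is why the paper obtains the uniform moving version just by checking that $A_r(\mu_j)$ and $\mu_j(X)$ are controlled.

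You instead run Ko{\l}odziej's capacity iteration \cite{Ko98,EGZ09,BEGZ10} with $\operatorname{Cap}_{\theta,\phi}$. This makes the role of positive mass transparent: it is exactly what turns a bound on $M_K$ into a lower bound on $\operatorname{Cap}_{\theta,\phi}(K)$, and hence into the volume--capacity inequality. It also handles any $\mu$ with $\mu(E)\le A\operatorname{Cap}_{\theta,\phi}(E)^{1+\epsilon}$. The price is the extremal-function machinery and the De Giorgi lemma. Your constants depend only on $A$ with $\theta\le A\omega$ (through Skoda and $L^1$ bounds) and on a lower bound for $\int_X\theta_\phi^n$, so tracking them would also give the moving corollary.

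\textbf{Points needing repair.} None of these is fatal.

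First, with $w=t\psi+(1-t)\phi-s-t$ you only get $\phi-s-2t\le w\le\phi-s-t$. Take $w:=t\psi+(1-t)\phi-s$ to obtain the inclusions you wrote.

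Second, the extremal function must carry the singularity constraint: $V_{K,\phi}:=\sup\{v\in\operatorname{PSH}(X,\theta):v\preceq\phi,\ v\le\phi \text{ on } K\}$. Without it, $V^*_{K,\phi}$ can be strictly less singular than $\phi$; for instance, when $\theta=\omega$ and $\phi$ is bounded below on $K$ it is bounded. Then both the mass identity and your test function fail. With the constraint, the model property gives $\phi\le V^*_{K,\phi}\le\phi+M_K$. So for $M_K\ge1$ the function $M_K^{-1}V^*_{K,\phi}+(1-M_K^{-1})\phi-1$ is a competitor for $\operatorname{Cap}_{\theta,\phi}(K)$.

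Third, the vanishing of $\theta^n_{V^*_{K,\phi}}$ on $X\setminus K$ is not covered by \Cref{thm:envelope-contact}. The relevant obstacle (equal to $\phi$ on $K$ and $\phi+M$ off $K$) is not quasi-continuous. Prove the vanishing for compact $K$ by balayage, or quote the relative extremal-function results of Darvas--Di Nezza--Lu. Then pass to Borel sets using inner regularity of $\omega^n$ and monotonicity of capacity; support on $\bar E$ alone would only bound $\operatorname{Cap}_{\theta,\phi}(\bar E)$, not $\operatorname{Cap}_{\theta,\phi}(E)$.

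Fourth, the bound $\operatorname{Cap}_{\theta,\phi}(\{u<\phi-s\})\le C/s$ needs more than the $L^1$ bound; it requires an integration by parts as in \Cref{lem:uniform-DDL-capacity-moving-models}. It is simpler to use Chebyshev, $\operatorname{Vol}_\omega(\{u<\phi-s\})\le C/s$, in your comparison step with $t=1$. This gives $\operatorname{Cap}_{\theta,\phi}(\{u<\phi-s-1\})\le\|f\|_{L^p}(C/s)^{1/q}$, which is enough to start the iteration.
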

	
	We need the following uniform form when the background class varies.
	
	\begin{corollary}
		\label{cor:relative-Linfty-estimate-moving}
		Let \(\theta_j,\theta\) be smooth closed real \((1,1)\)-forms representing big
		classes and assume
		\[
		-\varepsilon_j\omega\leq\theta_j-\theta\leq\varepsilon_j\omega,
		\qquad
		\varepsilon_j\to0 .
		\]
		Let \(\phi_j\in\operatorname{PSH}(X,\theta_j)\) be normalized model potentials
		satisfying
		\[
		\inf_j\int_X\theta_{j,\phi_j}^n=:m_0>0 .
		\]
		Suppose that \(u_j\in\mathcal E(X,\theta_j,\phi_j)\), \(\sup_Xu_j=0\), and
		\[
		\theta_{j,u_j}^n=f_j\omega^n,\qquad
		\sup_j\|f_j\|_{L^p(X,\omega^n)}\leq M
		\]
		for some \(p>1\). Then there exists \(C>0\), independent of \(j\), such that
		\[
		\phi_j-C\leq u_j\leq\phi_j .
		\]
	\end{corollary}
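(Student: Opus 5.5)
The plan is to reduce \Cref{cor:relative-Linfty-estimate-moving} to a single fixed background form, so that \Cref{thm:relative-Linfty-estimate} applies with constants independent of \(j\). Set \(\varepsilon_0:=\sup_j\varepsilon_j<\infty\) and \(\Theta:=\theta+\varepsilon_0\omega\). Then \(\theta_j\le\Theta\), so \(u_j,\phi_j\in\operatorname{PSH}(X,\Theta)\), and \(\{\Theta\}\) is big (even more positive than \(\{\theta\}\)). However, \(\phi_j\) need not be a \(\Theta\)-model potential, and \(u_j\) need not lie in \(\mathcal E(X,\Theta,\phi_j)\). Moreover, the mass of \(\Theta_{u_j}^n\) differs from that of \(\theta_{j,u_j}^n\). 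So a direct transfer is not clean, and I would instead rerun the proof of \Cref{thm:relative-Linfty-estimate} while tracking the dependence of its constants.

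In the Guedj--Lu envelope method as used in \cite{DDL25}, the constant \(C\) is governed by three quantities:
\begin{enumerate}[(a)]
\item a uniform Skoda/volume-capacity estimate for \(\theta_j\)-psh functions, namely \(\int_E f_j\,\omega^n\le \|f_j\|_{L^p}\operatorname{Vol}(E)^{1/q}\) together with \(\operatorname{Vol}(E)\le \exp(-c\operatorname{Cap}_\omega(E)^{-1/n})\);
\item the lower mass bound \(\int_X\theta_{j,\phi_j}^n\ge m_0\);
\item a uniform comparison between the relative capacity \(\operatorname{Cap}_{\theta_j,\phi_j}\) and \(\operatorname{Cap}_\omega\).
\end{enumerate}

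Item (a) is uniform because all \(\theta_j\le(\|\theta\|+\varepsilon_0)\omega=:A\omega\). Hence every \(\theta_j\)-psh function is \(A\omega\)-psh, and normalized \(A\omega\)-psh functions form a compact family, which gives uniform integrability exponents. Item (b) holds by hypothesis.

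Concretely, I would follow the scheme in which one compares \(u_j\) with \(\phi_j\) through the sublevel sets \(\{u_j<\phi_j-t\}\). The relative comparison principle \Cref{thm:DDL-relative-comparison} in the class \(\mathcal E(X,\theta_j,\phi_j)\) yields the De Giorgi-type inequality
\[
s^n\operatorname{Cap}_{\theta_j,\phi_j}(\{u_j<\phi_j-t-s\})\le \int_{\{u_j<\phi_j-t\}}f_j\,\omega^n .
\]
The right-hand side is bounded by \(M\,\operatorname{Vol}^{1/q}\), and the volume is then controlled by \(\operatorname{Cap}_{\theta_j,\phi_j}\) via a uniform capacity comparison. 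Kołodziej's iteration then produces \(C\) depending only on \(n,p,M,A,m_0\).

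The main obstacle is item (c). The domination \(\operatorname{Cap}_{\theta,\phi}\le F_\phi(\operatorname{Cap}_\omega)\) from \cite[Theorem~4.1]{DDL25} goes in the wrong direction for this purpose: what is needed is a reverse bound, \(\operatorname{Vol}(E)\lesssim G(\operatorname{Cap}_{\theta_j,\phi_j}(E))\), uniform in \(j\). To get it, I would try to reproduce the DDL argument for the reverse inequality. It uses the fact that a model potential of positive mass admits a \(\theta\)-psh function \(\rho\) with \(\phi-1\le\rho\le\phi\) and \(\theta_\rho^n\ge c\,\mathbf 1_K\omega^n\)-type lower bounds, obtained by solving \(\theta_{\rho}^n=(\text{mass})\cdot\omega^n/\operatorname{Vol}\) in \(\mathcal E(X,\theta,\phi)\). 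This construction is exactly circular, so uniformity must be extracted from the explicit dependence in \cite{DDL25}, where the constants depend on \(\theta\) only through \(A\) and on \(\phi\) only through its mass.

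If that dependence is not explicit, the fallback is a contradiction argument. Suppose \(\inf_X(u_j-\phi_j)\to-\infty\) along a subsequence. By the compactness noted in (a), pass to a further subsequence with \(\phi_j\to\phi_\infty\) and \(u_j\to u_\infty\) in \(L^1\). The uniform \(L^p\) bound on \(f_j\) then rules out mass escaping toward the singular locus, and this contradicts the fixed-class estimate \Cref{thm:relative-Linfty-estimate} applied in the limit.
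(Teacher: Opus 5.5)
Your reduction works up to the point you flag yourself. For $0<s\le1$, the inequality $s^n\operatorname{Cap}_{\theta_j,\phi_j}(\{u_j<\phi_j-t-s\})\le\int_{\{u_j<\phi_j-t\}}f_j\,\omega^n$ does follow from \Cref{thm:DDL-relative-comparison}, and (a) and (b) are uniform in $j$.

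\textbf{The gap is item (c).} The whole iteration rests on a bound $\omega^n(E)\le G(\operatorname{Cap}_{\theta_j,\phi_j}(E))$ with $G$ independent of $j$, and the proposal never establishes it. You rightly reject the construction by solving $\theta_{j,\rho}^n=c\,\omega^n$ as circular. You then appeal to an unverified dependence of constants in \cite{DDL25}; note that its proof is not a Ko{\l}odziej iteration anyway. Finally you fall back on a compactness argument, and that argument cannot work, for two reasons. First, the $L^1$-limit $\phi_\infty$ need not be a model potential, and its mass need not equal $\lim_j\int_X\theta_{j,\phi_j}^n$, since masses can jump under $L^1$-convergence; so \Cref{thm:relative-Linfty-estimate} need not apply to $(u_\infty,\phi_\infty)$. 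Second, even a bound $u_\infty\ge\phi_\infty-C$ says nothing about $\inf_X(u_j-\phi_j)$, because $L^1$-convergence does not control infima. A constant that survives along the sequence is exactly what the corollary asserts, so no contradiction is produced.

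\textbf{How to close it.} Item (c) needs no auxiliary equation; use the $\phi_j$-relative global extremal function instead. For compact non-pluripolar $K$, put $g_K:=P^{\theta_j}_K[\phi_j](0)$ and $M_K:=\sup_Xg_K<+\infty$.
\begin{enumerate}[(1)]
\item Since $\phi_j$ is a normalized model potential, every competitor $w$ satisfies $w-\sup_Xw\le\phi_j$. Hence $\phi_j\le g_K\le\phi_j+M_K$, so $\theta_{j,g_K}^n$ has mass $\int_X\theta_{j,\phi_j}^n\ge m_0$. It is carried by $K$ by the contact property of such envelopes (cf.\ \cite{DX24}).
\item The competitor $M_K^{-1}g_K+(1-M_K^{-1})\phi_j-1$ (or $g_K-M_K$ if $M_K<1$) gives $\operatorname{Cap}_{\theta_j,\phi_j}(K)\ge m_0\max\{M_K,1\}^{-n}$.
\item The function $g_K-M_K$ is normalized, $A\omega$-psh, and $\le-M_K$ q.e.\ on $K$. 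Uniform Skoda integrability then gives $\omega^n(K)\le C_\alpha e^{-\alpha M_K}$.
\end{enumerate}
Together these give $\omega^n(E)\le C\exp\bigl(-\alpha(m_0/\operatorname{Cap}_{\theta_j,\phi_j}(E))^{1/n}\bigr)$ for all Borel $E$, uniformly in $j$. This closes your iteration. It is also what you need at the end to pass from $\operatorname{Cap}_{\theta_j,\phi_j}(\{u_j<\phi_j-C\})=0$ to $u_j\ge\phi_j-C$ everywhere.

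\textbf{Comparison with the paper.} The paper sidesteps (c) altogether. It reruns the Guedj--Lu envelope argument of \cite[Theorem~5.6]{DDL25}, where the only inputs to be made uniform are:
\begin{enumerate}[(i)]
\item $\sup_h\int_X(-h)^r\,d\mu_j$ over normalized $A\omega$-psh $h$, bounded via H\"older and uniform $L^s$-bounds;
\item the two-sided mass bound $m_0\le\mu_j(X)\le M(\int_X\omega^n)^{1/q}$;
\item uniform bigness, $\theta_j+dd^c\rho\ge a\omega$.
\end{enumerate}
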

	
	\begin{proof}
		By \Cref{thm:relative-Linfty-estimate}, \(u_j\) has the same singularity type
		as \(\phi_j\) for each \(j\). It remains to make the a priori constants
		uniform.
		
		Choose a K\"ahler form \(\Omega=A\omega\) such that \(\theta_j\leq\Omega\) for
		all large \(j\). Then every normalized \(\theta_j\)-psh function is a
		normalized \(\Omega\)-psh function. Since the classes \(\{\theta_j\}\)
		converge to the big class \(\{\theta\}\), they are uniformly big: after
		choosing \(\rho\in\operatorname{PSH}(X,\theta)\) with
		\(\theta+dd^c\rho\geq2a\omega\), one has
		\[
		\theta_j+dd^c\rho\geq a\omega
		\]
		for all large \(j\).
		
		Put \(\mu_j:=f_j\omega^n\), and let \(q=p/(p-1)\). For \(r>n\), define
		\[
		A_r(\mu_j):=
		\sup_{\substack{h\in\operatorname{PSH}(X,\Omega)\\ \sup_Xh=0}}
		\int_X(-h)^r\,d\mu_j .
		\]
		H\"older's inequality and the uniform \(L^s\)-bounds for normalized
		\(\Omega\)-psh functions give
		\[
		A_r(\mu_j)
		\leq
		M\left(
		\sup_{\substack{h\in\operatorname{PSH}(X,\Omega)\\ \sup_Xh=0}}
		\int_X |h|^{rq}\,\omega^n
		\right)^{1/q}
		\leq C_r ,
		\]
		with \(C_r\) independent of \(j\). Moreover,
		\[
		m_0\leq\mu_j(X)\leq
		M\left(\int_X\omega^n\right)^{1/q}.
		\]
		Thus the quantities entering the a priori part of
		\cite[Theorem~5.6]{DDL25} are uniformly controlled. Repeating that argument
		gives a constant \(C>0\), independent of \(j\), such that
		\(u_j\geq\phi_j-C\). The upper bound follows from the normalization and the
		model property of \(\phi_j\); see \cite[Lemma~3.2]{DDL25}.
	\end{proof}
	
	We shall  use the solvability theorem for Monge-Amp\`ere equations
	with prescribed singularity type.
	
	\begin{theorem}[{\cite[Theorem~4.28]{DDL18b}, \cite[Theorem~5.4]{DDL25}}]
		\label{thm:relative-solvability}
		Let \(X\) be a compact K\"ahler manifold, and let \(\theta\) be a smooth
		closed real \((1,1)\)-form representing a big class. Let
		\(\phi\in\operatorname{PSH}(X,\theta)\) be a positive mass model potential,
		that is,
		\[
		\phi=P_\theta[\phi],
		\qquad
		\int_X\theta_\phi^n>0 .
		\]
		Let \(\mu\) be a non-pluripolar positive Radon measure on \(X\) satisfying
		\[
		\mu(X)=\int_X\theta_\phi^n .
		\]
		Then there exists \(u\in\mathcal E(X,\theta,\phi)\) such that
		\[
		\theta_u^n=\mu .
		\]
		Moreover, the solution is unique up to an additive constant.
	\end{theorem}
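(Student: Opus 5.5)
The solvability statement in \Cref{thm:relative-solvability} is quoted from \cite[Theorem~4.28]{DDL18b} and \cite[Theorem~5.4]{DDL25}. My plan is to prove it variationally: first existence for bounded densities via the relative $L^\infty$ estimate, then general non-pluripolar measures by approximation, with uniqueness coming from the relative comparison principle.

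\emph{Step 1: uniqueness.} Let $u,v\in\mathcal E(X,\theta,\phi)$ with $\theta_u^n=\theta_v^n=\mu$. I would follow the Dinew-type argument in the relative setting. Applying \Cref{thm:DDL-relative-comparison} to the pairs $(u,v+t)$ for all $t\in\mathbb R$ shows that $\mu$ puts equal mass on $\{u<v+t\}$ computed with either potential. I would then perturb $v$ to $P_\theta(\min\{v, (1-s)v+s\phi\})$-type combinations, or use the mixed-mass monotonicity \Cref{thm:mixed-monotonicity} to run the standard "$\theta_u\wedge\theta_v^{n-1}\ge\mu$" mixed inequality. Equality in that inequality forces $\theta_u^n$ and $\theta_v^n$ to be proportional along the segment. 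This yields $u-v$ constant, using that $\phi$ has positive mass so that the domination principle holds.

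\emph{Step 2: existence for $\mu=f\omega^n$ with $f\in L^p$, $p>1$.} Here I would maximize the relative Ding-type functional
\[
F(w)=I_\phi(w)-\int_X w\,d\mu
\]
over $\mathcal E^1(X,\theta,\phi)$, where $I_\phi$ is the relative Monge–Ampère energy. Properness follows from the $L^1$-compactness of $\{\sup_X w=0\}$ and the bound $\int |w|\,d\mu\le C$. Upper semicontinuity of $I_\phi$ gives a maximizer. The projection argument, namely differentiating $t\mapsto I_\phi(P_\theta(w+t\chi))$ with \Cref{thm:envelope-contact}, shows that $\theta_w^n=\mu$. \Cref{thm:relative-Linfty-estimate} then gives $\phi-C\le w\le\phi$.

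\emph{Step 3: general $\mu$.} By Cegrell's decomposition, a non-pluripolar $\mu$ is absolutely continuous with respect to $\theta_\rho^n$ for some bounded-relative-to-$\phi$ potential $\rho$. So $\mu=g\,\theta_\rho^n$, and I would approximate by $\mu_k=c_k\min(g,k)\theta_\rho^n$. I would solve each approximation with the Step 2 method, where bounded-density data with respect to $\theta_\rho^n$ suffices since the capacity domination $\operatorname{Cap}_{\theta,\phi}\le F_\phi(\operatorname{Cap}_\omega)$ replaces the $L^p$ bound. I would then extract an $L^1$-limit of the normalized solutions $u_k$. The \emph{main obstacle} is showing that the limit keeps full relative mass $\int\theta_\phi^n$ and that $\theta_{u_k}^n\to\theta_u^n$. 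For this I would use envelopes $P_\theta(\inf_{l\ge k}u_l)$ and take $u$ as the decreasing limit. The rooftop contact inequality \Cref{thm:rooftop-contact-inequality} gives $\theta^n\ge\min(\mu_l)$ on these envelopes. Mass preservation I would get from monotonicity of non-pluripolar products along decreasing sequences within $\mathcal E(X,\theta,\phi)$, together with the positivity of $\int\theta_\phi^n$.
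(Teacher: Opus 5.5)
The paper does not prove this statement; it quotes it from \cite{DDL18b,DDL25}, so your outline has to stand on its own. Step~2 is a reasonable sketch. One caveat: differentiating $t\mapsto I_\phi(P_\theta(w+t\chi))$ for a general model $\phi$ needs integration by parts for non-pluripolar products, not only \Cref{thm:envelope-contact}. Step~3, however, has a genuine gap exactly where you place the main obstacle, and the proposed mechanism fails on three counts.
\begin{enumerate}[(a)]
\item The sequence $\inf_{l\ge k}u_l$ increases with $k$, so $P_\theta(\inf_{l\ge k}u_l)$ is increasing in $k$, not decreasing. There is also no uniform bound $u_l\ge\phi-C$; for a general non-pluripolar $\mu$ the limit is not bounded relative to $\phi$. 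So nothing prevents $P_\theta(\inf_{l\ge k}u_l)\equiv-\infty$.
\item \Cref{thm:rooftop-contact-inequality} is an \emph{upper} bound. It makes such inf-envelopes supersolutions and never gives $\theta^n\ge\min_l\mu_l$.
\item Mass cannot be recovered from monotonicity, because $\mathcal E(X,\theta,\phi)$ is not closed under decreasing limits. For any $\psi\le\phi$ with $\int_X\theta_\psi^n<\int_X\theta_\phi^n$, the potentials $\max(\psi,\phi-k)\in\mathcal E(X,\theta,\phi)$ decrease to $\psi$. \Cref{thm:mixed-monotonicity} only yields $\int_X\theta_u^n\le\int_X\theta_\phi^n$, and positivity of $\int_X\theta_\phi^n$ plays no role in this step.
\end{enumerate}

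What is missing is a \emph{lower} bound on the limit measure. The right envelopes are the upper ones, $(\sup_{l\ge k}u_l)^*\searrow u$. For these the maximum principle $\theta_{\max(a,b)}^n\ge\mathbf 1_{\{a\ge b\}}\theta_a^n+\mathbf 1_{\{a<b\}}\theta_b^n$ gives lower bounds, and passing to the limit yields $\theta_u^n\ge\mu$ when $\mu_l\to\mu$ in total variation. This is \cite[Lemma~5.10]{DDL25}, which the paper uses in exactly this way in Step~1 of \Cref{thm:moving-background-capacity-stability}. Normalized elements of $\mathcal E(X,\theta,\phi)$ lie below the model potential $\phi$, so $u\le\phi$. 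Then $\mu(X)\le\int_X\theta_u^n\le\int_X\theta_\phi^n=\mu(X)$, which gives $u\in\mathcal E(X,\theta,\phi)$ and $\theta_u^n=\mu$.

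The reductions at the start of Step~3 are also unsupported.
\begin{enumerate}[(a)]
\item The relative Cegrell decomposition with $\rho\simeq\phi$ is only asserted.
\item The bound $\operatorname{Cap}_{\theta,\phi}\le C\operatorname{Cap}_\omega^{1/n}$ together with $\operatorname{Cap}_\omega(\{w<-t\})\le C/t$ only gives $\mu_k(\{w<-t\})\lesssim t^{-1/n}$. This is not integrable in $t$, so it cannot replace the $L^p$ bound.
\end{enumerate}
Use instead the K\"ahler Cegrell decomposition $\mu=g\,(\omega+dd^c\rho)^n$ with bounded $\rho\in\operatorname{PSH}(X,\omega)$; there Chern--Levine--Nirenberg bounds control $\int_X|w|\,d\mu_k$.

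Step~1 contains no actual argument.
\begin{enumerate}[(a)]
\item When $\theta_u^n=\theta_v^n$, the comparison principle for the pair $(u,v+t)$ is a tautology.
\item $P_\theta(\min\{v,(1-s)v+s\phi\})=v$ as soon as $v\le\phi$, so this perturbation does nothing.
\item The claim that ``equality forces proportionality, hence $u-v$ is constant'' skips Dinew's mechanism. That mechanism needs the full mass of $\theta_u^k\wedge\theta_v^{n-k}$ for $u,v\in\mathcal E(X,\theta,\phi)$, hence $\theta_u^k\wedge\theta_v^{n-k}=\mu$, followed by a genuine integration-by-parts argument for non-pluripolar products.
\end{enumerate}
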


	\subsection{Tame measures, singularity exponents, and delta invariants.}
	\label{subsec:tame-measures-delta}
	
	We recall the notation from Darvas-Zhang \cite{DZ24} used in
	\Cref{sec:DZ-stability}. Let \(\chi\) be smooth, let \(\psi\) be quasi-psh,
	and assume
	\[
	\int_X e^{\chi-\psi}\omega^n<+\infty .
	\]
	After a harmless normalization, set
	\[
	\mu:=e^{\chi-\psi}\omega^n .
	\]
	Such a measure will be called tame. For a quasi-psh function \(\varphi\), its
	complex singularity exponent with respect to \(\mu\) is
	\[
	c_\mu[\varphi]
	:=
	\sup\left\{
	\lambda>0:
	\int_X e^{-\lambda\varphi}\,d\mu<+\infty
	\right\}.
	\]
	It depends only on the singularity type of \(\varphi\). We use the convention
	\(c_\mu[\varphi]=+\infty\) if the integral is finite for all \(\lambda>0\).
	In particular,
	\[
	c_\mu[V_\theta]
	=
	\sup\left\{
	\lambda>0:
	\int_X e^{-\lambda V_\theta}\,d\mu<+\infty
	\right\}.
	\]
	
	Let \(\pi:Y\to X\) be a smooth birational model, and let \(E\subset Y\) be a
	prime divisor. For a quasi-psh function \(\varphi\) on \(X\), denote by
	\(\nu(\varphi,E)\) the generic Lelong number of \(\pi^*\varphi\) along \(E\).
	The log discrepancy is
	\[
	A_X(E):=1+\operatorname{ord}_E(K_Y-\pi^*K_X),
	\]
	and we set
	\[
	A_{\chi,\psi}(E):=
	A_X(E)+\nu(\chi,E)-\nu(\psi,E).
	\]
	The integrability assumption on \(e^{\chi-\psi}\omega^n\) implies
	\(A_{\chi,\psi}(E)>0\) for all such \(E\).
	
	For a prime divisor \(E\subset Y\xrightarrow{\pi}X\), define
	\[
	\tau_\theta(E)
	:=
	\sup\left\{
	t\geq0:
	\{\pi^*\theta\}-t\{E\}\ \text{is big}
	\right\}.
	\]
	The expected Lelong number of the class \(\{\theta\}\) along \(E\) is
	\[
	S_\theta(E)
	:=
	\frac1{\operatorname{Vol}(\theta)}
	\int_0^{\tau_\theta(E)}
	\operatorname{vol}\bigl(\{\pi^*\theta\}-t\{E\}\bigr)\,dt,
	\qquad
	\operatorname{Vol}(\theta):=\int_X\theta_{V_\theta}^n .
	\]
	Here \(\operatorname{vol}(\cdot)\) denotes the non-pluripolar volume of a big
	class. The delta invariant associated with \(\mu\) is
	\[
	\delta_\mu(\{\theta\})
	:=
	\inf_E
	\frac{A_{\chi,\psi}(E)}{S_\theta(E)} .
	\]
	When the class is fixed we write simply \(\delta_\mu\). In the twisted case
	\(\chi=0\), \(\mu=e^{-\psi}\omega^n\), this becomes the twisted delta invariant
	\[
	\delta_\psi(\{\theta\})
	=
	\inf_E
	\frac{A_X(E)-\nu(\psi,E)}{S_\theta(E)} .
	\]
	
	For \(\lambda>0\), define
	\[
	L^\lambda_\mu(u)
	:=
	-\frac1\lambda
	\log\int_X e^{-\lambda u}\,d\mu
	\]
	whenever the integral is finite. The \(\lambda\)-Ding functional is
	\[
	D^\lambda_\mu(u)
	:=
	L^\lambda_\mu(u)-I_\theta(u),
	\qquad
	u\in\mathcal E^1(X,\theta),
	\]
	where
	\[
	I_\theta(u)
	:=
	\frac{1}{\mbox{Vol}(\theta)({n+1})}
	\sum_{k=0}^n
	\int_X
	(u-V_\theta)\,
	\theta_u^k\wedge\theta_{V_\theta}^{n-k}.
	\]
	Here \(\mathcal E^1(X,\theta)\) denotes the finite-energy subspace of
	\(\mathcal E(X,\theta,V_\theta)\), namely the set of \(u\) with
	\(I_\theta(u)>-\infty\). The functional \(D^\lambda_\mu\) is well defined on
	\(\mathcal E^1(X,\theta)\) whenever \(\lambda<c_\mu[V_\theta]\). It is
	coercive, or proper, if there exist \(\varepsilon,C>0\) such that
	\[
	D^\lambda_\mu(u)
	\geq
	\varepsilon\bigl(\sup_Xu-I_\theta(u)\bigr)-C,
	\qquad
	u\in\mathcal E^1(X,\theta).
	\]
	Under the normalization \(\sup_Xu=0\), this is equivalent to
	\[
	D^\lambda_\mu(u)\geq -\varepsilon I_\theta(u)-C .
	\]

	\subsection[Multiplier ideals and I-singularity types]
	{Multiplier ideals and \texorpdfstring{\(I\)}{I}-singularity types}
	
	Let \(\psi\) be quasi-psh. The multiplier ideal sheaf \(\mathcal I(\psi)\) is
	defined by
	\[
	\mathcal I(\psi)_x
	:=
	\left\{
	f\in\mathcal O_{X,x}:
	|f|^2e^{-\psi}\in L^1_{\rm loc}\ \text{near }x
	\right\}.
	\]
	For \(u,v\in\operatorname{PSH}(X,\theta)\), following Darvas-Xia
	\cite{DX22,DX24}, we write \(u\preceq_I v\) if
	\[
	\mathcal I(tu)\subset \mathcal I(tv)
	\qquad\text{for all }t>0.
	\]
	We write \(u\simeq_I v\) if both \(u\preceq_I v\) and \(v\preceq_I u\). Define
	\[
	P_\theta[u]_I
	:=
	\left(
	\sup\{
	\psi\in\operatorname{PSH}(X,\theta):
	\psi\leq0,\ \psi\preceq_I u
	\}
	\right)^* .
	\]
	When \(\theta\) is fixed, we often write \(P[u]_I\). A potential \(\Phi\) is
	called \(I\)-model if
	\[
	\Phi=P_\theta[\Phi]_I .
	\]
	By \cite[Propositions~2.18 and 2.20]{DX22}, \(P_\theta[u]_I\) is \(I\)-model
	and
	\[
	u\simeq_I P_\theta[u]_I .
	\]
	Moreover, if \(u\) has analytic singularity type, then
	\[
	P_\theta[u]_I=P_\theta[u].
	\]

	\subsection[Partial I-equilibrium envelopes]
	{Partial \texorpdfstring{\(I\)}{I}-equilibrium envelopes}
	
	Let \(K\subset X\) be compact and non-pluripolar, and let \(v\in C^0(K)\).
	Following Berman-Boucksom-Witt Nystr\"om \cite{BBWN11} and Darvas-Xia
	\cite{DX24}, define the partial \(I\)-equilibrium envelope associated with
	\(u\) and \((K,v)\) by
	\[
	P_K^\theta[u]_I(v)
	:=
	\left(
	\sup\{
	\psi\in\operatorname{PSH}(X,\theta):
	\psi\leq v\ \text{q.e. on }K,\ 
	\psi\preceq_I u
	\}
	\right)^* .
	\]
	When \(\theta\) is fixed, we write \(P_K[u]_I(v)\). We also use the ordinary
	singularity-constrained envelope
	\[
	P_K^\theta[\phi](v)
	:=
	\left(
	\sup\{
	\psi\in\operatorname{PSH}(X,\theta):
	\psi\leq v\ \text{q.e. on }K,\ 
	\psi\preceq\phi
	\}
	\right)^* .
	\]
	If \(\Phi=P_\theta[u]_I\) has positive mass, then in the situations considered
	below the partial \(I\)-envelope depends only on \(\Phi\):
	\[
	P_K^\theta[u]_I(v)=P_K^\theta[\Phi](v),
	\qquad
	\Phi=P_\theta[u]_I .
	\]
	We shall use the contact property and the mass identity from \cite{DX24}: the
	Monge-Amp\`ere measure of \(P_K[u]_I(v)\) is concentrated on the contact set
	in \(K\), and
	\[
	\int_X\theta_{P_K[u]_I(v)}^n
	=
	\int_X\theta_{P[u]_I}^n .
	\]

	\subsection{Partial Bergman measures}
	
	In the quantization part, \(L\to X\) is a holomorphic line bundle with smooth
	Hermitian metric \(h\), and
	\[
	\theta=c_1(L,h).
	\]
	A fixed twisting line bundle may be included; we suppress it from the notation.
	
	For \(u\in\operatorname{PSH}(X,\theta)\), set
	\[
	V_{k,u}
	:=
	H^0\bigl(X,L^k\otimes\mathcal I(ku)\bigr),
	\qquad
	N_{k,u}:=\dim V_{k,u}.
	\]
	Let \(K\subset X\) be compact and non-pluripolar, let \(v\in C^0(K)\), and let
	\(\nu\) be a probability measure on \(K\). The corresponding \(L^2\)-norm on
	\(V_{k,u}\) is
	\[
	\|s\|^2_{k,v,\nu}
	:=
	\int_K |s|_{h^k}^2 e^{-kv}\,d\nu .
	\]
	For an orthonormal basis \(\{s_\alpha\}_{\alpha=1}^{N_{k,u}}\), define
	\[
	B^k_{v,u,\nu}
	:=
	\sum_{\alpha=1}^{N_{k,u}} |s_\alpha|_{h^k}^2 e^{-kv},
	\]
	and
	\[
	\beta^k_{v,u,\nu}
	:=
	\frac{n!}{k^n}B^k_{v,u,\nu}\,\nu .
	\]
	If \(\nu\) is Bernstein-Markov for \((K,v)\), then
	\[
	\beta^k_{v,u,\nu}
	\rightharpoonup
	\theta^n_{P_K[u]_I(v)}
	\]
	by \cite[Theorem~1.2]{DX24}.
	
	The Bernstein-Markov property means that, for every \(\varepsilon>0\), there
	exists \(C_\varepsilon>0\) such that
	\[
	\sup_K |s|_{h^k}^2e^{-kv}
	\leq
	C_\varepsilon e^{\varepsilon k}
	\int_K |s|_{h^k}^2e^{-kv}\,d\nu
	\]
	for all \(k\geq1\) and all \(s\in H^0(X,L^k)\). The same estimate then holds
	on every subspace \(V_{k,u}\).

	\subsection[The \(d_{\mathcal S}\)-metric on singularity types]
	{The \texorpdfstring{\(d_{\mathcal S}\)}{dS}-metric on singularity types}
	
	We shall use only the following consequence of the \(d_{\mathcal S}\)-geometry.
	When \(\theta=\omega\) is K\"ahler, Darvas-Di Nezza-Lu proved that
	\[
	d_{\mathcal S}([u],[v])
	\leq
	\sum_{\ell=0}^n
	\left(
	2\int_X\omega^\ell\wedge\omega_{\max(u,v)}^{n-\ell}
	-
	\int_X\omega^\ell\wedge\omega_u^{n-\ell}
	-
	\int_X\omega^\ell\wedge\omega_v^{n-\ell}
	\right)
	\leq
	C_n d_{\mathcal S}([u],[v]),
	\]
	where \(C_n>0\) depends only on \(n\); see \cite{DDL21a}. In the examples
	below, the term \(\ell=0\) already gives a uniform positive lower bound for
	\(d_{\mathcal S}\).
	
	For comparison with our capacity stability theorem, we recall the
	\(d_{\mathcal S}\)-stability theorem of Darvas-Di Nezza-Lu. Let
	\(\mathcal S_\delta(X,\theta)\) denote the set of singularity types of
	\(\theta\)-psh functions whose total non-pluripolar mass is at least
	\(\delta\).

	\begin{theorem}[{\cite[Theorem~1.4]{DDL21a}}]
		\label{thm:DDL-dS-stability}
		Let \((X,\omega)\) be a compact K\"ahler manifold, let \(\theta\) be a smooth
		closed real \((1,1)\)-form representing a big class, and let \(\delta>0\),
		\(p>1\). Let \(\phi_j,\phi\) be normalized model potentials and let
		\(f_j,f\geq0\). Assume that:
		\begin{enumerate}
			\item[\textup{(i)}] \([\phi_j],[\phi]\in\mathcal S_\delta(X,\theta)\) and
			\(d_{\mathcal S}([\phi_j],[\phi])\to0\);
			\item[\textup{(ii)}] \(f_j\to f\) in \(L^1(X,\omega^n)\) and
			\(\sup_j\|f_j\|_{L^p(X,\omega^n)}<+\infty\);
			\item[\textup{(iii)}]
			\[
			\int_X f_j\omega^n=\int_X\theta_{\phi_j}^n,
			\qquad
			\int_X f\omega^n=\int_X\theta_\phi^n .
			\]
		\end{enumerate}
		Let \(u_j\in\mathcal E(X,\theta,\phi_j)\) and
		\(u\in\mathcal E(X,\theta,\phi)\) be the normalized solutions
		\[
		\theta_{u_j}^n=f_j\omega^n,\quad \sup_Xu_j=0,
		\qquad
		\theta_u^n=f\omega^n,\quad \sup_Xu=0.
		\]
		Then \(u_j\to u\) in \(\operatorname{Cap}_\omega\).
	\end{theorem}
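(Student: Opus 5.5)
The statement just quoted is the Darvas--Di Nezza--Lu $d_{\mathcal S}$-stability theorem, cited from \cite[Theorem~1.4]{DDL21a}. We nevertheless sketch how one would prove it from the tools recalled above, since the same scheme underlies the capacity version, \Cref{thm:moving-background-capacity-stability}.

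\textbf{Step 1: uniform a priori bounds.} Since $\sup_j\|f_j\|_{L^p}<\infty$ and $\int_X\theta_{\phi_j}^n\geq\delta$, \Cref{cor:relative-Linfty-estimate-moving} (with $\theta_j=\theta$) gives a constant $C$ independent of $j$ such that
\[
\phi_j-C\le u_j\le\phi_j ,
\]
and similarly $\phi-C\le u\le\phi$. The family $(u_j)$ is relatively compact in $L^1$, so it suffices to show that every $L^1$-cluster point equals $u$, and then to upgrade the convergence to capacity.

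\textbf{Step 2: squeezing via envelopes.} By \cite{DDL21a}, $d_{\mathcal S}$-convergence of model potentials with mass at least $\delta$ allows one to pass to a subsequence along which the model potentials are sandwiched by monotone sequences. Concretely, $\psi_j^-:=P_\theta(\inf_{k\ge j}\phi_k)$ increases and $\psi_j^+:=(\sup_{k\ge j}\phi_k)^*$ decreases, both converge a.e. to $\phi$, and their masses converge to $\int_X\theta_\phi^n$. The mass convergence is exactly what $d_{\mathcal S}$ encodes, through the mixed-mass formula recalled above.

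Next introduce
\[
w_j:=P_\theta\Bigl(\inf_{k\ge j}u_k\Bigr),
\qquad
v_j:=\Bigl(\sup_{k\ge j}u_k\Bigr)^* .
\]
Then $w_j\le u_k\le v_j$ for all $k\ge j$, the sequence $w_j$ is increasing, and $v_j$ is decreasing. One shows $w_j\in\mathcal E(X,\theta,\psi_j^-)$ and $v_j\in\mathcal E(X,\theta,\psi_j^+)$ with $v_j-w_j$ uniformly controlled by $C$ plus the model gap. This uses the rooftop contact inequality, \Cref{thm:rooftop-contact-inequality}, together with \Cref{thm:envelope-contact}, which give
\[
\theta_{w_j}^n\le \sup_{k\ge j}f_k\,\omega^n
\]
on the contact set.

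\textbf{Step 3: identifying the limits.} The monotone limits $w:=\lim_j w_j$ and $v:=\lim_j v_j$ exist, converge in capacity, and satisfy $w\le v$ with $\phi-C\le w\le v\le\phi$. By Bedford--Taylor continuity along monotone sequences of relatively bounded potentials, together with $f_j\to f$ in $L^1$ and the uniform $L^p$ bound (so that the supremum over tails converges to $f$), we obtain
\[
\theta_w^n\le f\omega^n .
\]
Since $w\in\mathcal E(X,\theta,\phi)$ has full mass $\int_X f\omega^n$, equality holds, and uniqueness in \Cref{thm:relative-solvability} gives $w=u+c$. The normalization $\sup_Xu_k=0$ together with Hartogs' lemma forces $\sup_X v=0$, and the domination principle in $\mathcal E(X,\theta,\phi)$ (a consequence of \Cref{thm:DDL-relative-comparison}) forces $v=w$. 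Since $w_j\le u_j\le v_j$ and both bounds converge monotonically, hence in capacity, to $u$, we conclude $u_j\to u$ in $\operatorname{Cap}_\omega$.

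\textbf{Main obstacle.} The hard step is Step 2: proving that the rooftop envelope $w_j$ has full mass relative to $\psi_j^-$, that is, that no Monge--Amp\`ere mass is lost in $P_\theta(\inf_{k\ge j}\phi_k)$. This is precisely where the $d_{\mathcal S}$ hypothesis enters, through \cite{DDL21a}'s lemma that $d_{\mathcal S}$-convergent sequences admit subsequences whose tail envelopes have masses converging to $\int_X\theta_\phi^n$. Without that control, in the capacity setting of this paper, one must replace it by the strict subbarrier and truncation argument described in the introduction.
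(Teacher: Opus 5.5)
The paper does not prove this statement; it is quoted from \cite{DDL21a}, and inside the paper its role is taken over by \Cref{cor:capacity-stability-cap-only}. Judged on its own terms, your sketch has a genuine gap at the end of Step~3.

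From $\theta_w^n\le f\omega^n$ and full mass you correctly get $w=u+c$, but nothing you have determines $c$. The functions $w_j=P_\theta(\inf_{k\ge j}u_k)$ carry no normalization: an infimum of functions with $\sup_X=0$ usually has negative supremum, and $\sup_Xw=\lim_j\sup_Xw_j$.

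The domination principle cannot fix the constant either. To get $v\le w$ from it you would need $\theta_w^n(\{w<v\})=\int_{\{w<v\}}f\omega^n=0$. This is exactly what fails if $c<0$: then $w$ and $u$ are two solutions of the same equation differing by a constant, and $\{w<v\}$ has full $f\omega^n$-measure.

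You also never identify $v$. For that you need either the maximum inequality $\theta^n_{\max(a,b)}\ge\mathbf 1_{\{a\ge b\}}\theta^n_a+\mathbf 1_{\{a<b\}}\theta^n_b$ together with mass convergence of $\psi_j^+$, or first the $L^1$-convergence $u_j\to u$ plus Hartogs.

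The missing statement $P_\theta(\inf_{k\ge j}u_k)\nearrow u$ is essentially the lower-tail estimate $\operatorname{Cap}_\omega(\{u_j<u-\delta\})\to0$ itself. That is the real content of the theorem beyond $L^1$-convergence, so it cannot come out of uniqueness. Relatedly, the ``main obstacle'' you name is not the hard point: $w_j\in\mathcal E(X,\theta,\psi_j^-)$ is automatic from $\psi_j^--C\le w_j\le\psi_j^-$ once $\psi_j^-\not\equiv-\infty$.

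\textbf{One way to close the gap within your scheme.}
First prove $u_j\to u$ in $L^1$. Every cluster point lies between $\phi-C$ and $\phi$ by your sandwich, and satisfies $\theta^n\ge f\omega^n$ as in Step~1 of \Cref{thm:moving-background-capacity-stability}, hence equals $u$; this also gives $c\le0$.

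Then pass to a subsequence with $\sum_k\|f_k-f\|_{L^{p'}}<\infty$ for some $1<p'<p$ (interpolate the $L^1$ convergence against the $L^p$ bound) and $\sum_k\|u_k-u\|_{L^{q'}}<\infty$ for the conjugate exponent. Some such fast subsequence is needed anyway: a uniform $L^p$ bound does not make $\sup_{k\ge j}f_k$ integrable, nor convergent to $f$.

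Next use that $\theta^n_{w_j}$ lives where $w_j$ touches $\inf_{k\ge j}u_k$, and is bounded by $(\sup_{k\ge j}f_k)\omega^n$; both facts must be justified through finite rooftops. On that set $u-w_j\le\sum_{k\ge j}|u-u_k|$, so H\"older gives $\int_X(u-w_j)\,\theta^n_{w_j}\to0$. Since $u-w_j\ge u-w=-c\ge0$ and $\int_X\theta^n_{w_j}\to\int_X\theta^n_\phi>0$, this forces $c=0$.

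\textbf{The route closer to the paper.}
The monotone sandwich $\psi_j^-\le\phi_j\le\psi_j^+$ from \cite{DDL21a} that you invoke already yields $\phi_j\to\phi$ in $\operatorname{Cap}_\omega$. Monotone a.e.\ convergence implies capacity convergence, and a sub-subsequence argument handles the full sequence. After that, \Cref{cor:capacity-stability-cap-only} applies directly. There the lower tail comes from the strict subbarrier and truncation comparison of \Cref{thm:moving-background-capacity-stability}, so no additive constant ever has to be pinned down.
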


	\begin{remark}
		\label{rem:DoVu-quantitative-stability}
		Do-Vu \cite{DV25} obtained quantitative refinements of this theorem, allowing
		both the background class and the prescribed singularity type to vary, with
		right-hand side measures converging in total variation.
	\end{remark}

	\section{Stability of the Monge-Amp\`ere equations}\label{sec:stability}
	
	In this section, we prove the direct implication part of \cref{thm:intro-optimal-capacity-topology}.  We first prepare two lemma.
	The following lemma has essentially been discovered by Darvas-Xia \cite[Proposition 3.6]{DX24}.
	
	\begin{lemma}
		\label{lem:strict-subbarrier-slope-gap}
		Let $(X,\omega)$ be compact K\"ahler, and let $\theta$ be big. Put
		$V:=V_\theta$. Let $w\in\operatorname{PSH}(X,\theta)$ satisfy $w\le V$ and $\int_X\theta_w^n>0$.
		Then there exist $\chi\in\operatorname{PSH}(X,\theta)$ and constants
		$a,\sigma>0$ such that
		\[
		\theta_\chi\ge a\omega,\qquad
		\chi\le w-\sigma(V-w).
		\]
	\end{lemma}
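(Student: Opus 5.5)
The plan is to build $\chi$ as a small convex combination of two ingredients: a potential $\psi$ that already satisfies the slope-gap inequality without any strict positivity, and a strictly positive potential $\rho$ that supplies the lower bound $\theta_\chi\ge a\omega$.

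\emph{Step 1 (potential with a slope gap).} For $t>1$ consider the obstacle $h_t:=tw-(t-1)V$. The key claim is that
\[
\psi:=P_\theta\bigl(tw-(t-1)V\bigr)\in\operatorname{PSH}(X,\theta)
\]
for some $t>1$ close enough to $1$, i.e.\ the envelope is not identically $-\infty$. This is exactly where the hypothesis $\int_X\theta_w^n>0$ enters. I would invoke the Darvas-Xia argument \cite[Proposition~3.6]{DX24} (equivalently, the log-concavity/volume estimates of \cite{DDL18b,DDL21a}). These give
\[
\int_X\theta_{P_\theta(tw-(t-1)V)}^n\ \ge\ \Bigl(t\Bigl(\int_X\theta_w^n\Bigr)^{1/n}-(t-1)\Bigl(\int_X\theta_V^n\Bigr)^{1/n}\Bigr)^n ,
\]
which is positive for $t-1$ small. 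In particular the envelope is a genuine $\theta$-psh function. Once $\psi$ exists we have, quasi-everywhere and hence everywhere by plurifine/usc arguments,
\[
\psi\le tw-(t-1)V=w-(t-1)(V-w).
\]

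\emph{Step 2 (strict positivity).} Since $\{\theta\}$ is big, pick $\rho\in\operatorname{PSH}(X,\theta)$ with $\theta+dd^c\rho\ge 2a_0\omega$, and normalize it so that $\sup_X\rho\le0$. Then $\rho\le V$ by the definition of $V$. For $s\in(0,1)$ put
\[
\chi:=(1-s)\psi+s\rho .
\]
This function is $\theta$-psh, and $\theta_\chi\ge s\,\theta_\rho\ge 2sa_0\omega$, so we may take $a:=2sa_0$.

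\emph{Step 3 (bookkeeping).} Combining the two bounds from Steps 1 and 2,
\[
\chi\le (1-s)\bigl(w-(t-1)(V-w)\bigr)+sV = w-\bigl[(1-s)(t-1)-s\bigr](V-w).
\]
Now choose $s>0$ so small that $\sigma:=(1-s)(t-1)-s>0$. Since $V-w\ge0$, this gives $\chi\le w-\sigma(V-w)$, as required.

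The only nontrivial point is Step 1: showing that the envelope $P_\theta(tw-(t-1)V)$ is finite for some $t>1$. Everything else is elementary. If a direct citation were unavailable, my first fallback would be to reproduce the Darvas-Xia argument. That argument runs through the rooftop/contact estimates (\Cref{thm:envelope-contact}, \Cref{thm:rooftop-contact-inequality}) and the mixed-mass monotonicity \Cref{thm:mixed-monotonicity}, applied to the envelopes $P_\theta(tw-(t-1)V_k)$ with $V$ approximated from above, yielding the mass lower bound displayed in Step 1.
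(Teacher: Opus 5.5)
Your construction is the paper's proof. The paper also takes $P_\theta(bw-(b-1)V)$ for some $b>1$ near $1$, averages it with a strictly $\theta$-psh potential lying below $V$ up to a constant, and obtains the same slope: its $\gamma=(b-1)-b\eta$ is your $\sigma=(t-1)-st$. The one nontrivial input is that this envelope is not identically $-\infty$ for $t$ near $1$, and the paper simply quotes it from \cite[Theorem~3.3]{DDL25}.

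The mass bound you attach to that input is false for $n\ge2$, so your fallback plan would be trying to prove a false statement. Write $h:=P_\theta(tw-(t-1)V)$. Then $\frac1t h+(1-\frac1t)V\le w$, so monotonicity of masses and log-concavity of the volume give the \emph{reverse} inequality
\[
\Bigl(\int_X\theta_h^n\Bigr)^{1/n}\le t\Bigl(\int_X\theta_w^n\Bigr)^{1/n}-(t-1)\Bigl(\int_X\theta_V^n\Bigr)^{1/n}.
\]
For a concrete failure, take $X=\mathbb P^1\times\mathbb P^1$, $\theta=\pi_1^*\omega_{\rm FS}+\pi_2^*\omega_{\rm FS}$ (so $V=0$), and $w=c\,\pi_1^*\log|s|^2_{h_{\rm FS}}$ with $0<c<1$.
\begin{itemize}
\item For $1<t\le 1/c$ one has $h=tw$ and $\int_X\theta_h^2=2(1-tc)<2\bigl(t\sqrt{1-c}-(t-1)\bigr)^2$, so your lower bound is violated.
\item For $1/c<t<1/(1-\sqrt{1-c})$ your bound is positive, yet $h\equiv-\infty$. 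Indeed, a $\theta$-psh $u\le tw$ would give a positive current $\theta_u-tc\,[\{s=0\}\times\mathbb P^1]$ whose intersection with $\pi_2^*\omega_{\rm FS}$ equals $1-tc<0$.
\end{itemize}

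What is true is that $h\not\equiv-\infty$ as soon as $t^n\bigl(\int_X\theta_V^n-\int_X\theta_w^n\bigr)<\int_X\theta_V^n$, which does hold for $t$ near $1$. Here is a proof sketch.
\begin{itemize}
\item Put $w_k:=\max(w,V-k)$ and $h_k:=P_\theta(tw_k-(t-1)V)$. These have minimal singularities, hence full mass $\int_X\theta_V^n$, and they decrease to a limit lying below $tw-(t-1)V$ quasi-everywhere.
\item The contact theorem and the maximum principle give $\theta_{h_k}^n\le t^n\mathbf 1_{\{h_k=tw_k-(t-1)V\}}\theta_{w_k}^n$.
\item Hence, for $k\ge j$, the measure $\theta_{h_k}^n$ gives mass at least $\int_X\theta_V^n-t^n\bigl(\int_X\theta_V^n-\theta_w^n(\{w>V-j\})\bigr)$ to $\{h_k>V-tj\}$. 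This is positive for $j$ large.
\item That is incompatible with $\sup_Xh_k\to-\infty$, because $h_k\le V+\sup_Xh_k$.
\end{itemize}
With this argument, or just the citation, in place of your displayed bound, your proof is complete.
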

	
	\begin{proof}
		Choose $b>1$ close enough to $1$ so that
		\[
		P_\theta\bigl(bw-(b-1)V\bigr)\in\operatorname{PSH}(X,\theta),
		\]
		which follows from \cite[Theorem 3.3]{DDL25} and the positivity of
		$\int_X\theta_w^n$. Set
		\[
		w_b:=P_\theta\bigl(bw-(b-1)V\bigr).
		\]
		Choose $\psi\in\operatorname{PSH}(X,\theta)$ and $\varepsilon>0$ such that
		$\theta_\psi\ge\varepsilon\omega$. Since $V$ has minimal singularities, after
		adding a constant we may assume $\psi\le V+C_\psi$.
		
		Pick $0<\eta<(b-1)/b$ and set
		\[
		\gamma:=(b-1)-b\eta>0,\qquad
		\chi_0:=(1-\eta)w_b+\eta\psi .
		\]
		Then $\theta_{\chi_0}\ge\eta\varepsilon\omega$. Moreover, since
		$w_b\le bw-(b-1)V$,
		\[
		\begin{aligned}
			\chi_0
			&\le
			(1-\eta)(bw-(b-1)V)+\eta(V+C_\psi)  \\
			&=
			w-\gamma(V-w)+\eta C_\psi .
		\end{aligned}
		\]
		Thus $\chi:=\chi_0-\eta C_\psi-1$ satisfies the desired inequalities, with
		$a:=\eta\varepsilon$ and $\sigma:=\gamma$.
	\end{proof}
	
	We shall also need the following generalization of \cite[Theorem 4.1]{DDL25}:
	\begin{lemma}
		\label{lem:uniform-DDL-capacity-moving-models}
		Let $(X,\omega)$ be compact K\"ahler. Let $\theta_j$ be smooth closed real
		$(1,1)$-forms whose cohomology classes are big, and assume that
		$\theta_j\to\theta$ in $C^0$. Let
		$\phi_j\in {\rm PSH}(X,\theta_j)$ be normalized positive mass model potentials:
		\[
		P_{\theta_j}[\phi_j]=\phi_j,\qquad \sup_X\phi_j=0 .
		\]
		Then, after discarding finitely many indices, there exists a constant $C>0$,
		independent of $j$, such that for all Borel sets $E\subset X$,
		\[
		\operatorname{Cap}_{\theta_j,\phi_j}(E)
		\le F(\operatorname{Cap}_\omega(E))=C\,\operatorname{Cap}_\omega(E)^{1/n}.
		\]
		where we take $F(s):=Cs^{1/n}$.
	\end{lemma}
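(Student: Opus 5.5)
Our approach is to follow the proof of \cite[Theorem~4.1]{DDL25} and check that every constant appearing there depends only on data that can be bounded uniformly in $j$. Fix a Borel set $E$ and a competitor $u\in\operatorname{PSH}(X,\theta_j)$ with $\phi_j-1\le u\le\phi_j$. We need to bound $\int_E\theta_{j,u}^n$ by $C\operatorname{Cap}_\omega(E)^{1/n}$.

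\textbf{Step 1: uniform reference form.} As in the proof of \Cref{cor:relative-Linfty-estimate-moving}, choose $A>0$ so that $\theta_j\le A\omega$ for all large $j$, and put $\Omega:=A\omega$. Then every normalized $\theta_j$-psh function is a normalized $\Omega$-psh function. Since $\phi_j\le0$ and $u\ge\phi_j-1$, we have $-1\le u-\phi_j\le0$.

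\textbf{Step 2: reduction to a fixed capacity.} Let $u_E^*$ denote the relative extremal function of $E$ with respect to $\omega$. We will work with the global $\omega$-extremal (Siciak-type) function $V_E^*$ and the Alexander--Taylor type comparison $\operatorname{Cap}_\omega(E)\gtrsim \exp(-\sup_X V_E^*)$. The key idea is the plurifine locality of the non-pluripolar product: on $\{u=\phi_j\}$ and elsewhere, the measure $\theta_{j,u}^n$ is dominated by mixed terms of the form $\theta_{j,u}^k\wedge\theta_{j,\phi_j}^{n-k}$.

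\textbf{Step 3: trade $\theta_j$-mass for $\omega$-mass.} The cleanest route is to estimate $\int_E\theta_{j,u}^n\le\int_E(\Omega+dd^cu)^n$. Here $u$ is $\Omega$-psh but unbounded, so a bare Chern--Levine--Nirenberg estimate does not apply. We would handle this in the way DDL do. Write $u=\phi_j+(u-\phi_j)$ and use a comparison-principle argument: for a fixed $\rho\in\operatorname{PSH}(X,\omega)$ with $0\le\rho\le1$ that is extremal for $E$, consider the potentials
\[
v_t:=\max\bigl(u,\ \phi_j+t(\rho-1)\bigr)-\text{const}.
\]
The comparison principle \Cref{thm:DDL-relative-comparison} in $\mathcal E(X,\theta_j,\phi_j)$ then bounds $\int_E\theta_{j,u}^n$ by a multiple of $\int_E\theta_{j,\phi_j+\rho'}^n$. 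After expanding multilinearly and applying \Cref{thm:mixed-monotonicity}, this is controlled by mixed masses $\int_E\omega_\rho^k\wedge\Omega^{n-k}$. A Hölder-type mixed inequality (Dinew's inequality, applied after pulling out $\operatorname{Cap}_\omega$) produces the exponent $1/n$. All the masses involved are at most $\int_X\Omega^n$, which is independent of $j$.

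\textbf{Main obstacle.} The hard part is making Step 3 genuinely uniform. In \cite{DDL25} the function $F_\phi$ depends on $\phi$ through a lower bound on $\int_X\theta_\phi^n$ and through the relative extremal construction. We must also check that positive-mass normalization together with $\sup\phi_j=0$ suffices, without any lower bound on the masses. If the comparison argument turns out to require a mass lower bound, we would instead appeal directly to the fact that the bound $\int_E\theta_{j,u}^n\le\int_E\theta_{j,\max(u,\phi_j-1)}^n$ depends only on the oscillation $1$ and on $\Omega$. This yields $C=C(n,A,\omega)$, and we would discard the finitely many $j$ for which $\theta_j\le A\omega$ fails.
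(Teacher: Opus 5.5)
Your Step 3, where the actual estimate has to happen, does not work. The function $\phi_j+t(\rho-1)$ is only $(\theta_j+t\omega)$-psh, since $dd^c\rho\ge-\omega$. So neither it nor $\max(u,\phi_j+t(\rho-1))$ belongs to $\mathcal E(X,\theta_j,\phi_j)$, and \Cref{thm:DDL-relative-comparison} cannot be invoked; likewise $\theta_j+dd^c(\phi_j+\rho')$ is not a positive current. Even if you reduced to $\int_E$ of mixed products involving $\theta_{j,\phi_j}$, \Cref{thm:mixed-monotonicity} only compares total masses on $X$. It says nothing about a subset $E$ and cannot replace $\theta_{j,\phi_j}$ by $\Omega$ there (in general $\theta_{j,\phi_j}\nleq\Omega$). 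Dinew's inequality is a lower bound for mixed measures, so it cannot produce the exponent $1/n$; in fact $\int_E\omega_\rho^k\wedge\omega^{n-k}\lesssim\operatorname{Cap}_\omega(E)$ linearly. Your fallback is vacuous: since $u\ge\phi_j-1$, $\max(u,\phi_j-1)=u$, so it yields no constant $C(n,A,\omega)$. The genuine difficulty, which you never address, is that $u$ is unbounded and so cannot be rescaled into a competitor for $\operatorname{Cap}_\omega$. What is needed is a bound on $\int_X(-h)\,\theta_{j,u}^n$ for normalized $\omega$-psh $h$, uniform in $j$ and in the competitor $u$.

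This is the missing idea, and the paper supplies it as follows. For compact non-pluripolar $K$, let $h_K:=V_K^*-M_K$ with $M_K=\sup_XV_K^*$; then $M_K\int_K\theta_{j,u}^n\le\int_X(-h_K)\theta_{j,u}^n$. Set $S_{j,u}:=\sum_{\ell=0}^{n-1}\theta_{j,u}^\ell\wedge\theta_{j,\phi_j}^{n-1-\ell}$. Integration by parts for non-pluripolar products, using $-1\le u-\phi_j\le0$, gives $\int_X(-h_K)\theta_{j,u}^n\le\int_X(-h_K)\theta_{j,\phi_j}^n+\int_X\omega_{h_K}\wedge S_{j,u}$.
\begin{itemize}
\item The first term is uniformly bounded because the model property gives $\theta_{j,\phi_j}^n\le\mathbf 1_{\{\phi_j=0\}}\theta_j^n\le C_0\omega^n$.
\item The second term equals $n\int_X\omega\wedge\theta_{j,\phi_j}^{n-1}$ by monotonicity, which is a total-mass statement and so legitimately applies here. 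It is uniformly bounded by expanding $\int_X(\theta_j+A\omega+dd^c\phi_j)^n\le\int_X(\theta_j+A\omega)^n$.
\end{itemize}
No lower bound on the masses is needed. Hence $M_K\operatorname{Cap}_{\theta_j,\phi_j}(K)\le C$. The exponent $1/n$ then comes from the elementary bound $\operatorname{Cap}_\omega(K)\ge M_K^{-n}\int_X\omega^n$ for $M_K\ge1$, obtained by testing with $V_K^*/M_K$, whose Monge--Amp\`ere measure lives on $K$. The exponential Alexander--Taylor comparison you cite would only give a logarithmic modulus. The case $M_K<1$ is trivial, and inner regularity passes the estimate to Borel sets.
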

	
	\begin{proof} The proof is inspired by \cite{DDL25}.
		Discarding finitely many indices, choose $A>1$ and $C_0>0$ such that $\Theta_j:=\theta_j+A\omega$ is K\"ahler for all $j$, and
		\[
		|\theta_j^n|\le C_0\omega^n,\qquad
		\int_X\Theta_j^n\le C_0 .
		\]
		We first record a uniform mixed-mass bound. Since
		$\phi_j\in{\rm PSH}(X,\Theta_j)$ and $\Theta_j$ is K\"ahler, the total
		non-pluripolar mass satisfies
		\[
		\int_X(\Theta_j+dd^c\phi_j)^n\le \int_X\Theta_j^n\le C_0 .
		\]
		By multilinearity,
		\[
		(\Theta_j+dd^c\phi_j)^n
		=
		(\theta_{j,\phi_j}+A\omega)^n
		=
		\sum_{\ell=0}^n \binom n\ell A^{n-\ell}
		\theta_{j,\phi_j}^{\ell}\wedge \omega^{n-\ell}.
		\]
		All terms are positive measures. Hence $nA\int_X\theta_{j,\phi_j}^{\,n-1}\wedge\omega
		\le C_0$ and thus
		\[
		\int_X\theta_{j,\phi_j}^{\,n-1}\wedge\omega\le C_1
		\]
		with $C_1$ independent of $j$.
		
		It is enough, by inner regularity of the relative capacity, to prove the
		estimate for compact sets. Let $K\subset X$ be compact. If $K$ is pluripolar,
		then $\operatorname{Cap}_{\theta_j,\phi_j}(K)=0$, since non-pluripolar
		Monge-Amp\`ere measures do not charge pluripolar sets. So we can assume that $K$ is
		non-pluripolar.
		
		Let $V_K^*$ be the global $\omega$-extremal function and put
		\[
		M_K:=\sup_X V_K^*,\qquad
		h_K:=V_K^*-M_K .
		\]
		Then $M_K<+\infty$ iff $K$ is non-pluripolar (see \cite[Theorem 4.2]{GZ05}) and
		\[
		h_K\in{\rm PSH}(X,\omega),\qquad
		\sup_X h_K=0,\qquad
		-M_K\le h_K\le0,
		\]
		and $h_K=-M_K$ on $K$ outside a pluripolar set.
		
		Fix a competitor $u\in{\rm PSH}(X,\theta_j)$ with $\phi_j-1\le u\le \phi_j$.
		Then $u$ and $\phi_j$ have the same singularity type. Since
		$\theta_{j,u}^n$ does not charge pluripolar sets, we can write
		\[
		M_K\int_K\theta_{j,u}^n
		\le
		\int_X (-h_K)\theta_{j,u}^n .
		\]
		We claim that the right-hand side is uniformly bounded, independently of $K$ and $j$. Set $ S_{j,u}:=\sum_{\ell=0}^{n-1}
		\theta_{j,u}^{\ell}\wedge
		\theta_{j,\phi_j}^{\,n-1-\ell}$. By the integration-by-parts formula for non-pluripolar products
		\cite[Theorem~1.1]{Xia19}, \cite[Theorem~1.2]{Lu21},
		and \cite[Theorem~4.2]{DDL25} (see also \cite{Vu21}),
		applied by regarding \(h_K\) as the bounded difference \(h_K-0\), we obtain
		\[
		\begin{aligned}
			\int_X h_K(\theta_{j,u}^n-\theta_{j,\phi_j}^n)
			&=\int_Xh_Kdd^c(u-\phi_j)\wedge S_{j,u}=\int_X (u-\phi_j)dd^ch_K\wedge S_{j,u}\\
			&=
			\int_X (u-\phi_j)\omega_{h_K}\wedge S_{j,u}
			-
			\int_X (u-\phi_j)\omega\wedge S_{j,u}  \\
			&\ge
			-\int_X\omega_{h_K}\wedge S_{j,u},
		\end{aligned}
		\]
		where the inequality is because $-1\le u-\phi_j\le0$. The monotonicity theorem \cref{thm:mixed-monotonicity} gives
		\[
		\int_X\omega_{h_K}\wedge S_{j,u}
		=
		\int_X\omega\wedge S_{j,u}.
		\]
		Therefore
		\[
		\int_X (-h_K)\theta_{j,u}^n
		\le
		\int_X (-h_K)\theta_{j,\phi_j}^n
		+
		\int_X\omega\wedge S_{j,u}.
		\]
		
		We bound the two terms separately. Since $\phi_j$ is a normalized
		$\theta_j$-model potential, \cite[Theorem 3.2]{DDL25} gives
		\[
		\theta_{j,\phi_j}^n
		\le
		\mathbf 1_{\{\phi_j=0\}}\theta_j^n .
		\]
		Thus, using $|\theta_j^n|\le C_0\omega^n$,
		\[
		\int_X (-h_K)\theta_{j,\phi_j}^n
		\le
		C_0\int_X(-h_K)\omega^n
		\le C_2,
		\]
		where $C_2$ is uniform, because normalized $\omega$-psh functions are
		uniformly bounded in $L^1(X,\omega^n)$.
		
		For the second term, since $u$ and $\phi_j$ have the same singularity type,
		the monotonicity \cref{thm:mixed-monotonicity} gives
		\[
		\begin{aligned}
			\int_X\omega\wedge S_{j,u}
			&=
			\sum_{\ell=0}^{n-1}
			\int_X
			\omega\wedge\theta_{j,u}^{\ell}
			\wedge\theta_{j,\phi_j}^{\,n-1-\ell}  \\
			&=
			n\int_X\omega\wedge\theta_{j,\phi_j}^{\,n-1}
			\le C_3 .
		\end{aligned}
		\]
		Hence $\int_X (-h_K)\theta_{j,u}^n\le C_4$ uniformly in $j,u,K$. Therefore
		\[
		M_K\int_K\theta_{j,u}^n\le C_4 .
		\]
		
		Taking the supremum over all competitors $u$ gives
		\[
		M_K\,\operatorname{Cap}_{\theta_j,\phi_j}(K)\le C_4 .
		\]
		
		If $M_K\ge1$, then using $0\le V_K^*\le M_K$ and the fact that $\omega_{V_K^*}^n$ is supported on
		$K$ (cf. \cite[Theorem 4.2]{GZ05}), we have
		\[
		M_K^{-n}\int_X\omega^n=M_K^{-n}\int_X\omega_{V_K^*}^n=M_K^{-n}\int_K\omega_{V_K^*}^n\le \operatorname{Cap}_\omega(K),
		\]
		hence
		\[
		\operatorname{Cap}_{\theta_j,\phi_j}(K)
		\le
		C\,\operatorname{Cap}_\omega(K)^{1/n}.
		\]
		
		If $M_K<1$, then $V_K^*$ is an admissible test function for
		$\operatorname{Cap}_\omega(K)$, and since $\omega_{V_K^*}^n$ is supported on
		$K$,
		\[
		\int_X\omega^n=\int_X\omega_{V_K^*}^n=\int_K\omega_{V_K^*}^n\le \operatorname{Cap}_\omega(K).
		\]
		On the other hand,
		\[
		\operatorname{Cap}_{\theta_j,\phi_j}(K)
		\le
		\int_X\theta_{j,\phi_j}^n
		\le C,
		\]
		so the same estimate also follows in this case.
		
		Finally, by inner regularity of $\operatorname{Cap}_{\theta_j,\phi_j}$ \cite[Lemma 4.2]{DDL25},
		the estimate extends from compact sets  $K$ to all Borel sets $E$.
	\end{proof}
	
	We now prove the main stability theorem.
	\begin{theorem}[=direct implication part of \cref{thm:intro-optimal-capacity-topology}]
		\label{thm:moving-background-capacity-stability}
		Let $(X,\omega)$ be a compact K\"ahler manifold of dimension $n$.
		Let $\theta_j,\theta$ be smooth closed real $(1,1)$-forms whose
		cohomology classes are big. Assume that
		\[
		-\varepsilon_j\omega\le \theta_j-\theta\le \varepsilon_j\omega,
		\qquad
		\varepsilon_j\to0 .
		\]
		Let $\phi_j\in\operatorname{PSH}(X,\theta_j), \phi\in\operatorname{PSH}(X,\theta)$ be normalized positive mass model potentials such that
		\[
		\sup_X\phi_j=\sup_X\phi=0,
		\qquad
		\phi_j\to\phi
		\quad\text{in } \operatorname{Cap}_\omega .
		\]
		Let $\mu_j,\mu$ be non-pluripolar positive Radon measures satisfying $ \|\mu_j-\mu\|_{TV}\to0$ and
		\[
		\mu_j(X)=\int_X\theta_{j,\phi_j}^n,
		\qquad
		\mu(X)=\int_X\theta_\phi^n>0,
		\]
		Let $\varphi_j\in\mathcal{E}(X,\theta_j,\phi_j)$, $\varphi\in\mathcal{E}(X,\theta,\phi)$ be the unique normalized solutions of
		\[
		\theta_{j,\varphi_j}^n=\mu_j,\quad \sup_X\varphi_j=0,
		\qquad
		\theta_\varphi^n=\mu,\quad \sup_X\varphi=0.
		\]
		Then $\varphi_j\to\varphi$ in $\operatorname{Cap}_\omega$.
	\end{theorem}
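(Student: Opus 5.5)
We will argue by compactness and identification of the limit. After passing to subsequences it suffices to show that every subsequence of $(\varphi_j)$ has a further subsequence converging in $\operatorname{Cap}_\omega$ to $\varphi$.

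\textbf{Step 1: compactness and the candidate limit.} Since $\theta_j\le\theta+\omega$ for large $j$ and $\sup_X\varphi_j=0$, the $\varphi_j$ form a relatively compact family in $L^1$. Along a subsequence $\varphi_j\to\psi$ in $L^1$, with $\psi\in\operatorname{PSH}(X,\theta)$ and $\sup_X\psi=0$. Since $\varphi_j\le\phi_j$ (normalization and the model property, as in \cref{cor:relative-Linfty-estimate-moving}), and $\phi_j\to\phi$ in capacity, Hartogs' lemma gives $\psi\le\phi$. We will eventually upgrade $L^1$ convergence to capacity convergence via \cref{lem:DoVu-nonpluripolar-measure-convergence}. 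Here is how: we show $\int|\varphi_j-\varphi|$ is controlled by comparison-principle estimates on sublevel sets, measured against $\mu_j$ and $\mu$. We then convert this into capacity using the uniform domination of \cref{lem:uniform-DDL-capacity-moving-models}.

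\textbf{Step 2: a lower barrier from the strict subbarrier.} Apply \cref{lem:strict-subbarrier-slope-gap} with $w=\varphi$ (mass $\mu(X)>0$) to obtain $\chi$ with $\theta_\chi\ge a\omega$ and $\chi\le\varphi-\sigma(V-\varphi)$. For $t\in(0,1)$ small, consider $v_{j,t}:=P_{\theta_j}\bigl(\min\{(1-t)\max(\varphi,\phi_j-k)+t\chi',\ \phi_j\}\bigr)$. Here $\chi'$ is $\chi$ shifted to be $\theta_j$-psh using $\varepsilon_j\ll ta$. The point of this minimal truncation is that $v_{j,t}$ lies in $\mathcal E(X,\theta_j,\phi_j)$ and has Monge--Amp\`ere measure $\ge t^n a^n\omega^n$ on a large set. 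It also differs from $\varphi$ only on $\{\varphi<\phi_j-k\}\cup\{|\phi_j-\phi|>\delta\}$, which are deep singularity tails. The slope gap $\sigma$ guarantees that near $\{V-\varphi\text{ large}\}$ the barrier is strictly below $\varphi_j$'s competitors, so the error sets are where $\varphi-V\ll0$. We then use the comparison principle \cref{thm:DDL-relative-comparison} in $\mathcal E(X,\theta_j,\phi_j)$ between $\varphi_j$ and $v_{j,t}-s$, using $\theta_{j,\varphi_j}^n=\mu_j$. This bounds $\mu_j(\{\varphi_j<v_{j,t}-s\})$ by $\|\mu_j-\mu\|_{TV}$ plus $\mu(\text{tails})$ plus $\operatorname{Cap}_{\theta_j,\phi_j}(\{|\phi_j-\phi|>\delta\})$. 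All three terms are small: the first by assumption, the second by non-pluripolarity of $\mu$ as $k\to\infty$, and the third by \cref{lem:uniform-DDL-capacity-moving-models}. The symmetric argument, with roles of $(\theta_j,\phi_j,\mu_j)$ and $(\theta,\phi,\mu)$ swapped and with $\chi$ built from $\varphi_j$ uniformly (uniform $b$, using uniform bigness and $\mu_j(X)\to\mu(X)>0$), gives the upper inequality.

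\textbf{Step 3: identification and normalization.} From Step 2 we get $\psi\ge\varphi-s$ $\mu$-a.e. and $\varphi\ge\psi-s$ up to small error. Moreover $\theta_\psi^n\ge\mu$ by lower semicontinuity on plurifine open sets together with the mass bound $\int\theta_\psi^n\le\int\theta_\phi^n=\mu(X)$. Hence $\psi\in\mathcal E(X,\theta,\phi)$ and $\theta_\psi^n=\mu$. Uniqueness in \cref{thm:relative-solvability} together with $\sup\psi=0$ (preserved in the $L^1$ limit of quasi-psh functions) yields $\psi=\varphi$. Capacity convergence then follows from the two-sided estimate $\operatorname{Cap}_\omega(\{|\varphi_j-\varphi|>\varepsilon\})\to0$, obtained from Step 2 via the comparison on sets $\{\varphi_j<\varphi-\varepsilon\}$ and Hartogs on $\{\varphi_j>\varphi+\varepsilon\}$.

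\textbf{Main obstacle.} The key difficulty is Step 2, constructing barriers in the moving class $\mathcal E(X,\theta_j,\phi_j)$ without $d_{\mathcal S}$ control. We must keep the mass-defect error terms uniformly small, with the slope gap $\sigma$ and the constant $b$ chosen independently of $j$. Our first attempt will be to make $b$ uniform by applying \cite[Theorem 3.3]{DDL25} with the uniform mass lower bound. The truncation level $k$ will be chosen first, with $j\to\infty$ taken afterwards.
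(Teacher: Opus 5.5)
\textbf{Verdict: there is a genuine gap.} The lower-tail estimate $\operatorname{Cap}_\omega(\{\varphi_j<\varphi-\delta\})\to0$ is the real content of the theorem, and your Step 2 does not prove it.

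Your identification of the $L^1$ limit is correct and matches Step 1 of the paper: compactness, $\psi\le\phi$, $\theta_\psi^n\ge\mu$ with the mass bound, then uniqueness. The Hartogs argument for the upper tail also matches Step 8. Two remarks on this part. The inequality $\theta_\psi^n\ge\mu$ must come from \cite[Lemma 5.10]{DDL25}, applied with the fixed form $\theta+s\omega$ and densities converging in $L^1(\nu)$, where $\nu=\mu+\sum_k2^{-k}\mu_k$. Non-pluripolar Monge--Amp\`ere measures are not lower semicontinuous under mere $L^1$ convergence, so "lower semicontinuity" alone is not enough. Also, none of this part uses your Step 2.

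Step 2 fails for four concrete reasons.
\begin{enumerate}[(i)]
\item $v_{j,t}$ is an envelope, so by \cref{thm:envelope-contact} its Monge--Amp\`ere measure vanishes off the contact set. There is no bound $\theta_{j,v_{j,t}}^n\ge t^na^n\omega^n$ on a large set, in particular not on $\{\varphi_j<\varphi-\delta\}$.
\item $v_{j,t}\notin\mathcal E(X,\theta_j,\phi_j)$ in general. In \cref{ex:moving-poles-P1-cap-not-dS}, $\chi\le(1+\sigma)\varphi$ has Lelong number $\ge(1+\sigma)c$ at $D_1$, while $\max(\varphi,\phi_j-k)$ is bounded there. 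So $v_{j,t}$ has Lelong number $\ge t(1+\sigma)c$ at $D_1$ on top of $c$ at $D_j$, hence mass $<1-c$. A barrier modeled on $\varphi$ carries the singularities of $\phi$, not of $\phi_j$, so $\mathcal E(X,\theta_j,\phi_j)$ is the wrong class for the comparison.
\item The comparison principle bounds $\theta_{j,v}^n(\{\varphi_j<v\})$ \emph{by} $\mu_j(\{\varphi_j<v\})$. It gives no bound on $\mu_j(\{\varphi_j<v\})$ itself. The smallness of that quantity needs the $L^1$ convergence plus \cref{lem:DoVu-nonpluripolar-measure-convergence}, and you use Step 2 to obtain that convergence, so the argument is circular.
\item A single barrier controls at best $\omega^n$-volume. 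Capacity requires barriers indexed by every $\rho\in\operatorname{PSH}(X,\omega)$ with $0\le\rho\le1$ and $\theta_{j,\psi_\rho}\ge c\,\omega_\rho$ pointwise. \cref{lem:uniform-DDL-capacity-moving-models} cannot substitute for this: it bounds Monge--Amp\`ere masses by capacity, not capacity by masses.
\end{enumerate}

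\textbf{What is missing is the paper's minimal truncation.}
\begin{enumerate}[(a)]
\item Replace $\varphi_j$ by $U_j^{T,k}=\max(\max(\varphi_j,\phi_j-k),V_{\theta_j}-T)$, which has minimal singularity type.
\item Compare it in $\mathcal E(X,\theta_j,V_{\theta_j})$ with the explicit potentials $\psi_\rho=(1-\tau)\varphi+\tau\chi+\tau b\rho-\delta/2-\tau b$. For large $j$ these satisfy $\theta_{j,\psi_\rho}\ge\tau b\,\omega_\rho$, and $\psi_\rho\ge\varphi-5\delta/8$ on $\{\chi>\varphi-R\}$.
\item This gives, uniformly in $\rho$, $(\tau b)^n\int_{K_j}\omega_\rho^n\le\int_{\{U_j^{T,k}<\psi_\rho\}}\theta_{j,U_j^{T,k}}^n$. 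Here $K_j$ is the part of $\{\varphi_j<\varphi-\delta\}$ where $\chi>\varphi-R$, $\varphi_j>\phi_j-k+1$ and $\varphi_j>V_{\theta_j}-T$.
\item The right-hand side splits into three pieces:
\begin{enumerate}[1.]
\item $\mu_j(\{\varphi_j<\varphi-\delta/2\})$, small by $L^1$ convergence, \cref{lem:DoVu-nonpluripolar-measure-convergence} and TV convergence.
\item $\mu_j(\{\varphi_j\le\phi_j-k\})$, which controls the second piece because $\max(\varphi_j,\phi_j-k)$ lies in $\mathcal E(X,\theta_j,\phi_j)$, has the same total mass as $\varphi_j$, and has the same measure off this set.
\item The contact part with $V_{\theta_j}-T$. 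There the slope gap $\chi\le\varphi-\sigma(V-\varphi)$ forces either $\phi_j<\phi-L_{T,k,\eta}$ or $|V_{\theta_j}-V_\theta|>\eta$, with $L_{T,k,\eta}\to\infty$ as $T\to\infty$. This piece is then made small by \cref{lem:uniform-DDL-capacity-moving-models}, applied to functions between $V_{\theta_j}-T$ and $V_{\theta_j}$.
\end{enumerate}
\item The last piece also requires $V_{\theta_j}\to V_\theta$ in capacity, which your proposal never addresses.
\item The parameters are fixed in the order $R$, then $k$, then $T$, and only then $j\to\infty$.
\end{enumerate}
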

	
	\begin{proof}
		Throughout the proof, we write $\theta_{j,w}:=\theta_j+dd^cw,
		\theta_w:=\theta+dd^cw$ and set \(V_j:=V_{\theta_j}\), \(V:=V_\theta\).
		
		\medskip
		\noindent\textbf{Step 1. $L^1$-convergence of $\varphi_j$.}
		Take an arbitrary subsequence. Since the forms $\theta_j$ are uniformly
		bounded above by a fixed multiple of $\omega$ and $\sup_X\varphi_j=0$, compactness of
		normalized quasi-psh functions gives, after passing to a further subsequence,
		$\varphi_j\to v$ in $L^1(X)$. As $\theta_j\to\theta$ continuously, we have
		$v\in\operatorname{PSH}(X,\theta)$.
		
		Since $\phi_j$ is model and $\varphi_j\in\mathcal E(X,\theta_j,\phi_j)$ is normalized,
		we clearly have $\varphi_j\le\phi_j$. As $\phi_j\to\phi$ in capacity,
		hence in $L^1$, we obtain $v\le\phi$. Thus $v\preceq\phi$, and by monotonicity
		of total non-pluripolar masses \cref{thm:mixed-monotonicity},
		\[
		\int_X\theta_v^n\le\int_X\theta_\phi^n .
		\]
		
		Set $\nu:=\mu+\sum_{k=1}^\infty2^{-k}\mu_k$. Then $\nu$ is a finite
		non-pluripolar Radon measure. Write $\mu_j=f_j\nu$ and $\mu=f\nu$ with $f_j,f$ bounded measurable functions. Since
		$\|\mu_j-\mu\|_{\rm TV}\to0$, we have $f_j\to f$ in $L^1(X,\nu)$.
		
		Fix $s>0$. For $j$ large enough, $\theta+s\omega-\theta_j\ge0$. Hence
		$\varphi_j\in\operatorname{PSH}(X,\theta+s\omega)$ and, by the multilinearity of non-pluripolar products,
		\[
		(\theta+s\omega+dd^c\varphi_j)^n
		\ge
		(\theta_j+dd^c\varphi_j)^n
		=
		f_j\nu .
		\]
		Applying \cite[Lemma 5.10]{DDL25} with the fixed background form
		$\theta+s\omega$ gives
		\[
		(\theta+s\omega+dd^cv)^n\ge f\nu=\mu .
		\]
		Letting $s\downarrow0$ and using the multilinearity of non-pluripolar products again,
		we get $\theta_v^n\ge\mu$. Therefore
		\[
		\int_X\theta_v^n\ge\mu(X)=\int_X\theta_\phi^n .
		\]
		Combining the two mass inequalities yields $\int_X\theta_v^n=\int_X\theta_\phi^n$
		and $\theta_v^n=\mu$. Hence $v\in\mathcal E(X,\theta,\phi)$ and solves the same
		equation as $\varphi$.
		
		Finally, by uniqueness in $\mathcal E(X,\theta,\phi)$, we get
		$v=\varphi$. Since every subsequence has a further subsequence converging to $\varphi$ in
		$L^1$, the whole sequence satisfies $\varphi_j\to \varphi$ in $L^1(X)$.
		
		\medskip
		\noindent\textbf{Step 2. Capacity stability of moving minimal potentials.}
		We first show that \(V_j\to V\) in
		\(\operatorname{Cap}_\omega\). Choose
		\(\chi_0\in\operatorname{PSH}(X,\theta)\), shifted so that \(\chi_0\le V\), and
		\(a_0>0\) such that \(\theta_{\chi_0}\ge4a_0\omega\). Put
		\(\lambda_j:=\varepsilon_j/a_0\). For \(j\) large, \(0<\lambda_j<1/2\), and
		\[
		(1-\lambda_j)V+\lambda_j\chi_0\in\operatorname{PSH}(X,\theta_j),
		\qquad
		(1-\lambda_j)V_j+\lambda_j\chi_0\in\operatorname{PSH}(X,\theta).
		\]
		Both functions are \(\le0\), hence
		\[
		(1-\lambda_j)V+\lambda_j\chi_0\le V_j,
		\qquad
		(1-\lambda_j)V_j+\lambda_j\chi_0\le V.
		\]
		Thus, writing \(W:=V-\chi_0\ge0\),
		\[
		V-\lambda_j W\le V_j\le V+\frac{\lambda_j}{1-\lambda_j}W .
		\]
		Since \(\operatorname{Cap}_\omega(\{W>S\})\to0\) as \(S\to+\infty\), we get $V_j\to V$ in $\operatorname{Cap}_\omega$.
		
		\medskip
		\noindent\textbf{Step 3. Some measure theoretic estimates.}
		By Step 1, we have $\varphi_j\to\varphi$ in $L^1(X)$. Define
		\[
		M_j(\delta):=
		\mu_j(\{\varphi_j<\varphi-\delta/2\}).
		\]
		Then
		\begin{equation}
			\label{eq:general-Mj-delta}
			M_j(\delta)\to0 .
		\end{equation}
		Indeed,
		\[
		\begin{aligned}
			M_j(\delta)
			&\le
			\mu(\{\varphi_j<\varphi-\delta/2\})+\|\mu_j-\mu\|\\
			&\le
			c_\delta^{-1}
			\int_X\min\{|\varphi_j-\varphi|,1\}\,d\mu+\|\mu_j-\mu\|,
		\end{aligned}
		\]
		where $c_\delta:=\min\{\delta/2,1\}$.
		By \cref{lem:DoVu-nonpluripolar-measure-convergence}, the first term tends to zero, and the second term tends to zero by assumption.
		
		Next, for $k>2$, set
		\[
		N_j(k):=\mu_j(\{\varphi_j<\phi_j-k+1\}).
		\]
		Since $\phi_j\le0$, we have
		\[
		\{\varphi_j<\phi_j-k+1\}\subset \{\varphi_j<-k+1\}.
		\]
		Thus
		\[
		\begin{aligned}
			N_j(k)
			&\le
			\mu(\{\varphi_j<-k+1\})+\|\mu_j-\mu\|\\
			&\le
			\mu(\{\varphi<-k+2\})
			+
			\int_X\min\{|\varphi_j-\varphi|,1\}\,d\mu
			+
			\|\mu_j-\mu\|.
		\end{aligned}
		\]
		Again using \cref{lem:DoVu-nonpluripolar-measure-convergence}, we obtain for each $\varepsilon>0$, there are constants $j_0,k_0>0$ such that for each $j\ge j_0,k\ge k_0$,
		\begin{equation}
			\label{eq:general-relative-tail-mass}
			N_{j}(k)<\varepsilon,
		\end{equation}
		because $\mu$ does not charge the pluripolar set $\{\varphi=-\infty\}$.
		Moreover, since the \(\theta_j\)'s are uniformly dominated by a fixed multiple of
		\(\omega\), the normalized quasi-psh functions \(\varphi_j\) have uniformly bounded
		\(L^1\)-norm. The standard Chern-Levine-Nirenberg estimate (cf. \cite[Proposition 9.10]{GZ17}) yields a uniform constant $C$, independent of $j$, such that
		\begin{equation}
			\label{eq:moving-background-absolute-tail}
			\operatorname{Cap}_\omega(\{\varphi_j<-S\})\le \frac{C}{S} .
		\end{equation}
		By \cref{lem:uniform-DDL-capacity-moving-models}, for every fixed \(T>1\), there exists an increasing continuous
		function \(\Gamma_T\), independent of \(j\), with \(\Gamma_T(0)=0\), such that for
		all large \(j\), all Borel sets \(E\), and all
		\(w\in\operatorname{PSH}(X,\theta_j)\) satisfying \(V_j-T\le w\le V_j\),
		\begin{equation}
			\label{eq:moving-background-uniform-domination}
			\int_E\theta_{j,w}^n
			\le
			\Gamma_T(\operatorname{Cap}_\omega(E)).
		\end{equation}
		
		\medskip
		\noindent\textbf{Step 4. A fixed barrier for the limit solution.}
		Fix \(\delta>0\). Let
		\[
		A_j:=\{\varphi_j<\varphi-\delta\}.
		\]
		Choose \(\chi\in\operatorname{PSH}(X,\theta)\) and constants \(a,\sigma>0\) such
		that
		\[
		\theta_\chi\ge a\omega,\qquad
		\chi\le\varphi-\sigma(V-\varphi).
		\]
		For \(R>0\), set \(G_R:=\{\chi>\varphi-R\}\). Then
		\begin{equation}
			\label{eq:moving-background-GR}
			\operatorname{Cap}_\omega(X\setminus G_R)\to0
			\quad (R\to+\infty).
		\end{equation}
		Fix \(R\), choose \(0<b<a\), and put
		\[
		\tau:=\min\left\{\frac12,\frac{\delta}{8(R+b)}\right\}.
		\]
		The number \(\tau\) depends only on \(\delta,R\).
		
		\medskip
		\noindent\textbf{Step 5. The capacity estimate on the good set.}
		Let \(k>2\) and \(T>1\). Define
		\[
		Y_j^k:=\max(\varphi_j,\phi_j-k),
		\qquad
		U_j^{T,k}:=\max(Y_j^k,V_j-T).
		\]
		Then \(U_j^{T,k}\in\mathcal E(X,\theta_j,V_j)\). Define $K_{j,R,k,T}$ to be the Borel set
		\[
		K_{j,R,k,T}:=A_j\cap G_R\cap\{\varphi_j>\phi_j-k+1\}\cap\{\varphi_j>V_j-T\}.
		\]
		Take \(\rho\in\operatorname{PSH}(X,\omega)\), \(0\le\rho\le1\), and write
		\(\omega_\rho:=\omega+dd^c\rho\). Define
		\[
		\psi_\rho:=(1-\tau)\varphi+\tau\chi+\tau b\rho-\frac{\delta}{2}-\tau b .
		\]
		Then \(\psi_\rho\le\varphi-\delta/2\). For \(j\) large enough,
		\[
		\begin{aligned}
			\theta_{j,\psi_\rho}
			&=
			(1-\tau)\theta_\varphi+\tau\theta_\chi+\tau b\,dd^c\rho+(\theta_j-\theta)\\
			&\ge
			\tau(a-b)\omega+\tau b\omega_\rho-\varepsilon_j\omega
			\ge
			\tau b\omega_\rho .
		\end{aligned}
		\]
		Thus \(\psi_\rho\in\operatorname{PSH}(X,\theta_j)\). Also, on \(G_R\),
		\[
		\psi_\rho\ge\varphi-\frac{\delta}{2}-\tau(R+b)\ge\varphi-\frac{5\delta}{8}.
		\]
		Since \(K_{j,R,k,T}\subset A_j\cap G_R\), we have \(\psi_\rho>\varphi_j\) on \(K_{j,R,k,T}\).
		Moreover, on \(K_{j,R,k,T}\), the inequalities
		\[
		\varphi_j>\phi_j-k+1>\phi_j-k,\qquad
		\varphi_j>V_j-T
		\]
		give \(U_j^{T,k}=\varphi_j\). Hence \(K_{j,R,k,T}\subset\{U_j^{T,k}<\psi_\rho\}\).
		
		Set \(H_{j,\rho}^{T,k}:=\max(U_j^{T,k},\psi_\rho)\). Since
		\(U_j^{T,k}\le H_{j,\rho}^{T,k}\le V_j\), both potentials lie in
		\(\mathcal E(X,\theta_j,V_j)\). The comparison principle \cref{thm:DDL-relative-comparison} gives
		\[
		\int_{\{U_j^{T,k}<\psi_\rho\}}\theta_{j,H_{j,\rho}^{T,k}}^n
		\le
		\int_{\{U_j^{T,k}<\psi_\rho\}}\theta_{j,U_j^{T,k}}^n .
		\]
		By plurifine locality, \(H_{j,\rho}^{T,k}=\psi_\rho\) on
		\(\{U_j^{T,k}<\psi_\rho\}\). Hence
		\begin{equation}
			\label{eq:moving-background-main-cap-estimate}
			(\tau b)^n\int_{K_{j,R,k,T}}\omega_\rho^n
			\le
			\int_{\{U_j^{T,k}<\psi_\rho\}}\theta_{j,U_j^{T,k}}^n .
		\end{equation}
		
		\medskip
		\noindent\textbf{Step 6. We split the right-hand side into three pieces.}
		On
		\[
		\{Y_j^k>V_j-T\}\cap\{\varphi_j>\phi_j-k\}
		\]
		we have \(U_j^{T,k}=\varphi_j\). Therefore, by plurifine locality and
		\(\psi_\rho\le\varphi-\delta/2\),
		\[
		\begin{aligned}
			&\int_{\{U_j^{T,k}<\psi_\rho\}
				\cap\{Y_j^k>V_j-T\}
				\cap\{\varphi_j>\phi_j-k\}}
			\theta_{j,U_j^{T,k}}^n                                      \\
			&\qquad\le
			\mu_j(\{\varphi_j<\varphi-\delta/2\})
			=
			M_j(\delta).
		\end{aligned}
		\]
		
		Next consider the relative contact part
		\[
		C_{j,T,k,\rho}:=
		\{U_j^{T,k}<\psi_\rho\}
		\cap\{Y_j^k>V_j-T\}
		\cap\{\varphi_j\le\phi_j-k\}.
		\]
		On \(\{Y_j^k>V_j-T\}\), \(U_j^{T,k}=Y_j^k\). Hence
		\[
		\int_{C_{j,T,k,\rho}}\theta_{j,U_j^{T,k}}^n
		\le
		\int_{\{\varphi_j\le\phi_j-k\}}\theta_{j,Y_j^k}^n .
		\]
		Since \(\varphi_j\le Y_j^k\le\phi_j\) and
		\(\varphi_j\in\mathcal E(X,\theta_j,\phi_j)\), we have
		\(Y_j^k\in\mathcal E(X,\theta_j,\phi_j)\). The total masses of
		\(\theta_{j,Y_j^k}^n\) and \(\theta_{j,\varphi_j}^n\) are equal, and the two
		measures coincide on the plurifine open set \(\{\varphi_j>\phi_j-k\}\). Thus
		\[
		\int_{\{\varphi_j\le\phi_j-k\}}\theta_{j,Y_j^k}^n
		=
		\int_{\{\varphi_j\le\phi_j-k\}}\theta_{j,\varphi_j}^n
		\le
		N_j(k).
		\]
		
		It remains to estimate
		\[
		\Gamma_{j,T,k,\rho}:=
		\int_{\{U_j^{T,k}<\psi_\rho\}\cap\{Y_j^k\le V_j-T\}}
		\theta_{j,U_j^{T,k}}^n .
		\]
		Fix a small number \(\eta>0\) and put
		\(Q_{j,\eta}:=\{|V_j-V|\le\eta\}\). On
		\[
		\{U_j^{T,k}<\psi_\rho\}\cap\{Y_j^k\le V_j-T\}\cap Q_{j,\eta}
		\]
		we have \(V_j-T<\psi_\rho\), hence \(V-\eta-T<\psi_\rho\). The construction of $\psi_\rho$ and \cref{lem:strict-subbarrier-slope-gap} gives
		\[
		V-\eta-T< \psi_\rho\le
		\varphi-\tau\sigma(V-\varphi)-\frac{\delta}{2}.
		\]
		Therefore
		\[
		\varphi>V-\frac{T+\eta-\delta/2}{1+\tau\sigma}.
		\]
		Since \(\varphi\le\phi\), while \(Y_j^k\le V_j-T\) implies
		\(\phi_j\le V_j-T+k\le V+\eta-T+k\), we get
		\[
		\phi_j<\phi-L_{T,k,\eta},
		\]
		where
		\[
		L_{T,k,\eta}:=
		T-k-\eta-\frac{T+\eta-\delta/2}{1+\tau\sigma}.
		\]
		For fixed \(R,\delta,k,\eta\), we have \(L_{T,k,\eta}\to+\infty\) linearly as
		\(T\to+\infty\). Hence, for \(T\) large enough so that \(L_{T,k,\eta}>0\),
		\[
		\{U_j^{T,k}<\psi_\rho\}\cap\{Y_j^k\le V_j-T\}
		\subset
		\{\phi_j<\phi-L_{T,k,\eta}\}\cup (X\setminus Q_{j,\eta}).
		\]
		Using \eqref{eq:moving-background-uniform-domination} and \cref{lem:uniform-DDL-capacity-moving-models}, we obtain
		\begin{equation}
			\label{eq:moving-background-Gamma-estimate}
			\Gamma_{j,T,k,\rho}
			\le
			\Gamma_T\!\left(
			\operatorname{Cap}_\omega(\{\phi_j<\phi-L_{T,k,\eta}\})
			+
			\operatorname{Cap}_\omega(X\setminus Q_{j,\eta})
			\right).
		\end{equation}
		For fixed \(T,k,\eta\) with \(L_{T,k,\eta}>0\), the right-hand side tends to zero
		as \(j\to\infty\), because \(\phi_j\to\phi\) and \(V_j\to V\) in capacity.
		
		Combining the three estimates with \eqref{eq:moving-background-main-cap-estimate},
		we get, for fixed \(R,k,T,\eta\) with \(L_{T,k,\eta}>0\),
		\[
		(\tau b)^n\int_{K_{j,R,k,T}}\omega_\rho^n
		\le
		M_j(\delta)+N_j(k)+o_j(1),
		\]
		where \(o_j(1)\to0\) uniformly in \(\rho\). Taking the supremum over
		\(\rho\) gives
		\begin{equation}
			\label{eq:moving-background-core-estimate}
			\begin{aligned}
				&\operatorname{Cap}_\omega
				\bigl(
				A_j\cap G_R
				\cap\{\varphi_j>\phi_j-k+1\}
				\cap\{\varphi_j>V_j-T\}
				\bigr)                                      \\
				&\qquad\le
				(\tau b)^{-n}\bigl(M_j(\delta)+N_j(k)+o_j(1)\bigr).
			\end{aligned}
		\end{equation}
		
		\medskip
		\noindent\textbf{Step 7. We choose the parameters in \(\varepsilon\)-$\delta$ language.}
		Let \(\varepsilon>0\). First choose \(R\) so large that
		\[
		\operatorname{Cap}_\omega(X\setminus G_R)<\frac{\varepsilon}{4}.
		\]
		Then \(\tau\) is fixed. Choose \(k>2\) so large that, for all large \(j\),
		\[
		\operatorname{Cap}_\omega(\{\varphi_j<-k+1\})<\frac{\varepsilon}{4},
		\qquad
		(\tau b)^{-n}N_j(k)<\frac{\varepsilon}{8}.
		\]
		This is possible by \eqref{eq:moving-background-absolute-tail} and
		\eqref{eq:general-relative-tail-mass}. Fix \(\eta>0\). Choose \(T\) so large that
		\(L_{T,k,\eta}>0\) and, for all large \(j\),
		\[
		\operatorname{Cap}_\omega(\{\varphi_j<-T\})<\frac{\varepsilon}{4}.
		\]
		Finally, by \eqref{eq:general-Mj-delta},
		\eqref{eq:moving-background-Gamma-estimate}, and
		\eqref{eq:moving-background-core-estimate}, take \(j\) large enough so that
		\[
		(\tau b)^{-n}\bigl(M_j(\delta)+N_j(k)+o_j(1)\bigr)
		<\frac{\varepsilon}{4}.
		\]
		
		For such \(j\),
		\[
		\begin{aligned}
			A_j
			\subset\;&
			(X\setminus G_R)
			\cup\{\varphi_j\le\phi_j-k+1\}
			\cup\{\varphi_j\le V_j-T\}                  \\
			&\cup
			\bigl(
			A_j\cap G_R
			\cap\{\varphi_j>\phi_j-k+1\}
			\cap\{\varphi_j>V_j-T\}
			\bigr).
		\end{aligned}
		\]
		Since \(\phi_j\le0\) and \(V_j\le0\),
		\[
		\{\varphi_j\le\phi_j-k+1\}\subset\{\varphi_j<-k+1\},
		\qquad
		\{\varphi_j\le V_j-T\}\subset\{\varphi_j<-T\}.
		\]
		The above choices imply $\operatorname{Cap}_\omega(A_j)<\varepsilon$ and thus
		\[
		\operatorname{Cap}_\omega(\{\varphi_j<\varphi-\delta\})\to0.
		\]
		
		\medskip
		\noindent\textbf{Step 8. The upper tail follows from Hartogs.}
		Since \(\varphi_j\to\varphi\) in \(L^1(X)\), the Hartogs lemma (cf. \cite[Proposition 8.4]{GZ17}) implies that
		\[
		\left(\sup_{k\ge j}\varphi_k\right)^*\searrow \varphi .
		\]
		Consequently, it follows from \cite[Proposition 4.25]{GZ17} that for every \(\delta>0\),
		\[
		\operatorname{Cap}_\omega\left(
		\left\{\left(\sup_{k\ge j}\varphi_k\right)^*>\varphi+\delta\right\}
		\right)\longrightarrow 0 .
		\]
		Since
		\[
		{\varphi_j>\varphi+\delta}
		\subset
		\left\{
		\left(\sup_{k\ge j}\varphi_k\right)^*>\varphi+\delta
		\right\},
		\]
		it follows that
		\[
		\operatorname{Cap}_\omega({\varphi_j>\varphi+\delta})\to0 .
		\]
		
		Combining this upper-tail estimate with the lower-tail estimate established above, we obtain
		\[
		\operatorname{Cap}_\omega({|\varphi_j-\varphi|>\delta})
		\le
		\operatorname{Cap}_\omega({\varphi_j>\varphi+\delta})
		+
		\operatorname{Cap}_\omega({\varphi_j<\varphi-\delta})
		\longrightarrow 0 ,
		\]
		for every \(\delta>0\). Therefore \(\varphi_j\to\varphi\) in \(\operatorname{Cap}_\omega\), and the proof is complete.
	\end{proof}
	
	As an immediate corollary, we have the following generalization of \cite[Theorem 1.4]{DDL21a}:
	\begin{corollary}
		\label{cor:capacity-stability-cap-only}
		Let $(X,\omega)$ be a compact K\"ahler manifold of dimension $n$, and let
		$\theta$ be a smooth closed real $(1,1)$-form whose cohomology class is big.
		Let $\phi_j,\phi\in\operatorname{PSH}(X,\theta)$ be normalized positive mass
		model potentials such that
		\[
		\sup_X\phi_j=\sup_X\phi=0,\qquad
		\phi_j\to\phi
		\quad\text{in }\operatorname{Cap}_\omega .
		\]
		Let $p>1$, and let $f_j,f\ge0$ satisfy
		\[
		f_j\to f \quad\text{in }L^1(X,\omega^n),
		\qquad
		\sup_j\|f_j\|_{L^p(X,\omega^n)}<+\infty ,
		\]
		with the mass normalizations $\int_X f_j\omega^n=\int_X\theta_{\phi_j}^n$ and $\int_X f\omega^n=\int_X\theta_\phi^n>0$.
		Let $  u_j\in\mathcal E(X,\theta,\phi_j),
		u\in\mathcal E(X,\theta,\phi)$ be the unique normalized solutions of
		\[
		\theta_{u_j}^n=f_j\omega^n,\quad \sup_Xu_j=0,
		\qquad
		\theta_u^n=f\omega^n,\quad \sup_Xu=0.
		\]
		Then $u_j\to u$ in $\text{Cap}_\omega$.
	\end{corollary}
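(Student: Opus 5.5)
This is a direct specialization of \Cref{thm:moving-background-capacity-stability}. We take \(\theta_j:=\theta\) for every \(j\), so that \(\varepsilon_j=0\) and the two-sided bound \(-\varepsilon_j\omega\le\theta_j-\theta\le\varepsilon_j\omega\) holds trivially. We set \(\mu_j:=f_j\omega^n\) and \(\mu:=f\omega^n\). The model potentials \(\phi_j,\phi\) are normalized, have positive mass, and converge in \(\operatorname{Cap}_\omega\) by hypothesis. The solutions \(u_j,u\) are the normalized solutions whose existence and uniqueness come from \Cref{thm:relative-solvability}. It then remains to check that the measures satisfy the hypotheses of the theorem.

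\emph{Non-pluripolarity.} Since \(f_j,f\in L^1(X,\omega^n)\), the measures \(\mu_j,\mu\) are absolutely continuous with respect to Lebesgue measure. Pluripolar sets have Lebesgue measure zero, so \(\mu_j\) and \(\mu\) put no mass on pluripolar sets.

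\emph{Total variation convergence.} We have
\[
\|\mu_j-\mu\|_{\rm TV}=\int_X|f_j-f|\,\omega^n\longrightarrow0,
\]
because \(f_j\to f\) in \(L^1(X,\omega^n)\).

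\emph{Masses.} The normalizations \(\mu_j(X)=\int_X\theta_{\phi_j}^n\) and \(\mu(X)=\int_X\theta_\phi^n>0\) are exactly assumption (iii) of the corollary.

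With all hypotheses verified, \Cref{thm:moving-background-capacity-stability} yields \(u_j\to u\) in \(\operatorname{Cap}_\omega\). The uniform \(L^p\) bound \(\sup_j\|f_j\|_{L^p}<+\infty\) is not needed for this argument; it is kept only so the statement matches \cite[Theorem~1.4]{DDL21a}.

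The point of the corollary is that the hypothesis \(d_{\mathcal S}([\phi_j],[\phi])\to0\) of \Cref{thm:DDL-dS-stability} is replaced by the weaker capacity convergence \(\phi_j\to\phi\). The mass lower bound \(\mathcal S_\delta\) is also dropped: only positivity of the limit mass \(\int_X\theta_\phi^n\) is required, together with the positive mass of each \(\phi_j\).
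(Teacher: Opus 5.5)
Your proposal is correct and matches the paper, which states this result as an immediate corollary of Theorem~\ref{thm:moving-background-capacity-stability}, obtained by taking $\theta_j=\theta$, $\mu_j=f_j\omega^n$ and $\mu=f\omega^n$ exactly as you do. Non-pluripolarity follows from absolute continuity, and total variation convergence follows from $L^1$ convergence of the densities. You are also right that the uniform $L^p$ bound is superfluous, since the proof of that theorem never uses it.
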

	

	We give an example on Riemann surfaces to illustrate that the capacity topology is indeed much weaker than $d_{\mathcal S}$ topology, the same construction also gives numerous examples of moving divisors on higher dimension manifolds. 
	
	\begin{example}[Moving poles on \(\mathbb P^1\): capacity convergence without \(d_{\mathcal S}\)-convergence]
		\label{ex:moving-poles-P1-cap-not-dS}
		Let \(X=\mathbb P^1\), \(L=\mathcal O_{\mathbb P^1}(1)\), and let
		\(\omega=\omega_{\rm FS}=c_1(L,h_{\rm FS})\) be the Fubini-Study form normalized by
		\(\int_X\omega=1\). Use homogeneous coordinates \([Z_0:Z_1]\), and set
		\[
		D_1:=\{Z_0=0\}.
		\]
		Fix a rational number \(0<c<1\). Let \(\varepsilon_1:=0\), and let
		\(\varepsilon_j\in\mathbb C^*\), \(j\ge2\), satisfy \(\varepsilon_j\to0\).
		Put
		\[
		a_j:=(1+|\varepsilon_j|^2)^{-1/2},
		\qquad
		s_j:=a_j(Z_0-\varepsilon_j Z_1)\in H^0(\mathbb P^1,\mathcal O(1)),
		\]
		and
		\[
		D_j:=\{s_j=0\}=\{Z_0-\varepsilon_jZ_1=0\}.
		\]
		Thus the points \(D_j\to D_1\). With respect to the Fubini-Study metric,
		\[
		|s_j|_{h_{\rm FS}}^2([Z_0:Z_1])
		=
		\frac{|Z_0-\varepsilon_jZ_1|^2}
		{(1+|\varepsilon_j|^2)(|Z_0|^2+|Z_1|^2)}
		\le1,
		\]
		and \(\sup_X |s_j|_{h_{\rm FS}}=1\). Define
		\[
		u_j:=c\log |s_j|_{h_{\rm FS}}^2,
		\qquad
		\phi_j:=P_\omega[u_j].
		\]
		Then \(\sup_Xu_j=0\), hence \(\sup_X\phi_j=0\) and $\phi_j$ is a model potential.
		
		\smallskip
		\noindent\textbf{Step 1: \(u_j\to u_1\) and \(\phi_j\to\phi_1\) in capacity.}
		Choose
		\[
		g_j:=
		\begin{pmatrix}
			a_j & -a_j\varepsilon_j\\
			a_j\overline{\varepsilon_j} & a_j
		\end{pmatrix}
		\in U(2),
		\]
		and let \(G_j\in{\rm PU}(2)\) be the induced automorphism of \(\mathbb P^1\).
		Then \(G_j\to{\rm id}\) smoothly, \(G_j^*\omega=\omega\) (because the Fubini-Study metric is unitary invariant), and
		\[
		G_j^*Z_0=a_j(Z_0-\varepsilon_jZ_1)=s_j.
		\]
		Since \(h_{\rm FS}\) is invariant under \(U(2)\),
		\[
		u_j=G_j^*u_1.
		\]
		We claim that \(u_j\to u_1\) in \(\operatorname{Cap}_\omega\). Indeed, fix
		\(\delta>0\). By quasi-continuity of \(u_1\), for every \(\eta>0\) there is an
		open set \(O\subset X\) with \(\operatorname{Cap}_\omega(O)<\eta\) such that
		\(u_1\) is continuous on \(X\setminus O\). Since \(G_j\to{\rm id}\), for
		\(j\gg1\),
		\[
		|u_1\circ G_j-u_1|<\delta
		\quad\text{on }X\setminus(O\cup G_j^{-1}O).
		\]
		Moreover, \(G_j\) preserves \(\omega\), hence it preserves
		\(\operatorname{Cap}_\omega\). Therefore
		\[
		\operatorname{Cap}_\omega(\{|u_j-u_1|>\delta\})
		\le
		\operatorname{Cap}_\omega(O)+
		\operatorname{Cap}_\omega(G_j^{-1}O)
		=
		2\operatorname{Cap}_\omega(O).
		\]
		Letting \(\eta\to0\) gives \(u_j\to u_1\) in capacity.
		
		The envelope is equivariant under \(G_j\). More precisely, \(G_j^*\) gives a bijection
		between the candidates defining \(P_\omega[u_1]\) and those defining
		\(P_\omega[G_j^*u_1]\), since $G_j^*\omega=\omega$. Hence
		\[
		\phi_j=P_\omega[u_j]=P_\omega[G_j^*u_1]=G_j^*P_\omega[u_1]=G_j^*\phi_1.
		\]
		Repeating the same quasi-continuity argument with \(\phi_1\) in place of
		\(u_1\), we obtain
		\[
		\phi_j\to\phi_1
		\quad\text{in }\operatorname{Cap}_\omega.
		\]
		
		\smallskip
		\noindent\textbf{Step 2: \(\phi_j\) are positive mass model potentials and the masses are constant.}
		By the Poincar\'e-Lelong formula, as distributions
		\[
		\omega+dd^cu_j=\omega+c([D_j]-\omega)=(1-c)\omega+c[D_j].
		\]
		Thus \(u_j\in\operatorname{PSH}(X,\omega)\), and it is clear that \(u_j\) has analytic
		singularity type. Hence \(\phi_j=P_\omega[u_j]\) is a model potential with the
		same analytic singularity type as \(u_j\) (cf. \cite[Lemma 3.2]{DX24}). Since non-pluripolar products do
		not charge polar sets, we get as non-pluripolar measures, $\langle\omega+dd^cu_j\rangle=(1-c)\omega$, hence
		\begin{equation}
			\label{eq:P1-moving-pole-mass}
			\int_X\omega_{\phi_j}
			=
			\int_X\omega_{u_j}
			=
			1-c
			=
			\int_X\omega_{\phi_1}.
		\end{equation}
		In particular, \(\phi_j\to\phi_1\) in capacity and the total non-pluripolar
		masses converge, in fact they are equal.
		
		\smallskip
		\noindent\textbf{Step 3: the sequence does not converge in \(d_{\mathcal S}\).}
		For \(j\ge2\), the poles \(D_j\) and \(D_1\) are distinct. Since
		\(\phi_j\simeq u_j\) and \(\phi_1\simeq u_1\), the potential
		\(\max(\phi_j,\phi_1)\) has minimal singularity type since $D_1\cap D_j=\emptyset$ in $\mathbb{P}^1$: near \(D_j\) the
		function \(\phi_1\) is locally bounded, and near \(D_1\) the function
		\(\phi_j\) is locally bounded. Therefore
		\[
		\int_X\omega_{\max(\phi_j,\phi_1)}=1.
		\]
		Using \eqref{eq:P1-moving-pole-mass}, we have
		\[
		2\int_X\omega_{\max(\phi_j,\phi_1)}
		-
		\int_X\omega_{\phi_j}
		-
		\int_X\omega_{\phi_1}
		=
		2-2(1-c)=2c.
		\]
		It follows from \cite{DDL21a,DV25} that \([\phi_j]\not\to[\phi_1]\) in \(d_{\mathcal S}\).
		
		\smallskip
		\noindent\textbf{Step 4: consequence for capacity stability of prescribed Monge-Amp\`ere equations.}
		Let \(\mu_j,\mu\) be non-pluripolar positive Radon measures such that
		\[
		\|\mu_j-\mu\|_{\rm TV}\to0,
		\qquad
		\mu_j(X)=\int_X\omega_{\phi_j}=1-c,
		\qquad
		\mu(X)=\int_X\omega_{\phi_1}=1-c.
		\]
		Let \(\varphi_j\in\mathcal E(X,\omega,\phi_j)\) and
		\(\varphi\in\mathcal E(X,\omega,\phi_1)\) be the normalized solutions
		\[
		\omega_{\varphi_j}=\mu_j,\quad \sup_X\varphi_j=0,
		\qquad
		\omega_\varphi=\mu,\quad \sup_X\varphi=0.
		\]
		By the capacity stability theorem for moving prescribed singularities,
		\[
		\varphi_j\to\varphi
		\quad\text{in }\operatorname{Cap}_\omega.
		\]
		This example shows that our capacity-stability theorem is genuinely stronger than the \(d_{\mathcal S}\)-stability theorems of \cite{DDL21a,DV25}. Indeed, in this example one has \[ \phi_j \longrightarrow \phi_1 \quad\text{in } \operatorname{Cap}_\omega, \] whereas the corresponding singularity types do not converge in the \(d_{\mathcal S}\)-topology: \[ [\phi_j]\not\longrightarrow [\phi_1] \quad\text{in } d_{\mathcal S} . \]
	\end{example}

	\section{Continuity of envelopes and optimality of the capacity topology}\label{sec-4}
	In this section, we will prove the inverse implication part of \cref{thm:intro-optimal-capacity-topology}. 
	\subsection{Continuity of envelopes with respect to convergence in capacity}
	
	We record an application of the truncation method to the stability of
	quasi-continuous envelopes. The point is that one can allow the obstacles to
	have moving prescribed singularities, provided that these singularities converge
	in capacity. The proof is close in spirit to the proof of
	\Cref{thm:moving-background-capacity-stability}: instead of trying to control directly
	the mass defect of $\max(P_\theta(h_j),\psi_\rho)$, we truncate to the minimal
	singularity type and use the slope gap barrier to force the truncation contact
	mass into a set where $\phi_j$ is much smaller than $\phi$.
	
	We shall use the following elementary observation.
	
	\begin{lemma}
		\label{lem:qpsh-upper-bound-passes-to-limit}
		Let $u_j,u\in\operatorname{PSH}(X,\theta)$ with $u_j\to u$ in $L^1(X)$.
		Let $h_j,h$ be quasi-continuous functions such that
		$h_j\to h$ in $\operatorname{Cap}_\omega$. If
		\begin{equation}
			\label{eq:upper-bound-qe}
			u_j\le h_j \quad \text{q.e. on }X
		\end{equation}
		for all $j$, then
		\begin{equation}
			u\le h \quad \text{q.e. on }X .
		\end{equation}
	\end{lemma}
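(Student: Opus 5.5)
We reduce the statement to a pointwise identity for upper semicontinuous regularizations. By the Hartogs lemma (cf. \cite[Proposition~8.4]{GZ17}), $L^1$-convergence $u_j\to u$ of $\theta$-psh functions gives
\[
w_j:=\Bigl(\sup_{k\ge j}u_k\Bigr)^*\searrow u
\]
pointwise on $X$. By \cite[Proposition~4.25]{GZ17}, this decreasing convergence also holds in $\operatorname{Cap}_\omega$. The plan is to show that, for each fixed $\delta>0$, the set $E_\delta:=\{u>h+\delta\}$ has zero capacity. Since $\{u>h\}=\bigcup_m E_{1/m}$, this suffices: countable unions of sets of zero outer capacity have zero capacity, and $\operatorname{Cap}_\omega$-null sets are pluripolar \cite{GZ05}.

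\textbf{Step 1: reduce to a single index.} Fix $\delta>0$. Since $w_j\ge u$, we have
\[
E_\delta\subset\{w_j>h+\delta\}\quad\text{for every } j .
\]
Using the Hartogs identity $\sup_{k\ge j}u_k=w_j$ q.e. (the negligible set $\{(\sup_{k\ge j}u_k)^*\ne\sup_{k\ge j}u_k\}$ is pluripolar), we get, quasi-everywhere,
\[
E_\delta\subset\bigcup_{k\ge j}\{u_k>h+\delta/2\}\cup\{w_j-\sup_{k\ge j}u_k>\delta/2\}.
\]
The last set is pluripolar. This union over $k$ is not directly summable in capacity, so I instead control the sets one index at a time. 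For each $k$, the hypothesis $u_k\le h_k$ q.e. gives
\[
\{u_k>h+\delta/2\}\subset\{h_k>h+\delta/2\}\cup(\text{pluripolar}).
\]
The capacities of these sets tend to $0$, but the union over $k\ge j$ is still not controlled. I would therefore pass to a subsequence $j_\ell$ with
\[
\operatorname{Cap}_\omega\bigl(\{|h_{j_\ell}-h|>\delta/2\}\bigr)<2^{-\ell}.
\]
The subsequence still converges to $u$ in $L^1$, so Hartogs applies to it as well. Countable subadditivity then gives
\[
\operatorname{Cap}_\omega(E_\delta)\le\sum_{\ell\ge m}2^{-\ell}+0\longrightarrow0 .
\]

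\textbf{Step 2: conclude.} One technical point is that $\operatorname{Cap}_\omega$ is only countably subadditive as an outer capacity on arbitrary sets. Because $u$ and $h$ are quasi-continuous, $E_\delta$ is quasi-open, so outer regularity applies; a Borel version suffices in any case, and the sets $\{h_k>h+\delta/2\}$ can be enlarged to open sets of comparable capacity by quasi-continuity. With this, $\operatorname{Cap}_\omega(E_\delta)=0$ for every $\delta>0$, hence $u\le h$ q.e.

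\textbf{Main obstacle.} The delicate step is the passage from the upper envelope $w_j$ to the individual $u_k$ on a set of full capacity. This requires the fact that the upper semicontinuous regularization of a countable supremum of quasi-psh functions differs from the supremum only on a pluripolar set (Bedford–Taylor negligible sets). If the pointwise Hartogs route proves awkward, I would fall back on the following: by Step 1, the subsequence satisfies $u_{j_\ell}\le h+\delta/2$ off a set $O_m$ with $\operatorname{Cap}_\omega(O_m)<2^{-m}$ for all $\ell\ge m$. Taking $\limsup$ and regularizing, using that $O_m$ can be chosen open, gives $u=\limsup^*u_{j_\ell}\le h+\delta/2$ on $X\setminus O_m$ up to a pluripolar set. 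The same conclusion follows.
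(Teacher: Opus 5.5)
Your argument is correct, but it takes a genuinely different route from the paper's.

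\textbf{The paper's route.} The paper argues by contradiction with a single test measure. If $\{u>h+3\varepsilon\}$ had positive capacity, some $\nu=(\omega+dd^c\rho)^n$ with $0\le\rho\le1$ would charge it. Capacity convergence of $h_j$ transfers to $\nu$-measure because $\nu\le\operatorname{Cap}_\omega$. $L^1$-convergence of $u_j$ transfers to $\nu$-measure via \cref{lem:DoVu-nonpluripolar-measure-convergence}. The essential input there is the lower-tail statement $\nu(\{u_j<u-\varepsilon\})\to0$ for \emph{all} large $j$. This is needed to get $u_j>h_j+\varepsilon$ on most of the set, and it is the nontrivial half of the Do--Vu lemma.

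\textbf{Your route.} You never control the lower tail of $u_j$. You use only the upper-envelope half of Hartogs' lemma: $u\le\sup_{\ell\ge m}u_{j_\ell}$ off a pluripolar set, by the Bedford--Taylor negligible-set theorem. So at quasi-every point of $E_\delta$, \emph{some} $u_{j_\ell}$ with $\ell\ge m$ exceeds $h+\delta$. Since you do not know which index, you pay for it with a Borel--Cantelli sum over a fast subsequence of $(h_j)$.

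\textbf{What each buys.} Your proof is more elementary: it uses only Hartogs, negligible sets and countable subadditivity, with no test measures and no Do--Vu lemma. The paper's proof is shorter given Do--Vu, which it needs elsewhere anyway, and it works with the full sequence rather than a subsequence.

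\textbf{Small cleanups.}
\begin{enumerate}
\item The first inclusion you write for the full sequence is superfluous once you pass to the subsequence.
\item The $\delta/2$ can be replaced by $\delta$.
\item The caveats in Step 2 about quasi-openness and outer regularity are unnecessary. $\operatorname{Cap}_\omega$ is countably subadditive on Borel sets directly from its definition as a supremum of measures. Every pluripolar set lies in a Borel set $\{\psi=-\infty\}$, which no measure $(\omega+dd^c\rho)^n$ with bounded $\rho$ charges.
\end{enumerate}
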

	
	\begin{proof}
		It is enough to prove that, for every \(\varepsilon>0\),
		\[
		\operatorname{Cap}_\omega(\{u>h+3\varepsilon\})=0 .
		\]
		Suppose by contradiction that, for some \(\varepsilon>0\),
		\[
		E_\varepsilon:=\{u>h+3\varepsilon\}
		\]
		has positive \(\omega\)-capacity. By the definition of
		\(\operatorname{Cap}_\omega\), there exists
		\(\rho\in\operatorname{PSH}(X,\omega)\), \(0\le\rho\le1\), such that, with $\nu:=(\omega+dd^c\rho)^n$,
		we have $\nu(E_\varepsilon)>0$. The measure \(\nu\) is a finite non-pluripolar measure and, by the definition
		of capacity, $\nu(A)\le \operatorname{Cap}_\omega(A)$ for every Borel set \(A\subset X\).
		
		Since \(u_j\to u\) in \(L^1(X)\) and \(\nu\) does not charge pluripolar sets,
		\cref{lem:DoVu-nonpluripolar-measure-convergence} gives
		\[
		\int_X\min\{|u_j-u|,1\}\,d\nu\to0 .
		\]
		In particular, $\nu(\{|u_j-u|>\varepsilon\})\to0$.
		
		On the other hand, since \(h_j\to h\) in \(\operatorname{Cap}_\omega\), the
		domination \(\nu\le \operatorname{Cap}_\omega\) gives
		\[
		\nu(\{|h_j-h|>\varepsilon\})
		\le
		\operatorname{Cap}_\omega(\{|h_j-h|>\varepsilon\})
		\to0 .
		\]
		
		The inequality \(u_j\le h_j\) holds q.e. by assumption. Since \(\nu\) does not
		charge pluripolar sets, it holds \(\nu\)-a.e. Now on \(E_\varepsilon\), if
		\[
		|u_j-u|\le\varepsilon,
		\qquad
		|h_j-h|\le\varepsilon,
		\]
		then
		\[
		u_j\ge u-\varepsilon>h+2\varepsilon\ge h_j+\varepsilon,
		\]
		which contradicts \(u_j\le h_j\) q.e. Hence, there is a $\nu$-negligible set $E_\nu$ such that
		\[
		E_\varepsilon
		\subset
		\{|u_j-u|>\varepsilon\}
		\cup
		\{|h_j-h|>\varepsilon\}
		\cup E_\nu
		.
		\]
		Therefore
		\[
		\begin{aligned}
			\nu(E_\varepsilon)
			&\le
			\nu(\{|u_j-u|>\varepsilon\})
			+
			\nu(\{|h_j-h|>\varepsilon\})
			\to0 .
		\end{aligned}
		\]
		This contradicts \(\nu(E_\varepsilon)>0\). Thus
		\[
		\operatorname{Cap}_\omega(\{u>h+3\varepsilon\})=0
		\]
		for every \(\varepsilon>0\). Letting \(\varepsilon\downarrow0\) along a
		sequence gives the desired result.
	\end{proof}
	
	The following property of the envelope will be used later.
	\begin{lemma}\label{lem: envelope equality}
		\label{lem:pointwise-qe-envelope-positive-subbarrier}
		Let \(h:X\to[-\infty,+\infty]\). Assume that there exists
		\(q_0\in\operatorname{PSH}(X,\theta)\) such that
		\[
		q_0\le h\quad\text{q.e. on }X,
		\qquad
		\int_X\theta_{q_0}^n>0 .
		\]
		Then
		\[
		P_\theta(h)
		=
		\left(
		\sup\{q\in\operatorname{PSH}(X,\theta):
		q\le h\ \text{q.e. on }X\}
		\right)^* .
		\]
		
	\end{lemma}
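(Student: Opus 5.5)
Denote by $\tilde P(h)$ the right-hand side, i.e. the usc regularization of $\tilde S:=\sup\{q\in\operatorname{PSH}(X,\theta): q\le h \text{ q.e.}\}$, and by $S:=\sup\{q\in\operatorname{PSH}(X,\theta): q\le h\}$ the family defining $P_\theta(h)$. The inequality $P_\theta(h)\le\tilde P(h)$ is immediate, because every candidate for $S$ is a candidate for $\tilde S$. Everything therefore reduces to the reverse inequality: given $q\in\operatorname{PSH}(X,\theta)$ with $q\le h$ outside a pluripolar set $E$, we must show $q\le P_\theta(h)$. Once this holds, $\tilde S\le P_\theta(h)$, and since $P_\theta(h)$ is usc we get $\tilde P(h)\le P_\theta(h)$.

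The plan is to correct $q$ on $E$ with a small perturbation that equals $-\infty$ on $E$. By Josefson's theorem in its global quasi-psh form (every pluripolar set on a compact Kähler manifold is contained in $\{\psi=-\infty\}$ for some $\psi$ that is quasi-psh), we can choose a $\theta$-psh function with poles on $E$. To do this, take $\psi\in\operatorname{PSH}(X,A\omega)$ with $E\subset\{\psi=-\infty\}$ and $\psi\le0$. Then pick $\rho\in\operatorname{PSH}(X,\theta)$ with $\theta_\rho\ge\varepsilon\omega$, and set $\psi':=\rho+\tfrac{\varepsilon}{A}\psi$. This $\psi'$ lies in $\operatorname{PSH}(X,\theta)$ and satisfies $E\subset\{\psi'=-\infty\}$. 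We now use the hypothesis on $q_0$. For $t\in(0,1)$ and $s\in(0,1-t)$ consider
\[
q_{t,s}:=(1-t-s)\,q+t\,q_0+s\,\psi' - C_{t,s}.
\]
Off $E\cup E_0$, where $E_0$ is the pluripolar exceptional set for $q_0$, we have $q\le h$ and $q_0\le h$. The function $q_{t,s}$ is $-\infty$ on $E$, hence certainly $\le h$ there. The difficulty is that $E_0$ might not lie inside $\{\psi'=-\infty\}$, and that $h$ could be $-\infty$ or discontinuous. The remedy is to choose $\psi'$ with poles on the full set $E\cup E_0$ from the start. Then $\{q_{t,s}>h\}$ can only lie off $E\cup E_0$, where a convex combination of functions bounded by $h$ is bounded by $h$, provided $h>-\infty$ there or both sides are $-\infty$. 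One still has to handle the constant terms and the points where $h=+\infty$; we either normalize by $\sup$ bounds or do without $q_0$ entirely. Once $q_{t,s}\le h$ everywhere, $q_{t,s}\le S\le P_\theta(h)$.

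The remaining task is the limit $s\to0$, $t\to0$. For fixed $t$, letting $s\downarrow0$ gives $(1-t)q+tq_0\le P_\theta(h)$ off the pluripolar set $\{\psi'=-\infty\}$. Since both sides are quasi-psh, the inequality extends everywhere, because a pluripolar set has Lebesgue measure zero and quasi-psh functions are determined by their a.e. values. Letting $t\to0$ then yields $q\le P_\theta(h)$ off $\{q_0=-\infty\}$, and hence everywhere by the same argument.

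The main obstacle is to make sure $P_\theta(h)\in\operatorname{PSH}(X,\theta)$, so that the a.e.-to-everywhere step is valid and the envelope does not collapse to $-\infty$. This is exactly where $\int_X\theta_{q_0}^n>0$ and $q_0$ are used. The combination $(1-t)q+tq_0$ with $t>0$ gives a genuine candidate that lies below $h$ q.e., and after the pole correction above it lies below $h$ everywhere. Hence $P_\theta(h)\not\equiv-\infty$. Its usc regularization is then $\theta$-psh by the standard Choquet argument.
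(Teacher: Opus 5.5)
\textbf{There is a genuine gap.} Your argument breaks at the claim that $q_{t,s}\le h$ on all of $X$. Off $E\cup E_0$ you only know $q\le h$ and $q_0\le h$, which gives
\[
q_{t,s}\le (1-s)\,h+s\,\psi'-C_{t,s}.
\]
This is $\le h$ only where $s(\psi'-h)\le C_{t,s}$. But $\psi'$ is not one of the ``functions bounded by $h$''. The obstacle $h$ may be as small as $q_0$, which tends to $-\infty$ along the singularities of $q_0$, while $\psi'=\rho+\frac{\varepsilon}{A}\psi$ need not be comparably singular there. Adjusting the constant $C_{t,s}$ does not repair this, nor does normalizing suprema. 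Dropping $q_0$ does not help either: it only replaces $q_0$ by $q$ in the same problem.

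The missing idea is that the pole-carrying perturbation must itself lie \emph{below} $q_0$ off $E$ while still being $\theta$-psh. The paper obtains this from \cref{lem:strict-subbarrier-slope-gap}. After normalizing $q_0\le V_\theta$, the hypothesis $\int_X\theta_{q_0}^n>0$ produces $\chi\in\operatorname{PSH}(X,\theta)$ with $\theta_\chi\ge a\omega$ and $\chi\le q_0$. Take a quasi-psh $g\le0$ with $dd^cg\ge-a\omega$ and $E\subset\{g=-\infty\}$. Then $\chi+g$ is $\theta$-psh, equals $-\infty$ on $E$, and is $\le q_0\le h$ off $E$. Hence $(1-t)q+t(\chi+g)\le h$ everywhere. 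Letting $t\downarrow0$ gives $q\le P_\theta(h)$ off a pluripolar set, hence everywhere.

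So you had the right building block, but you chose the K\"ahler-current potential $\rho$ arbitrarily instead of below $q_0$. The mass hypothesis is used precisely to produce a strictly positive current below $q_0$ whose curvature absorbs the $-a\omega$ of the pole function. It is not mainly about ensuring $P_\theta(h)\not\equiv-\infty$. In your write-up, that non-triviality rests on the same failing step.

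That the mass must enter exactly here is shown by a zero-mass example, which your construction does not distinguish from the positive-mass case.
\begin{itemize}
\item Take $X=\mathbb P^1$ with $\theta=\omega_{\rm FS}$, and let $q_0=\log|s_p|^2_{h_{\rm FS}}$. Then $\omega_{q_0}=[p]$ has zero non-pluripolar mass.
\item Let $h=q_0$ except $h(x_1)=-\infty$ at a point $x_1\ne p$.
\item Any $\omega$-psh $u\le q_0$ has Lelong number $\ge1$ at $p$, hence $\omega_u=[p]$ and $u=q_0+c$. So $u(x_1)>-\infty$, and no $u$ is a genuine candidate.
\item Therefore $P_\omega(h)\equiv-\infty$, whereas the q.e.\ envelope equals $q_0$.
\end{itemize}
Concretely, with $q=q_0$ your construction gives $q_{t,s}-h=s(\psi'-q_0)-C_{t,s}$ off $x_1$. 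This is unbounded above near $p$, because $\psi'$ takes the value $-\infty$ at $x_1$ and therefore cannot have Lelong number $1$ at $p$.
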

	
	\begin{proof}
		Denote by
		\[
		\widetilde{P}_{\theta}(h):=\left(\sup\{q\in\operatorname{PSH}(X,\theta): q\le h\ \text{q.e. on }X\}\right)^*.
		\]
		It is clear that \(P_\theta(h)\le \widetilde{P}_{\theta}(h)\), so it suffices to prove the reverse inequality.
		
		Let \(w\in\operatorname{PSH}(X,\theta)\) satisfy \(w\le h\) outside a pluripolar set \(E\). Enlarging \(E\) if necessary, we may assume that \(q_0\le h\) on \(X\setminus E\). After subtracting a constant from \(q_0\), we can also suppose \(q_0\le V_\theta\). By \cref{lem:strict-subbarrier-slope-gap}, there exist \(\chi\in\operatorname{PSH}(X,\theta)\) and \(a>0\) such that \(\theta_\chi\ge a\omega\) and \(\chi\le q_0\). Since \(E\) is pluripolar, there exists a quasi-plurisubharmonic function  \(\rho\leq 0\) on $X$,  such that \(dd^c\rho\ge -a\omega\), and \(E\subset\{\rho=-\infty\}\) (see \cite[Theorem~7.2]{GZ05}). Define \(\psi:=\chi+\rho\); then \(\psi\in\operatorname{PSH}(X,\theta)\) and \(\psi=-\infty\) on \(E\). For \(0<t<1\), set \(w_t:=(1-t)w+t\psi\). Then \(w_t\in\operatorname{PSH}(X,\theta)\) and \(w_t\le h\) on all of \(X\), hence \(w_t\le P_\theta(h)\). Letting \(t\downarrow 0\) yields \(w\le P_\theta(h)\) quasi-everywhere, and therefore everywhere. Taking the supremum over all admissible \(w\) (which satisfy the inequality quasi-everywhere) proves the claim.
	\end{proof}

	\begin{theorem}[=\cref{intro_thm:relative-bounded-obstacle-capacity-continuity}]
		\label{thm:relative-bounded-obstacle-capacity-continuity}
		Let $(X,\omega)$ be a compact K\"ahler manifold, and let $\theta$ be a smooth
		closed real $(1,1)$-form whose cohomology class is big. Let
		$\phi_j,\phi\in\operatorname{PSH}(X,\theta)$ be normalized positive mass model
		potentials such that
		\begin{equation}
			\sup_X\phi_j=\sup_X\phi=0,
			\qquad
			\phi_j\to\phi
			\quad\text{in }\operatorname{Cap}_\omega .
		\end{equation}
		Let $h_j,h$ be quasi-continuous functions such that $h_j\to h$ in capacity,
		and suppose that there exists a constant $C>0$ such that, q.e. on $X$,
		\begin{equation}
			\phi_j-C\le h_j\le \phi_j,
			\qquad
			\phi-C\le h\le \phi .
		\end{equation}
		Then
		\begin{equation}
			P_{\theta}(h_j)\to P_\theta(h)
			\quad\text{in }\operatorname{Cap}_\omega .
		\end{equation}
	\end{theorem}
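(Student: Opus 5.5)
Write $u_j:=P_\theta(h_j)$ and $u:=P_\theta(h)$. We first check that these envelopes are well behaved. Since $\phi_j-C\le h_j\le\phi_j$ q.e. and $\phi_j$ is a positive mass candidate, \cref{lem:pointwise-qe-envelope-positive-subbarrier} applies. It gives $\phi_j-C\le u_j\le\phi_j$, and likewise $\phi-C\le u\le\phi$. Hence $u_j\in\mathcal E(X,\theta,\phi_j)$ and $u\in\mathcal E(X,\theta,\phi)$, and the $u_j$ are uniformly bounded relative to $\phi_j$.

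The argument then proceeds in three steps, mirroring \Cref{thm:moving-background-capacity-stability}.

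\textbf{Step 1 ($L^1$ identification).} Pass to a subsequence with $u_j\to v$ in $L^1$. By \cref{lem:qpsh-upper-bound-passes-to-limit}, applied to $u_j\le h_j$ q.e., we get $v\le h$ q.e., and therefore $v\le u$ by \cref{lem:pointwise-qe-envelope-positive-subbarrier}.

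For the reverse inequality $u\le v$ we use a perturbed competitor. Take the strict subbarrier $\chi$ of \cref{lem:strict-subbarrier-slope-gap} with $w=u$, so that $\chi\le u-\sigma(V-u)$ and $\theta_\chi\ge a\omega$. For small $\tau$ set
\[
\psi:=(1-\tau)u+\tau\chi-\delta.
\]
We want to show $\psi\le u_j+o(1)$ off small sets.

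The cleaner route is to show $\operatorname{Cap}_\omega(\{u_j<u-\delta\})\to0$ directly, as in Step 2 below. Combined with the upper bound from Hartogs and $v\le u$, this yields $v=u$.

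\textbf{Step 2 (lower tail via comparison).} Fix $\delta$ and $R$, let $G_R:=\{\chi>u-R\}$, and for $\rho\in\operatorname{PSH}(X,\omega)$ with $0\le\rho\le1$ define
\[
\psi_\rho:=(1-\tau)u+\tau\chi+\tau b\rho-\delta/2-\tau b,
\]
exactly as before. Next truncate: put $U_j^T:=\max(u_j,V-T)\in\mathcal E(X,\theta)$ and apply the comparison principle \cref{thm:DDL-relative-comparison} to $U_j^T$ and $\max(U_j^T,\psi_\rho)$. This gives
\[
(\tau b)^n\operatorname{Cap}_\omega\bigl(\{u_j<u-\delta\}\cap G_R\cap\{u_j>V-T\}\bigr)\le\int_{\{U_j^T<\psi_\rho\}}\theta_{U_j^T}^n .
\]
We split the right-hand side into two pieces.

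On $\{u_j>V-T\}$ we have $\theta_{U_j^T}^n=\theta_{u_j}^n$, which by \cref{thm:envelope-contact} lives on the contact set $\{u_j=h_j\}$. There, $U_j^T<\psi_\rho$ forces $h_j<u-\delta/2\le h-\delta/2$ (using $u\le h$). So this piece is bounded by $\theta_{u_j}^n(\{|h_j-h|>\delta/2\})$. Since $\phi_j-C\le u_j\le\phi_j$, \cref{lem:uniform-DDL-capacity-moving-models} bounds this by $C^nF(\operatorname{Cap}_\omega(\{|h_j-h|>\delta/2\}))\to0$.

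On $\{u_j\le V-T\}$ we use the slope gap exactly as in Step 6 of the previous proof. The condition $V-T<\psi_\rho$ gives $u>V-(T-\delta/2)/(1+\tau\sigma)$, while $u_j\ge\phi_j-C$ gives $\phi_j\le V-T+C$. Hence
\[
\phi_j<\phi-L_T,\qquad L_T\to\infty .
\]
Bounding this piece through the capacity domination for $V-T\le U_j^T\le V$ by $\Gamma_T(\operatorname{Cap}_\omega(\{\phi_j<\phi-L_T\}))$, it tends to $0$ as $j\to\infty$ for fixed $T$.

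The remaining sets are controlled separately. $X\setminus G_R$ has small capacity for large $R$, and $\{u_j\le V-T\}\subset\{u_j<-T\}$ has small capacity by Chern–Levine–Nirenberg. Choosing $R$, then $T$, then $j$ yields $\operatorname{Cap}_\omega(\{u_j<u-\delta\})\to0$.

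\textbf{Step 3 (upper tail).} Step 2 gives $u\le v$, so $v=u$ and $u_j\to u$ in $L^1$ along the full sequence. Hartogs' lemma together with \cite[Proposition 4.25]{GZ17} then gives $\operatorname{Cap}_\omega(\{u_j>u+\delta\})\to0$, exactly as in Step 8 of \Cref{thm:moving-background-capacity-stability}.

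The main obstacle is the contact-set estimate in Step 2. It requires that $\theta_{u_j}^n$ does not charge the exceptional pluripolar sets where the q.e. inequalities fail, and that contact with a merely quasi-continuous obstacle is handled correctly; both rest on \cref{thm:envelope-contact} being applicable to $h_j$. It also requires the uniform domination of $\theta_{u_j}^n$ by capacity, which is where the two-sided bound $\phi_j-C\le u_j\le\phi_j$ and \cref{lem:uniform-DDL-capacity-moving-models} are essential.
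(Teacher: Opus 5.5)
Your proposal is correct and is essentially the paper's proof. It uses the slope-gap subbarrier $\chi$, the truncation $U_j^T=\max(u_j,V-T)$ with the comparison principle, the contact-set bound via \cref{thm:envelope-contact} and the uniform domination of \cref{lem:uniform-DDL-capacity-moving-models}, the reduction of the truncation part to $\{\phi_j<\phi-L_T\}$, and then $L^1$ identification through \cref{lem:qpsh-upper-bound-passes-to-limit} plus Hartogs for the upper tail. The only differences are cosmetic: you control $\{u_j\le V-T\}$ by Chern--Levine--Nirenberg rather than via $\phi_j\to\phi$ and the tail of $\phi$, and the domination constant should be $\max\{C,1\}^n$ (the paper obtains it by rescaling to $C_1^{-1}u_j+(1-C_1^{-1})\phi_j$ with $C_1=\max\{C,1\}$).
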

	\begin{proof}
		Put \(V:=V_\theta\), \(v_j:=P_\theta(h_j)\), and \(v:=P_\theta(h)\). Then
		\[
		\phi_j-C\le v_j\le \phi_j,\qquad
		\phi-C\le v\le \phi .
		\]
		We first prove the lower-tail estimate. Fix \(\delta>0\) and set
		\(A_j:=\{v_j<v-\delta\}\). By \Cref{lem:strict-subbarrier-slope-gap}, applied
		to \(v\) and \(\phi\), there exist \(\chi\in\operatorname{PSH}(X,\theta)\) and
		constants \(a,\sigma>0\) such that
		\[
		\theta_\chi\ge a\omega,\qquad
		\chi\le v-\sigma(V-v).
		\]
		For \(R>0\), let \(G_R:=\{\chi>v-R\}\). Since \(X\setminus G_R\) decreases to a
		pluripolar set, \(\operatorname{Cap}_\omega(X\setminus G_R)\to0\) as
		\(R\to+\infty\).
		
		We shall use the following uniform domination. Let \(C_1:=\max\{C,1\}\) and set
		\(w_j:=C_1^{-1}v_j+(1-C_1^{-1})\phi_j\). Then
		\(\phi_j-1\le w_j\le\phi_j\), and \(\theta_{v_j}\le C_1\theta_{w_j}\). By \cref{lem:uniform-DDL-capacity-moving-models}, after increasing \(j\) if
		necessary, there exists an increasing continuous function \(F\), independent of
		\(j\), with \(F(0)=0\), such that
		\[
		\int_E\theta_{v_j}^n
		\le C_1^n\operatorname{Cap}_{\theta,\phi_j}(E)
		\le C_1^nF(\operatorname{Cap}_\omega(E))
		\]
		for all Borel sets \(E\subset X\).
		
		By the envelope contact theorem \cref{thm:envelope-contact}, \(\theta_{v_j}^n\) is concentrated on
		\(\{v_j=h_j\}\). Since \(v\le h\) q.e., we get
		\[
		\begin{aligned}
			M_j(\delta)
			&:=
			\int_{\{v_j<v-\delta/2\}}\theta_{v_j}^n                                      \\
			&\le
			\int_{\{h_j<h-\delta/2\}}\theta_{v_j}^n                                      \\
			&\le
			C_1^nF\bigl(\operatorname{Cap}_\omega(\{h_j<h-\delta/2\})\bigr).
		\end{aligned}
		\]
		Hence \(M_j(\delta)\to0\).
		
		We now prove that \(\operatorname{Cap}_\omega(A_j)\to0\). Let
		\(\varepsilon>0\). Choose \(R>0\) such that
		\[
		\operatorname{Cap}_\omega(X\setminus G_R)<\varepsilon/3 .
		\]
		Fix \(0<b<a\) and put
		\[
		\tau:=\min\left\{\frac12,\frac{\delta}{8(R+b)}\right\}.
		\]
		For \(T>1\), define
		\[
		L_T:=T-C-\frac{T-\delta/2}{1+\tau\sigma}.
		\]
		Choose \(T\) large enough so that \(L_T>0\) and
		\[
		\operatorname{Cap}_\omega(\{\phi\le V-T/2+C\})<\varepsilon/6 .
		\]
		Since \(\phi_j\to\phi\) in capacity, after increasing \(j\) if necessary,
		\[
		\operatorname{Cap}_\omega(\{\phi_j<\phi-T/2\})<\varepsilon/6 .
		\]
		For such \(j\),
		\[
		\{v_j\le V-T\}
		\subset
		\{\phi_j\le V-T+C\}
		\subset
		\{\phi\le V-T/2+C\}\cup\{\phi_j<\phi-T/2\},
		\]
		and hence
		\[
		\operatorname{Cap}_\omega(\{v_j\le V-T\})<\varepsilon/3 .
		\]
		
		It remains to estimate the core set
		\[
		B_{j,R,T}:=A_j\cap G_R\cap\{v_j>V-T\}.
		\]
		Fix
		\(\rho\in\operatorname{PSH}(X,\omega)\), \(0\le\rho\le1\). Write
		\(\omega_\rho:=\omega+dd^c\rho\), and set
		\[
		\psi_\rho:=(1-\tau)v+\tau\chi+\tau b\rho-\frac{\delta}{2}-\tau b .
		\]
		Then \(\psi_\rho\le v-\delta/2\), \(\theta_{\psi_\rho}\ge\tau b\,\omega_\rho\),
		and on \(G_R\),
		\[
		\psi_\rho\ge v-\frac{\delta}{2}-\tau(R+b)\ge v-\frac{5\delta}{8}.
		\]
		Thus \(B_{j,R,T}\subset\{U_j^T<\psi_\rho\}\), where \(U_j^T:=\max(v_j,V-T)\). Since
		\(V-T\le U_j^T\le V\), both \(U_j^T\) and
		\(H_{j,\rho}^T:=\max(U_j^T,\psi_\rho)\) have minimal singularity type. By the
		comparison principle \cref{thm:DDL-relative-comparison} in \(\mathcal E(X,\theta,V)\) and plurifine locality,
		\[
		(\tau b)^n\int_{B_{j,R,T}}\omega_\rho^n
		\le
		\int_{\{U_j^T<\psi_\rho\}}\theta_{U_j^T}^n .
		\]
		
		We split the last integral. On \(\{v_j>V-T\}\), plurifine locality gives
		\(\theta_{U_j^T}^n=\theta_{v_j}^n\), and the corresponding contribution is at
		most \(M_j(\delta)\). On the remaining part \(\{v_j\le V-T\}\), the inequality
		\(U_j^T<\psi_\rho\) implies \(V-T<\psi_\rho\). Using the slope gap,
		\[
		\psi_\rho\le v-\tau\sigma(V-v)-\frac{\delta}{2},
		\]
		we obtain
		\[
		v>V-\frac{T-\delta/2}{1+\tau\sigma}.
		\]
		Since \(v\le\phi\), while \(v_j\le V-T\) and \(v_j\ge\phi_j-C\) imply
		\(\phi_j\le V-T+C\), this part is contained in
		\[
		E_{j,T}:=\{\phi_j<\phi-L_T\}.
		\]
		Therefore, using \(V-T\le U_j^T\le V\) and the capacity comparison for
		\(\operatorname{Cap}_{\theta,V}\),
		\[
		\int_{\{U_j^T<\psi_\rho\}\cap\{v_j\le V-T\}}\theta_{U_j^T}^n
		\le
		T^nF_V(\operatorname{Cap}_\omega(E_{j,T})).
		\]
		Since \(L_T>0\) is fixed and \(\phi_j\to\phi\) in capacity,
		\(\operatorname{Cap}_\omega(E_{j,T})\to0\). Thus, after increasing \(j\) once
		more, we have both
		\[
		M_j(\delta)<\frac{(\tau b)^n\varepsilon}{6},
		\qquad
		T^nF_V(\operatorname{Cap}_\omega(E_{j,T}))
		<
		\frac{(\tau b)^n\varepsilon}{6}.
		\]
		It follows that $\int_{B_{j,R,T}}\omega_\rho^n<\varepsilon/3$.
		Taking the supremum over \(\rho\), we get
		\[
		\operatorname{Cap}_\omega(B_{j,R,T})\le\varepsilon/3 .
		\]
		
		Finally, it is clear that
		\[
		A_j\subset
		(X\setminus G_R)\cup\{v_j\le V-T\}\cup B_{j,R,T} .
		\]
		For all sufficiently large \(j\), each of the three terms has capacity \(<\varepsilon/3\).
		Hence
		\begin{equation}\label{eq:envelope lower tail estimate}
			\operatorname{Cap}_\omega(\{v_j<v-\delta\})\to0 .
		\end{equation}
		
		We now prove \(L^1\)-convergence. Namely, $v_j\to v$ in $L^1(X)$. Take an arbitrary subsequence. Since
		\(v_j\le0\) and \(v_j\ge\phi_j-C\), and since \(\phi_j\to\phi\) in \(L^1\),
		compactness of quasi-psh functions gives a further subsequence, still denoted by
		\(v_j\), such that \(v_j\to w\) in \(L^1(X)\). The lower-tail estimate \eqref{eq:envelope lower tail estimate} implies
		\(w\ge v\). On the other hand, \(v_j\le h_j\) q.e. and \(h_j\to h\) in capacity.
		By \Cref{lem:qpsh-upper-bound-passes-to-limit}, \(w\le h\) q.e. Hence by \cref{lem: envelope equality},
		\(w\le P_\theta(h)=v\). Thus \(w=v\). Since every subsequence has a further
		subsequence converging to \(v\) in \(L^1\), we have \(v_j\to v\) in \(L^1(X)\).
		
		By the Hartogs lemma,
		\[
		\operatorname{Cap}_\omega(\{v_j>v+\delta\})\to0
		\]
		for every \(\delta>0\). Together with the lower-tail estimate, this proves
		\(v_j\to v\) in \(\operatorname{Cap}_\omega\).
	\end{proof}
	
	As immediate corollaries, we have the envelope stability under the capacity topology when the obstacles $h_j,h$ are uniformly of minimal singularity types or uniformly bounded. 
	\begin{corollary}
		\label{cor:minimal-type-obstacle-capacity-continuity}
		Let $h_j,h$ be quasi-continuous functions satisfying, for some constant $C>0$,
		\begin{equation}
			\label{eq:minimal-type-obstacle-bound}
			V_\theta-C\le h_j,h\le V_\theta .
		\end{equation}
		If $h_j\to h$ in capacity, then
		\begin{equation}
			P_\theta(h_j)\to P_\theta(h)
			\quad\text{in }\operatorname{Cap}_\omega .
		\end{equation}
	\end{corollary}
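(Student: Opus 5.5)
This is the special case $\phi_j=\phi=V_\theta$ of \cref{thm:relative-bounded-obstacle-capacity-continuity}, after a normalization; the plan is to check its hypotheses. Replacing $V_\theta$ by $V_\theta-\sup_XV_\theta$ is harmless, since $V_\theta\le0$ and $\sup_XV_\theta=0$ by definition. So $V_\theta$ is normalized.

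We next verify that $V_\theta$ is a positive mass model potential. The class $\{\theta\}$ is big, so $\int_X\theta_{V_\theta}^n=\operatorname{vol}(\{\theta\})>0$. For the model property, $P_\theta[V_\theta]=\bigl(\lim_{C\to\infty}P_\theta(V_\theta+C,V_\theta)\bigr)^*$. For $C\ge0$ we have $P_\theta(\min\{V_\theta+C,V_\theta\})=P_\theta(V_\theta)=V_\theta$, hence $P_\theta[V_\theta]=V_\theta$.

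Now take $\phi_j:=\phi:=V_\theta$ for all $j$. Then $\phi_j\to\phi$ in $\operatorname{Cap}_\omega$ trivially. By \eqref{eq:minimal-type-obstacle-bound}, the obstacles satisfy $\phi_j-C\le h_j\le\phi_j$ and $\phi-C\le h\le\phi$ everywhere, hence q.e. The functions $h_j,h$ are quasi-continuous and $h_j\to h$ in capacity by assumption. \Cref{thm:relative-bounded-obstacle-capacity-continuity} therefore gives $P_\theta(h_j)\to P_\theta(h)$ in $\operatorname{Cap}_\omega$.

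The only step needing any care is the model identity $P_\theta[V_\theta]=V_\theta$, and it is immediate from the definition of the envelope.
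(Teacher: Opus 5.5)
Your proposal is correct and is exactly the paper's proof, which simply applies \cref{thm:relative-bounded-obstacle-capacity-continuity} with $\phi_j=\phi=V_\theta$. Your check that $V_\theta$ is a normalized positive-mass model potential ($\sup_X V_\theta=0$, $\int_X\theta_{V_\theta}^n>0$, $P_\theta[V_\theta]=V_\theta$) just spells out hypotheses the paper leaves implicit.
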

	
	\begin{proof}
		Apply \Cref{thm:relative-bounded-obstacle-capacity-continuity} with
		$\phi_j=\phi=V_\theta$.
	\end{proof}

	\begin{corollary}
		\label{cor:bounded-obstacle-capacity-continuity}
		Let $h_j,h$ be uniformly bounded  quasi-continuous functions. If $h_j\to h$ in capacity, then
		\begin{equation}
			P_\theta(h_j)\to P_\theta(h)
			\quad\text{in }\operatorname{Cap}_\omega .
		\end{equation}
	\end{corollary}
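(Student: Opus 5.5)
The plan is to reduce \cref{cor:bounded-obstacle-capacity-continuity} to \cref{cor:minimal-type-obstacle-capacity-continuity}, that is, to \Cref{thm:relative-bounded-obstacle-capacity-continuity} with $\phi_j=\phi=V_\theta$. Suppose $|h_j|,|h|\le M$ on $X$, and normalize $V:=V_\theta$ so that $\sup_X V=0$. The obstacles are then bounded, but they need not be squeezed between $V-C$ and $V$. Our first move is to replace them by the modified obstacles
\[
\tilde h_j:=\min\{h_j,V\},\qquad \tilde h:=\min\{h,V\}.
\]
These are still quasi-continuous, since $V$ is quasi-psh and minima of quasi-continuous functions are quasi-continuous. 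Because $\min$ is $1$-Lipschitz,
\[
\{|\tilde h_j-\tilde h|>\varepsilon\}\subset\{|h_j-h|>\varepsilon\},
\]
so $\tilde h_j\to\tilde h$ in $\operatorname{Cap}_\omega$. Moreover, $V\le0$ and $h_j\ge -M$ give
\[
V-M\le \tilde h_j\le V,
\]
and the same bounds hold for $\tilde h$. So the hypotheses of \cref{cor:minimal-type-obstacle-capacity-continuity} hold with $C=M$.

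Next we check that the envelopes do not change: $P_\theta(h_j)=P_\theta(\tilde h_j)$, and likewise for $h$. The inequality $P_\theta(\tilde h_j)\le P_\theta(h_j)$ is clear because $\tilde h_j\le h_j$. For the reverse inequality, take any candidate $u\in\operatorname{PSH}(X,\theta)$ with $u\le h_j$. Then $u\le M$, so $u-M\le0$, and by the definition of $V$ we get $u-M\le V$, that is, $u\le V+M$. This is the one real point to handle: the cap by $V$ should be placed at the right height.

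To make the argument clean, we use the shifted cap $\min\{h_j,V+M\}$ in place of $\min\{h_j,V\}$. Every candidate $u\le h_j$ also satisfies $u\le V+M$, so the candidate sets coincide and
\[
P_\theta(h_j)=P_\theta(\min\{h_j,V+M\}).
\]
The new obstacles satisfy
\[
V-M\le \min\{h_j,V+M\}\le V+M,
\]
which uses $V\le 0$ for the upper comparison with $-M$. Subtracting the constant $M$, so that $P_\theta(f-M)=P_\theta(f)-M$, they become
\[
V-2M\le \min\{h_j,V+M\}-M\le V.
\]
Applying \cref{cor:minimal-type-obstacle-capacity-continuity} with $C=2M$ to these shifted obstacles, which are quasi-continuous and converge in capacity by the Lipschitz property of $\min$, gives
\[
P_\theta(h_j)-M\to P_\theta(h)-M\quad\text{in }\operatorname{Cap}_\omega,
\]
and hence the claim.

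The main obstacle is this height bookkeeping in the cap. Capping at $V$ itself would be wrong, since $u\le h_j$ does not force $u\le V$ when the $h_j$ are positive somewhere. The fix is to cap at $V+M$, using the extremal property of $V$, and then shift by the constant $M$. The remaining steps are routine: quasi-continuity of the minimum, and capacity convergence being preserved under $1$-Lipschitz operations.
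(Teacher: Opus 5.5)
Your argument is correct and is essentially the paper's proof. Drop the false start of capping at $V_\theta$ itself, since, as you note, $P_\theta(h_j)=P_\theta(\min\{h_j,V_\theta\})$ fails when $h_j>0$ somewhere; what remains is the identity $\min\{h_j,V_\theta+M\}-M=\min\{h_j-M,V_\theta\}$, which is exactly the paper's step of first subtracting the uniform upper bound so that $h_j,h\le 0$, then capping at $V_\theta$ and applying \cref{cor:minimal-type-obstacle-capacity-continuity}.
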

	
	\begin{proof} Up to subtracting a uniform upper bound from $h_j$ and $h$, we may assume $h_j,h\leq 0$.
		Since $V_\theta\le0$, the functions
		$\tilde h_j:=\min\{h_j,V_\theta\}$ and $\tilde h:=\min\{h,V_\theta\}$ satisfy
		\begin{equation}
			V_\theta-C\le \tilde h_j,\tilde h\le V_\theta .
		\end{equation}
		Moreover, $P_\theta(h_j)=P_\theta(\tilde h_j)$ and
		$P_\theta(h)=P_\theta(\tilde h)$. Since $\tilde h_j\to\tilde h$ in capacity,
		the result follows from
		\Cref{cor:minimal-type-obstacle-capacity-continuity}.
	\end{proof}
	
	For rooftop envelopes, we have the following:
	\begin{corollary}
		\label{cor:rooftop-capacity-continuity-moving-singularities}
		Let $f_j,g_j,f,g$ be quasi-continuous functions. Suppose that there exist
		normalized positive mass model potentials $\phi_j,\phi$ and a constant $C>0$
		such that $\phi_j\to\phi$ in $\operatorname{Cap}_\omega$ and, q.e. on $X$,
		\begin{equation}
			\label{eq:rooftop-relative-bounds}
			\phi_j-C\le f_j,g_j\le \phi_j,
			\qquad
			\phi-C\le f,g\le \phi .
		\end{equation}
		If $   f_j\to f$ and $g_j\to g$ in capacity, then
		\begin{equation}
			P_\theta(\min\{f_j,g_j\})
			\to
			P_\theta(\min\{f,g\})
			\quad\text{in }\operatorname{Cap}_\omega .
		\end{equation}
	\end{corollary}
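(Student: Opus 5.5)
This follows from \Cref{thm:relative-bounded-obstacle-capacity-continuity} with the obstacles $h_j:=\min\{f_j,g_j\}$ and $h:=\min\{f,g\}$. Since $P_\theta(\min\{f,g\})$ is by definition $P_\theta(h)$, it suffices to check the hypotheses of that theorem for these obstacles.

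\textbf{Quasi-continuity.} The minimum of two quasi-continuous functions is quasi-continuous. Given $\eta>0$, choose open sets of capacity $<\eta/2$ off which $f_j$, respectively $g_j$, is continuous. Off the union, which has capacity $<\eta$ by subadditivity, $\min\{f_j,g_j\}$ is continuous. The same argument applies to $h$.

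\textbf{Two-sided bounds.} Taking minima of the q.e. inequalities \eqref{eq:rooftop-relative-bounds} gives
\[
\phi_j-C\le\min\{f_j,g_j\}\le\phi_j,\qquad \phi-C\le\min\{f,g\}\le\phi
\]
q.e. on $X$. The exceptional set is a union of two pluripolar sets, hence pluripolar.

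\textbf{Convergence in capacity.} We use the elementary inequality $|\min\{a,b\}-\min\{c,d\}|\le\max\{|a-c|,|b-d|\}$, valid wherever the quantities are finite. It gives
\[
\{|h_j-h|>\varepsilon\}\subset\{|f_j-f|>\varepsilon\}\cup\{|g_j-g|>\varepsilon\}
\]
up to the locus where some function is infinite. By the q.e. bounds and finiteness q.e. of the quasi-psh potentials $\phi_j,\phi$, that locus is pluripolar and so has zero capacity. Subadditivity of $\operatorname{Cap}_\omega$ then gives $h_j\to h$ in capacity.

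Applying \Cref{thm:relative-bounded-obstacle-capacity-continuity} now yields the statement. The main point requiring care is the treatment of infinite values and exceptional pluripolar sets in the last step, but these are all negligible for $\operatorname{Cap}_\omega$.
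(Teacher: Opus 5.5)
Your proposal is correct and is essentially the paper's own argument: you set $h_j:=\min\{f_j,g_j\}$ and $h:=\min\{f,g\}$, check the q.e.\ two-sided bounds and $h_j\to h$ in capacity, and then invoke \Cref{thm:relative-bounded-obstacle-capacity-continuity}. You also verify explicitly two points the paper leaves implicit: that the minima are quasi-continuous, and that the $1$-Lipschitz estimate for $\min$ holds off a pluripolar set.
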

	
	\begin{proof}
		Set $h_j:=\min\{f_j,g_j\}$ and $h:=\min\{f,g\}$. Then
		$\phi_j-C\le h_j\le\phi_j$, $\phi-C\le h\le\phi$, and $h_j\to h$ in capacity.
		The result follows from
		\Cref{thm:relative-bounded-obstacle-capacity-continuity}.
	\end{proof}

	\subsection{Optimality of the capacity topology}
	The capacity stability theorem, \Cref{thm:moving-background-capacity-stability},
	shows that capacity convergence of the prescribed model potentials is
	sufficient for capacity convergence of the corresponding Monge-Amp\`ere
	solutions. We now prove that, under the natural
	no-mass-loss assumption given by total variation convergence of the
	Monge-Amp\`ere measures, this condition is also necessary. In other words, the
	capacity topology is not merely a convenient replacement for the
	\(d_{\mathcal S}\)-topology: it is exactly the topology on moving prescribed singularities
	detected by stability of the equation.
	
	The key point is to recover the model envelope of the singularity type from
	the solution. We first prove a fixed-level rooftop continuity statement with
	moving background classes, and then pass from fixed rooftops to singularity
	envelopes.

	\begin{lemma}
		\label{lem:fixed-level-rooftop-moving-background}
		Let $\theta_j,\theta$ be smooth closed real $(1,1)$-forms with big classes and
		\[
		-\varepsilon_j\omega\le\theta_j-\theta\le\varepsilon_j\omega,
		\qquad
		\varepsilon_j\to0 .
		\]
		Set $V_j:=V_{\theta_j}$ and $V:=V_\theta$. Let
		$u_j\in\operatorname{PSH}(X,\theta_j)$ and
		$u\in\operatorname{PSH}(X,\theta)$ satisfy
		\[
		\sup_j\sup_Xu_j<+\infty,\qquad \sup_Xu<+\infty,
		\]
		and assume
		\[
		u_j\to u\quad\text{in }\operatorname{Cap}_\omega,\qquad
		\theta_{j,u_j}^n\to\theta_u^n\quad\text{in total variation}.
		\]
		Assume moreover that $\int_X\theta_u^n>0$. Then
		\[
		P_{\theta_j}(u_j,V_j)\to P_\theta(u,V)
		\quad\text{in }\operatorname{Cap}_\omega .
		\]
	\end{lemma}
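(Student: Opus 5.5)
Set $w_j:=P_{\theta_j}(u_j,V_j)$ and $w:=P_\theta(u,V)$. The plan follows the two-sided scheme of \Cref{thm:relative-bounded-obstacle-capacity-continuity}: a lower-tail capacity estimate via a slope-gap barrier and the comparison principle, then $L^1$-identification of the limit, then Hartogs for the upper tail.

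\textbf{Preliminaries.} After subtracting a uniform constant we may assume $u_j,u\le 0$; then $w_j=P_{\theta_j}(u_j)$ and $w=P_\theta(u)$. Since $\int_X\theta_u^n>0$, and since $u\le w\le u$ fails only where $w<u$, we have $\int_X\theta_w^n\ge\int_X\theta_u^n>0$ by \Cref{thm:mixed-monotonicity} ($w\succeq u$). So \Cref{lem:strict-subbarrier-slope-gap} applies to $w$ and gives $\chi$ with $\theta_\chi\ge a\omega$ and $\chi\le w-\sigma(V-w)$. By \Cref{thm:rooftop-contact-inequality}, $\theta_{j,w_j}^n\le\mathbf 1_{\{w_j=u_j\}}\theta_{j,u_j}^n+\mathbf 1_{\{w_j=V_j\}}\theta_{j,V_j}^n$. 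The key quantity is then
\[
M_j(\delta):=\int_{\{w_j<w-\delta/2\}}\theta_{j,w_j}^n\le \theta_{j,u_j}^n(\{u_j<u-\delta/2\})+\theta_{j,V_j}^n(\{V_j<V-\delta/2\}),
\]
using $w\le\min(u,V)$. The first term tends to zero by total variation convergence $\theta_{j,u_j}^n\to\theta_u^n$: it is at most $\|\theta_{j,u_j}^n-\theta_u^n\|_{\rm TV}+\theta_u^n(\{u_j<u-\delta/2\})$. The last term vanishes by \Cref{lem:DoVu-nonpluripolar-measure-convergence}, since capacity convergence implies $L^1$ convergence. The second term is controlled by \eqref{eq:moving-background-uniform-domination} with $w=V_j$, together with $V_j\to V$ in capacity (Step 2 of the proof of \Cref{thm:moving-background-capacity-stability}).

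\textbf{Lower tail.} Fix $\delta$, set $A_j:=\{w_j<w-\delta\}$ and $G_R:=\{\chi>w-R\}$, and let $\psi_\rho$ be built exactly as in Step 5 from $w,\chi,\rho$ and $\tau$. For large $j$ this gives $\psi_\rho\in\operatorname{PSH}(X,\theta_j)$, with $\theta_{j,\psi_\rho}\ge\tau b\,\omega_\rho$. Truncate $U_j^T:=\max(w_j,V_j-T)$ and apply \Cref{thm:DDL-relative-comparison} in $\mathcal E(X,\theta_j,V_j)$ to $U_j^T$ and $\max(U_j^T,\psi_\rho)$. This bounds $(\tau b)^n\int_{A_j\cap G_R\cap\{w_j>V_j-T\}}\omega_\rho^n$ by two pieces: $M_j(\delta)$, plus the mass of $\theta_{j,U_j^T}^n$ on $\{w_j\le V_j-T\}\cap\{V_j-T<\psi_\rho\}$. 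On this last set the slope gap forces $w>V-\frac{T+\eta-\delta/2}{1+\tau\sigma}$ (off $\{|V_j-V|>\eta\}$), while $w_j\le V_j-T$. This is exactly where the argument differs from \Cref{thm:moving-background-capacity-stability}, since there is no model $\phi_j$ trapping $w_j$.

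\textbf{Main obstacle.} I expect the main obstacle to be this deep-tail contact piece. I would handle it with the Monge–Ampère measure of the truncation itself rather than with capacity: on $\{w_j<V_j-T\}$, plurifine locality gives $\theta_{j,U_j^T}^n=\theta_{j,V_j}^n$, which is uniformly dominated by $\Gamma_1(\operatorname{Cap}_\omega(\cdot))$. The remaining step is to show that $\operatorname{Cap}_\omega(\{w_j\le V_j-T\}\cap\{w>V-T'\})\to0$ for $T'<T$. For this I would use $w_j\ge$ the barrier: by comparing with $(1-\lambda_j)w+\lambda_j\chi_0-c_j$, which lies in $\operatorname{PSH}(X,\theta_j)$ and is $\le u_j$ outside a set of small capacity, one should get $w_j\ge w-\delta'$ off small capacity sets. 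If this comparison-with-obstacle step fails, I would fall back on first proving $L^1$ convergence $w_j\to w$. That would come from compactness, \Cref{lem:qpsh-upper-bound-passes-to-limit} (giving limit $\le\min(u,V)$, so $\le w$ by \Cref{lem: envelope equality}), and the mass identity: the limit $w'$ has $\theta_{w'}^n\ge$ weak limits of the contact measures, and total variation forces $\int\theta_{w'}^n$ to be large enough to prevent $w'<w$, via the domination principle. Then I would bootstrap the deep tail as in Step 7 using \eqref{eq:moving-background-absolute-tail}.

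\textbf{Conclusion.} Choosing $R$, then $T$, then $j$ as in Step 7 gives $\operatorname{Cap}_\omega(A_j)\to0$. $L^1$ convergence follows by the subsequence argument of \Cref{thm:relative-bounded-obstacle-capacity-continuity}. Finally, the Hartogs lemma gives the upper tail, and hence convergence in $\operatorname{Cap}_\omega$.
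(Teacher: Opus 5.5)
\textbf{Gap: the deep-tail contact piece.} There is a genuine gap exactly at the step you call the main obstacle. You missed that the trapping you say is absent is automatic here.

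Take $A\ge\max\{0,\sup_j\sup_Xu_j,\sup_Xu\}$. Then $u_j-A\le u_j-\sup_Xu_j\le V_j$, so $u_j-A\le\min(u_j,V_j)$ and hence $u_j-A\le w_j\le u_j$. Likewise $u-A\le w\le u$. So $u_j$ itself plays the role of $\phi_j$ in \Cref{thm:moving-background-capacity-stability}, and $A$ plays the role of $C$.

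On $\{U_j^T<\psi_\rho\}\cap\{w_j\le V_j-T\}\cap\{|V_j-V|\le\eta\}$, your slope-gap computation gives $w>V-\frac{T+\eta-\delta/2}{1+\tau\sigma}$. At the same time $u_j\le w_j+A\le V+\eta-T+A$ and $u\ge w$. Hence this piece lies in $\{u_j<u-L_{T,\eta}\}\cup\{|V_j-V|>\eta\}$, where $L_{T,\eta}:=T-A-\eta-\frac{T+\eta-\delta/2}{1+\tau\sigma}\to+\infty$ as $T\to+\infty$. Its capacity tends to $0$ because $u_j\to u$ and $V_j\to V$ in capacity.

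The uniform domination $\int_E\theta_{j,U_j^T}^n\le\Gamma_T(\operatorname{Cap}_\omega(E))$, applied to the whole set, then kills this contribution. This also avoids your plurifine-locality claim on $\{w_j\le V_j-T\}$, which is not plurifine open and leaves the contact set $\{w_j=V_j-T\}$ unaccounted for. The same trapping gives $\{w_j\le V_j-T\}\subset\{u_j\le A-T\}$, which you need for the final choice of parameters. It also supplies the uniform $L^1$ bound on $w_j$ that your appeal to \eqref{eq:moving-background-absolute-tail} silently requires.

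\textbf{Why your proposed routes do not close it.} A $\theta_j$-psh function that is $\le u_j$ only off a set of small capacity is not a competitor for $P_{\theta_j}(u_j,V_j)$. Competitors must lie below the obstacle everywhere, or q.e.\ via \Cref{lem:pointwise-qe-envelope-positive-subbarrier}. So comparing with $(1-\lambda_j)w+\lambda_j\chi_0-c_j$ gives no lower bound on $w_j$. Upgrading ``below the obstacle off a small set'' to ``$w_j\ge w-\delta'$ off a small set'' is precisely the lower-tail estimate you are trying to prove.

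The fallback is not an argument either. To get $w'\ge w$ from the domination principle you need $\theta_{w'}^n(\{w'<w\})=0$, that is, an upper bound on the limit measure off the contact set. $L^1$ convergence only gives lower bounds on limit measures, as in Step~1 of the proof of \Cref{thm:moving-background-capacity-stability}.

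\textbf{Invalid preliminary reduction.} $P_{\theta_j}(u_j,V_j)$ is not equivariant under subtracting a constant from $u_j$ alone. Once $u_j\le0$ one has $u_j\le V_j$, so $P_{\theta_j}(u_j,V_j)=u_j$ and the statement becomes trivial. This is not a WLOG: the lemma is used for $u_j+C$ with $C$ large precisely so that the obstacle exceeds $V_j$.

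\textbf{What survives.} Drop that reduction and keep your $M_j(\delta)$ estimate and barrier/comparison step, which are correct and match the paper. Insert the $u_j$-trapping above. The $L^1$ identification via \Cref{lem:qpsh-upper-bound-passes-to-limit} and \Cref{lem:pointwise-qe-envelope-positive-subbarrier}, followed by Hartogs for the upper tail, then goes through as you describe.
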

	
	\begin{proof}
		Put
		\[
		p_j:=P_{\theta_j}(u_j,V_j),\qquad p:=P_\theta(u,V).
		\]
		After increasing a constant $A>0$, we may assume for all $j$,
		\[
		u_j-A\le p_j\le u_j,\qquad u-A\le p\le u.
		\]
		In particular $\int_X\theta_p^n>0$. By \Cref{lem:strict-subbarrier-slope-gap}
		applied to $p$, choose $\chi\in\operatorname{PSH}(X,\theta)$ and
		$a,\sigma>0$ such that
		\[
		\theta_\chi\ge a\omega,\qquad
		\chi\le p-\sigma(V-p).
		\]
		We first prove the lower tail. Fix $\delta>0$ and set
		$A_j:=\{p_j<p-\delta\}$. Define
		\[
		M_j(\delta):=
		\int_{\{p_j<p-\delta/2\}}\theta_{j,p_j}^n .
		\]
		By the rooftop contact inequality \cref{thm:rooftop-contact-inequality} for $p_j=P_{\theta_j}(u_j,V_j)$,
		\[
		\theta_{j,p_j}^n
		\le
		\mathbf 1_{\{p_j=u_j\}}\theta_{j,u_j}^n
		+
		\mathbf 1_{\{p_j=V_j\}}\theta_{j,V_j}^n .
		\]
		On $\{p_j<p-\delta/2\}$, the first term is supported in
		$\{u_j<u-\delta/2\}$, since $p\le u$. The second term is supported in
		$\{|V_j-V|>\delta/4\}$ for all large $j$: indeed, if
		$p_j=V_j$ and $|V_j-V|\le\delta/4$, then
		$V_j\ge V-\delta/4\ge p-\delta/4$, contradicting
		$p_j<p-\delta/2$. Therefore
		\[
		M_j(\delta)
		\le
		\mu_j(\{u_j<u-\delta/2\})
		+
		\theta_{j,V_j}^n(\{|V_j-V|>\delta/4\}),
		\]
		where $\mu_j:=\theta_{j,u_j}^n$. The first term tends to zero by the same arguments as in \cref{thm:moving-background-capacity-stability}, thanks to \cref{lem:DoVu-nonpluripolar-measure-convergence}. The second term tends to zero by $V_j\to V$ in capacity and \cref{lem:uniform-DDL-capacity-moving-models}. Hence
		$M_j(\delta)\to0$. Provided this, the subsequent arguments are almost the same as that in \cref{thm:moving-background-capacity-stability} and \cref{thm:relative-bounded-obstacle-capacity-continuity}.
		
		Now take $R>0$ and put $G_R:=\{\chi>p-R\}$; then
		$\operatorname{Cap}_\omega(X\setminus G_R)\to0$ as $R\to+\infty$. Fix
		$0<b<a$ and
		\[
		\tau:=\min\left\{\frac12,\frac{\delta}{8(R+b)}\right\}.
		\]
		For $\rho\in\operatorname{PSH}(X,\omega)$, $0\le\rho\le1$, set
		\[
		\psi_\rho:=(1-\tau)p+\tau\chi+\tau b\rho-\delta/2-\tau b .
		\]
		For $j$ large,
		\[
		\theta_{j,\psi_\rho}\ge \tau b(\omega+dd^c\rho),
		\qquad
		\psi_\rho\le p-\delta/2,
		\]
		and on $G_R$,
		\[
		\psi_\rho\ge p-\delta/2-\tau(R+b)\ge p-5\delta/8.
		\]
		Let $T>1$ and set $U_j^T:=\max(p_j,V_j-T)$. On the Borel set
		\[
		B_{j,R,T}:=A_j\cap G_R\cap\{p_j>V_j-T\}
		\]
		we have $U_j^T=p_j<\psi_\rho$. Applying the comparison principle \cref{thm:DDL-relative-comparison} in
		$\mathcal E(X,\theta_j,V_j)$ to $\max(U_j^T,\psi_\rho)$ and using plurifine
		locality gives
		\[
		(\tau b)^n\int_{B_{j,R,T}}(\omega+dd^c\rho)^n
		\le
		\int_{\{U_j^T<\psi_\rho\}}\theta_{j,U_j^T}^n .
		\]
		We split the right hand side. On $\{p_j>V_j-T\}$, it is bounded by
		$M_j(\delta)$. On
		$\{U_j^T<\psi_\rho\}\cap\{p_j\le V_j-T\}$, work on
		$Q_{j,\eta}:=\{|V_j-V|\le\eta\}$. There
		\[
		V-T-\eta<\psi_\rho\le p-\tau\sigma(V-p)-\delta/2,
		\]
		hence
		\[
		p>V-\frac{T+\eta-\delta/2}{1+\tau\sigma}.
		\]
		Since $p\le u$ and $p_j\ge u_j-A$, the same part is contained in
		\[
		\{u_j<u-L_{T,\eta}\}\cup (X\setminus Q_{j,\eta}),
		\]
		where
		\[
		L_{T,\eta}:=
		T-A-\eta-\frac{T+\eta-\delta/2}{1+\tau\sigma}
		\longrightarrow+\infty
		\quad (T\to+\infty).
		\]
		Using $V_j-T\le U_j^T\le V_j$ and \cref{lem:uniform-DDL-capacity-moving-models},
		\[
		\int_E\theta_{j,U_j^T}^n
		\le
		\Gamma_T(\operatorname{Cap}_\omega(E)),
		\]
		we see that, for fixed $T,\eta$ with $L_{T,\eta}>0$, this contribution
		tends to zero as $j\to\infty$, because $u_j\to u$ and $V_j\to V$ in capacity.
		
		Taking the supremum over $0\le\rho\le1$ gives, for fixed large $R,T$,
		\[
		\operatorname{Cap}_\omega(B_{j,R,T})\to0.
		\]
		Finally, it is elementary to see that
		\[
		\operatorname{Cap}_\omega(\{p_j<p-\delta\})\to0 .
		\]
		
		It remains to identify the upper tail. Take any subsequence and assume
		$p_j\to w$ in $L^1(X)$. The lower-tail estimate gives $w\ge p$ a.e. On the
		other hand $p_j\le m_j:=\min(u_j,V_j)$, and
		$m_j\to m:=\min(u,V)$ in capacity. \cref{lem:qpsh-upper-bound-passes-to-limit}
		gives $w\le m$ q.e., hence by \cref{lem: envelope equality}
		\[
		w\le P_\theta(m)=p.
		\]
		Thus $w=p$. Every $L^1$-cluster is $p$, so $p_j\to p$ in $L^1(X)$. The
		uniform Hartogs lemma for the uniformly quasi-psh family $p_j$ gives
		\[
		\operatorname{Cap}_\omega(\{p_j>p+\delta\})\to0 .
		\]
		Together with the lower-tail estimate, this proves $p_j\to p$ in capacity.
	\end{proof}
	
	For envelope of singularity types, we have the following:
	\begin{theorem}[=inverse implication part of  \cref{thm:intro-optimal-capacity-topology}]
		\label{thm:no-mass-loss-singularity-envelope-capacity}
		Let $\theta_j,\theta$ be smooth closed real $(1,1)$-forms with big classes and
		\[
		-\varepsilon_j\omega\le \theta_j-\theta\le \varepsilon_j\omega,
		\qquad \varepsilon_j\to0 .
		\]
		Set $V_j:=V_{\theta_j}$ and $V:=V_\theta$. Let
		$u_j\in\operatorname{PSH}(X,\theta_j)$ and
		$u\in\operatorname{PSH}(X,\theta)$ be normalized by
		$\sup_Xu_j=\sup_Xu=0$, and assume $u_j\to u$ in
		$\operatorname{Cap}_\omega$. Put
		\[
		\phi_j:=P_{\theta_j}[u_j],\qquad \phi:=P_\theta[u].
		\]
		Assume that $\phi_j,\phi$ have positive mass, that
		$u_j\in\mathcal E(X,\theta_j,\phi_j)$, $u\in\mathcal E(X,\theta,\phi)$, and that
		\[
		\theta_{j,u_j}^n\to\theta_u^n
		\quad\text{in total variation}.
		\]
		Then $\phi_j\to\phi$ in $\operatorname{Cap}_\omega$.
	\end{theorem}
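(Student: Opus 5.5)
The plan is to recover $\phi_j$ from $u_j$ through fixed-level rooftops and then pass to the limit in the level. For $C>0$ put
\[
\phi_j^C:=P_{\theta_j}(u_j+C,V_j),\qquad \phi^C:=P_\theta(u+C,V).
\]
By definition $\phi_j^C\nearrow \phi_j$ a.e.\ as $C\to+\infty$ (before usc regularization), and similarly $\phi^C\nearrow\phi$. \Cref{lem:fixed-level-rooftop-moving-background}, applied to $u_j+C\to u+C$, gives $\phi_j^C\to\phi^C$ in $\operatorname{Cap}_\omega$ for each fixed $C$. Its hypotheses hold: the Monge-Amp\`ere measures converge in total variation, and $\int_X\theta_u^n=\int_X\theta_\phi^n>0$. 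We must then exchange the limits $j\to\infty$ and $C\to\infty$. This requires a uniform-in-$j$ control of $\phi_j-\phi_j^C$ in capacity, and that is where the no-mass-loss hypothesis enters.

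\textbf{Lower bound for $\phi_j$.} This direction is easy. Since $\phi_j\ge\phi_j^C$, we have $\{\phi_j<\phi-2\delta\}\subset\{\phi_j^C<\phi^C-\delta\}\cup\{\phi^C<\phi-\delta\}$. The second set has small capacity once $C$ is large, because $\phi^C\nearrow\phi$ q.e.\ and monotone convergence implies capacity convergence. The first set has small capacity for $j$ large by the lemma. Hence $\operatorname{Cap}_\omega(\{\phi_j<\phi-2\delta\})\to0$.

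\textbf{Upper tail.} This is the main obstacle. Here one must show that $\phi_j$ cannot be much larger than $\phi$ on a set of large capacity. I would proceed through $L^1$ and Hartogs, as in the earlier proofs. Take a subsequence with $\phi_j\to w$ in $L^1$; then $w\ge\phi$ by the lower tail, $w\le0$, and $w\in\operatorname{PSH}(X,\theta)$. It suffices to prove $w\le\phi$, that is, $w\preceq u$ in the envelope sense. The tool is no mass loss. Since $u_j\in\mathcal E(X,\theta_j,\phi_j)$, the contact theory for $\phi_j^C$ gives
\[
\theta_{j,\phi_j^C}^n\le \mathbf 1_{\{\phi_j^C=u_j+C\}}\theta_{j,u_j}^n+\mathbf 1_{\{\phi_j^C=V_j\}}\theta_{j,V_j}^n
\]
by \Cref{thm:rooftop-contact-inequality}. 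The mass of $\theta_{j,\phi_j^C}^n$ on the contact set $\{\phi_j^C=u_j+C\}$ is at most $\mu_j(\{u_j\le\phi_j-C\})$, which is small uniformly in $j$ for large $C$. This uniformity follows from total variation convergence together with the tail argument for $N_j(k)$ in \Cref{thm:moving-background-capacity-stability}, since $\{u_j\le \phi_j-C\}\subset\{u_j\le -C\}$. By DDL, $\phi_j=P_{\theta_j}(\phi_j^C$-contact on $V_j)$-type: the mass of $\phi_j$ equals $\int\theta_{j,\phi_j^C}^n$ up to the contact part carried by $u_j$. I would use this to show that $\int_X\theta_{j,\phi_j^C}^n\to\int_X\theta_{j,\phi_j}^n$ uniformly in $j$. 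Passing to the limit, the rooftops $w^C:=P_\theta(w\text{-level})$ then carry the same mass as $\phi^C$.

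Next compare $\max(w,\phi)=w$ with $\phi$, using $\int_X\theta_w^n\le\int_X\theta_\phi^n+o(1)$. By Witt Nystr\"om monotonicity, $w\ge\phi$ with equal mass. Since $w\le0$ and $\phi$ is model, a domination argument in the spirit of \cite[Theorem~3.12]{DDL18b} (equal mass together with $w\succeq\phi$ and $\phi$ model forces $w\le P_\theta[\phi]=\phi$) yields $w=\phi$. The delicate point is justifying $\int_X\theta_w^n\le\int_X\theta_\phi^n$, which needs upper semicontinuity of total masses along $\phi_j\to w$. I would obtain it by writing $\phi_j\ge\phi_j^C$ and noting $\int\theta_{j,\phi_j}^n=\mu_j(X)\to\mu(X)=\int\theta_\phi^n$. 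Lower semicontinuity of mass along the capacity convergence of $\phi_j^C\to\phi^C$ (from \cite[Lemma 5.10]{DDL25}-type arguments) then pins the limit masses, and any excess mass in $w$ would contradict the rooftop description.

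Once $w=\phi$ is established, every subsequence has a further subsequence with $\phi_j\to\phi$ in $L^1$. Hartogs' lemma then gives $\operatorname{Cap}_\omega(\{\phi_j>\phi+\delta\})\to0$. Combined with the lower tail, this proves $\phi_j\to\phi$ in $\operatorname{Cap}_\omega$.
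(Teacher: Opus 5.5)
You have a genuine gap at the upper-tail step. The lower tail and the closing Hartogs step match the paper. Your observation that the contact set $\{\phi_j^C=u_j+C\}$ lies in $\{u_j\le\phi_j-C\}$ is correct: it holds because $\phi_j^C\le\phi_j$, and the $\mu_j$-mass of that set is uniformly small.

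The gap is the identification $w\le\phi$ for an $L^1$-cluster point $w$ of $(\phi_j)$. You reduce it to $\int_X\theta_w^n\le\int_X\theta_\phi^n$ but never prove this, and the tools you list cannot give it.
\begin{itemize}
\item The identity $\int_X\theta_{j,\phi_j^C}^n=\int_X\theta_{j,\phi_j}^n$ holds exactly for every $C$, since $u_j\le\phi_j^C\le\phi_j$ and $u_j\in\mathcal E(X,\theta_j,\phi_j)$. So it says nothing about how far $\phi_j$ lies above $\phi_j^C$.
\item Lower semicontinuity of mass along $\phi_j^C\to\phi^C$ gives only lower bounds. You need an upper bound for the mass of an $L^1$-limit, and non-pluripolar mass is not upper semicontinuous under $L^1$ convergence, even for model potentials. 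For instance, on $\mathbb P^1$ take model potentials with $j$ equidistributing logarithmic poles of weight $c/j$. They have mass $1-c$, yet every $L^1$-limit is bounded below and so has mass $1$.
\end{itemize}
The sentence ``any excess mass in $w$ would contradict the rooftop description'' is exactly where the proof should be, and nothing in the proposal supplies it. Your final domination step (equal mass forces $w=\phi$) would be fine, but its hypothesis is never established.

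What is missing is a uniform-in-$j$ volume estimate: for all $\delta,\varepsilon>0$ there are $C_0,J_0$ with $\omega^n(\{\phi_j>\phi_j^C+\delta\})<\varepsilon$ whenever $C\ge C_0$ and $j\ge J_0$. Your contact observation is only half of this. To turn a contact-mass bound into a volume bound, you need a barrier with non-degenerate Monge--Amp\`ere measure that stays uniformly close to $\phi_j$.

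The paper builds it as follows.
\begin{itemize}
\item Solve $\theta_{j,\chi_j}^n=c_j\omega^n$ in $\mathcal E(X,\theta_j,\phi_j)$ with $\sup_X(\chi_j-\phi_j)=0$.
\item The uniform relative $L^\infty$ estimate (\Cref{cor:relative-Linfty-estimate-moving}) gives $\phi_j-B\le\chi_j\le\phi_j$ with $B$ independent of $j$.
\item Set $\psi_j:=(1-\tau)\phi_j+\tau\chi_j-\delta/2$ with $\tau$ depending only on $\delta,B$. Then $\{\phi_j>\phi_j^C+\delta\}\subset\{\phi_j^C<\psi_j\}$, and $\theta_{j,\psi_j}^n\ge\tau^nc_0\,\omega^n$.
\item Apply the comparison principle (\Cref{thm:DDL-relative-comparison}) in $\mathcal E(X,\theta_j,\phi_j)$ to $\phi_j^C$ and $\psi_j$, together with the rooftop contact inequality (\Cref{thm:rooftop-contact-inequality}). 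Its $V_j$-part vanishes on $\{\phi_j^C<\psi_j\}$ because $\psi_j\le V_j-\delta/2$.
\end{itemize}
This yields
\[
\tau^n c_0\,\omega^n(\{\phi_j>\phi_j^C+\delta\})\le\mu_j(\{u_j<\phi_j-C-\delta/2\}),
\]
and the right-hand side is uniformly small by total variation convergence and $u_j\to u$ in capacity. Combined with $\phi_j^C\to\phi^C$ in measure, this gives $\omega^n(\{w>\phi^C+3\delta\})\le3\varepsilon$. Since $\phi^C\le\phi$, you get $w\le\phi$ directly, with no mass comparison for $w$ needed.
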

	
	\begin{proof}
		We write $\mu_j:=\theta_{j,u_j}^n$ and $\mu:=\theta_u^n$.
		
		\smallskip
		\noindent\textbf{Step 1. Lower tail estimate of $\phi_j-\phi$.}
		For $C>0$ set
		$p_{j,C}:=P_{\theta_j}(u_j+C,V_j)$ and $p_C:=P_\theta(u+C,V)$. By
		\Cref{lem:fixed-level-rooftop-moving-background}, $p_{j,C}\to p_C$ in capacity
		for every fixed $C$.
		
		We first note that $p_{j,C}\le\phi_j$. Indeed, $u_j\preceq\phi_j$, hence
		$u_j\le\phi_j+A_j$ for some constant $A_j$. Thus
		\[
		p_{j,C}
		\le P_{\theta_j}(\phi_j+A_j+C,V_j)
		\le P_{\theta_j}[\phi_j]=\phi_j .
		\]
		Since $p_C\uparrow P_\theta[u]=\phi$ q.e., monotone convergence almost everywhere gives
		$p_C\to\phi$ in capacity as $C\to+\infty$ \cite[Proposition4.25]{GZ17}.
		
		Fix $\delta,\varepsilon>0$. Choose $C>0$ such that
		\[
		\operatorname{Cap}_\omega(\{p_C<\phi-\delta/2\})<\varepsilon/2 .
		\]
		For this fixed $C$, choose $J$ such that, for $j\ge J$,
		\[
		\operatorname{Cap}_\omega(\{p_{j,C}<p_C-\delta/2\})<\varepsilon/2 .
		\]
		Since $p_{j,C}\le\phi_j$,
		\[
		\{\phi_j<\phi-\delta\}
		\subset
		\{p_C<\phi-\delta/2\}
		\cup
		\{p_{j,C}<p_C-\delta/2\}.
		\]
		Hence
		\[
		\operatorname{Cap}_\omega(\{\phi_j<\phi-\delta\})\to0 .
		\]
		
		\smallskip
		\noindent\textbf{Step 2. Uniform relative tails from TV convergence.}
		For every $\eta>0$ there exist $L_0>0$ and $J_0>0$ such that, for all
		$L\ge L_0$ and $j\ge J_0$,
		\[
		\mu_j(\{u_j<\phi_j-L\})<\eta .
		\]
		Indeed, $\phi_j\le V_j\le0$, hence
		\[
		\{u_j<\phi_j-L\}\subset\{u_j<-L\}.
		\]
		Choose $L_0$ so large that $\mu(\{u<-L_0+1\})<\eta/3$. Since
		$u_j\to u$ in capacity and $\mu$ does not charge pluripolar sets, after
		increasing $J_0$ we have, for $j\ge J_0$,
		\[
		\mu(\{|u_j-u|>1\})<\eta/3,
		\qquad
		\|\mu_j-\mu\|_{\rm TV}<\eta/3 .
		\]
		Then, for $L\ge L_0$ and $j\ge J_0$,
		\[
		\begin{aligned}
			\mu_j(\{u_j<\phi_j-L\})
			&\le \mu_j(\{u_j<-L\})                                      \\
			&\le \mu(\{u_j<-L\})+\|\mu_j-\mu\|_{\rm TV}                  \\
			&\le \mu(\{u<-L+1\})
			+\mu(\{|u_j-u|>1\})
			+\|\mu_j-\mu\|_{\rm TV}
			<\eta .
		\end{aligned}
		\]
		
		\smallskip
		\noindent\textbf{Step 3. A volume recovery estimate.}
		We prove that, for every $\delta,\varepsilon>0$, there exist $C_0>0$ and
		$J_0>0$ such that, for all $C\ge C_0$ and $j\ge J_0$,
		\[
		\omega^n(\{\phi_j>p_{j,C}+\delta\})<\varepsilon .
		\]
		
		Put $m_j:=\int_X\theta_{j,\phi_j}^n=\mu_j(X)$ and
		$m:=\int_X\theta_\phi^n=\mu(X)>0$. Since $m_j\to m$ by the assumption $\mu_j\to\mu$ in total variation, after discarding finitely
		many indices we may assume $m_j\ge m/2$. Let
		$c_j:=m_j/\int_X\omega^n$ and $c_0:=m/(2\int_X\omega^n)$.
		
		By \cref{thm:relative-solvability}, we can find $\chi_j\in\mathcal E(X,\theta_j,\phi_j)$ solve
		\[
		\theta_{j,\chi_j}^n=c_j\omega^n,
		\qquad
		\sup_X(\chi_j-\phi_j)=0 .
		\]
		By the relative $L^\infty$ estimate \cref{cor:relative-Linfty-estimate-moving}, there exists
		$B>0$, independent of $j$, such that
		\[
		\phi_j-B\le\chi_j\le\phi_j .
		\]
		
		We next claim that $p_{j,C}\in\mathcal E(X,\theta_j,\phi_j)$ and
		$p_{j,C}\le\phi_j$. 
		
		Since $\sup_Xu_j=0$, we have $u_j\le V_j$, hence
		$u_j\le p_{j,C}$. As above, $p_{j,C}\le\phi_j$. By monotonicity of total
		non-pluripolar masses \cref{thm:mixed-monotonicity},
		\[
		\int_X\theta_{j,u_j}^n
		\le
		\int_X\theta_{j,p_{j,C}}^n
		\le
		\int_X\theta_{j,\phi_j}^n .
		\]
		The first and last terms are equal because
		$u_j\in\mathcal E(X,\theta_j,\phi_j)$. Hence
		$p_{j,C}\in\mathcal E(X,\theta_j,\phi_j)$.
		
		Fix $\delta,\varepsilon>0$ and put $B_1:=\max\{B,1\}$. Choose
		\[
		\tau:=\min\left\{\frac12,\frac{\delta}{4B_1}\right\},
		\qquad
		\psi_j:=(1-\tau)\phi_j+\tau\chi_j-\delta/2 .
		\]
		Then
		\[
		\phi_j-\frac{3\delta}{4}\le\psi_j\le\phi_j-\frac{\delta}{2},
		\qquad
		\psi_j\in\mathcal E(X,\theta_j,\phi_j).
		\]
		Moreover
		\[
		\theta_{j,\psi_j}^n
		\ge
		\tau^n\theta_{j,\chi_j}^n
		=
		\tau^n c_j\omega^n
		\ge
		\tau^n c_0\omega^n .
		\]
		Let $E_{j,C}:=\{\phi_j>p_{j,C}+\delta\}$. Since
		$\psi_j\ge\phi_j-3\delta/4$, we have
		$E_{j,C}\subset\{p_{j,C}<\psi_j\}$. Since
		$p_{j,C},\psi_j\in\mathcal E(X,\theta_j,\phi_j)$, the relative comparison principle \cref{thm:DDL-relative-comparison}
		gives
		\[
		\tau^n c_0\,\omega^n(E_{j,C})\le  \int_{\{p_{j,C}<\psi_j\}}\theta_{j,\psi_j}^n
		\le
		\int_{\{p_{j,C}<\psi_j\}}\theta_{j,p_{j,C}}^n .
		\]
		By the rooftop contact inequality \cref{thm:rooftop-contact-inequality} for
		$p_{j,C}=P_{\theta_j}(u_j+C,V_j)$,
		\[
		\theta_{j,p_{j,C}}^n
		\le
		\mathbf 1_{\{p_{j,C}=u_j+C\}}\theta_{j,u_j}^n
		+
		\mathbf 1_{\{p_{j,C}=V_j\}}\theta_{j,V_j}^n .
		\]
		The second term does not contribute on $\{p_{j,C}<\psi_j\}$, because
		$\psi_j\le\phi_j-\delta/2\le V_j-\delta/2$. On the contact set
		$\{p_{j,C}=u_j+C\}$, the inequality
		$p_{j,C}<\psi_j\le\phi_j-\delta/2$ gives
		\[
		u_j<\phi_j-C-\delta/2 .
		\]
		Thus
		\[
		\tau^n c_0\,\omega^n(E_{j,C})
		\le
		\mu_j(\{u_j<\phi_j-C-\delta/2\}) .
		\]
		
		Apply Step 2 with $\eta:=\tau^n c_0\,\varepsilon$. There exist $C_0,J_0$ such
		that, for all $C\ge C_0$ and $j\ge J_0$,
		\[
		\omega^n(\{\phi_j>p_{j,C}+\delta\})<\varepsilon .
		\]
		
		\smallskip
		\noindent\textbf{Step 4. $L^1$ convergence of the singularity envelopes.}
		We prove that $\phi_j\to\phi$ in $L^1(X)$. Take an arbitrary subsequence. By
		compactness of normalized quasi-psh functions, after passing to a further
		subsequence we may assume $\phi_j\to\psi$ in $L^1(X)$, where
		$\psi\in\operatorname{PSH}(X,\theta)$ and $\sup_X\psi=0$.
		
		Step 1 implies $\psi\ge\phi$ a.e. Indeed, if
		$\omega^n(\{\psi<\phi-\delta\})>0$ for some $\delta>0$, then, since
		$\phi_j\to\psi$ in $\omega^n$-measure,
		$\omega^n(\{\phi_j<\phi-\delta/2\})$ is uniformly positive along a subsequence,
		contradicting the lower capacity-tail estimate from Step 1.
		
		It remains to prove $\psi\le\phi$ a.e. For arbitrary fixed  $\delta,\varepsilon>0$. By Step 3,
		choose $C>0$ and $J_1>0$ such that, for $j\ge J_1$,
		\[
		\omega^n(\{\phi_j>p_{j,C}+\delta\})<\varepsilon .
		\]
		For this fixed $C$, \Cref{lem:fixed-level-rooftop-moving-background} gives
		$p_{j,C}\to p_C$ in capacity, hence in $\omega^n$-measure. Since
		$\phi_j\to\psi$ in $L^1$, after increasing $J_1$ we have, for $j\ge J_1$,
		\[
		\omega^n(\{|\phi_j-\psi|>\delta\})<\varepsilon,
		\qquad
		\omega^n(\{|p_{j,C}-p_C|>\delta\})<\varepsilon .
		\]
		Outside the set $\{|\phi_j-\psi|>\delta\}\cup \{|p_{j,C}-p_C|>\delta\}$, $\psi>p_C+3\delta$ implies
		$\phi_j>p_{j,C}+\delta$. Hence
		\[
		\omega^n(\{\psi>p_C+3\delta\})\le3\varepsilon .
		\]
		Since $p_C\le\phi$, we get
		\[
		\omega^n(\{\psi>\phi+3\delta\})\le3\varepsilon .
		\]
		Letting $\varepsilon\to0$ and then $\delta\to0$ gives $\psi\le\phi$ a.e. Thus
		$\psi=\phi$ a.e. Since every subsequence has a further subsequence converging
		to $\phi$ in $L^1(X)$, the whole sequence satisfies
		\[
		\phi_j\to\phi
		\quad\text{in }L^1(X).
		\]
		
		\smallskip
		\noindent\textbf{Step 5. Upper tail and conclusion.}
		By Step 4 and the uniform Hartogs lemma for quasi-psh functions, for every
		$\delta>0$,
		\[
		\operatorname{Cap}_\omega(\{\phi_j>\phi+\delta\})\to0 .
		\]
		Together with the lower-tail estimate from Step 1, this gives
		\[
		\operatorname{Cap}_\omega(\{|\phi_j-\phi|>\delta\})\to0
		\]
		for every $\delta>0$. Hence $\phi_j\to\phi$ in $\operatorname{Cap}_\omega$.
	\end{proof}
	
	We are now able to show that capacity topology is the optimal topology under which one can expect the corresponding prescribed solutions converges in capacity:
	\begin{corollary}[=\cref{thm:intro-optimal-capacity-topology}: Capacity is the optimal topology for TV-stable equations]
		\label{cor:capacity-optimal-topology}
		Let $\theta_j,\theta$ be smooth closed real $(1,1)$-forms with big classes and
		\[
		-\varepsilon_j\omega\le\theta_j-\theta\le\varepsilon_j\omega,
		\qquad \varepsilon_j\to0 .
		\]
		Let $\phi_j\in\operatorname{PSH}(X,\theta_j)$ and
		$\phi\in\operatorname{PSH}(X,\theta)$ be normalized positive mass model
		potentials. Let $\mu_j,\mu$ be non-pluripolar positive Radon measures such that
		\[
		\|\mu_j-\mu\|_{\rm TV}\to0,\qquad
		\mu_j(X)=\int_X\theta_{j,\phi_j}^n,\qquad
		\mu(X)=\int_X\theta_\phi^n>0 .
		\]
		Let $u_j\in\mathcal E(X,\theta_j,\phi_j)$ and
		$u\in\mathcal E(X,\theta,\phi)$ be normalized solutions of
		\[
		\theta_{j,u_j}^n=\mu_j,\quad \sup_Xu_j=0,\qquad
		\theta_u^n=\mu,\quad \sup_Xu=0 .
		\]
		Then
		\[
		\phi_j\to\phi \text{ in }\operatorname{Cap}_\omega
		\quad\Longleftrightarrow\quad
		u_j\to u \text{ in }\operatorname{Cap}_\omega .
		\]
	\end{corollary}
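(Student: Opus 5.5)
The corollary combines \Cref{thm:moving-background-capacity-stability} and \Cref{thm:no-mass-loss-singularity-envelope-capacity}, so the plan is to check that the hypotheses of each are met in the present setting.

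\emph{Forward implication.} Assume $\phi_j\to\phi$ in $\operatorname{Cap}_\omega$. The data here are exactly those of \Cref{thm:moving-background-capacity-stability}:
\begin{itemize}
\item $\theta_j,\theta$ satisfy $-\varepsilon_j\omega\le\theta_j-\theta\le\varepsilon_j\omega$;
\item $\phi_j,\phi$ are normalized positive mass model potentials with $\sup_X\phi_j=\sup_X\phi=0$;
\item $\mu_j\to\mu$ in total variation, with the prescribed masses;
\item $u_j,u$ are the unique normalized solutions, uniqueness coming from \cref{thm:relative-solvability}.
\end{itemize}
That theorem then gives $u_j\to u$ in $\operatorname{Cap}_\omega$ directly.

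\emph{Reverse implication.} Assume $u_j\to u$ in $\operatorname{Cap}_\omega$. We apply \Cref{thm:no-mass-loss-singularity-envelope-capacity} to $u_j,u$; the only work is identifying the envelopes, namely
\[
P_{\theta_j}[u_j]=\phi_j,\qquad P_\theta[u]=\phi.
\]
We have $u\in\mathcal E(X,\theta,\phi)$, $\phi$ is a model potential, and $\int_X\theta_\phi^n>0$. The relative theory of Darvas--Di Nezza--Lu \cite{DDL18b,DDL25} states that for a positive mass model potential $\phi$ and $u\in\mathcal E(X,\theta,\phi)$, one has $P_\theta[u]=\phi$.

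If only the general facts are used, the argument runs as follows. From $u\preceq\phi$ and monotonicity of the envelope,
\[
P_\theta[u]\le P_\theta[\phi]=\phi.
\]
Conversely, $u\le P_\theta[u]$ with $P_\theta[u]\preceq\phi$, so
\[
\int_X\theta^n_{P_\theta[u]}=\int_X\theta_u^n=\int_X\theta^n_\phi .
\]
Hence $P_\theta[u]$ and $\phi$ are two model potentials with $P_\theta[u]\le\phi$ and equal positive mass. The positive mass comparison result of \cite{DDL18b,DDL25} forces $P_\theta[u]=\phi$.

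The identical argument applied to $\theta_j$ gives $P_{\theta_j}[u_j]=\phi_j$. The remaining hypotheses of \Cref{thm:no-mass-loss-singularity-envelope-capacity} then hold:
\begin{itemize}
\item the potentials are normalized, $\sup_X u_j=\sup_X u=0$;
\item $\phi_j,\phi$ have positive mass, since $\mu_j(X)\to\mu(X)>0$;
\item $u_j\in\mathcal E(X,\theta_j,\phi_j)$ and $u\in\mathcal E(X,\theta,\phi)$;
\item $\theta_{j,u_j}^n=\mu_j\to\mu=\theta_u^n$ in total variation.
\end{itemize}
The theorem yields $\phi_j\to\phi$ in $\operatorname{Cap}_\omega$.

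The only step that is not purely formal is this envelope identification. It relies on the known result that a model potential with positive mass is determined by its mass among model potentials below it, a fact from \cite{DDL18b} (the positive mass case of the ceiling identity). Everything else consists of matching hypotheses.
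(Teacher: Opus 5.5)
Your proposal is correct and follows the paper's proof: the forward direction is \Cref{thm:moving-background-capacity-stability}, and the reverse direction is \Cref{thm:no-mass-loss-singularity-envelope-capacity} once $P_{\theta_j}[u_j]=\phi_j$ and $P_\theta[u]=\phi$ are identified. For that identification the paper argues more directly, combining $P_{\theta_j}[u_j]\ge P_{\theta_j}[u_j](\phi_j)=\phi_j$ (from \cite[Theorem 3.7]{DDL25}) with $u_j\preceq\phi_j$; this avoids your detour through the model property of $P_\theta[u]$ and the rigidity of model potentials with equal positive mass.
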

	
	\begin{proof}
		The implication from left to right is
		\Cref{thm:moving-background-capacity-stability}. Conversely, assume $u_j\to u$ in capacity. We first identify the absolute model
		projections. Since $u_j\in\mathcal E(X,\theta_j,\phi_j)$, \cite[Theorem 3.7]{DDL25} gives
		\[
		P_{\theta_j}[u_j](\phi_j)=\phi_j .
		\]
		As $\phi_j\le V_j$, we get
		\[
		P_{\theta_j}[u_j]\ge P_{\theta_j}[u_j](\phi_j)=\phi_j .
		\]
		On the other hand, $u_j\preceq\phi_j$, so for some constant $A_j$,
		$u_j\le\phi_j+A_j$. Hence, for every $C>0$,
		\[
		P_{\theta_j}(u_j+C,V_j)
		\le
		P_{\theta_j}(\phi_j+A_j+C,V_j)
		\le
		P_{\theta_j}[\phi_j]
		=
		\phi_j .
		\]
		Letting $C\to+\infty$ gives $P_{\theta_j}[u_j]\le\phi_j$. Therefore
		$P_{\theta_j}[u_j]=\phi_j$. The same argument gives $P_\theta[u]=\phi$.
		Combining the assumption
		$\theta_{j,u_j}^n=\mu_j\to\mu=\theta_u^n$ in total variation and 
		\Cref{thm:no-mass-loss-singularity-envelope-capacity} gives
		$\phi_j\to\phi$ in $\operatorname{Cap}_\omega$.
	\end{proof}
	
	\section{Equality of the ceiling and singularity envelopes}\label{sec-5}
	
	In this section we use our techniques to prove a conjecture of Darvas-Di Nezza-Lu \cite[Conjecture~2.5]{DDL21a}. We first recall the definition of the Ceiling operator.
	
	Let
	\((X,\omega)\) be compact K\"ahler, let \(\theta\) be big, and put
	\(V:=V_\theta\). For \(u\in\operatorname{PSH}(X,\theta)\), define
	\[
	\mathscr F_u
	:=
	\left\{
	v\in\operatorname{PSH}(X,\theta):
	u\preceq v,\ v\le0,\ 
	\int_X\theta_v^k\wedge\theta_V^{n-k}
	=
	\int_X\theta_u^k\wedge\theta_V^{n-k},
	\ 0\le k\le n
	\right\}.
	\]
	The ceiling of \(u\) is
	\[
	\mathscr C_\theta(u)
	:=
	\left(\sup_{v\in\mathscr F_u}v\right)^* .
	\]
	By \cite[Lemma~2.4]{DDL21a}, if \(u\le0\), then
	\[
	\mathscr C_\theta(u)
	=
	\lim_{\varepsilon\downarrow0}
	P_\theta\big[(1-\varepsilon)u+\varepsilon V\big],
	\]
	where the limit is decreasing. Moreover, if
	\(\int_X\theta_u^n>0\), they showed that
	\[
	\mathscr C_\theta(u)=P_\theta[u].
	\]
	The purpose of this section is to prove that this identity remains true  and the singularity envelope operator is idempotent without any
	positive mass assumption. This extends the positive mass model envelope calculus of Darvas-Di Nezza-Lu \cite[Theorem 3.6]{DDL25} to the setting of possibly zero mass.
	
	\begin{theorem}[=\cref{thm:intro-ceiling-equals-singularity-envelope}]
		\label{thm:ceiling-equals-singularity-envelope}
		Let \((X,\omega)\) be a compact K\"ahler manifold and let \(\theta\) be a smooth closed real $(1,1)$-form whose cohomology class is big. Then for every
		$u\in\operatorname{PSH}(X,\theta)$,
		\[
		\mathscr{C}_\theta(u)=P_\theta[u].
		\]
		Moreover, we have  $P_\theta[P_\theta[u]]=P_\theta[u]$.
	\end{theorem}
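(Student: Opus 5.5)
We assume $u\le 0$ throughout; this loses nothing, since both sides of the identity are unchanged when a constant is added to $u$. Put $u_\varepsilon:=(1-\varepsilon)u+\varepsilon V$ and $\phi_\varepsilon:=P_\theta[u_\varepsilon]$. Since $\theta_{u_\varepsilon}^n\ge\varepsilon^n\theta_V^n$, each $u_\varepsilon$ has positive mass. By \cite[Lemma~2.4]{DDL21a}, $\phi_\varepsilon\searrow\mathscr C_\theta(u)$. The plan is to prove the two inequalities $P_\theta[u]\le\mathscr C_\theta(u)$ and $\mathscr C_\theta(u)\le P_\theta[u]$. Idempotence then follows at once: $P_\theta[u]\ge u$, and $\mathscr F_{P_\theta[u]}\subset\mathscr F_u$ by the mixed mass equalities, so $P_\theta[P_\theta[u]]=\mathscr C_\theta(P_\theta[u])\le\mathscr C_\theta(u)=P_\theta[u]$. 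The reverse inequality $P_\theta[P_\theta[u]]\ge P_\theta[u]$ is trivial.

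\textbf{The easy inequality.} Since $u\le0$ we have $u_\varepsilon\ge u$. For every $C$ this gives $P_\theta(u_\varepsilon+C,V)\ge P_\theta(u+C,V)$, hence $\phi_\varepsilon\ge P_\theta[u]$, and letting $\varepsilon\to0$ gives $\mathscr C_\theta(u)\ge P_\theta[u]$.

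\textbf{The hard inequality.} Set $p_C:=P_\theta(u+C,V)$, so that $p_C\nearrow P_\theta[u]$, and $p_{\varepsilon,C}:=P_\theta(u_\varepsilon+C,V)$. First, $(1-\varepsilon)p_{C/(1-\varepsilon)}+\varepsilon V$ is a competitor for $p_{\varepsilon,C}$, so it lies below $p_{\varepsilon,C}$. Second, we reproduce Step~3 of \Cref{thm:no-mass-loss-singularity-envelope-capacity} inside the positive mass class $\mathcal E(X,\theta,\phi_\varepsilon)$. By \Cref{thm:relative-solvability} and \Cref{thm:relative-Linfty-estimate}, take $\chi_\varepsilon\in\mathcal E(X,\theta,\phi_\varepsilon)$ with $\theta_{\chi_\varepsilon}^n=c_\varepsilon\omega^n$ and $\phi_\varepsilon-B_\varepsilon\le\chi_\varepsilon\le\phi_\varepsilon$. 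Combining the rooftop contact inequality \Cref{thm:rooftop-contact-inequality} with the comparison principle \Cref{thm:DDL-relative-comparison}, as in that proof, should give
\[
\tau_\varepsilon^n c_\varepsilon\,\omega^n(\{\phi_\varepsilon>p_{\varepsilon,C}+\delta\})\le\theta_{u_\varepsilon}^n(\{u_\varepsilon<\phi_\varepsilon-C-\delta/2\}).
\]
Third, since $u_\varepsilon\le\phi_\varepsilon$, the set on the right lies in $\{u<-C\}$. Its $\theta_{u_\varepsilon}^n$-measure is controlled by multilinearity, $\theta_{u_\varepsilon}^n=\sum_k\binom nk(1-\varepsilon)^k\varepsilon^{n-k}\theta_u^k\wedge\theta_V^{n-k}$, and each mixed non-pluripolar measure gives vanishing mass to $\{u<-C\}$ as $C\to\infty$. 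So for each fixed $\varepsilon$ we obtain $p_{\varepsilon,C}\to\phi_\varepsilon$ in $\omega^n$-measure as $C\to\infty$.

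\textbf{Main obstacle: the double limit.} To conclude, we want to show that $\phi_\varepsilon-\big((1-\varepsilon)P_\theta[u]+\varepsilon V\big)$ is small in measure uniformly as $\varepsilon\to0$. That would force $\mathscr C_\theta(u)\le P_\theta[u]$ a.e., hence everywhere. The difficulty is that $c_\varepsilon$ may tend to $0$ (it does when $\int_X\theta_u^n=0$), and $B_\varepsilon$ may blow up. The remedy I would try first is to run the comparison not with $\chi_\varepsilon$ but with the fixed barrier of \Cref{lem:strict-subbarrier-slope-gap}, applied to $V$ itself. Its Monge--Amp\`ere measure $\theta_\chi^n\ge a^n\omega^n$ is independent of $\varepsilon$. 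Taking $\psi:=(1-\tau)\phi_\varepsilon+\tau\chi-\delta/2$ and working in $\mathcal E(X,\theta,V)$ after truncation $\max(\cdot,V-T)$, as in Steps~5--6 of \Cref{thm:moving-background-capacity-stability}, the errors coming from $\{\,\cdot\le V-T\}$ should be absorbed by the slope gap into sets of small capacity, uniformly in $\varepsilon$. If this works, the resulting bound is
\[
\omega^n(\{\phi_\varepsilon>p_{\varepsilon,C}+\delta\})\le C_\delta\sup_{\varepsilon}\theta_{u_\varepsilon}^n(\{u<-C\})+o_T(1),
\]
and we then let $C\to\infty$, then $T\to\infty$, then $\varepsilon\to0$.
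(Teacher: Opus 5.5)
\textbf{There is a genuine gap in the hard inequality, exactly at the step you flag.} Your reduction, the easy inequality $P_\theta[u]\le\mathscr C_\theta(u)$, and the deduction of idempotence are fine. For fixed $\varepsilon$, the convergence $p_{\varepsilon,C}\to\phi_\varepsilon$ as $C\to\infty$ holds by the definition of $P_\theta[u_\varepsilon]$. So all the content lies in an estimate $\omega^n(\{\phi_\varepsilon>p_{\varepsilon,C}+\delta\})<\eta$ that is uniform in $\varepsilon$, and your proposed remedy does not produce it.

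Applied to $V$, \Cref{lem:strict-subbarrier-slope-gap} only gives $\chi\le V$, so there is no slope gap at all. The barrier $\psi=(1-\tau)\phi_\varepsilon+\tau\chi-\delta/2$ even lies above $\phi_\varepsilon$ wherever $\phi_\varepsilon$ is deep and $\chi-V$ is bounded. After truncating, $U=\max(p_{\varepsilon,C},V-T)$ has total mass $\int_X\theta_V^n$. On $\{p_{\varepsilon,C}>V-T\}$ its measure is $\theta_{p_{\varepsilon,C}}^n$, of mass at most $m_\varepsilon$. Hence $\theta_U^n$ puts mass at least $\int_X\theta_V^n-m_\varepsilon$ on $\{p_{\varepsilon,C}\le V-T\}$, essentially on the level set $\{p_{\varepsilon,C}=V-T\}$. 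There $\phi_\varepsilon\ge p_{\varepsilon,C}=V-T$ gives $\psi-U\ge\tau(T+\chi-V)-\delta/2>0$ away from the deep set of $\chi$. So this truncation mass lies inside $\{U<\psi\}$, and your right-hand side is of order $\int_X\theta_V^n-m_\varepsilon$. That is not small precisely in the zero-mass case; already for $u=\log|s|^2_{h}$ on $\mathbb P^1$ with $\chi\equiv-1$ one has $m_\varepsilon=\varepsilon$. In \Cref{thm:moving-background-capacity-stability} this term is killed only because the slope gap is relative to a positive-mass limit and the bad set is $\{\phi_j<\phi-L_T\}$, which is small by the hypothesis $\phi_j\to\phi$ in capacity. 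Here $P_\theta[u]$ may have zero mass, and the convergence is what you are trying to prove.

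\textbf{The missing idea is that $c_\varepsilon\to0$ is harmless once you normalize, and your own binomial expansion already shows this.} Put $a_k:=\int_X\theta_u^k\wedge\theta_V^{n-k}$ and choose $C$ so that $\theta_u^k\wedge\theta_V^{n-k}(\{u<V-C\})\le\eta a_k$ for every $k$. The same coefficients $\binom nk(1-\varepsilon)^k\varepsilon^{n-k}$ appear in $m_\varepsilon=\int_X\theta_{u_\varepsilon}^n$. Hence $\theta_{u_\varepsilon}^n(\{u_\varepsilon<V-C\})\le\eta\,m_\varepsilon$ for all $\varepsilon$, and $m_\varepsilon$ cancels against $c_\varepsilon=m_\varepsilon/\int_X\omega^n$. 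So you can stay inside $\mathcal E(X,\theta,\phi_\varepsilon)$ with your $\chi_\varepsilon$, with no truncation.

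The $\varepsilon$-dependent $L^\infty$ constant $B_\varepsilon$ is avoided by working on $\{\chi_\varepsilon>\phi_\varepsilon-B\}$. Since $\phi_\varepsilon$ is model, $\sup_X\chi_\varepsilon=0$ by \cite[Lemma~2.7]{DDL21a}. So the complement has $\omega^n$-volume at most $C_1/B$ uniformly in $\varepsilon$, and $\tau=\min\{1/2,\delta/(4B)\}$ no longer depends on $\varepsilon$.

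\textbf{The limits also have to run the other way.} What is needed is the upper bound $p_{\varepsilon,C}\searrow p_C\le P_\theta[u]$ as $\varepsilon\downarrow0$ for fixed $C$. This holds by monotonicity, since $u_\varepsilon$ increases with $\varepsilon$, and the limit is at most $\min\{u+C,V\}$. Your ``First'' step gives only a lower bound, which does not help here. You fix $C$ from the uniform estimate and only then let $\varepsilon\to0$; sending $C\to\infty$ first just returns the trivial fixed-$\varepsilon$ statement.
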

	
	\begin{proof}
		Put \(V:=V_\theta\). After adding a constant to \(u\), we may assume
		\(u\le V\le0\). Set
		\[
		\phi:=P_\theta[u].
		\]
		By \cite[Proposition~2.6(iv)]{DDL21a}, $\mathscr C_\theta(\phi)=\mathscr C_\theta(u)$. Thus it is enough to prove that
		\[
		\mathscr C_\theta(\phi)=\phi .
		\]
		For \(0<\varepsilon<1\), set
		\[
		v_\varepsilon:=(1-\varepsilon)\phi+\varepsilon V,
		\qquad
		w_\varepsilon:=(1-\varepsilon)u+\varepsilon V,
		\qquad
		\phi_\varepsilon:=P_\theta[v_\varepsilon].
		\]
		By \cite[Remark 3.1]{DDL25}, the mixed masses of \(u\) and
		\(\phi=P_\theta[u]\) with \(V\) are equal, namely, $\int_X\theta_u^k\wedge\theta_V^{n-k}=\int_X\theta_\phi^k\wedge\theta_V^{n-k}$. Hence, by multilinearity of non-pluripolar product and \cite[Remark 3.1]{DDL25} again,
		\[
		m_\varepsilon:= \int_X\theta_{w_\varepsilon}^n
		=
		\int_X\theta_{v_\varepsilon}^n=
		\int_X\theta_{\phi_\varepsilon}^n\geq \varepsilon^n\int_X\theta_V^n
		>0 .
		\]
		Since \(v_\varepsilon\) has positive mass, \(\phi_\varepsilon\) is a positive
		mass model potential, namely $P_\theta[\phi_\varepsilon]=\phi_\varepsilon$. Moreover \(w_\varepsilon\le v_\varepsilon\). Thus
		\(w_\varepsilon\in\mathcal E(X,\theta,\phi_\varepsilon)\), and by the relative full mass characterization \cite[Theorem 3.7]{DDL25},
		\[
		P_\theta[w_\varepsilon]=P_\theta[v_\varepsilon]=\phi_\varepsilon .
		\]
		By \cite[Lemma~2.4, Proposition~2.6]{DDL21a},
		\[
		\phi_\varepsilon
		=
		P_\theta[(1-\varepsilon)\phi+\varepsilon V]
		\searrow
		\mathscr C_\theta(\phi)
		\qquad(\varepsilon\downarrow0).
		\]
		Now it suffices to prove that
		\[
		\phi_\varepsilon\to\phi
		\quad\text{in }L^1(X).
		\]
		
		\medskip
		\noindent\textbf{Step 1. Fixed-level rooftops.}
		For \(A>0\), set
		\[
		p_{\varepsilon,A}:=P_\theta(w_\varepsilon+A,V),
		\qquad
		p_A:=P_\theta(u+A,V).
		\]
		For every fixed \(A\), we have
		\[
		p_{\varepsilon,A}\downarrow p_A
		\qquad(\varepsilon\downarrow0).
		\]
		Indeed, \(w_\varepsilon\searrow u\), hence
		\[
		h_{\varepsilon,A}:=\min\{w_\varepsilon+A,V\}
		\searrow
		h_A:=\min\{u+A,V\}.
		\]
		Thus \(p_{\varepsilon,A}\) decreases to some \(q\ge p_A\). Since
		\(p_{\varepsilon,A}\le h_{\varepsilon,A}\), passing to the limit gives
		\(q\le h_A\). Hence
		\[
		q\le P_\theta(h_A)=p_A,
		\]
		and so \(q=p_A\). Monotone convergence of quasi-psh functions gives convergence
		in capacity.
		
		\medskip
		\noindent\textbf{Step 2. A normalized tail estimate.}
		We claim that for every \(\eta>0\), there exists \(A_0>0\) such that for all
		\(A\ge A_0\) and all \(0<\varepsilon<1\),
		\begin{equation}
			\label{eq:attenuation-normalized-tail}
			\frac{
				\theta_{w_\varepsilon}^n(\{w_\varepsilon<V-A\})
			}{
				\int_X\theta_{w_\varepsilon}^n
			}
			<\eta .
		\end{equation}
		Let
		\[
		\mu_k:=\theta_u^k\wedge\theta_V^{n-k},
		\qquad
		a_k:=\mu_k(X),
		\qquad 0\le k\le n .
		\]
		The measures \(\mu_k\) do not charge pluripolar sets, and, as \(A\to+\infty\),
		\[
		\{u<V-A\}\downarrow \{u=-\infty\}
		\]
		modulo the pluripolar set \(\{V=-\infty\}\). Hence
		\[
		\mu_k(\{u<V-A\})\to0
		\qquad(A\to+\infty).
		\]
		Choose \(A_0\) so large that, for every \(k\) with \(a_k>0\),
		\[
		\mu_k(\{u<V-A\})<\eta a_k,
		\qquad A\ge A_0 .
		\]
		Since
		\[
		w_\varepsilon-V=(1-\varepsilon)(u-V),
		\]
		we have
		\[
		\{w_\varepsilon<V-A\}
		=
		\left\{
		u<V-\frac{A}{1-\varepsilon}
		\right\}
		\subset
		\{u<V-A\}.
		\]
		By multilinearity of non-pluripolar products,
		\[
		\theta_{w_\varepsilon}^n
		=
		\sum_{k=0}^n
		\binom nk(1-\varepsilon)^k\varepsilon^{n-k}\mu_k .
		\]
		Therefore, for \(A\ge A_0\),
		\[
		\begin{aligned}
			\theta_{w_\varepsilon}^n(\{w_\varepsilon<V-A\})
			&\le
			\sum_{k=0}^n
			\binom nk(1-\varepsilon)^k\varepsilon^{n-k}
			\mu_k(\{u<V-A\})                                      \\
			&\le
			\eta
			\sum_{k=0}^n
			\binom nk(1-\varepsilon)^k\varepsilon^{n-k}a_k         \\
			&=
			\eta\int_X\theta_{w_\varepsilon}^n .
		\end{aligned}
		\]
		This proves \eqref{eq:attenuation-normalized-tail}.
		
		\medskip
		\noindent\textbf{Step 3. Volume recovery.}
		We prove that, for every \(\delta,\eta>0\), there exists \(A_0>0\) such that
		for all \(A\ge A_0\) and all \(0<\varepsilon<1\),
		\begin{equation}
			\label{eq:volume-recovery-ceiling}
			\omega^n(\{\phi_\varepsilon>p_{\varepsilon,A}+\delta\})<\eta .
		\end{equation}
		Let
		\[
		E_{\varepsilon,A,\delta}:=
		\{\phi_\varepsilon>p_{\varepsilon,A}+\delta\}.
		\]
		Set
		\[
		\Omega:=\int_X\omega^n,
		\qquad
		c_\varepsilon:=\frac{m_\varepsilon}{\Omega}.
		\]
		By \cref{thm:relative-solvability}, choose
		\(\chi_\varepsilon\in\mathcal E(X,\theta,\phi_\varepsilon)\) such that
		\[
		\theta_{\chi_\varepsilon}^n=c_\varepsilon\omega^n,
		\qquad
		\sup_X(\chi_\varepsilon-\phi_\varepsilon)=0 .
		\]
		Since \(P_\theta[\phi_\varepsilon]=\phi_\varepsilon\), \cite[Lemma~2.7]{DDL21a}
		gives $\sup_X\chi_\varepsilon=0$.
		Thus standard compactness for normalized quasi-psh functions gives a constant
		\(C_1>0\), independent of \(\varepsilon\), such that
		\[
		\int_X(-\chi_\varepsilon)\omega^n\le C_1 .
		\]
		
		Fix \(B>1\), to be chosen later, and set
		\[
		G_{\varepsilon,B}:=
		\{\chi_\varepsilon>\phi_\varepsilon-B\}.
		\]
		Since \(\phi_\varepsilon\le0\),
		\[
		X\setminus G_{\varepsilon,B}
		\subset
		\{\chi_\varepsilon\le -B\}.
		\]
		Thus
		\begin{equation}
			\label{eq:bad-good-set-volume}
			\omega^n(X\setminus G_{\varepsilon,B})
			\le
			\omega^n(\{\chi_\varepsilon\le -B\})
			\le
			\frac{C_1}{B}.
		\end{equation}
		Choose
		\[
		\tau:=\min\left\{\frac12,\frac{\delta}{4B}\right\},
		\qquad
		\psi_\varepsilon:=
		(1-\tau)\phi_\varepsilon+\tau\chi_\varepsilon-\frac{\delta}{2}.
		\]
		Since \(\chi_\varepsilon\le\phi_\varepsilon\), we have
		\[
		\psi_\varepsilon
		\le
		\phi_\varepsilon-\frac{\delta}{2}
		\le
		V-\frac{\delta}{2}.
		\]
		On \(G_{\varepsilon,B}\),
		\[
		\psi_\varepsilon
		>
		\phi_\varepsilon-\tau B-\frac{\delta}{2}
		\ge
		\phi_\varepsilon-\frac{3\delta}{4}.
		\]
		Therefore
		\[
		E_{\varepsilon,A,\delta}\cap G_{\varepsilon,B}
		\subset
		\{p_{\varepsilon,A}<\psi_\varepsilon\}.
		\]
		
		We check the relative full mass condition. Since $w_\varepsilon\le p_{\varepsilon,A}\le P_\theta[w_\varepsilon]
		=
		\phi_\varepsilon$
		and $\int_X\theta_{w_\varepsilon}^n
		=
		\int_X\theta_{\phi_\varepsilon}^n$,
		monotonicity of total non-pluripolar masses \cref{thm:mixed-monotonicity} gives $p_{\varepsilon,A}\in\mathcal E(X,\theta,\phi_\varepsilon)$.
		Also, $\chi_\varepsilon-\frac{\delta}{2}
		\le
		\psi_\varepsilon
		\le
		\phi_\varepsilon-\frac{\delta}{2}$, hence $\psi_\varepsilon\in\mathcal E(X,\theta,\phi_\varepsilon)$.
		Now \cref{thm:DDL-relative-comparison} gives
		\[
		\begin{aligned}
			\tau^n c_\varepsilon
			\omega^n(E_{\varepsilon,A,\delta}\cap G_{\varepsilon,B})
			&\le
			\int_{\{p_{\varepsilon,A}<\psi_\varepsilon\}}
			\theta_{\psi_\varepsilon}^n                                      \\
			&\le
			\int_{\{p_{\varepsilon,A}<\psi_\varepsilon\}}
			\theta_{p_{\varepsilon,A}}^n .
		\end{aligned}
		\]
		Applying \cref{thm:rooftop-contact-inequality} to
		\[
		p_{\varepsilon,A}=P_\theta(w_\varepsilon+A,V),
		\]
		we get
		\[
		\theta_{p_{\varepsilon,A}}^n
		\le
		\mathbf 1_{\{p_{\varepsilon,A}=w_\varepsilon+A\}}\theta_{w_\varepsilon}^n
		+
		\mathbf 1_{\{p_{\varepsilon,A}=V\}}\theta_V^n .
		\]
		The second term gives no contribution on
		\(\{p_{\varepsilon,A}<\psi_\varepsilon\}\), since otherwise
		\[
		V=p_{\varepsilon,A}<\psi_\varepsilon\le V-\frac{\delta}{2},
		\]
		a contradiction. On the first contact set,
		\[
		p_{\varepsilon,A}=w_\varepsilon+A,
		\qquad
		p_{\varepsilon,A}<\psi_\varepsilon\le V-\frac{\delta}{2}
		\]
		imply
		\[
		w_\varepsilon<V-A-\frac{\delta}{2}.
		\]
		Consequently
		\[
		\tau^n c_\varepsilon
		\omega^n(E_{\varepsilon,A,\delta}\cap G_{\varepsilon,B})
		\le
		\theta_{w_\varepsilon}^n
		\left(\left\{w_\varepsilon<V-A-\frac{\delta}{2}\right\}\right).
		\]
		Since \(c_\varepsilon=m_\varepsilon/\Omega\), we obtain
		\[
		\omega^n(E_{\varepsilon,A,\delta})
		\le
		\frac{C_1}{B}
		+
		\frac{\Omega}{\tau^n}
		\frac{
			\theta_{w_\varepsilon}^n
			\left(\left\{w_\varepsilon<V-A-\frac{\delta}{2}\right\}\right)
		}{
			\int_X\theta_{w_\varepsilon}^n
		}.
		\]
		Now choose \(B\) so large that \(C_1/B<\eta/2\). Then \(\tau\) is fixed.
		By Step 2, choose \(A_0\) so large that, for all \(A\ge A_0\) and all
		\(0<\varepsilon<1\),
		\[
		\frac{
			\theta_{w_\varepsilon}^n
			\left(\left\{w_\varepsilon<V-A-\frac{\delta}{2}\right\}\right)
		}{
			\int_X\theta_{w_\varepsilon}^n
		}
		<
		\frac{\tau^n\eta}{2\Omega}.
		\]
		Then \eqref{eq:volume-recovery-ceiling} follows.
		
		\medskip
		\noindent\textbf{Step 4. \(L^1\)-convergence of \(\phi_\varepsilon\).}
		Take any sequence \(\varepsilon_j\downarrow0\). By compactness, after passing
		to a subsequence, we may assume
		\[
		\phi_{\varepsilon_j}\to\psi
		\quad\text{in }L^1(X).
		\]
		Since \(\phi\le\phi_{\varepsilon_j}\le V\), we have \(\psi\ge\phi\) a.e.
		
		We prove the reverse inequality. Fix \(\delta,\eta>0\). By Step 3, choose
		\(A>0\) such that
		\[
		\omega^n(\{\phi_{\varepsilon_j}>p_{\varepsilon_j,A}+\delta\})<\eta
		\]
		for every \(j\). For this fixed \(A\), Step 1 gives $p_{\varepsilon_j,A}\searrow p_A$, hence in \(\omega^n\)-measure. Since
		\(\phi_{\varepsilon_j}\to\psi\) in \(L^1\), we may take \(j\) large so that
		\[
		\omega^n(\{|\phi_{\varepsilon_j}-\psi|>\delta\})<\eta,
		\qquad
		\omega^n(\{|p_{\varepsilon_j,A}-p_A|>\delta\})<\eta .
		\]
		Hence
		\[
		\omega^n(\{\psi>p_A+3\delta\})\le\omega^n(\{\phi_{\varepsilon_j}>p_{\varepsilon_j,A}+\delta\}\cup\{|\phi_{\varepsilon_j}-\psi|>\delta\}\cup\{|p_{\varepsilon_j,A}-p_A|>\delta\})\le3\eta .
		\]
		Since
		\[
		p_A=P_\theta(u+A,V)\le P_\theta[u]=\phi,
		\]
		we get
		\[
		\omega^n(\{\psi>\phi+3\delta\})\le3\eta .
		\]
		Letting \(\eta\to0\) and then \(\delta\to0\), we obtain \(\psi\le\phi\) a.e.
		Thus \(\psi=\phi\). Every \(L^1\)-cluster point is \(\phi\), so
		\[
		\phi_\varepsilon\to\phi
		\quad\text{in }L^1(X).
		\]
		
		\medskip
		\noindent\textbf{Step 5. Conclusion.}
		Since
		\[
		\phi_\varepsilon\searrow\mathscr C_\theta(\phi)
		\qquad\text{and}\qquad
		\phi_\varepsilon\to\phi \text{ in }L^1(X),
		\]
		we have
		\[
		\mathscr C_\theta(\phi)=\phi,
		\]
		and the proof of $\mathscr C_\theta(\phi)=P_\theta[\phi]$ is  finished.
		
		We further apply \cite[Proposition~2.6(iv)]{DDL21a} to get 
		\[
		P_\theta[P_\theta[\phi]]
		=
		\mathscr C_\theta(P_\theta[\phi])
		=
		\mathscr C_\theta(\phi)
		=
		P_\theta[\phi].
		\]
		
		The proof of \cref{thm:ceiling-equals-singularity-envelope} is complete.
	\end{proof}
	
	We can also obtain the following characterization, generalizing \cite[Theorem 3.7]{DDL25} to the zero mass setting:
	\begin{proposition}
		\label{prop:full-mixed-mass-characterization}
		Put $V:=V_\theta$. Let \(u,\phi\in\operatorname{PSH}(X,\theta)\) and assume that
		\(u\preceq \phi\). Then the following are equivalent:
		\begin{enumerate}[(i)]
			\item $P_\theta[u]=P_\theta[\phi]$.
			
			\item For every \(0\le k\le n\),
			\[
			\int_X\theta_u^k\wedge\theta_V^{n-k}
			=
			\int_X\theta_\phi^k\wedge\theta_V^{n-k}.
			\]
		\end{enumerate}
	\end{proposition}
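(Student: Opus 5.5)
The plan is to reduce both directions to \cref{thm:ceiling-equals-singularity-envelope}, using the fact that the ceiling $\mathscr C_\theta$ depends only on the family $\mathscr F_u$. Everything stays inside the big class $\{\theta\}$, and the mixed masses against $\theta_V$ are the only invariants we need.

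\emph{(i)$\Rightarrow$(ii).} By \cite[Remark 3.1]{DDL25}, already used in the proof of \cref{thm:ceiling-equals-singularity-envelope}, for every $w\in\operatorname{PSH}(X,\theta)$ and every $0\le k\le n$,
\[
\int_X\theta_w^k\wedge\theta_V^{n-k}=\int_X\theta_{P_\theta[w]}^k\wedge\theta_V^{n-k}.
\]
We apply this identity to $w=u$ and to $w=\phi$. Hypothesis (i) says the two right-hand sides are the same quantity, so the left-hand sides agree, which is (ii). No positivity of mass is needed here.

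\emph{(ii)$\Rightarrow$(i).} After subtracting constants, which changes neither side, we may assume $u\le\phi\le 0$.

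First, we show $\mathscr F_\phi\subset\mathscr F_u$. Take $v\in\mathscr F_\phi$. Then $u\preceq\phi\preceq v$ and $v\le0$. Moreover, the mixed masses of $v$ equal those of $\phi$, and by (ii) these equal those of $u$. Hence $v\in\mathscr F_u$, and therefore $\mathscr C_\theta(\phi)\le\mathscr C_\theta(u)$.

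Second, we show the reverse inequality. Take $v\in\mathscr F_u$ and set $v':=\max(v,\phi)$. Then $v'\le0$ and $\phi\preceq v'$. Since $u\preceq\phi\le v'$ and $v\le v'$, \cref{thm:mixed-monotonicity} gives, for each $k$, two chains of inequalities:
\[
\int_X\theta_u^k\wedge\theta_V^{n-k}\le\int_X\theta_\phi^k\wedge\theta_V^{n-k}\le\int_X\theta_{v'}^k\wedge\theta_V^{n-k}\le\int_X\theta_V^k\wedge\theta_V^{n-k}.
\]
This alone does not bound the mass of $v'$ from above, because $v'$ is less singular than $v$ and so could carry more mass. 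So I would avoid the comparison with $v'$ and pass through envelopes instead, using \cref{thm:ceiling-equals-singularity-envelope}. That theorem gives $\mathscr C_\theta(u)=P_\theta[u]$, and $P_\theta[u]\le P_\theta[\phi]$ holds since $u\preceq\phi$. Consequently
\[
\mathscr C_\theta(u)=P_\theta[u]\le P_\theta[\phi]=\mathscr C_\theta(\phi).
\]
Combined with the first inequality this yields $\mathscr C_\theta(u)=\mathscr C_\theta(\phi)$, and applying \cref{thm:ceiling-equals-singularity-envelope} once more gives $P_\theta[u]=P_\theta[\phi]$.

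The only place where genuine work lies is the inclusion $\mathscr F_\phi\subset\mathscr F_u$ together with the identification of both ceilings with singularity envelopes; the latter is exactly the content of \cref{thm:ceiling-equals-singularity-envelope}, which removes the positive mass hypothesis of \cite[Theorem 3.7]{DDL25}. The remaining point to verify is that $\mathscr F_\phi$ is nonempty, so that the ceiling is not $-\infty$. This holds because $\phi\in\mathscr F_\phi$, using $\phi\le0$.
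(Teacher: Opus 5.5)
Your proof is correct and follows essentially the same route as the paper: (i)$\Rightarrow$(ii) comes from \cite[Remark~3.1]{DDL25}, and for (ii)$\Rightarrow$(i) you show $\mathscr C_\theta(u)=\mathscr C_\theta(\phi)$ and then invoke the ceiling theorem. The differences are minor. You obtain $\mathscr C_\theta(\phi)\le\mathscr C_\theta(u)$ from the elementwise inclusion $\mathscr F_\phi\subset\mathscr F_u$, which, unlike the paper's admissibility argument, does not need the fact that $\mathscr C_\theta(\phi)$ itself has the mixed masses of $\phi$. You get the reverse inequality from the theorem together with monotonicity of $P_\theta[\cdot]$, whereas the paper cites monotonicity of the ceiling. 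The abandoned $\max(v,\phi)$ digression can simply be deleted.
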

	
	\begin{proof}
		If \(P_\theta[u]=P_\theta[\phi]\), then the conclusion follows from
		\cite[Remark~3.1]{DDL25}, applied to mixed products.
		
		Conversely, assume the mixed masses are equal. Since \(u\preceq\phi\), by
		monotonicity of the ceiling operator,
		\[
		\mathscr C_\theta(u)\le \mathscr C_\theta(\phi).
		\]
		On the other hand, \(\mathscr C_\theta(\phi)\) is less singular than \(\phi\),
		hence less singular than \(u\), and it has the same mixed masses as \(\phi\),
		therefore also the same mixed masses as \(u\). Thus
		\(\mathscr C_\theta(\phi)\) is admissible in the definition of
		\(\mathscr C_\theta(u)\), and
		\[
		\mathscr C_\theta(\phi)\le \mathscr C_\theta(u).
		\]
		Hence \(\mathscr C_\theta(u)=\mathscr C_\theta(\phi)\). By
		\Cref{thm:ceiling-equals-singularity-envelope}, we get $P_\theta[u]=P_\theta[\phi]$.
	\end{proof}
	
	\section{Capacity stability of relative weak geodesic segments}
	
	In  this section, we prove   a capacity stability result for weak geodesic segments with moving
	prescribed singularities. The minimal singularity case is obtained by taking
	\(\phi_j=\phi=V_\theta\).
	
	Let \(u_0,u_1\in\operatorname{PSH}(X,\theta)\). A subgeodesic segment joining
	\(u_0,u_1\) is a family \(t\mapsto h_t\in\operatorname{PSH}(X,\theta)\),
	\(t\in(0,1)\), whose complexification is \(\pi_X^*\theta\)-psh on
	\(\{0<\operatorname{Re}\tau<1\}\times X\), and such that
	\[
	\limsup_{t\to  0}h_t\le u_0,\qquad
	\limsup_{t\to  1}h_t\le u_1,
	\]
	where $\pi_X$ is the projection $\pi_X:\{0<\operatorname{Re}\tau<1\}\times X\rightarrow X$. The weak geodesic segment joining \(u_0,u_1\) is the upper envelope of all such
	subgeodesic segments, we refer the reader to \cite{DDL18a,DDL18b} for more information about weak geodesic in big classes.
	
	\begin{lemma}
		\label{lem:relative-geodesic-segment-rooftop-legendre}
		Let \(\phi,u_0,u_1\in\operatorname{PSH}(X,\theta)\) and assume that, for some
		\(C>0\),
		\[
		\phi-C\le u_0,u_1\le\phi .
		\]
		Let \(t\mapsto u_t\) be the weak geodesic segment joining \(u_0,u_1\). Then
		\[
		u_t=
		\left(
		\sup_{\lambda\in\mathbb R}
		\bigl(P_\theta(u_0,u_1-\lambda)+t\lambda\bigr)
		\right)^*,
		\qquad 0\le t\le1.
		\]
		Moreover, the supremum may be taken over \([-C,C]\).
	\end{lemma}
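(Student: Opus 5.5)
The plan is to prove two inequalities between the weak geodesic \(u_t\) and the function
\[
w_t:=\Bigl(\sup_{\lambda\in\mathbb R}\bigl(P_\theta(u_0,u_1-\lambda)+t\lambda\bigr)\Bigr)^*,
\]
using Kiselman's minimum principle for one direction and Legendre duality for the other. First we record that the weak geodesic is well controlled by the barrier \(\phi\). The segment \(h_t:=\phi-C\) is a subgeodesic with boundary values below \(u_0,u_1\), so \(u_t\ge\phi-C\). Conversely \(\max(u_0,u_1)\le\phi\), and \(\phi\) is constant in \(t\); the maximum principle for \(\pi_X^*\theta\)-psh functions on the strip (comparison with constant-in-\(t\) envelopes) should give \(u_t\le\phi\). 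In particular the segment is bounded relative to \(\phi\). Standard convexity then gives the Lipschitz bound \(|u_t-u_s|\le C|t-s|\), obtained by comparing with the subgeodesics \(u_0-Ct\) and \(u_1-C(1-t)\). This yields \(u_t\to u_0\) and \(u_t\to u_1\) uniformly as \(t\to0,1\).

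\textbf{Inequality \(w_t\le u_t\).} Fix \(\lambda\) and set \(q_\lambda:=P_\theta(u_0,u_1-\lambda)\). The family \(t\mapsto q_\lambda+t\lambda\) is independent of \(\operatorname{Im}\tau\) and affine in \(t\), hence \(\pi_X^*\theta\)-psh. Its boundary values are \(q_\lambda\le u_0\) at \(t=0\) and \(q_\lambda+\lambda\le u_1\) at \(t=1\). It is therefore a subgeodesic, so \(q_\lambda+t\lambda\le u_t\). Taking the supremum over \(\lambda\) and the usc regularization (\(u_t\) is \(\theta\)-psh) gives \(w_t\le u_t\).

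\textbf{Inequality \(u_t\le w_t\).} Since \(t\mapsto u_t(x)\) is convex on \([0,1]\) (by \(\operatorname{Im}\tau\)-invariance and subharmonicity in \(\tau\)), Kiselman's minimum principle shows that for each \(\lambda\) the function
\[
\check u_\lambda:=\inf_{t\in[0,1]}\bigl(u_t-t\lambda\bigr)
\]
is \(\theta\)-psh (or \(\equiv-\infty\)). Evaluating at \(t=0\) and \(t=1\) gives \(\check u_\lambda\le u_0\) and \(\check u_\lambda\le u_1-\lambda\), hence \(\check u_\lambda\le q_\lambda\). By convexity and the Lipschitz continuity at the endpoints, Legendre duality gives
\[
u_t(x)=\sup_\lambda\bigl(\check u_\lambda(x)+t\lambda\bigr)\le\sup_\lambda\bigl(q_\lambda(x)+t\lambda\bigr)\le w_t(x)
\]
for \(0\le t\le1\). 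This step is where I expect the main obstacle: points where \(u_t(x)=-\infty\) need care, and the endpoints require the uniform Lipschitz bound so that the boundary values are attained, not merely bounded by a limsup. Both issues are handled by the bound \(|\dot u_t|\le C\) on \(\{\phi>-\infty\}\), with the complement pluripolar and removed by usc regularization.

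\textbf{Restriction to \([-C,C]\).} The Lipschitz bound shows that the slopes \(\dot u_t(x)\) lie in \([-C,C]\). For \(\lambda>C\), the function \(u_t-t\lambda\) is decreasing in \(t\), so \(\check u_\lambda=u_1-\lambda\), and
\[
\check u_\lambda+t\lambda=u_1-(1-t)\lambda\le\check u_C+tC.
\]
Symmetrically, for \(\lambda<-C\) we have \(\check u_\lambda=u_0\), and its contribution is dominated by the one at \(\lambda=-C\). Hence the Legendre supremum is attained on \([-C,C]\). The same domination holds for \(q_\lambda\) by monotonicity of \(\lambda\mapsto q_\lambda+t\lambda\) outside \([-C,C]\): for \(\lambda\ge C\), \(q_\lambda=u_1-\lambda\) because \(u_1-\lambda\le\phi-C\le u_0\), and the case \(\lambda\le-C\) is analogous. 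So the supremum defining \(w_t\) may also be restricted to \([-C,C]\).
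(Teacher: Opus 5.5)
Your proposal is correct and follows the paper's proof. You use the same subgeodesics $P_\theta(u_0,u_1-\lambda)+t\lambda$ to get $w_t\le u_t$, Kiselman's minimum principle plus one-variable Legendre duality to get $u_t\le w_t$, and the same observation that $P_\theta(u_0,u_1-\lambda)$ equals $u_0$ for $\lambda\le -C$ and $u_1-\lambda$ for $\lambda\ge C$.

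The only difference is where you apply Kiselman. You apply it to $u_t$ itself, which forces you to first establish $\phi-C\le u_t\le\phi$, the $C$-Lipschitz bound in $t$, and that $u_t$ is itself a subgeodesic. The paper instead applies it to an arbitrary competitor $h_t$, with $q_\lambda:=\inf_{0<s<1}(h_s-s\lambda)$, so it needs only the $\limsup$ boundary conditions and duality at interior $t$; the endpoint cases $t=0,1$ are trivial.
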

	
	\begin{proof}
		Set
		\[
		w_t:=
		\left(
		\sup_{\lambda\in\mathbb R}
		\bigl(P_\theta(u_0,u_1-\lambda)+t\lambda\bigr)
		\right)^* .
		\]
		For fixed \(\lambda\), the curve
		\(t\mapsto P_\theta(u_0,u_1-\lambda)+t\lambda\) is a subgeodesic segment with
		boundary values bounded above by \(u_0\) and \(u_1\). Hence \(w_t\le u_t\).
		
		Conversely, let \(h_t\) be any subgeodesic segment with boundary values bounded
		above by \(u_0,u_1\). For \(\lambda\in\mathbb R\), put
		\[
		q_\lambda:=\inf_{0<s<1}(h_s-s\lambda). 
		\]
		By Kiselman's minimum principle \cite[Theorem 2.2]{Kis78}, \(q_\lambda\in\operatorname{PSH}(X,\theta)\)
		or \(q_\lambda\equiv-\infty\). The boundary conditions give
		\(q_\lambda\le u_0\) and \(q_\lambda\le u_1-\lambda\). Hence
		\[
		q_\lambda\le P_\theta(u_0,u_1-\lambda).
		\]
		For each fixed \(x\), the function \(t\mapsto h_t(x)\) is convex on \((0,1)\).
		
		We shall use the following elementary one-dimensional Fenchel-Moreau duality. Let
		\(f:(0,1)\to\mathbb R\) be convex and set
		\[
		f^*(\lambda):=\inf_{0<s<1}(f(s)-s\lambda).
		\]
		Then, for every \(t\in(0,1)\),
		\[
		f(t)=\sup_{\lambda\in\mathbb R}(f^*(\lambda)+t\lambda).
		\]
		Indeed, for every \(\lambda\) we have
		\(f^*(\lambda)\le f(t)-t\lambda\), hence
		\(\sup_\lambda(f^*(\lambda)+t\lambda)\le f(t)\). Conversely, since \(t\) is an
		interior point, the subdifferential \(\partial f(t)\) is nonempty. Namely, choose
		$\lambda\in[f^\prime_-(t),f^\prime_+(t)]$. Then
		\[
		f(s)\ge f(t)+\lambda(s-t),\qquad 0<s<1,
		\]
		and hence
		\[
		f(s)-s\lambda\ge f(t)-t\lambda .
		\]
		Taking the infimum over \(s\in(0,1)\), we get
		\(f^*(\lambda)\ge f(t)-t\lambda\), so
		\(f^*(\lambda)+t\lambda\ge f(t)\). This proves the equality.
		
		For each fixed $x$, the above duality gives
		\[
		h_t(x)\le\sup_{\lambda\in\mathbb R}\bigl(q_\lambda(x)+t\lambda\bigr).
		\]
		Consequently \(h_t\le w_t\). Taking the supremum over all admissible
		subgeodesic segments gives \(u_t\le w_t\).
		
		It remains to reduce the parameter interval. If \(\lambda\le -C\), then
		\(u_1-\lambda\ge \phi\ge u_0\), hence
		\(P_\theta(u_0,u_1-\lambda)=u_0\). Thus
		\[
		P_\theta(u_0,u_1-\lambda)+t\lambda
		\le u_0-tC,
		\]
		which is the value corresponding to \(\lambda=-C\). If \(\lambda\ge C\), then
		\(u_1-\lambda\le \phi-C\le u_0\), hence
		\(P_\theta(u_0,u_1-\lambda)=u_1-\lambda\). Thus
		\[
		P_\theta(u_0,u_1-\lambda)+t\lambda
		=
		u_1-(1-t)\lambda
		\le u_1-(1-t)C,
		\]
		which is the value corresponding to \(\lambda=C\). Therefore the supremum over
		\(\mathbb R\) equals the supremum over \([-C,C]\).
	\end{proof}
	The following theorem illustrates that the whole geodesic segment converges in capacity uniformly provided that their endpoints converge in capacity.
	\begin{theorem}[=\cref{intro_thm:weak geodesic segments convergence}]
		\label{thm:relative-weak-geodesic-segment-capacity-stability}
		Let \(\phi_j,\phi\in\operatorname{PSH}(X,\theta)\) be normalized positive mass
		model potentials such that
		\[
		\sup_X\phi_j=\sup_X\phi=0,
		\qquad
		\phi_j\to\phi
		\quad\text{in }\operatorname{Cap}_\omega .
		\]
		Let \(u_{j,0},u_{j,1},u_0,u_1\in\operatorname{PSH}(X,\theta)\). Assume that
		there exists \(C>0\) such that
		\[
		\phi_j-C\le u_{j,0},u_{j,1}\le\phi_j,
		\qquad
		\phi-C\le u_0,u_1\le\phi
		\]
		for all \(j\), and assume that
		\[
		u_{j,0}\to u_0,
		\qquad
		u_{j,1}\to u_1
		\quad\text{in }\operatorname{Cap}_\omega .
		\]
		Let \(t\mapsto u_{j,t}\) be the weak geodesic segment joining
		\(u_{j,0},u_{j,1}\), and let \(t\mapsto u_t\) be the weak geodesic segment
		joining \(u_0,u_1\). Then $u_{j,t}\to u_t$ in $\text{Cap}_\omega$ uniformly. Namely, for every \(\varepsilon>0\),
		\[
		\lim_{j\to\infty}
		\sup_{0\le t\le1}
		\operatorname{Cap}_\omega(\{|u_{j,t}-u_t|>\varepsilon\})=0 .
		\]
	\end{theorem}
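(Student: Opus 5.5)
The plan is to combine the rooftop–Legendre representation of \Cref{lem:relative-geodesic-segment-rooftop-legendre} with the rooftop continuity of \Cref{cor:rooftop-capacity-continuity-moving-singularities}. The argument has three steps: discretize the Legendre parameter, show each discretized rooftop converges in capacity, and then handle the upper semicontinuous regularization and the uniformity in \(t\).

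\textbf{Step 1: discretize \(\lambda\).} By \Cref{lem:relative-geodesic-segment-rooftop-legendre}, both \(u_{j,t}\) and \(u_t\) are upper regularizations of
\[
\sup_{\lambda\in[-C,C]}\bigl(P_\theta(u_{j,0},u_{j,1}-\lambda)+t\lambda\bigr)
\quad\text{and}\quad
\sup_{\lambda\in[-C,C]}\bigl(P_\theta(u_0,u_1-\lambda)+t\lambda\bigr),
\]
respectively. The map \(\lambda\mapsto P_\theta(u_0,u_1-\lambda)\) is decreasing in \(\lambda\), and
\[
P_\theta(u_0,u_1-\lambda')\ge P_\theta(u_0,u_1-\lambda)-|\lambda-\lambda'|,
\]
since subtracting a constant commutes with the envelope. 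Hence replacing \([-C,C]\) by a finite grid \(\Lambda_N\) of mesh \(2C/N\) changes the supremum by at most \(2\cdot 2C/N\), uniformly in \(t\in[0,1]\) and in \(j\). The same bound holds for the \(j\)-th family. Moreover, a finite maximum of \(\theta\)-psh functions is already upper semicontinuous, so for the grid version no regularization is needed. Set
\[
F_{j,t}^N:=\max_{\lambda\in\Lambda_N}\bigl(p_{j,\lambda}+t\lambda\bigr),
\qquad
F_t^N:=\max_{\lambda\in\Lambda_N}\bigl(p_\lambda+t\lambda\bigr),
\]
where \(p_{j,\lambda}:=P_\theta(u_{j,0},u_{j,1}-\lambda)\) and \(p_\lambda:=P_\theta(u_0,u_1-\lambda)\). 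Then \(F_t^N\le u_t\le F_t^N+4C/N\) outside a pluripolar set, because the usc regularization of a sup equals the sup off a pluripolar set and is at most the sup plus the discretization error. The same holds for the \(j\)-th family.

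\textbf{Step 2: convergence of each rooftop.} Fix \(\lambda\in[-C,C]\). Take \(f_j:=u_{j,0}\) and \(g_j:=\min\{u_{j,1}-\lambda,\phi_j\}\) if \(\lambda<0\), or \(u_{j,1}-\lambda\) with a shift otherwise. These functions satisfy \(\phi_j-2C\le f_j,g_j\le\phi_j\) up to an additive constant independent of \(j\). The minimum with \(\phi_j\) does not change the rooftop, since \(u_{j,0}\le\phi_j\). Capacity convergence of \(u_{j,i}\) and \(\phi_j\) gives \(f_j\to f\) and \(g_j\to g\) in capacity. Hence \Cref{cor:rooftop-capacity-continuity-moving-singularities} yields \(p_{j,\lambda}\to p_\lambda\) in \(\operatorname{Cap}_\omega\) for each grid point. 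Since \(\Lambda_N\) is finite, for every \(\varepsilon>0\),
\[
\operatorname{Cap}_\omega\Bigl(\bigcup_{\lambda\in\Lambda_N}\{|p_{j,\lambda}-p_\lambda|>\varepsilon\}\Bigr)\to0 .
\]
Outside this set, \(|F_{j,t}^N-F_t^N|\le\varepsilon\) for every \(t\in[0,1]\) simultaneously, because a max of functions each moved by at most \(\varepsilon\) moves by at most \(\varepsilon\). The exceptional set does not depend on \(t\), and this is exactly what gives uniformity in \(t\).

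\textbf{Step 3: conclusion.} Given \(\varepsilon>0\), choose \(N\) with \(4C/N<\varepsilon/3\). For all \(t\), the set \(\{|u_{j,t}-u_t|>\varepsilon\}\) is contained in the union of \(\bigcup_\lambda\{|p_{j,\lambda}-p_\lambda|>\varepsilon/3\}\) and a pluripolar set. The pluripolar set has zero capacity, and the first set has capacity tending to zero independently of \(t\).

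The endpoints \(t=0,1\) need a separate check. There the weak geodesic takes the boundary values \(u_{j,0}\) and \(u_{j,1}\), and convergence follows directly from the hypothesis. Alternatively, the formula of \Cref{lem:relative-geodesic-segment-rooftop-legendre} is stated for \(0\le t\le1\), and the argument above applies verbatim.

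The main technical point I expect to verify carefully is the hypothesis check in Step 2. The rooftop obstacle \(u_{j,1}-\lambda\) must be bracketed between \(\phi_j-C'\) and \(\phi_j\) after a normalization that is uniform in \(j\) and \(\lambda\), so that \Cref{cor:rooftop-capacity-continuity-moving-singularities} applies. A related point is the claim that the usc regularization only differs from the supremum on a pluripolar set; for the finite grid this is unnecessary, and the comparison between \(u_t\) and \(F_t^N\) uses only the Lipschitz dependence on \(\lambda\).
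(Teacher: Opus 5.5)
Your proposal is correct and matches the paper's proof. Both use the rooftop--Legendre representation over $[-C,C]$, discretize $\lambda$ on a finite grid with a uniform-in-$t$ sandwich, obtain $p_{j,\lambda}\to p_\lambda$ in capacity for each grid point from \Cref{thm:relative-bounded-obstacle-capacity-continuity}, and finish with a union bound whose exceptional set is independent of $t$.

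The differences are cosmetic. The paper applies the envelope theorem directly to $\min\{u_{j,0},u_{j,1}-\lambda\}$, which already lies between $\phi_j-2C$ and $\phi_j$, so your extra $\min$ with $\phi_j$ and the vague ``shift'' are unnecessary. The paper also uses monotonicity in $\lambda$ rather than your Lipschitz bound. Finally, your sandwich $F^N_t\le u_t\le F^N_t+4C/N$ holds everywhere, not only off a pluripolar set, because a finite maximum of $\theta$-psh functions is already upper semicontinuous.
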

	
	\begin{proof}
		For \(\lambda\in[-C,C]\), put
		\[
		p_{j,\lambda}:=P_\theta(u_{j,0},u_{j,1}-\lambda),
		\qquad
		p_\lambda:=P_\theta(u_0,u_1-\lambda).
		\]
		For each fixed \(\lambda\in[-C,C]\), we have
		\[
		\phi_j-2C\le \min\{u_{j,0},u_{j,1}-\lambda\}\le\phi_j,
		\qquad
		\phi-2C\le \min\{u_{0},u_{1}-\lambda\}\le\phi .
		\]
		Hence, by \Cref{thm:relative-bounded-obstacle-capacity-continuity}, $p_{j,\lambda}\to p_\lambda$ in $\operatorname{Cap}_\omega$
		for every fixed \(\lambda\in[-C,C]\). By \cref{thm:relative-weak-geodesic-segment-capacity-stability}, we can write
		\[
		u_{j,t}
		=
		\left(
		\sup_{\lambda\in[-C,C]}(p_{j,\lambda}+t\lambda)
		\right)^*,
		\qquad
		u_t
		=
		\left(
		\sup_{\lambda\in[-C,C]}(p_\lambda+t\lambda)
		\right)^* .
		\]
		Fix \(\varepsilon>0\). Choose a partition
		\[
		\Pi=\{-C=\lambda_0<\lambda_1<\cdots<\lambda_m=C\}
		\]
		with mesh \(|\Pi|<\varepsilon/3\). Define
		\[
		u_{j,t}^{\Pi}:=\max_{0\le \ell\le m}
		(p_{j,\lambda_\ell}+t\lambda_\ell),
		\qquad
		u_t^{\Pi}:=\max_{0\le \ell\le m}
		(p_{\lambda_\ell}+t\lambda_\ell).
		\]
		Since \(\lambda\mapsto p_{j,\lambda}\) and \(\lambda\mapsto p_\lambda\) are
		decreasing, for \(\lambda\in[\lambda_\ell,\lambda_{\ell+1}]\) and
		\(0\le t\le1\) we have
		\[
		p_{j,\lambda}+t\lambda
		\le p_{j,\lambda_\ell}+t\lambda_\ell+|\Pi|,
		\qquad
		p_\lambda+t\lambda
		\le p_{\lambda_\ell}+t\lambda_\ell+|\Pi|.
		\]
		Therefore, for all \(t\in[0,1]\),
		\[
		u_{j,t}^{\Pi}\le u_{j,t}\le u_{j,t}^{\Pi}+|\Pi|,
		\qquad
		u_t^{\Pi}\le u_t\le u_t^{\Pi}+|\Pi|.
		\]
		It follows that
		\[
		\{|u_{j,t}-u_t|>\varepsilon\}
		\subset
		\{|u_{j,t}^{\Pi}-u_t^{\Pi}|>\varepsilon-|\Pi|\}
		\subset
		\{|u_{j,t}^{\Pi}-u_t^{\Pi}|>2\varepsilon/3\}.
		\]
		Also,
		\[
		|u_{j,t}^{\Pi}-u_t^{\Pi}|
		\le
		\max_{0\le\ell\le m}|p_{j,\lambda_\ell}-p_{\lambda_\ell}|.
		\]
		Thus, uniformly in \(t\in[0,1]\),
		\[
		\begin{aligned}
			\operatorname{Cap}_\omega(\{|u_{j,t}-u_t|>\varepsilon\})
			&\le
			\sum_{\ell=0}^m
			\operatorname{Cap}_\omega
			\bigl(\{|p_{j,\lambda_\ell}-p_{\lambda_\ell}|>2\varepsilon/3\}\bigr).
		\end{aligned}
		\]
		The right hand side tends to zero because the partition is finite and
		\(p_{j,\lambda_\ell}\to p_{\lambda_\ell}\) in capacity for each \(\ell\). This
		proves the result.
	\end{proof}
	
	\begin{corollary}
		\label{cor:minimal-weak-geodesic-segment-capacity-stability}
		Let \(u_{j,0},u_{j,1},u_0,u_1\in\operatorname{PSH}(X,\theta)\) satisfy, for
		some \(C>0\),
		\[
		V_\theta-C\le u_{j,0},u_{j,1},u_0,u_1\le V_\theta .
		\]
		Assume that
		\[
		u_{j,0}\to u_0,
		\qquad
		u_{j,1}\to u_1
		\quad\text{in }\operatorname{Cap}_\omega .
		\]
		Let \(u_{j,t}\) and \(u_t\) be the corresponding weak geodesic segments. Then,
		for every \(\varepsilon>0\),
		\[
		\lim_{j\to\infty}
		\sup_{0\le t\le1}
		\operatorname{Cap}_\omega(\{|u_{j,t}-u_t|>\varepsilon\})=0 .
		\]
	\end{corollary}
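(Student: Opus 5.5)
This is the specialization of \Cref{thm:relative-weak-geodesic-segment-capacity-stability} to the constant model potential $\phi_j=\phi=V_\theta$, and I would prove it by checking that this choice satisfies every hypothesis of that theorem.

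First, $V_\theta$ is a normalized positive mass model potential. It satisfies $\sup_X V_\theta=0$ by its definition as an upper envelope of nonpositive $\theta$-psh functions; in a big class the supremum is attained at $0$ since $V_\theta\equiv0$ would be $\theta$-psh only if $\theta\ge0$, and otherwise one uses the standard normalization $\sup_XV_\theta=0$. Next, $P_\theta[V_\theta]=V_\theta$: every candidate $P_\theta(V_\theta+C,V_\theta)$ equals $V_\theta$, and the constant sequence passes to the limit. Finally, $\int_X\theta_{V_\theta}^n=\operatorname{Vol}(\{\theta\})>0$ because the class is big.

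Second, the constant sequence $\phi_j=V_\theta$ trivially converges to $\phi=V_\theta$ in $\operatorname{Cap}_\omega$, since all the sets $\{|\phi_j-\phi|>\varepsilon\}$ are empty.

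Third, the assumed two-sided bounds $V_\theta-C\le u_{j,0},u_{j,1},u_0,u_1\le V_\theta$ are exactly the hypotheses $\phi_j-C\le u_{j,\cdot}\le\phi_j$ and $\phi-C\le u_\cdot\le\phi$ of \Cref{thm:relative-weak-geodesic-segment-capacity-stability} with $\phi_j=\phi=V_\theta$. The endpoint convergence in capacity is assumed directly.

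Applying \Cref{thm:relative-weak-geodesic-segment-capacity-stability} then gives the uniform-in-$t$ capacity convergence claimed. The only point needing care is the normalization $\sup_XV_\theta=0$ together with the model property, and both are standard from the definitions. The real content, namely the continuity of rooftop envelopes (\Cref{cor:minimal-type-obstacle-capacity-continuity}) combined with the rooftop–Legendre representation (\Cref{lem:relative-geodesic-segment-rooftop-legendre}) over a finite partition of $[-C,C]$, is already packaged in the theorem being invoked.
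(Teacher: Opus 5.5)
Your proposal is correct and matches the paper, which proves the corollary in one line by applying \Cref{thm:relative-weak-geodesic-segment-capacity-stability} with $\phi_j=\phi=V_\theta$. One small cleanup: your justification of $\sup_X V_\theta=0$ is muddled, and the direct argument is that $V_\theta\le 0$, while for any $u\in\operatorname{PSH}(X,\theta)$ the competitor $u-\sup_X u$ forces $\sup_X V_\theta\ge 0$.
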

	
	\begin{proof}
		Apply \Cref{thm:relative-weak-geodesic-segment-capacity-stability} with
		\(\phi_j=\phi=V_\theta\).
	\end{proof}
	
	\begin{remark}
		This result is a capacity-topology analogue of the \(d_1\)-stability of weak
		geodesic segments. The proof uses only the rooftop representation in
		\Cref{lem:relative-geodesic-segment-rooftop-legendre} and the capacity continuity of
		rooftop envelopes.
	\end{remark}
	
	\section{Stability of twisted K\"ahler-Einstein metrics in big classes}\label{sec:DZ-stability}
	
	In this subsection, we use our stability theorem to establish a stability result for twisted K\"ahler-Einstein equations in big classes whose existence was recently studied by Darvas-Zhang \cite{DZ24}. 
	
	We recall the following existence theorem of Darvas-Zhang in the form used below. \begin{theorem}[Darvas-Zhang existence theorem {\cite[Theorem~1.2]{DZ24}}] \label{thm:DZ-existence} Let \(X\) be compact K\"ahler, let \(\theta\) be a smooth closed real \((1,1)\)-form whose cohomology class is big, and put \(V:=V_\theta\), \(m:=\int_X\theta_V^n\). Let \(\eta\) be a smooth closed real \((1,1)\)-form such that \(c_1(X)=\{\theta\}+\{\eta\}\). Let \(\psi\) be quasi-psh with \(\eta+dd^c\psi\ge0\). Assume moreover that $e^{-\psi}$ is $L^1$ integrable and \(\delta_\psi(\{\theta\})>1\). Choose \(f\in C^\infty(X)\) such that \(\operatorname{Ric}(\omega)=\theta+\eta+dd^cf\), and set \(\mu:=e^{f-\psi}\omega^n\). Then there exists \(u\in\mathcal E(X,\theta,V)\), normalized by \(\sup_Xu=0\), such that \[ \theta_u^n = m\,\frac{e^{-u}\mu}{\int_X e^{-u}\,d\mu}. \] Equivalently, \(\theta_u=\theta+dd^cu\) has minimal singularities and solves \[ \operatorname{Ric}(\theta_u)=\theta_u+\eta+dd^c\psi . \] \end{theorem}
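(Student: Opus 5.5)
The approach is the variational method in the big class $\{\theta\}$: minimize the Ding functional $D^1_\mu$ on $\mathcal E^1(X,\theta)$, derive the Euler--Lagrange equation at a minimizer, and then upgrade the regularity of the minimizer. Throughout I take $\mu=e^{f-\psi}\omega^n$, which is a tame measure. It is finite because $e^{-\psi}\in L^1$ and $f$ is bounded.

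\textbf{Step 1 (coercivity).} The first task is to show that $\delta_\psi(\{\theta\})>1$ implies $D^1_\mu$ is proper, meaning
\[
D^1_\mu(u)\ge -\varepsilon I_\theta(u)-C
\quad\text{for } u\in\mathcal E^1(X,\theta),\ \sup_Xu=0 .
\]
I would argue by contradiction, in the spirit of Berman--Boucksom--Jonsson. Suppose properness fails. Then there is a sequence with $I_\theta(u_k)\to-\infty$ whose Ding energy is $o(-I_\theta(u_k))$. Rescaling along $d_1$-geodesic rays from $V$ produces a nontrivial finite-energy geodesic ray along which the slope of $D^1_\mu$ is $\le0$. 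Next I would approximate the ray by rays of analytic or divisorial type, using the multiplier ideals of the ray. The slope of the $L$-part is then computed by log canonical thresholds relative to $\mu$, i.e.\ by $A_{\chi,\psi}(E)$. The slope of $I_\theta$ is computed by the expected vanishing order $S_\theta(E)$. Together these give $A_X(E)-\nu(\psi,E)\le S_\theta(E)$ for some divisor $E$. Since the ray is nontrivial, this forces $\delta_\psi\le1$, a contradiction. This is the step I expect to be the main obstacle. The comparison between the slope of $L$ and the valuative data requires Guan--Zhou strong openness and the Demailly approximation, carried out on birational models adapted to the big class.

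\textbf{Step 2 (existence of a minimizer).} Take a minimizing sequence normalized by $\sup_Xu_k=0$. Properness bounds $I_\theta(u_k)$ from below, so the sequence lies in a $d_1$-bounded, hence $L^1$-compact, subset of $\mathcal E^1$. Since $I_\theta$ is upper semicontinuous, it suffices to have continuity of $L^1_\mu$ along $L^1$-convergent sequences in this set. For that I would use a uniform Skoda-type estimate: because $\delta_\psi>1$, there is some $\lambda>1$ with $\int e^{-\lambda u}d\mu\le C$ uniformly on normalized energy-bounded sets. Uniform integrability then makes $L^1_\mu$ continuous there. Hence a limit point $u$ is a minimizer.

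\textbf{Step 3 (Euler--Lagrange).} Perturb the minimizer $u$ by $P_\theta(u+tg)$ with $g\in C^\infty(X)$. By the differentiability of $I_\theta\circ P_\theta$ (Berman--Boucksom--Guedj--Zeriahi, and its big-class version), the first variation gives
\[
\theta_u^n=m\,\frac{e^{-u}\mu}{\int e^{-u}d\mu},
\]
and the normalization $\sup_Xu=0$ is harmless.

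\textbf{Step 4 (minimal singularities).} Here I apply \cref{thm:relative-Linfty-estimate} with $\phi=V$. This needs the density $e^{f-\psi-u}$ to lie in $L^p$ for some $p>1$. Since $u\in\mathcal E^1$ has the uniform exponential integrability from Step 2, and $e^{-\psi}\in L^{1+}$ by strong openness, Hölder's inequality gives the $L^p$ bound. The estimate then yields $V-C\le u\le V$. Finally, the curvature identity $\operatorname{Ric}(\theta_u)=\theta_u+\eta+dd^c\psi$ follows by taking $-dd^c\log$ of the density on the ample locus, using $\operatorname{Ric}(\omega)=\theta+\eta+dd^cf$.
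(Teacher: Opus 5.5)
\textbf{Verdict: the proposal has a genuine gap in Step~1, and Step~1 carries the entire content of the theorem.}

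The paper gives no proof of this statement. It quotes it from \cite[Theorem~1.2]{DZ24}, and in Section~7 it imports the key input separately as \cite[Corollary~5.6]{DZ24}: Ding properness from $\delta_\psi(\{\theta\})>1$ together with $\eta+dd^c\psi\ge0$.

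Your architecture (properness, then minimizer, then Euler--Lagrange, then relative $L^\infty$) is the right one. Steps~2--4 are essentially what the paper does for the moving family in \cref{lem:DZ-uniform-Skoda-on-energy-sublevel} and \cref{prop:DZ-uniform-relative-bound-from-nonworsening-twists}: a uniform Skoda bound, then H\"older, then \cref{thm:relative-Linfty-estimate}. Note that the input $c_\mu[V_\theta]\ge\delta_\psi(\{\theta\})$ behind your Skoda bound is itself a lemma of \cite{DZ24}, not automatic.

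\textbf{First gap: the twist hypothesis is never used.} You never invoke $\eta+dd^c\psi\ge0$. That hypothesis is what produces the destabilizing ray. A non-coercive sequence only controls $D^1_\mu$ at the endpoints $u_k$. To transfer that control to the $d_1$-geodesic segments joining $V$ to $u_k$, and then to the limit ray, you need convexity of $t\mapsto-\log\int_Xe^{-u_t}\,d\mu$ along weak geodesics. This is Berndtsson's theorem for the weight $u_t+\psi-f$, whose curvature $\theta_{u_t}+\eta+dd^c\psi$ is semipositive exactly because of the twist assumption.

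\textbf{Second gap: the $I_\theta$-slope is not given by $S_\theta$ of a single divisor.} The claim ``the slope of $I_\theta$ is computed by $S_\theta(E)$'' fails for a general ray. Multiplier-ideal approximation does not reduce you to a divisorial ray; it produces analytic singularities along many exceptional divisors. Let $\hat r_\tau$ be the Ross--Witt Nystr\"om test curve of the ray $r$, normalized so that $\tau^+=0$. Then the $I_\theta$-slope is $\operatorname{Vol}(\theta)^{-1}\int_{-\infty}^{0}\bigl(\int_X\theta^n_{\hat r_\tau}-\operatorname{Vol}(\theta)\bigr)\,d\tau$, which depends on the whole test curve. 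The quantity $-S_\theta(E)$ is this slope only for the divisorial ray attached to $E$. Similarly, the lct at one critical level only compares $A_X(E)-\nu(\psi,E)$ with a single Lelong number $\nu(\hat r_{\tau_c},E)$, not with the average $S_\theta(E)$.

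What does hold is a one-sided bound for every divisor $E$ and every scale $c>0$. It follows from $\int_X\theta^n_{\hat r_\tau}\le\operatorname{vol}\bigl(\{\pi^*\theta\}-\nu(\hat r_\tau,E)\{E\}\bigr)$:
\[
\lim_{t\to\infty}\frac{I_\theta(r_t)}{t}\le\phi_r(c\,\mathrm{ord}_E)+c\,S_\theta(E),
\qquad
\phi_r(c\,\mathrm{ord}_E):=\sup_{\tau}\bigl(\tau-c\,\nu(\hat r_\tau,E)\bigr).
\]
To conclude $\delta_\psi(\{\theta\})\le1$, this bound needs two further ingredients, neither of which is in your sketch:
\begin{itemize}
\item The valuative formula $\inf_{c>0,\,E}\bigl(c\,(A_X(E)-\nu(\psi,E))+\phi_r(c\,\mathrm{ord}_E)\bigr)$ for the $L$-slope, which is where strong openness enters. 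Combined with the display, it gives $c\,\bigl(A_X(E)-\nu(\psi,E)-S_\theta(E)\bigr)\le\varepsilon$ for nearly optimal pairs $(c,E)$.
\item A lower bound on $c\,S_\theta(E)$ for those pairs, for instance from $S_\theta(E)\ge\tau_\theta(E)/(n+1)$, which follows from log-concavity of transcendental volumes.
\end{itemize}
Without these, ``together these give $A_X(E)-\nu(\psi,E)\le S_\theta(E)$ for some divisor $E$'' is an assertion rather than an argument.
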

	
	\begin{lemma}
		\label{lem:DZ-weighted-TV-nonlinear-weight}
		Let $(X,\omega)$ be a compact K\"ahler manifold and let $\theta$ be a smooth
		closed real $(1,1)$-form whose cohomology class is big. Put $V:=V_\theta$.
		Let $\mu_j,\mu$ be tame measures such that $\|e^{-V}\mu_j-e^{-V}\mu\|_{\mathrm{TV}}\to0$.
		Let $u_j,u\in\operatorname{PSH}(X,\theta)$ be such that $u_j\to u$ in $L^1(X)$ and $V-C\le u_j,u\le V$ for some constant $C>0$. Then
		\[
		e^{-u_j}\mu_j\to e^{-u}\mu
		\quad\text{in total variation}.
		\]
	\end{lemma}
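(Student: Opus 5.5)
The plan is to write
\[
e^{-u_j}\mu_j-e^{-u}\mu=e^{V-u_j}\bigl(e^{-V}\mu_j-e^{-V}\mu\bigr)+\bigl(e^{V-u_j}-e^{V-u}\bigr)e^{-V}\mu ,
\]
which is legitimate because the tame measures do not charge the pluripolar set $\{V=-\infty\}$. It then suffices to show that the total variation of each summand tends to zero.

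The first term is easy. The assumption $V-C\le u_j\le V$ gives $0\le V-u_j\le C$, hence $1\le e^{V-u_j}\le e^C$. Therefore the total variation of the first summand is at most $e^C\|e^{-V}\mu_j-e^{-V}\mu\|_{\rm TV}\to0$.

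For the second term, set $\nu:=e^{-V}\mu$. This is a finite positive measure, being the TV-limit of the finite measures $e^{-V}\mu_j$; one could also argue this directly, since $V$ has minimal singularities. It is non-pluripolar because $\mu$ is absolutely continuous with respect to $\omega^n$. We must show that
\[
\int_X\bigl|e^{V-u_j}-e^{V-u}\bigr|\,d\nu\to0 .
\]
Both $V-u_j$ and $V-u$ lie in $[0,C]$ ($\nu$-a.e.), and the exponential is $e^C$-Lipschitz on $[0,C]$. This gives the pointwise bound
\[
\bigl|e^{V-u_j}-e^{V-u}\bigr|\le e^C\min\{|u_j-u|,C\}\le e^C\max\{C,1\}\,\min\{|u_j-u|,1\}.
\]
Since $u_j,u\in\operatorname{PSH}(X,A\omega)$ for $A$ large (as $\theta\le A\omega$) and $u_j\to u$ in $L^1$, \cref{lem:DoVu-nonpluripolar-measure-convergence} applied to the non-pluripolar finite measure $\nu$ yields $\int_X\min\{|u_j-u|,1\}\,d\nu\to0$, and the second term is done.

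The main point requiring care is checking that $\nu=e^{-V}\mu$ is indeed a finite Radon measure not charging pluripolar sets, so that the Do–Vu lemma applies. Finiteness follows from the TV hypothesis, because $e^{-V}\mu_j$ are assumed to converge in total variation and hence have finite mass eventually. Alternatively, $V$ is bounded below on the complement of an analytic set and tameness gives integrability. Non-pluripolarity is inherited from $\mu\ll\omega^n$.

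A secondary subtlety is that the inequalities $V-C\le u_j\le V$ are only needed $\nu$-a.e., which holds since they hold everywhere off $\{V=-\infty\}$. Combining the two estimates gives the asserted total variation convergence.
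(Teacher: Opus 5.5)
Your proposal is correct and is essentially the paper's proof: the same splitting into $e^{V-u_j}(\sigma_j-\sigma)$ and $(e^{V-u_j}-e^{V-u})\sigma$ with $\sigma_j=e^{-V}\mu_j$, $\sigma=e^{-V}\mu$. The first piece is bounded by $e^C\|\sigma_j-\sigma\|_{\rm TV}$, and the second by the Lipschitz estimate on $[0,C]$ together with the Do--Vu lemma and $|u_j-u|\le C$. One caveat: finiteness of $e^{-V}\mu$ is indeed implicit in the TV hypothesis (the paper simply asserts it), but your alternative justification via minimal singularities and tameness is not valid in general. The function $V_\theta$ can have large generic Lelong numbers along divisors in the negative part of the divisorial Zariski decomposition, so integrability of $e^{-V}$ against $\mu$ needs an extra condition such as $c_\mu[V]>1$, which is what the paper assumes in its subsequent lemma.
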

	
	\begin{proof}
		Set $\sigma_j:=e^{-V}\mu_j,\sigma:=e^{-V}\mu$. By assumption we have $ \|\sigma_j-\sigma\|_{\mathrm{TV}}\to0$. Write
		\[
		e^{-u_j}\mu_j
		=
		e^{V-u_j}\,e^{-V}\mu_j
		=
		g_j\sigma_j,
		\qquad
		g_j:=e^{V-u_j},
		\]
		and similarly
		\[
		e^{-u}\mu
		=
		e^{V-u}\,e^{-V}\mu
		=
		g\sigma,
		\qquad
		g:=e^{V-u}.
		\]
		Since $V-C\le u_j,u\le V$, we have $1\le g_j,g\le e^C$. Hence
		\[
		\begin{aligned}
			\|g_j\sigma_j-g\sigma\|_{\mathrm{TV}}
			&\le
			\|g_j\sigma_j-g_j\sigma\|_{\mathrm{TV}}
			+
			\|g_j\sigma-g\sigma\|_{\mathrm{TV}}       \\
			&\le
			e^C\|\sigma_j-\sigma\|_{\mathrm{TV}}
			+
			\int_X |g_j-g|\,d\sigma .
		\end{aligned}
		\]
		The first term tends to zero. It remains to prove that the second term tends
		to zero.
		
		Since $0\le V-u_j,V-u\le C$, the exponential function is Lipschitz on
		$[0,C]$. Thus there is a constant $A_C>0$ such that
		\[
		|g_j-g|
		=
		|e^{V-u_j}-e^{V-u}|
		\le
		A_C |u_j-u|.
		\]
		The measure $\sigma$ is finite and
		does not charge pluripolar sets. By \cref{lem:DoVu-nonpluripolar-measure-convergence}, the $L^1$ convergence $u_j\to u$ implies
		\[
		\int_X \min\{|u_j-u|,1\}\,d\sigma\to0 .
		\]
		Since $|u_j-u|\le C$, this gives
		\[
		\int_X |u_j-u|\,d\sigma\to0 ,
		\]
		hence $\int_X |g_j-g|\,d\sigma\to0$ and the proof is concluded.
	\end{proof}
	
	We first give a stability theorem under some conditional assumptions, these conditions will be justified later.
	\begin{theorem}
		\label{thm:DZ-conditional-capacity-stability}
		Let $(X,\omega)$, $\theta$, $V:=V_\theta$, and
		$m:=\int_X\theta_V^n>0$ be as above. Let $\mu_j,\mu$ be tame measures satisfying $\|e^{-V}\mu_j-e^{-V}\mu\|_{\mathrm{TV}}\to0$.
		Let $u_j\in\mathcal E(X,\theta,V)$ be normalized by $\sup_Xu_j=0$ and solve (assume its existence)
		\[
		\theta_{u_j}^n
		=
		\nu_j
		:=
		m\frac{e^{-u_j}\mu_j}{\int_X e^{-u_j}\,d\mu_j}.
		\]
		Assume that we have the uniform estimates $V-C\le u_j\le V$ for all $j$, and that
		$u_j\to u$ in $L^1(X)$ for some $u\in PSH(X,\theta)$. Then, $u\in\mathcal E(X,\theta,V)$, $\sup_Xu=0$, and
		\[
		\theta_u^n
		=
		\nu_u
		:=
		m\frac{e^{-u}\mu}{\int_X e^{-u}\,d\mu}.
		\]
		Moreover,
		$$
		\nu_j\to\nu_u \ \text{in total variation,\quad and}\
		u_j\to u \ \text{in } \operatorname{Cap}_\omega .
		$$
	\end{theorem}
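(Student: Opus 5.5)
The argument passes the uniform bounds to the limit, identifies the limiting right-hand side via \cref{lem:DZ-weighted-TV-nonlinear-weight}, and then appeals to the fixed-class capacity stability theorem with $\phi_j=\phi=V$.

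\textbf{Step 1: properties of $u$.} Since $V-C\le u_j\le V$ and $u_j\to u$ in $L^1(X)$, we get $V-C\le u\le V$ almost everywhere, hence everywhere because both sides are quasi-psh. Thus $u$ has minimal singularity type and $u\in\mathcal E(X,\theta,V)$. For the normalization, $\sup_Xu\le 0$ is immediate. For the reverse inequality I would use the Hartogs lemma: the upper semicontinuous regularization $(\limsup_j u_j)^*$ equals $u$, so $\sup_X u\ge\limsup_j\sup_X u_j=0$. This uses the standard fact that $L^1$-limits of quasi-psh functions preserve the supremum, which holds since the $u_j$ are uniformly quasi-psh.

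\textbf{Step 2: convergence of the right-hand sides.} By \cref{lem:DZ-weighted-TV-nonlinear-weight}, with the bounds $V-C\le u_j,u\le V$, we have $e^{-u_j}\mu_j\to e^{-u}\mu$ in total variation. In particular the total masses $\int_Xe^{-u_j}d\mu_j$ converge to $\int_Xe^{-u}d\mu$. This limit is positive, since $e^{-u}\ge e^{-V}\ge1$ and $\mu$ is a nonzero tame measure. It is finite, since $e^{-u}\le e^{C-V}$ and $e^{-V}\mu$ has finite mass, because the hypothesis makes $e^{-V}\mu$ a finite measure. Dividing, $\nu_j\to\nu_u$ in total variation, and each $\nu_j,\nu_u$ has mass $m=\int_X\theta_V^n$.

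\textbf{Step 3: identify the limit equation and conclude.} Solve $\theta_w^n=\nu_u$ with $w\in\mathcal E(X,\theta,V)$ and $\sup_X w=0$; this is possible by \cref{thm:relative-solvability}, since $\nu_u$ is non-pluripolar of mass $m$. Apply \cref{thm:moving-background-capacity-stability} with $\theta_j=\theta$, $\phi_j=\phi=V$ (trivially convergent in capacity), $\mu_j:=\nu_j$ and $\mu:=\nu_u$. The normalized solutions are $u_j$ and $w$, so $u_j\to w$ in $\operatorname{Cap}_\omega$, hence in $L^1$. Therefore $w=u$. This gives $\theta_u^n=\nu_u$ and $u_j\to u$ in capacity simultaneously.

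The main technical point is Step 2: checking that $e^{-V}\mu$ is a finite non-pluripolar measure, which \cref{lem:DZ-weighted-TV-nonlinear-weight} needs, and that the normalizing constants stay bounded away from $0$ and $\infty$. Both follow directly from the TV hypothesis on $e^{-V}\mu_j$ and the two-sided bounds on $u_j$. An alternative to Step 3 is Step 1 of the proof of \cref{thm:moving-background-capacity-stability}, i.e.\ \cite[Lemma 5.10]{DDL25}, which gives $\theta_u^n\ge\nu_u$ and then equality by comparing masses. Invoking the theorem outright yields both conclusions at once.
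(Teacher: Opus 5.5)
Your proof is correct and follows the paper's argument. It uses the same passage of the two-sided bounds and the normalization to $u$, the same use of \cref{lem:DZ-weighted-TV-nonlinear-weight} to get $\nu_j\to\nu_u$ in total variation, and the same final appeal to \cref{thm:moving-background-capacity-stability} with $\theta_j=\theta$ and $\phi_j=\phi=V$. The only difference is how the limit equation is identified. The paper does it directly via \cite[Lemma~5.10]{DDL25}, with the dominating measure $\rho=\nu_u+\sum_j2^{-j}\nu_j$, followed by mass comparison; this is the alternative you mention. You instead solve $\theta_w^n=\nu_u$ via \cref{thm:relative-solvability} and conclude $w=u$ by uniqueness of limits, which is equally valid, since Step~1 of the stability theorem carries out exactly that Lemma~5.10 argument internally.
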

	
	\begin{proof}
		Since $V-C\le u_j\le V$ and $u_j\to u$ in $L^1(X)$, we get $V-C\le u\le V$.
		Thus $u$ has the same singularity type as $V$.  Also $\sup_Xu=0$ by the Hartogs lemma and the normalization
		$\sup_Xu_j=0$. By \Cref{lem:DZ-weighted-TV-nonlinear-weight},
		\[
		e^{-u_j}\mu_j
		\longrightarrow
		e^{-u}\mu
		\quad\text{in total variation}.
		\]
		In particular
		\[
		Z_j:=\int_X e^{-u_j}\,d\mu_j\to
		Z_u:=\int_X e^{-u}\,d\mu.
		\]
		Since $Z_u>0$, we obtain
		\[
		\nu_j
		=
		m\frac{e^{-u_j}\mu_j}{Z_j}
		\longrightarrow
		m\frac{e^{-u}\mu}{Z_u}
		=
		\nu_u
		\quad\text{in total variation}.
		\]
		It remains to check that $u$ really solves the limiting equation.
		Define a finite positive non-pluripolar measure
		\[
		\rho:=\nu_u+\sum_{j=1}^{\infty}2^{-j}\nu_j .
		\]
		Then there are bounded non-negative functions $F_j,F$ with
		\[
		\nu_j=F_j\rho,
		\qquad
		\nu_u=F\rho .
		\]
		The total variation convergence $\nu_j\to\nu_u$ gives $F_j\to F$ in $L^1(X,\rho)$.
		Since $\theta_{u_j}^n=\nu_j=F_j\rho$
		and $u_j\to u$ in $L^1(X)$, \cite[Lemma 5.10]{DDL25} gives $ \theta_u^n\ge F\rho=\nu_u$.
		Since both sides have total mass $m$, we get 
		\[
		\theta_u^n=\nu_u
		=
		m\frac{e^{-u}\mu}{\int_Xe^{-u}\,d\mu}.
		\]
		Now apply \cref{thm:moving-background-capacity-stability} with
		\[
		\theta_j=\theta,\qquad
		\phi_j=\phi=V_\theta,
		\qquad
		\nu_j\to\nu_u
		\quad\text{in total variation}.
		\]
		It gives $u_j\to u$ in $\operatorname{Cap}_\omega$.
	\end{proof}
	
	We first record a simple consequence of capacity convergence and the integrability estimate, which shows that under some clean conditions of $f_j$ and $\psi_j$, the strong measure convergence assumptions $e^{-V}\mu_j\to e^{-V}\mu$ in \cref{thm:DZ-conditional-capacity-stability} could be fulfilled.
	\begin{lemma}
		\label{lem:DZ-weighted-TV-from-twist-convergence}
		Let $   \mu_j=e^{f_j-\psi_j}\omega^n,
		\mu=e^{f-\psi}\omega^n$ be tame measures
		and put $V:=V_\theta$. Assume that $f_j\to f$ uniformly, that
		$\psi_j\to\psi$ in $\operatorname{Cap}_\omega$, and that there is a uniform constant
		$A>0$ such that $\psi_j\ge\psi-A$.
		Assume moreover that $ c_\mu[V]>1$. Then,
		\[
		e^{-V+f_j-\psi_j}
		\to
		e^{-V+f-\psi}
		\quad\text{in }L^1(X,\omega^n).
		\]
	\end{lemma}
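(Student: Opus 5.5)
The plan is to prove $L^1$ convergence by Vitali's theorem: convergence in $\omega^n$-measure together with uniform integrability in $L^1(X,\omega^n)$. Write
\[
F_j:=e^{-V+f_j-\psi_j},\qquad F:=e^{-V+f-\psi}.
\]

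\textbf{Step 1: convergence in measure.} Since $f_j\to f$ uniformly and $\psi_j\to\psi$ in $\operatorname{Cap}_\omega$, we get $\psi_j\to\psi$ in $\omega^n$-measure, because $\omega^n(E)\le \operatorname{Cap}_\omega(E)$ (take $\rho=0$ in the definition of capacity). On the set $\{\psi>-\infty,\ V>-\infty\}$, which has full $\omega^n$-measure, the exponential is continuous. Hence $F_j\to F$ in $\omega^n$-measure, and along any subsequence a further subsequence converges a.e.

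\textbf{Step 2: domination.} The hypothesis $\psi_j\ge\psi-A$ gives $-\psi_j\le -\psi+A$. Since $f_j\to f$ uniformly, we also have $f_j\le f+1$ for large $j$. Therefore
\[
0\le F_j\le e^{A+1}e^{-V+f-\psi}=e^{A+1}F.
\]

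\textbf{Step 3: integrability of the dominating function.} It remains to check that $F\in L^1(X,\omega^n)$, i.e.\ $\int_X e^{-V}\,d\mu<+\infty$. This is exactly where $c_\mu[V]>1$ enters. By definition of the complex singularity exponent there is some $\lambda>1$ with $\int_X e^{-\lambda V}\,d\mu<+\infty$. Since $V\le0$ and $\mu$ is finite (it is tame), we have $e^{-V}\le e^{-\lambda V}$, so $\int_X e^{-V}\,d\mu<\infty$.

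\textbf{Conclusion.} Dominated convergence, applied along a.e.-convergent subsequences, gives $F_j\to F$ in $L^1(X,\omega^n)$ for every subsequence, and hence for the full sequence.

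The only delicate point is Step 3, namely that the strict inequality $c_\mu[V]>1$ is what yields integrability at exponent exactly $1$. Everything else is routine. Note also that the lower bound $\psi_j\ge\psi-A$ is what makes the domination uniform in $j$; without it one would need a uniform-integrability argument instead, for instance Hölder's inequality combined with a uniform Skoda-type bound.
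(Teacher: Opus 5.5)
Your proposal is correct and follows the paper's proof essentially step for step. Both arguments obtain convergence in $\omega^n$-measure from capacity convergence, get the uniform domination $F_j\le e^{A+B}F$ from $\psi_j\ge\psi-A$ and $f_j\to f$ uniformly, deduce $\int_X e^{-V}\,d\mu<+\infty$ from $c_\mu[V]>1$ and $V\le 0$, and conclude by dominated convergence along a.e.-convergent subsequences.
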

	
	\begin{proof}
		Since \(\psi_j\to\psi\) in \(\operatorname{Cap}_\omega\), we have
		\(\psi_j\to\psi\) in \(\omega^n\)-measure. Together with \(f_j\to f\)
		uniformly, this gives
		\[
		-V+f_j-\psi_j
		\to
		-V+f-\psi
		\]
		in \(\omega^n\)-measure, since \(V,\psi,\psi_j\) are finite outside
		pluripolar sets. Hence, by continuity of the exponential function,
		\[
		e^{-V+f_j-\psi_j}
		\to
		e^{-V+f-\psi}
		\]
		in \(\omega^n\)-measure.
		
		It remains to prove uniform integrability. Since \(f_j\to f\) uniformly,
		there exists \(B>0\) such that \(f_j\le f+B\) for all \(j\). Using
		\(\psi_j\ge\psi-A\), we obtain
		\[
		e^{-V+f_j-\psi_j}
		\le
		e^{A+B}e^{-V+f-\psi}
		=
		e^{A+B}e^{-V}\frac{d\mu}{d\omega^n}.
		\]
		The right-hand side belongs to \(L^1(X,\omega^n)\), because
		\(c_\mu[V]>1\) implies $\int_X e^{-V}\,d\mu<+\infty$.
		Therefore the sequence \(e^{-V+f_j-\psi_j}\) is uniformly dominated by an
		\(L^1(\omega^n)\)-function.
		
		Convergence in measure together with this \(L^1\)-domination implies
		convergence in \(L^1\). Indeed, every subsequence admits an a.e. convergent
		subsequence, and the dominated convergence theorem applies to that
		subsequence. Thus
		\[
		e^{-V+f_j-\psi_j}
		\to
		e^{-V+f-\psi}
		\quad\text{in }L^1(X,\omega^n).
		\]
	\end{proof}
	
	Thanks to Guan-Zhou's strong openness theorem \cite{GZ15-1} and Guan-Li-Zhou's stability of multiplier ideal sheaves \cite{GLZ22}, we can derive a uniform Skoda estimate on a energy-bounded set: 
	\begin{lemma}
		\label{lem:DZ-uniform-Skoda-on-energy-sublevel}
		Let $\mu=e^{f-\psi}\omega^n$ be a tame measure and assume that $ c_\mu[V_\theta]>1$.
		For $M>0$, set
		\[
		\mathcal K_M
		:=
		\left\{
		w\in\mathcal E^1(X,\theta):
		\sup_Xw=0,\ 
		-M\le I_\theta(w)\le 0
		\right\}.
		\]
		Then there exist $q>1$ and $C_M>0$ such that
		\[
		\sup_{w\in\mathcal K_M}
		\int_X e^{-q w}\,d\mu
		\le C_M .
		\]
	\end{lemma}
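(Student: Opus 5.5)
I would prove the uniform Skoda estimate by contradiction, combining compactness of $\mathcal K_M$ in $L^1$ with strong openness and the Guan--Li--Zhou stability of multiplier ideals. First I record a single-potential integrability statement. Since $c_\mu[V_\theta]>1$, there is $q_0>1$ with $\int_X e^{-q_0V_\theta}d\mu<+\infty$. Every $w\in\mathcal K_M$ lies in $\mathcal E^1(X,\theta)$, hence has zero Lelong numbers on all birational models relative to $V_\theta$ (full mass potentials have the same Lelong numbers as $V_\theta$, cf.\ \cite{DDL18b}). Consequently $c_\mu[w]=c_\mu[V_\theta]$: the valuative criterion for $c_\mu$ only depends on the generic Lelong numbers $\nu(w,E)=\nu(V_\theta,E)$ and on $A_{f,\psi}(E)$. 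So each individual $w$ satisfies $\int_Xe^{-qw}d\mu<\infty$ for every $q<c_\mu[V_\theta]$, in particular for $q=q_1:=(1+c_\mu[V_\theta])/2\wedge(1+q_0)/2>1$. The problem is uniformity in $w$.

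Next I use compactness. The set $\mathcal K_M$ is compact in $L^1(X)$. This follows from upper semicontinuity of $I_\theta$ along $L^1$-limits and the Hartogs normalization $\sup_X=0$ being preserved, as in the Darvas--Zhang setting \cite{DZ24}. Suppose no pair $(q,C_M)$ works. Then for $q_k\downarrow1$ there are $w_k\in\mathcal K_M$ with $\int_Xe^{-q_kw_k}d\mu\ge k$. Passing to a subsequence, $w_k\to w\in\mathcal K_M$ in $L^1$.

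Now I would write $e^{-q w_k}d\mu=e^{f}e^{-(qw_k+\psi)}\omega^n$ and apply the effective stability of multiplier ideal sheaves of Guan--Li--Zhou \cite{GLZ22}, which rests on strong openness \cite{GZ15-1}. Since $e^{-(q_1w+\psi)}\in L^1$ by the previous paragraph, strong openness gives some $p>1$ with $e^{-p(q_1w+\psi)}\in L^1$. The GLZ stability theorem then asserts that for quasi-psh $\varphi_k\to\varphi$ in $L^1$ with a uniform upper bound and $e^{-\varphi}\in L^1$, one has $\int e^{-\varphi_k}\to\int e^{-\varphi}$, in particular a uniform bound on a fixed neighbourhood cover of $X$. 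I apply this with $\varphi_k:=q_kw_k+\psi$, which converge to $w+\psi$, and $e^{-(w+\psi)}$ is integrable with room to spare. This gives $\sup_k\int_Xe^{-q_kw_k}d\mu<\infty$, a contradiction.

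The main obstacle is that GLZ is local and requires integrability of the limit with a strictly larger exponent, together with upper bounds on $\varphi_k$. The upper bound is automatic since $w_k\le0$ and $\psi$ is fixed. The needed gap comes from $c_\mu[w]=c_\mu[V_\theta]>1$. That identity is the delicate point: it requires that full mass potentials have the same valuative Lelong numbers as $V_\theta$. If I wanted to avoid this, I would instead use $w_k\ge P_\theta(w_k)$-type comparison with energy, as in Darvas--Zhang's Skoda estimate \cite{DZ24}, to get $\nu(w,E)=\nu(V_\theta,E)$ for finite-energy $w$. A finite cover of $X$ by coordinate balls then turns the local GLZ bound into the global one.
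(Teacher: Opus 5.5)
Your proposal is correct and follows the paper's own argument: argue by contradiction, use $L^1$-compactness of $\mathcal K_M$, use the identity $c_\mu[w]=c_\mu[V_\theta]$ for the full-mass limit (the paper quotes Darvas--Zhang's Proposition 2.5, which packages the same Lelong-number/strong-openness fact you invoke), and conclude with Guan--Li--Zhou stability. The only difference is cosmetic: the paper fixes an arbitrary $q\in(1,c_\mu[V_\theta])$ in advance, so it gets the bound for every such $q$, while your choice $q_k\downarrow 1$ only yields some $q>1$, which is exactly what the statement asks for.
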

	
	\begin{proof}
		Choose $q$ such that $ 1<q<c_\mu[V_\theta]$. We argue by contradiction. Suppose that there are
		$w_k\in\mathcal K_M$ such that
		\[
		\int_X e^{-q w_k}\,d\mu\to+\infty .
		\]
		The set $\mathcal K_M$ is compact in the $L^1$ topology. Hence, after passing
		to a subsequence, we may assume that
		\[
		w_k\to w
		\quad\text{in }L^1(X)
		\]
		for some $w\in\mathcal K_M$. Indeed, by the proof of \cite[Proposition 5.1]{DDL25} we get $\varphi_j=(\sup_{k\ge j}w_k)^*$ decreases to $w$ and $\varphi_j\in\mathcal K_M$. Then using \cite[Lemma 5.7]{DDL25} and the fact that $-M\le I_\theta(\varphi_k)\le0$, we conclude that $w\in\mathcal{E}^1(X,\theta)$. Since $w\in\mathcal E^1(X,\theta)\subset\mathcal E(X,\theta)$, \cite[Proposition 2.5]{DZ24}, which essentially comes from Guan-Zhou's strong openness theorem \cite{GZ15-1,GZ15-2}, yields that $c_\mu[w]=c_\mu[V_\theta]$, and thus $ q<c_\mu[w]$. Choose $q'$ such that $ q<q_1<c_\mu[w]$, then
		\begin{equation}\label{eq:q_1w}
			\int_X e^{-q_1w}\,d\mu<+\infty
		\end{equation}
		Then by Guan-Li-Zhou's stability of multiplier ideal sheaves \cite[Theorem 1.1]{GLZ22} (see also Xiao-Zhou \cite{XZ26}), we have 
		\[
		\sup_{k\gg1}\int_X e^{-q w_k}\,d\mu<+\infty .
		\]
		This contradicts the choice of \(w_k\).
	\end{proof}
	
	Under similar conditions, we prove the uniform estimates of solutions $u_j$:
	\begin{proposition}
		\label{prop:DZ-uniform-relative-bound-from-nonworsening-twists}
		Let $(X,\omega)$ be compact K\"ahler, and let $\theta$ be a smooth closed
		real $(1,1)$-form whose cohomology class is big. Put $  V:=V_\theta,
		m:=\int_X\theta_V^n>0$ and let $\mu_j=e^{f_j-\psi_j}\omega^n,
		\mu=e^{f-\psi}\omega^n$ be tames measures with $\delta_\psi(\{\theta\})>1$. Let $\eta,\eta_j$ be smooth closed
		real $(1,1)$-forms such that $c_1(X)=\{\theta\}+\{\eta_j\}$, $c_1(X)=\{\theta\}+\{\eta\}$ and
		\[\operatorname{Ric}(\omega)=\theta+\eta+dd^cf,\quad\operatorname{Ric}(\omega)=\theta+\eta_j+dd^cf_j,\quad
		\eta+dd^c\psi\ge0,
		\quad \eta_j+dd^c\psi_j\ge0,\quad\forall j\ge0.
		\]
		Assume that $f_j\to f$ uniformly, that
		$\psi_j\to\psi$ in $\operatorname{Cap}_\omega$, and that there is a uniform constant
		$A>0$ such that $\psi_j\ge\psi-A$. Let $u_j\in\mathcal E(X,\theta,V)$ be normalized solutions of
		\[
		\sup_Xu_j=0,
		\qquad
		\theta_{u_j}^n
		=
		m\frac{e^{-u_j}\mu_j}{\int_Xe^{-u_j}\,d\mu_j}.
		\]
		Then there exists a constant $C>0$, independent of $j$, such that
		\[
		V-C\le u_j\le V .
		\]
	\end{proposition}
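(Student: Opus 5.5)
The plan is to reduce the uniform bound $V-C\le u_j\le V$ to a uniform $L^p$ bound, $p>1$, on the densities of the right-hand sides, and then invoke \Cref{cor:relative-Linfty-estimate-moving} with $\theta_j=\theta$ and $\phi_j=V$. The upper bound $u_j\le V$ is automatic from $\sup_Xu_j=0$. Writing $\theta_{u_j}^n=g_j\omega^n$ with
\[
g_j=m\,\frac{e^{-u_j+f_j-\psi_j}}{Z_j},\qquad Z_j:=\int_Xe^{-u_j}\,d\mu_j,
\]
it suffices to prove two things: that $Z_j$ is bounded below uniformly, and that $\sup_j\|e^{-u_j}e^{f_j-\psi_j}\|_{L^p(\omega^n)}<\infty$ for some $p>1$.

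The lower bound on $Z_j$ is elementary. Since $u_j\le0$, we have $Z_j\ge\int_Xe^{f_j-\psi_j}\omega^n$. By uniform convergence of $f_j$ and the fact that $\psi_j\to\psi$ in capacity (hence in measure), Fatou's lemma gives
\[
\liminf_j\int_Xe^{f_j-\psi_j}\omega^n\ge\int_Xe^{f-\psi}\omega^n>0 .
\]

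For the $L^p$ bound, Hölder's inequality reduces matters to uniform bounds on $\int_Xe^{-rpu_j}\,\omega^n$ and on $\int_Xe^{-r'p\psi_j}\,\omega^n$. For the second, $\psi_j\ge\psi-A$ reduces it to $e^{-s\psi}\in L^1$ for some $s>1$, which follows from strong openness since $e^{-\psi}\in L^1$ (tameness). For the first, I would like to place $u_j$ in an energy sublevel set $\mathcal K_M$ and use \Cref{lem:DZ-uniform-Skoda-on-energy-sublevel}, or simply Skoda-type uniform integrability for normalized $\theta$-psh functions with arbitrarily small exponent. The catch is that we need the exponent $rp$ close to $1$, not small, so Skoda's uniform estimate alone does not suffice. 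One needs $I_\theta(u_j)\ge-M$.

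The uniform energy bound is the main obstacle. I would obtain it from uniform Ding properness, as in Darvas--Zhang. The condition $\delta_\psi(\{\theta\})>1$ gives coercivity of $D^\lambda_\mu$ for some $\lambda>1$ close to $1$. The hypotheses $\psi_j\ge\psi-A$ and $f_j\le f+B$ give $\mu_j\le e^{A+B}\mu$, hence $L^\lambda_{\mu_j}\ge L^\lambda_\mu-(A+B)/\lambda$, so $D^\lambda_{\mu_j}\ge\varepsilon(-I_\theta)-C'$ uniformly in $j$. Since $u_j$ is a Kähler--Einstein-type solution, it minimizes the ($\lambda=1$) Ding functional $D^1_{\mu_j}$, and comparing with $V$ gives
\[
D^1_{\mu_j}(u_j)\le D^1_{\mu_j}(V)\le C ,
\]
where the last bound uses $\mu_j(X)\ge c>0$ and $\int e^{-V}d\mu_j\le e^{A+B}\int e^{-V}d\mu<\infty$. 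Passing from $D^1$ to $D^\lambda$ is done by Hölder interpolation, $L^1\ge L^\lambda$ up to constants; more precisely $L^1_{\mu_j}(u)\ge L^\lambda_{\mu_j}(u)-C$, which follows from $\mu_j(X)$ being bounded. This yields $D^\lambda_{\mu_j}(u_j)\le C$ and hence $-I_\theta(u_j)\le M$. Then \Cref{lem:DZ-uniform-Skoda-on-energy-sublevel}, applied to $\mu$ using $\mu_j\le e^{A+B}\mu$ and with exponent $q$, combined with Hölder, gives $g_j\in L^p$ uniformly for some $p>1$. \Cref{cor:relative-Linfty-estimate-moving} then concludes. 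If the minimizing property of $u_j$ is unavailable for the merely tame $\mu_j$, I would instead rely on the variational characterization in \cite{DZ24}, which constructs solutions as Ding minimizers, together with uniqueness up to constants.
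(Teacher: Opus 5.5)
Your overall structure matches the paper's. You bound $\mu_j\le e^{A+B}\mu$, deduce uniform Ding properness, compare with $V$ to get $I_\theta(u_j)\ge -M$, then apply \Cref{lem:DZ-uniform-Skoda-on-energy-sublevel}, a H\"older step, and the relative $L^\infty$ estimate. A few side remarks:
\begin{itemize}
\item Your Fatou argument for $\inf_j Z_j>0$ is a fine substitute for the paper's total-variation argument.
\item The detour through $D^\lambda$ with $\lambda>1$ is harmless but unnecessary. Properness of the $\lambda=1$ functional, \cite[Corollary~5.6]{DZ24}, is all the paper uses.
\item To invoke the Skoda lemma at all you should record $c_\mu[V]\ge\delta_\psi(\{\theta\})>1$, from \cite[Lemma~4.3]{DZ24}.
\end{itemize}

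The step that fails as written is your H\"older reduction. Write $h_j=e^{f_j-\psi_j}$. Splitting $\int_X e^{-pu_j}h_j^p\,\omega^n$ against $\omega^n$ into $\int_X e^{-rpu_j}\omega^n$ and $\int_X e^{-r'p\psi_j}\omega^n$, with $1/r+1/r'=1$, forces $\max(r,r')\ge 2$. So you cannot have both ``$r'p=s>1$ from strong openness'' and ``$rp$ close to $1$''. With your choice $r'p=s$, which is only slightly above $1$, you get $rp=sp/(s-p)$, which is large.

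Nothing in the hypotheses controls such an exponent. The Skoda lemma only handles $q<c_\mu[V]$, and $c_\mu[V]$ is merely known to exceed $1$. In fact, in a big non-nef class $V$ can have positive generic Lelong number $\nu$ along a divisor. Since $u_j\le V$, one then has $\int_X e^{-\lambda u_j}\omega^n\ge\int_X e^{-\lambda V}\omega^n=+\infty$ once $\lambda\nu\ge 1$. So your first factor can genuinely be infinite.

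The missing idea is to keep $e^{-u_j}$ integrated against $\mu_j$, which already carries the weight $e^{-\psi_j}$, and to split off only a small power of the density:
\[
\int_X(e^{-u_j}h_j)^p\,\omega^n=\int_X e^{-pu_j}h_j^{p-1}\,d\mu_j\le\Bigl(\int_X e^{-pau_j}\,d\mu_j\Bigr)^{1/a}\Bigl(\int_X h_j^{1+(p-1)b}\,\omega^n\Bigr)^{1/b}.
\]
Here $b=a/(a-1)$, $pa<q$, and $1+(p-1)b<r$, where $h=e^{f-\psi}\in L^r(\omega^n)$ for some $r>1$ by strong openness.
\begin{itemize}
\item The first factor is controlled by $e^{A+B}\sup_{\mathcal K_M}\int_X e^{-qw}\,d\mu$, using $u_j\le 0$ and $\mu_j\le e^{A+B}\mu$.
\item The second factor is controlled by $h_j\le e^{A+B}h$.
\end{itemize}
This is exactly the paper's argument. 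Your closing sentence ("applied to $\mu$ \dots combined with H\"older") points toward it, but the reduction you actually wrote down does not close.
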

	
	\begin{proof}
		First, since $\psi_j\ge\psi-A$, it is clear from definitions that $\delta_{\psi_j}(\{\theta\})
		\ge
		\delta_\psi(\{\theta\})
		>1 $.
		\cref{thm:DZ-existence} then gives the existence of $u_j\in\mathcal E(X,\theta,V)$.
		
		We next prove a uniform Ding energy bound. Let
		\[
		\mathcal D_j(w)
		:=
		-\log\int_X e^{-w}\,d\mu_j
		-
		I_\theta(w),
		\qquad
		\sup_Xw=0,
		\]
		and let
		\[
		\mathcal D(w)
		:=
		-\log\int_X e^{-w}\,d\mu
		-
		I_\theta(w).
		\]
		Since $\delta_\psi(\{\theta\})>1$ and $\eta+dd^c\psi\ge0$, \cite[Corollary 5.6]{DZ24} gives constants $a,b>0$ such that
		\[
		\mathcal D(w)\ge a(\sup_Xw-I_\theta(w))-b
		\quad
		\text{for all }w\in\mathcal E^1(X,\theta).
		\]
		The assumptions $f_j\to f$ uniformly and $\psi_j\ge\psi-A$ imply that there
		is a uniform constant $B>0$ such that $\mu_j\le B\mu$
		for all $j$. Hence, for every $w$,
		\[
		\int_X e^{-w}\,d\mu_j
		\le
		B\int_Xe^{-w}\,d\mu.
		\]
		Therefore
		\[
		\begin{aligned}
			\mathcal D_j(w)
			&=
			-\log\int_Xe^{-w}\,d\mu_j
			-
			I_\theta(w)                                      \\
			&\ge
			-\log\int_Xe^{-w}\,d\mu
			-
			I_\theta(w)
			-
			\log B                                           \\
			&=
			\mathcal D(w)-\log B                             \\
			&\ge
			a(\sup_Xw-I_\theta(w))-b-\log B .
		\end{aligned}
		\]
		Thus the functionals $\mathcal D_j$ are uniformly proper.
		
		Since $u_j$ minimizes $\mathcal D_j$, we have $\mathcal D_j(u_j)\le\mathcal D_j(V)$.
		By \Cref{lem:DZ-weighted-TV-from-twist-convergence},
		\[
		e^{-V}\mu_j\to e^{-V}\mu
		\quad\text{in total variation}.
		\]
		In particular, $\int_Xe^{-V}\,d\mu_j
		\to
		\int_Xe^{-V}\,d\mu>0$.
		Thus we have $ \sup_j\mathcal D_j(V)<+\infty$.
		The uniform properness therefore gives a constant $M>0$ such that
		\[
		I_\theta(u_j)\ge -M
		\quad\text{for all }j.
		\]
		Hence
		\[
		u_j\in\mathcal K_M
		:=
		\left\{
		w\in\mathcal E^1(X,\theta):
		\sup_Xw=0,\ 
		I_\theta(w)\ge -M
		\right\}.
		\]
		We are now going to derive a uniform $L^p$ bound for the normalized right hand side measures, in order to use the relative $L^\infty$-estimate.
		
		By \cite[Lemma 4.3]{DZ24}, $c_\mu[V]\ge\delta_\psi(\{\theta\})>1$. This combined with $\mu_j\le B\mu$ and \Cref{lem:DZ-uniform-Skoda-on-energy-sublevel} gives that
		\[
		\sup_j\int_Xe^{-q u_j}\,d\mu_j<+\infty .
		\]
		
		Write $h_j:=\frac{d\mu_j}{d\omega^n}=e^{f_j-\psi_j}$ and $h:=\frac{d\mu}{d\omega^n}=e^{f-\psi}$.
		Because $\mu$ is tame, Guan-Zhou strong openness \cite{GZ15-1} gives $r>1$ such that $h\in L^r(X,\omega^n)$.
		Since $\mu_j\le B\mu$, we have $h_j\le Bh$, and consequently
		\[
		\sup_j\|h_j\|_{L^r(X,\omega^n)}<+\infty .
		\]
		Choose $p>1$ sufficiently close to $1$ and $a>1$ such that, with
		$b:=a/(a-1)$,
		\[
		pa<q,
		\qquad
		1+(p-1)b<r.
		\]
		Then H\"older's inequality gives
		\[
		\begin{aligned}
			\int_X(e^{-u_j}h_j)^p\omega^n
			&=
			\int_Xe^{-pu_j}h_j^{p-1}\,d\mu_j                                      \\
			&\le
			\left(\int_Xe^{-pa u_j}\,d\mu_j\right)^{1/a}
			\left(\int_Xh_j^{(p-1)b}\,d\mu_j\right)^{1/b}                           \\
			&=
			\left(\int_Xe^{-pa u_j}\,d\mu_j\right)^{1/a}
			\left(\int_Xh_j^{1+(p-1)b}\omega^n\right)^{1/b}.
		\end{aligned}
		\]
		Since $pa<q$ and $u_j\le0$, the first factor is uniformly bounded by $\sup_j\int_Xe^{-q u_j}\,d\mu_j<+\infty$.
		The second factor is uniformly bounded by the uniform $L^r$ bound for $h_j$.
		Hence
		\[
		\sup_j\|e^{-u_j}h_j\|_{L^p(X,\omega^n)}<+\infty .
		\]
		Let \(Z_j:=\int_Xe^{-u_j}\,d\mu_j\). Since $e^{-V}\mu_j\to e^{-V}\mu$ in total variation and \(0\le e^V\le1\), we have
		\[
		\mu_j(X)
		=
		\int_X e^V\,d(e^{-V}\mu_j)
		\longrightarrow
		\int_X e^V\,d(e^{-V}\mu)
		=
		\mu(X)>0.
		\]
		As \(u_j\le0\), it follows that there is a uniform constant $c_0$ such that $Z_j\ge \mu_j(X)\ge c_0>0$
		for all large \(j\). Therefore the normalized
		right hand side density $F_j:=
		m\frac{e^{-u_j}h_j}{Z_j}$ satisfies
		\[
		\sup_j\|F_j\|_{L^p(X,\omega^n)}<+\infty .
		\]
		The equation is
		\[
		\theta_{u_j}^n=F_j\omega^n,
		\qquad
		u_j\in\mathcal E(X,\theta,V),
		\qquad
		\sup_Xu_j=0.
		\]
		By the relative $L^\infty$ estimate \cref{thm:relative-Linfty-estimate}, there is a constant $C>0$, independent of $j$,
		such that
		\[
		V-C\le u_j\le V .
		\]
		The upper bound follows also from $\sup_Xu_j=0$ and the definition of
		$V_\theta$.
	\end{proof}
	As an immediate corollary of \cref{thm:DZ-conditional-capacity-stability}, \cref{lem:DZ-weighted-TV-from-twist-convergence} and \cref{prop:DZ-uniform-relative-bound-from-nonworsening-twists}, we obtain:
	
	\begin{corollary}[=\cref{intro:stability-of-Darvas-Zhang}]
		Let $(X,\omega)$ be compact K\"ahler, and let $\theta$ be a smooth closed
		real $(1,1)$-form whose cohomology class is big. Put $  V:=V_\theta,
		m:=\int_X\theta_V^n>0$ and let $\mu_j=e^{f_j-\psi_j}\omega^n,
		\mu=e^{f-\psi}\omega^n$ be tame measures with $\delta_\psi(\{\theta\})>1$. Let $\eta,\eta_j$ be smooth closed
		real $(1,1)$-forms such that $c_1(X)=\{\theta\}+\{\eta_j\}$, $c_1(X)=\{\theta\}+\{\eta\}$ and
		\[\operatorname{Ric}(\omega)=\theta+\eta+dd^cf,\quad\operatorname{Ric}(\omega)=\theta+\eta_j+dd^cf_j,\quad
		\eta+dd^c\psi\ge0,
		\quad \eta_j+dd^c\psi_j\ge0,\quad\forall j\ge0.
		\]
		Assume that $f_j\to f$ uniformly, that
		$\psi_j\to\psi$ in $\operatorname{Cap}_\omega$, and that there is a uniform constant
		$A>0$ such that $\psi_j\ge\psi-A$. Let $u_j\in\mathcal E(X,\theta,V)$ be normalized solutions of
		\[
		\sup_Xu_j=0,
		\qquad
		\theta_{u_j}^n
		=
		m\frac{e^{-u_j}\mu_j}{\int_Xe^{-u_j}\,d\mu_j}.
		\]
		Then every subsequence of $u_j$ has a further subsequence converging in capacity to some $u\in\mathcal{E}(X,\theta)$ such that
		\[
		\theta_u^n
		=
		m\frac{e^{-u}\mu}{\int_X e^{-u}\,d\mu}.
		\]
	\end{corollary}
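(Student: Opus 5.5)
The plan is to combine the three results stated just before this corollary, working along an arbitrary subsequence. First, \cref{prop:DZ-uniform-relative-bound-from-nonworsening-twists} applies directly under the hypotheses of the corollary. It gives the existence of the normalized solutions $u_j\in\mathcal E(X,\theta,V)$, since $\delta_{\psi_j}(\{\theta\})\ge\delta_\psi(\{\theta\})>1$. It also gives a constant $C>0$, independent of $j$, with $V-C\le u_j\le V$.

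Second, since $\sup_Xu_j=0$ and all $u_j$ are $\theta$-psh, the standard compactness of normalized quasi-psh functions lets us extract, from any given subsequence, a further subsequence with $u_j\to u$ in $L^1(X)$ for some $u\in\operatorname{PSH}(X,\theta)$.

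Third, we verify the measure hypothesis of \cref{thm:DZ-conditional-capacity-stability}, namely $\|e^{-V}\mu_j-e^{-V}\mu\|_{\rm TV}\to0$. By \cref{lem:DZ-weighted-TV-from-twist-convergence}, this reduces to $e^{-V+f_j-\psi_j}\to e^{-V+f-\psi}$ in $L^1(X,\omega^n)$, since total variation of absolutely continuous measures equals the $L^1$ norm of their densities. That lemma needs $c_\mu[V]>1$, and this holds by \cite[Lemma 4.3]{DZ24}, which gives $c_\mu[V]\ge\delta_\psi(\{\theta\})>1$, exactly as used in the proof of the proposition.

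With the uniform bound $V-C\le u_j\le V$, the $L^1$ convergence $u_j\to u$, and the total variation convergence in hand, \cref{thm:DZ-conditional-capacity-stability} applies along the subsequence. It yields $u\in\mathcal E(X,\theta,V)=\mathcal E(X,\theta)$ with $\sup_Xu=0$, the limiting equation $\theta_u^n=m\,e^{-u}\mu/\int_Xe^{-u}\,d\mu$, and $u_j\to u$ in $\operatorname{Cap}_\omega$.

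There is no real obstacle at this stage: the substantive work is the uniform $L^\infty$ bound of the proposition, which rests on uniform Ding properness and the Skoda estimate of \cref{lem:DZ-uniform-Skoda-on-energy-sublevel}. The only points to check are the hypothesis $c_\mu[V]>1$ and that the identification of $L^1$ convergence of densities with total variation convergence is legitimate. One should also note that the limit $u$ may depend on the subsequence, since no uniqueness is asserted for the limiting equation. This is why the conclusion is stated only up to subsequences.
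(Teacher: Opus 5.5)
Your proposal is correct and follows the paper's proof: the paper also obtains this corollary by combining four ingredients. These are the uniform bound $V-C\le u_j\le V$ from the preceding proposition, $L^1$-compactness along subsequences, the convergence $e^{-V}\mu_j\to e^{-V}\mu$ in total variation from the preceding lemma (with $c_\mu[V]\ge\delta_\psi(\{\theta\})>1$ by Lemma 4.3 of Darvas--Zhang), and the conditional capacity stability theorem; your explicit checks of $c_\mu[V]>1$ and of total variation as the $L^1(\omega^n)$-norm of the density difference fill in details the paper leaves implicit.
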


	\section{Quantization of partial equilibrium measures}
	
	In this final section we show that the capacity topology also governs envelopes
	defined by multiplier ideal constraints. We apply the stability method developed
	above to partial \(I\)-equilibrium envelopes in the sense of Darvas-Xia \cite{DX22,DX24}. Although
	these envelopes are defined through multiplier ideal conditions rather than
	ordinary singularity type inequalities, the same capacity mechanism applies
	after passing to the associated \(I\)-model projection. Thus moving multiplier
	ideals, and in particular moving algebraic vanishing conditions, give rise to
	partial equilibrium envelopes which vary continuously in capacity. As a result, we can give a moving quantization of partial equilibrium measures.
	
	We will use the following   uniform relative boundedness property for partial \(I\)-equilibrium envelopes. The proof is essentially contained in \cite[Lemma 5.2]{DX24}.
	\begin{lemma}
		\label{lem:uniform-bounds-partial-I-envelopes}
		Let $K\subset X$ be compact and non-pluripolar, and let
		$v_j,v\in C^0(K)$ satisfy $v_j\to v$ uniformly. Let $u_j,u\in\operatorname{PSH}(X,\theta)$ and put
		\[
		\Phi_j:=P[u_j]_I,\qquad \Phi:=P[u]_I ,\qquad W_j:=P_K[u_j]_I(v_j).
		\qquad
		W:=P_K[u]_I(v).
		\]
		Assume that $\sup_X\Phi_j=\sup_X\Phi=0$ and $\Phi_j\to\Phi$ in capacity. Then there exists a constant $A>0$, independent of $j$, such that
		\[
		\Phi_j-A\le W_j\le \Phi_j+A,
		\qquad
		\Phi-A\le W\le \Phi+A .
		\]
	\end{lemma}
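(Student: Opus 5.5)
Both bounds follow from comparing $W_j$ with explicit competitors built out of $\Phi_j$ and the extremal function of $(K,\cdot)$, using only constants that do not depend on $j$. Put $M:=\sup_j\|v_j\|_{L^\infty(K)}<+\infty$, which is finite by uniform convergence. We use the facts recalled in the preliminaries: $\Phi_j$ is $I$-model, $u_j\simeq_I\Phi_j$, and the partial envelope depends only on $\Phi_j$, so we may work with $P_K[\Phi_j](v_j)$.

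\emph{Lower bound.} The candidate $\Phi_j-M$ is $\theta$-psh, satisfies $\Phi_j-M\le -M\le v_j$ on $K$ because $\sup_X\Phi_j=0$, and $\Phi_j-M\preceq_I u_j$. Hence $W_j\ge\Phi_j-M$. This step is immediate.

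\emph{Upper bound.} We need a uniform constant $A$ with $\psi\le\Phi_j+A$ for every competitor $\psi$, that is, every $\psi\preceq_I\Phi_j$ with $\psi\le v_j\le M$ q.e.\ on $K$. The argument has two parts.
\begin{enumerate}[(i)]
\item \emph{Uniform sup bound.} Since $K$ is non-pluripolar, the global extremal function $V^*_{K,\theta}:=\sup\{\psi\in\operatorname{PSH}(X,\theta):\psi\le0\text{ on }K\}^*$ is bounded above by a constant $M_K$; this is the big-class analogue of \cite[Theorem 4.2]{GZ05}. Consequently every competitor satisfies $\sup_X\psi\le M+M_K$.
\item \emph{Relative bound.} This is the step I expect to be the main obstacle. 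Following \cite[Lemma 5.2]{DX24}, once $\sup_X\psi\le M+M_K$ and $\psi\preceq_I\Phi_j$, the function $\psi-M-M_K$ is admissible for the envelope defining $P[\Phi_j]_I=\Phi_j$, since it is $\le0$ and $I$-dominated by $\Phi_j$. Hence $\psi-M-M_K\le\Phi_j$, that is, $\psi\le\Phi_j+M+M_K$.
\end{enumerate}
This gives $W_j\le\Phi_j+A$ with $A:=M+M_K$. The constant depends only on $K$, $\theta$ and $M$, never on $j$, so the hypothesis that $\Phi_j\to\Phi$ in capacity is not needed for this bound. The same argument applied to $u,v$ gives the bounds for $W$ and $\Phi$.

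The subtle point in (ii) is the $I$-model identity $\Phi_j=\sup\{\psi\le0:\psi\preceq_I\Phi_j\}^*$. It requires the transitivity $\psi\preceq_I\Phi_j\simeq_I u_j$ together with the defining sup, both recalled in the preliminaries. The only other care needed is in (i), confirming that $M_K<\infty$ in the big case. This follows by comparing with the Kähler case: $\theta\le A'\omega$ gives $\operatorname{PSH}(X,\theta)\subset\operatorname{PSH}(X,A'\omega)$, so $V^*_{K,\theta}\le A'V^*_{K,\omega}\le A'M_K^\omega$.
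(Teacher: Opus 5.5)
Your argument is correct and is essentially what the paper intends, since it simply defers to \cite[Lemma 5.2]{DX24}: the lower bound comes from the competitor $\Phi_j-M$ with $M=\sup_j\|v_j\|_{L^\infty(K)}$, and the upper bound comes from the uniform sup bound given by the extremal function of the non-pluripolar set $K$, followed by admissibility of $\psi-M-M_K$ in the envelope defining $P[u_j]_I=\Phi_j$, with all constants independent of $j$. Your reduction to $P_K[\Phi_j](v_j)$ is superfluous (the preliminaries only record it under a positive-mass assumption), but you never actually use it, because both bounds work directly with the $I$-constrained competitors via $\psi\preceq_I u_j\Leftrightarrow\psi\preceq_I\Phi_j$.
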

	
	\begin{theorem}[=\cref{intro: stability of Darvas-Xia}]
		\label{thm:partial-I-envelope-capacity-stability}
		Let $(X,\omega)$ be a compact K\"ahler manifold, and let $\theta$ be a smooth
		closed real $(1,1)$-form whose cohomology class is big. Let
		$K\subset X$ be compact and non-pluripolar. Let $u_j,u\in\operatorname{PSH}(X,\theta)$ with $\sup_Xu_j=\sup_Xu=0$ and put
		\[
		\Phi_j:=P[u_j]_I,\qquad \Phi:=P[u]_I ,\qquad W_j:=P_K[u_j]_I(v_j),
		\qquad
		W:=P_K[u]_I(v).
		\]
		Assume that $\Phi_j\to\Phi$ in capacity. Let $v_j,v\in C^0(K)$ satisfy $v_j\to v$ uniformly on $K$. Then $W_j\to W$ in capacity.
		
		Consequently, if we moreover have $ \int_X\theta_{\Phi_j}^n\to \int_X\theta_\Phi^n>0$, then we have the weak convergence $\theta_{W_j}^n\rightharpoonup \theta_W^n$.
	\end{theorem}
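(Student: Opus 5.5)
We follow the scheme of \Cref{thm:relative-bounded-obstacle-capacity-continuity}. First we reduce to ordinary singularity constraints. When $\Phi_j$ and $\Phi$ have positive mass, the identity recalled in the preliminaries gives $W_j=P_K[\Phi_j](v_j)$ and $W=P_K[\Phi](v)$. In general we work with $P_K[\Phi_j]$ directly. We also use $\Phi_j\simeq_I u_j$ together with the fact that $\Phi_j$ is $I$-model, so that $\psi\preceq_I u_j$ and $\psi\le 0$ together imply $\psi\le\Phi_j$ (DX22).

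By \Cref{lem:uniform-bounds-partial-I-envelopes} we have uniform bounds $\Phi_j-A\le W_j\le\Phi_j+A$. Replacing $v_j$ by $v_j+\sup_j\|v_j-v\|_\infty$ shifts $W_j$ by a constant that tends to zero, so we may assume $v_j\ge v$ or compare envelopes via shifts of size $\|v_j-v\|_{C^0(K)}$.

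\textbf{Lower tail.} We build a competitor for $W_j$ out of $W$ and $\Phi_j$. We apply the slope-gap barrier \Cref{lem:strict-subbarrier-slope-gap} to $W$ (which has positive mass in the second part; in general we use the first part together with a rooftop). We then repeat the truncation and comparison argument of Step~5--6 of \Cref{thm:moving-background-capacity-stability}.

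A cleaner route is to note that $\tilde W_j:=P_\theta(\min\{W,\Phi_j\}+\text{const})$-type rooftops lie below $W_j$ up to $\|v_j-v\|$. Concretely, $P_\theta(W,\Phi_j)-\|v_j-v\|$ is $\le v_j$ q.e. on $K$ and is $\preceq\Phi_j$, so it is a competitor and
\[
P_\theta(W,\Phi_j)-\|v_j-v\|_{C^0(K)}\le W_j .
\]
Since $\Phi_j-A\le \min\{W,\Phi_j\}\le\Phi_j$ after normalizing, and $\min\{W,\Phi_j\}\to\min\{W,\Phi\}=W$ in capacity (using $W\le\Phi$ up to the constant $A$; subtract $A$), \Cref{thm:relative-bounded-obstacle-capacity-continuity} gives $P_\theta(W,\Phi_j)\to P_\theta(W,\Phi)=W$ in capacity. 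This yields $\operatorname{Cap}_\omega(\{W_j<W-\delta\})\to0$. The positive-mass hypothesis needed there is the main obstacle when $\int\theta_\Phi^n=0$. In that case we first prove the statement for $(1-\varepsilon)\Phi+\varepsilon V_\theta$-type perturbations and let $\varepsilon\to0$ using monotonicity, or else appeal to \Cref{lem:fixed-level-rooftop-moving-background}-style rooftop recovery.

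\textbf{$L^1$ and upper tail.} The $W_j$ are uniformly bounded above, since $\Phi_j\le0$. So any subsequence has an $L^1$ limit $w$, and by the lower tail $w\ge W$. Since $W_j\le v_j$ q.e. on $K$, an argument as in \Cref{lem:qpsh-upper-bound-passes-to-limit}, using a non-pluripolar measure supported on $K$ (for instance $\omega_{V_K^*}^n$), gives $w\le v$ q.e. on $K$. Moreover $W_j\le\Phi_j+A$ and $\Phi_j\to\Phi$ in $L^1$, hence $w\preceq\Phi$, and hence $w\preceq_I u$. Thus $w$ is a competitor for $W$, giving $w\le W$. So $W_j\to W$ in $L^1$, and the Hartogs lemma gives the upper tail, hence capacity convergence.

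\textbf{Measure convergence.} Capacity convergence plus the uniform bounds $\Phi_j-A\le W_j\le\Phi_j+A$ give $\liminf\theta_{W_j}^n\ge\theta_W^n$ on open sets (via \cite[Lemma 5.10]{DDL25}-type lower semicontinuity). Total masses satisfy $\int\theta_{W_j}^n=\int\theta_{\Phi_j}^n\to\int\theta_\Phi^n=\int\theta_W^n$ by the DX24 mass identity. Lower semicontinuity with convergent masses then forces weak convergence.
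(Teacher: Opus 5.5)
Your lower-tail argument fails. The competitor inequality $P_\theta(W,\Phi_j)-\|v_j-v\|_{C^0(K)}\le W_j$ is correct. However, the bound $\Phi_j-A\le\min\{W,\Phi_j\}$ you need to invoke \Cref{thm:relative-bounded-obstacle-capacity-continuity} is false. Since $W\simeq\Phi$, it would force $\Phi_j\le\Phi+O(1)$ uniformly in $j$, which is exactly what moving singularities violate.

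Worse, the conclusion $P_\theta(W,\Phi_j)\to W$ is itself false. Take the moving-pole example (\Cref{ex:moving-poles-P1-cap-not-dS}), where $\Phi_j=\phi_j\to\Phi=\phi_1$ in capacity with masses $1-c>0$. Every $\omega$-psh function below $\min\{W,\Phi_j\}$ has Lelong number at least $c$ at both $D_1$ and $D_j$. So $P_\omega(W,\Phi_j)\equiv-\infty$ if $c>1/2$. If $c\le 1/2$, any $L^1$-limit has Lelong number at least $2c$ at $D_1$, by upper semicontinuity of the mass of closed balls under weak convergence, while $W$ has Lelong number $c$ there. The rooftop accumulates the singularities of both $\Phi$ and $\Phi_j$. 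This is precisely the difference between capacity and $d_{\mathcal S}$ convergence, and positive mass is not the real obstruction.

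What is missing is the mechanism you mention only in one sentence, plus one ingredient you never mention. Do not form $\min\{W,\Phi_j\}$. Instead, truncate $W_j$ to minimal singularity type, $U_j^T:=\max(W_j,V_\theta-T)$. Compare it in $\mathcal E(X,\theta,V_\theta)$ with $\psi_\rho=(1-\tau)W+\tau\chi+\tau b\rho-\delta/2-\tau b$, where $\chi$ is the slope-gap barrier of \Cref{lem:strict-subbarrier-slope-gap} for $W$. That barrier needs $\int_X\theta_W^n>0$, and your zero-mass perturbation remark is not substantiated. The comparison principle gives $(\tau b)^n\int_{B_j}(\omega+dd^c\rho)^n\le\int_{\{U_j^T<\psi_\rho\}}\theta_{U_j^T}^n$, where $B_j:=\{W_j<W-\delta\}\cap\{\chi>W-R\}\cap\{W_j>V_\theta-T\}$.

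You then split the right-hand side into two pieces.
\begin{itemize}
\item On $\{W_j\le V_\theta-T\}$, the slope gap forces $W$ close to $V_\theta$, while $W_j\ge\Phi_j-C_0$ forces $\Phi_j$ far below. The set therefore lies in a deep tail $\{\Phi_j<\Phi-L_T\}$ with $L_T\to+\infty$. The mass there is at most $T^nF_{V_\theta}(\operatorname{Cap}_\omega(\{\Phi_j<\Phi-L_T\}))\to0$.
\item On $\{W_j>V_\theta-T\}$, you need the contact property \cite[Corollary~5.7]{DX24}: $\theta_{W_j}^n$ is carried by $K\cap\{W_j=v_j\}$. Since $W\le v$ q.e.\ on $K$, the mass $\theta_{W_j}^n(\{W_j<W-\delta/2\})$ sits on $K\cap\{v_j<v-\delta/2\}$. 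This set is empty once $\|v_j-v\|_{C^0(K)}<\delta/2$.
\end{itemize}
With this lower tail in hand, your upper-tail identification ($w\le v$ q.e.\ on $K$ and $w\le\Phi+A$, hence $w\le W$) goes through, and so does your measure-convergence step.
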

	\begin{proof}
		By \Cref{lem:uniform-bounds-partial-I-envelopes}, after replacing
		$W_j,W$ by $W_j-A,W-A$, we may assume that there is $C_0>0$ such that
		\[
		\Phi_j-C_0\le W_j\le \Phi_j,
		\qquad
		\Phi-C_0\le W\le \Phi .
		\]
		The shift does not affect capacity convergence.
		
		We first sketch the lower estimate. Fix $\delta>0$ and set
		$A_j:=\{W_j<W-\delta\}$. Let $V:=V_\theta$. By
		\Cref{lem:strict-subbarrier-slope-gap}, applied to $W$ and $\Phi$, there are
		$\chi\in\operatorname{PSH}(X,\theta)$ and $a,\sigma>0$ such that
		$\theta_\chi\ge a\omega$ and
		\[
		\chi\le W-\sigma(V-W).
		\]
		For $R>0$ put $G_R:=\{\chi>W-R\}$; then
		$\text{Cap}_\omega(X\setminus G_R)\to0$ as $R\to+\infty$. Fix $R$, choose
		$0<b<a$, and put
		\[
		\tau:=\min\left\{\frac12,\frac{\delta}{8(R+b)}\right\}.
		\]
		For $T>1$, let $U_j^T:=\max(W_j,V-T)$. Let
		$K_{j,R,T}:= A_j\cap G_R\cap\{W_j>V-T\}$ and
		$\rho\in\operatorname{PSH}(X,\omega)$, $0\le\rho\le1$. Set
		\[
		\psi_\rho:=(1-\tau)W+\tau\chi+\tau b\rho-\frac{\delta}{2}-\tau b .
		\]
		Then $\psi_\rho\le W-\delta/2$, $\theta_{\psi_\rho}\ge\tau b\,\omega_\rho$,
		and on $G_R$ we have $\psi_\rho\ge W-5\delta/8$. Hence
		$K_{j,R,T}\subset\{U_j^T<\psi_\rho\}$. With
		$H_{j,\rho}^T:=\max(U_j^T,\psi_\rho)$, the comparison principle \cref{thm:DDL-relative-comparison} in the
		minimal full mass class gives
		\[
		(\tau b)^n\int_{K_{j,R,T}}\omega_\rho^n
		\le
		\int_{\{U_j^T<\psi_\rho\}}\theta_{U_j^T}^n .
		\]
		We split the last integral according to $\{W_j>V-T\}$ and $\{W_j\le V-T\}$.
		On $\{W_j>V-T\}$, plurifine locality gives
		$\theta_{U_j^T}^n=\theta_{W_j}^n$, and the contribution is bounded by
		\[
		\int_{\{W_j<W-\delta/2\}}\theta_{W_j}^n .
		\]
		This is the only new point compared with the ordinary envelope case. By \cite[Corollary 5.7]{DX24}, $\theta_{W_j}^n$ puts no mass on
		$(X\setminus K)\cup\{W_j<v_j\}$. Since $W\le v$ on $K$, this contribution is
		bounded by
		\[
		\int_{K\cap\{v_j<v-\delta/2\}}\theta_{W_j}^n,
		\]
		which is zero for all large $j$, as $v_j\to v$ uniformly on $K$.
		
		On $\{W_j\le V-T\}$, one can show that
		$\{U_j^T<\psi_\rho\}\cap\{W_j\le V-T\}$ is contained in
		\[
		\{\Phi_j<\Phi-L_T\},
		\qquad
		L_T:=T-C_0-\frac{T-\delta/2}{1+\tau\sigma}.
		\]
		For fixed $R,\delta$, we have $L_T\to+\infty$ as $T\to+\infty$. Hence, we can argue as in \cref{thm:relative-bounded-obstacle-capacity-continuity} to obtain
		\begin{equation}\label{eq:lower tail for I envelope}
			\text{Cap}_\omega(\{W_j<W-\delta\})\to0 .
		\end{equation}
		Now, the same arguments as in \cref{thm:relative-bounded-obstacle-capacity-continuity} yields
		\[
		W_j\to W\quad\text{in }\text{Cap}_\omega .
		\]
		
		Finally, by \cite[Lemma 5.2]{DX24},
		$[W_j]=[\Phi_j]$ and $[W]=[\Phi]$. Hence
		\[
		\int_X\theta_{W_j}^n=\int_X\theta_{\Phi_j}^n,
		\qquad
		\int_X\theta_W^n=\int_X\theta_\Phi^n .
		\]
		If $\int_X\theta_{\Phi_j}^n\to\int_X\theta_\Phi^n$, then
		$\int_X\theta_{W_j}^n\to\int_X\theta_W^n$. Since $W_j\to W$ in capacity, \cite[Theorem 2.1]{DDL25} gives the weak convergence $\theta_{W_j}^n\rightharpoonup\theta_W^n$.
	\end{proof}
	
	\begin{corollary}[Moving quantization of partial equilibrium measures]
		\label{cor:moving-quantization-partial-equilibrium}
		Assume the setting of \Cref{thm:partial-I-envelope-capacity-stability}.
		Let $\nu_j$ be Bernstein-Markov measures for $(K,v_j)$, and let $\beta^k_{v_j,u_j,\nu_j}$ be the partial Bergman measures associated to $H^0(X,L^k\otimes T\otimes \mathcal I(ku_j))$. Then
		\[
		\lim_{j\to\infty}\lim_{k\to\infty}
		\beta^k_{v_j,u_j,\nu_j}
		=
		\theta^n_{P_K[u]_I(v)}
		\]
		weakly. In particular, there exists a sequence $k_j\to\infty$ such that
		\[
		\beta^{k_j}_{v_j,u_j,\nu_j}
		\rightharpoonup
		\theta^n_{P_K[u]_I(v)} .
		\]
	\end{corollary}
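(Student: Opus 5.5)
This is a double limit: first $k\to\infty$ for fixed $j$, then $j\to\infty$. I will handle the two limits separately and then extract a diagonal sequence.

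\textbf{Inner limit.} Fix $j$. Since $\nu_j$ is Bernstein--Markov for $(K,v_j)$, and $K$ is compact and non-pluripolar with $v_j\in C^0(K)$, the quantization theorem of Darvas--Xia \cite[Theorem~1.2]{DX24} applies to the data $(u_j,v_j,\nu_j)$. It gives
\[
\lim_{k\to\infty}\beta^k_{v_j,u_j,\nu_j}=\theta^n_{P_K[u_j]_I(v_j)}=\theta^n_{W_j}
\]
weakly. Here the twisting bundle $T$ is suppressed, as in the preliminaries. This removes the inner limit.

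\textbf{Outer limit.} The second part of \Cref{thm:partial-I-envelope-capacity-stability} gives $\theta^n_{W_j}\rightharpoonup\theta^n_W$ with $W=P_K[u]_I(v)$. This uses that $W_j\to W$ in capacity together with convergence of the total masses. So I will take the setting to include the mass hypothesis $\int_X\theta_{\Phi_j}^n\to\int_X\theta_\Phi^n>0$ from the theorem, and read ``assume the setting'' accordingly. This is the main point where care is needed. Without mass convergence, capacity convergence alone only gives $\liminf$-type bounds for the Monge--Ampère measures, and the weak limit could lose mass. Combining the two limits yields $\lim_j\lim_k\beta^k_{v_j,u_j,\nu_j}=\theta^n_W$.

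\textbf{Diagonal sequence.} The weak topology on positive measures of bounded mass on the compact space $X$ is metrizable. Fix a metric $d$, for instance one built from a countable dense family $\{g_\ell\}\subset C^0(X)$. For the masses: $\beta^k_{v_j,u_j,\nu_j}(X)\to\int_X\theta^n_{W_j}$, and $\int_X\theta^n_{W_j}$ is bounded in $j$, so the masses are bounded along the sequences in question. For each $j$, choose $k_j\ge j$ with $d(\beta^{k_j}_{v_j,u_j,\nu_j},\theta^n_{W_j})<1/j$. Then
\[
d\bigl(\beta^{k_j}_{v_j,u_j,\nu_j},\theta^n_W\bigr)\le \frac1j+d(\theta^n_{W_j},\theta^n_W)\to0,
\]
which gives the final claim.
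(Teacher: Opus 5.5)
Your proposal is correct and follows the paper's argument. The paper likewise combines the fixed-$j$ Darvas--Xia quantization \cite[Theorem~1.2]{DX24} with the weak convergence $\theta^n_{W_j}\rightharpoonup\theta^n_W$ from the second part of \Cref{thm:partial-I-envelope-capacity-stability}; that step does need the mass hypothesis $\int_X\theta^n_{\Phi_j}\to\int_X\theta^n_\Phi>0$, which you rightly read into ``the setting.'' Your diagonal extraction, using a metric on bounded-mass measures, supplies the ``in particular'' claim that the paper leaves implicit.
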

	
	We give an example of quantization by potentials with moving singularities:
	\begin{example}\label{ex:P1-moving-pole-quantization}
		We continue the setup of \Cref{ex:moving-poles-P1-cap-not-dS}, where
		\(X=\mathbb P^1\), \(L=\mathcal O(1)\), \(D_1=\{Z_0=0\}\),
		\(D_j=\{Z_0-\varepsilon_jZ_1=0\}\), and
		\[
		u_j=c\log|s_j|_{h_{\rm FS}}^2,\qquad \phi_j=P_\omega[u_j],
		\qquad
		\Phi_j:=P_\omega[u_j]_I.
		\]
		Since \(u_j\) has analytic singularity type (in fact algebraic singularity type), \cite[Proposition 2.20]{DX22} yields that
		\[
		P_\omega[u_j]_I=P_\omega[u_j].
		\]
		In particular \(\Phi_j=\phi_j\). Hence, by
		\Cref{ex:moving-poles-P1-cap-not-dS},
		\[
		\Phi_j\to\Phi_1
		\quad\text{in }\operatorname{Cap}_\omega,
		\qquad
		\int_X\omega_{\Phi_j}
		=
		\int_X\omega_{\Phi_1}
		=
		1-c.
		\]
		Moreover, the convergence is still not \(d_{\mathcal S}\)-convergence:
		\[
		d_{\mathcal S}([\Phi_j],[\Phi_1])\ge \frac{2c}{C_S}>0.
		\]
		
		\smallskip
		\noindent\textbf{Step 1: the multiplier ideals are moving vanishing conditions.}
		Near the point \(D_j\), use the affine coordinate
		\[
		w_j:=\frac{Z_0}{Z_1}-\varepsilon_j.
		\]
		In this coordinate \(D_j=\{w_j=0\}\), and
		\[
		u_j=c\log|w_j|^2+O(1).
		\]
		Let \(f=w_j^m g\) be a holomorphic germ with \(g(0)\neq0\). Then
		\(f\in\mathcal I(ku_j)\) if and only if
		\[
		|f|^2e^{-ku_j}
		\sim
		|w_j|^{2m-2kc}
		\]
		is locally integrable. Since we are in one complex dimension, this is
		equivalent to
		\[
		\int_0^\epsilon r^{2m-2kc}r\,dr<+\infty,
		\]
		which is equivalent to \(m>kc-1\), or equivalently
		\(m\ge\lfloor kc\rfloor\). Therefore
		\begin{equation}
			\label{eq:P1-moving-pole-multiplier-ideal}
			\mathcal I(ku_j)
			=
			\mathcal O_X(-\lfloor kc\rfloor D_j).
		\end{equation}
		Thus
		\[
		H^0\bigl(X,L^k\otimes\mathcal I(ku_j)\bigr)
		=
		H^0\bigl(\mathbb P^1,\mathcal O(k)\otimes\mathcal{O}(-\lfloor kc\rfloor D_j)\bigr).
		\]
		Equivalently, this is the space of degree \(k\) homogeneous polynomials
		divisible by \((Z_0-\varepsilon_jZ_1)^{\lfloor kc\rfloor}\).
		
		\smallskip
		\noindent\textbf{Step 2: equivariance of the \(I\)-model projection.}
		Let \(G_j\in{\rm PU}(2)\) be the automorphism used above. We have
		\(G_j^*\omega=\omega\) and \(u_j=G_j^*u_1\). We claim that
		\[
		P_\omega[G_j^*u_1]_I=G_j^*P_\omega[u_1]_I.
		\]
		Indeed, if \(\psi\in\operatorname{PSH}(X,\omega)\), then
		\(G_j^*\psi\in\operatorname{PSH}(X,\omega)\). Moreover multiplier ideals are
		functorial under biholomorphisms:
		\[
		\mathcal I(tG_j^*\psi)=G_j^{-1}\mathcal I(t\psi).
		\]
		Hence
		\[
		\psi\preceq_I u_1
		\quad\Longleftrightarrow\quad
		G_j^*\psi\preceq_I G_j^*u_1.
		\]
		Pullback by \(G_j\) therefore identifies the candidate set defining
		\(P_\omega[u_1]_I\) with the candidate set defining \(P_\omega[G_j^*u_1]_I\).
		Taking upper envelopes gives
		\[
		\Phi_j=P_\omega[u_j]_I=G_j^*P_\omega[u_1]_I=G_j^*\Phi_1.
		\]
		
		\smallskip
		\noindent\textbf{Step 3: capacity stability of the \(I\)-partial envelopes.}
		Let \(K\subset X\) be compact nonpluripolar, let \(v_j,v\in C^0(K)\) be such that $v_j\to v$ uniformly on $K$, and let
		\(\nu_j,\nu\) be Bernstein-Markov probability measures for $(K,v_j)$ and \((K,v)\). By \cref{thm:partial-I-envelope-capacity-stability} we have
		\[
		P_K[u_j]_I(v_j)
		\longrightarrow
		P_K[u_1]_I(v)
		\quad\text{in }\operatorname{Cap}_\omega.
		\]
		Thanks to \cite[Lemma 5.2]{DX24} the total masses are constant:
		\[
		\int_X\omega_{P_K[u_j]_I(v)}
		=
		\int_X\omega_{\Phi_j}
		=
		1-c
		=
		\int_X\omega_{\Phi_1}
		=
		\int_X\omega_{P_K[u_1]_I(v)}.
		\]
		Therefore, we finally obtain the weak convergence
		\begin{equation}
			\label{eq:P1-moving-pole-partial-equilibrium-measure-conv}
			\omega_{P_K[u_j]_I(v)}
			\rightharpoonup
			\omega_{P_K[u_1]_I(v)} .
		\end{equation}
		
		\smallskip
		\noindent\textbf{Step 4: a new quantization.}
		Let \(\beta^k_{v_j,u_j,\nu}\) be the partial Bergman measure associated to
		\[
		H^0\bigl(X,L^k\otimes\mathcal I(ku_j)\bigr)
		=
		H^0\bigl(\mathbb P^1,\mathcal O(k)\otimes\mathcal O(-\lfloor kc\rfloor D_j)\bigr),
		\]
		with the \(L^2\)-norm induced by \((K,v_j,\nu_j)\). For each fixed \(j\), \cite[Theorem 1.2]{DX24} gives
		\[
		\beta^k_{v_j,u_j,\nu}
		\rightharpoonup
		\omega_{P_K[u_j]_I(v_j)}
		\qquad (k\to+\infty).
		\]
		Combining this fixed-\(j\) quantization with
		\eqref{eq:P1-moving-pole-partial-equilibrium-measure-conv}, we obtain, for
		every \(f\in C^0(X)\),
		\begin{equation}
			\label{eq:P1-moving-pole-iterated-quantization}
			\lim_{j\to\infty}\lim_{k\to\infty}
			\int_X f\,d\beta^k_{v_j,u_j,\nu}
			=
			\int_X f\,\omega_{P_K[u_1]_I(v)} .
		\end{equation}
		In words, the partial Bergman measures associated to degree \(k\) sections
		vanishing to order at least \(\lfloor kc\rfloor\) at the moving divisor \(D_j\)
		quantize the partial equilibrium measure, and after \(j\to\infty\) they converge
		to the partial equilibrium associated to the limiting point \(D_1=\{Z_0=0\}\).
	\end{example}
	
	\begin{remark}
		We believe that under some more conditions, such as the measures $\nu_j$ are uniformly Berstein-Markov, the quantization \eqref{eq:P1-moving-pole-iterated-quantization} in \cref{ex:P1-moving-pole-quantization} could be uniformly diagonal, i.e., we can exchange $k,j$ in the double limit. Thus, we can provide more flexible quantizations when the singularities are moving. These result could have applications in the equidistribution of Fekete points (cf. \cite{BB10,BBWN11}).
	\end{remark}

\end{document}